\newcommand{\defnword}[1]{\textbf{#1}}
\newcommand{\comment}[1]{}
\newcommand{\on}[1]{\operatorname{#1}}
\newcommand{\bb}[1]{\mathbb{#1}}
\newcommand{\ord}{\on{ord}}
\DeclareSymbolFontAlphabet{\mathbbl}{bbold}
\DeclareSymbolFontAlphabet{\mathbb}{AMSb}
\numberwithin{equation}{subsubsection}
\newtheorem{introthm}{Theorem}
\newtheorem{proposition}[subsubsection]{Proposition}
\newtheorem*{thm*}{Theorem}
\newtheorem{lemma}[subsubsection]{Lemma}
\newtheorem*{lem*}{Lemma}
\newtheorem{corollary}[subsubsection]{Corollary}
\theoremstyle{definition}
\newtheorem{definition}[subsubsection]{Definition}
\newtheorem{construction}[subsubsection]{Construction}
\newtheorem{example}[subsubsection]{Example}
\newtheorem{remark}[subsubsection]{Remark}
\newtheorem{notation}[subsubsection]{Notation}
\newtheorem{assumption}[subsubsection]{Assumption}
\newtheorem*{assump*}{Assumption}
\newcommand{\Real}{\mathbb{R}}
\newcommand{\Int}{\mathbb{Z}}
\newcommand{\Comp}{\mathbb{C}}
\newcommand{\Adele}{\mathbb{A}}
\newcommand{\Field}{\mathbb{F}}
\newcommand{\Rat}{\mathbb{Q}}
\newcommand{\Lie}{\on{Lie}}
\newcommand{\Dieu}{\mathbb{D}}
\newcommand{\defn}{\overset{\mathrm{defn}}{=}}
\newcommand{\leftcirc}{{}^{\circ}}
\newcommand{\im}{\on{im}}
\newcommand{\into}{\hookrightarrow}
\newcommand{\Obj}{\on{Obj}}
\newcommand{\Rg}{\mathscr{O}}
\newcommand{\Reg}[1]{\Rg_{#1}}
\newcommand{\Hom}{\on{Hom}}
\newcommand{\End}{\on{End}}
\newcommand{\Ext}{\on{Ext}}
\newcommand{\Aut}{\on{Aut}}
\newcommand{\Gal}{\on{Gal}}
\newcommand{\Gm}{\mathbb{G}_m}
\newcommand{\Gmh}[1]{\mathbb{G}_{m,#1}}
\newcommand{\cris}{\on{cris}}
\newcommand{\dR}{\on{dR}}
\newcommand{\pow}[1]{[\vert#1\vert]}
\newcommand{\Spec}{\on{Spec}}
\newcommand{\Spf}{\on{Spf}}
\newcommand{\et}{\on{\acute{e}t}}
\newcommand{\mult}{\on{mult}}
\newcommand{\dlog}{\on{dlog}}
\newcommand{\Fil}{\on{Fil}}
\newcommand{\mx}{\mathfrak{m}}
\newcommand{\Sh}{\on{Sh}}
\newcommand{\Ss}{\mathcal{S}}
\newcommand{\Res}{\on{Res}}
\newcommand{\an}{\on{an}}
\newcommand{\ad}{\on{ad}}
\newcommand{\GSp}{\on{GSp}}
\newcommand{\GL}{\on{GL}}
\newcommand{\SO}{\on{SO}}
\newcommand{\Spin}{\on{Spin}}
\newcommand{\GSpin}{\on{GSpin}}
\begin{document}
\title[Connected components of special cycles]{Connected components of special cycles on Shimura varieties}
\author{Keerthi Madapusi}
\address{Keerthi Madapusi\\%
Department of Mathematics\\%
Maloney Hall\\%
Boston College\\%
Chestnut Hill, MA 02467\\%
USA}
\email{madapusi@bc.edu}

\begin{abstract}
I use methods of Chai-Hida and ordinary $p$-Hecke correspondences to study the set of irreducible components of special fibers of special cycles of sufficiently low codimension in integral models of GSpin Shimura varieties, and apply this to prove irreducibility results for the special fibers of the moduli of polarized K3 surfaces. These results are also applied in joint work with Howard on the modularity of generating series of higher codimension cycles on GSpin Shimura varieties.
\end{abstract}

\maketitle

\section*{Introduction}

The goal of this paper is to use $p$-Hecke correspondences---following in spirit a paper of de Jong~\cite{De_Jong1993-al}---to obtain control over irreducible components of the special fibers of certain cycles on GSpin Shimura varieties. More precisely, I show that, under some numerical constraints, each such component is the specialization of a unique component in the generic fiber. This plays a key role in the proof of the main result in~\cite{HMP:mod_codim} on the modularity of generating series of special cycles of higher codimension. 

As a more immediate application, I give an application to the moduli of polarized K3 surfaces. Fix a positive integer $d\geq 1$, and consider the moduli stack $\mathsf{M}^{\circ}_{2d}$ over $\Int$ that parameterizes primitively polarized K3 surfaces of degree $2d$. I prove:

\begin{introthm}\label{introthm:k3s}
For every prime $p$, the fiber $\mathsf{M}^{\circ}_{2d,\Field_p}$ is geometrically irreducible.
\end{introthm}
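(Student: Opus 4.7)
The plan is to realize $\mathsf{M}^{\circ}_{2d}$ via its Kuga--Satake period map as an open and closed substack of the integral model of a suitable GSpin Shimura variety of signature $(2,19)$, and then invoke the paper's main specialization result for irreducible components. Concretely, attached to the primitive cohomology lattice $L_{2d}$ of a degree $2d$ polarized K3 (of signature $(2,19)$) one has a GSpin Shimura variety $\Ss$ over $\Int$ and a finite étale morphism from (a rigidification of) $\mathsf{M}^{\circ}_{2d}$ to $\Ss$. After possibly enlarging to absorb a spin/orientation twist, the image is an open and closed substack $\Ss^{\circ}\subset\Ss$, and it suffices to show that $\Ss^{\circ}_{\overline{\Field_p}}$ is irreducible.

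For the generic fiber, I would invoke classical results: the complex period domain for degree $2d$ polarized K3 surfaces is the connected Hermitian symmetric domain of type IV$_{19}$, and the surjectivity of the period map (global Torelli) combined with transitivity of the arithmetic orthogonal group on orientations gives that $\mathsf{M}^{\circ}_{2d,\Comp}$, and hence $\Ss^{\circ}_\Comp$, is irreducible. Galois descent then upgrades this to geometric irreducibility of $\Ss^{\circ}_\Rat$. I would then apply the paper's main theorem: since $\Ss^{\circ}$ sits as a codimension-zero cycle in its ambient Shimura variety, it falls well within the codimension range of the specialization theorem, so every geometric irreducible component of $\Ss^{\circ}_{\overline{\Field_p}}$ is the specialization of a unique geometric component of $\Ss^{\circ}_{\overline{\Rat}}$. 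With only one generic component, we get at most one component in the special fiber; non-emptiness is classical, as polarized K3 surfaces of every degree $2d$ exist in every characteristic (for instance Kummer surfaces attached to suitable abelian surfaces).

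The main obstacle is at primes $p\mid 2d$, where the integral model of the ambient Shimura variety is not smooth, the Kuga--Satake map has subtleties, and the Kisin--type integral structure requires care. In particular, one must verify that the hypotheses feeding into the main theorem---ordinariness of enough test points and the operation of $p$-Hecke correspondences à la Chai--Hida---genuinely apply to $\Ss^{\circ}$ at these primes, so that irreducible components of the special fiber cannot proliferate beyond the unique generic one. This is exactly what the body of the paper is designed to produce, by building enough $p$-Hecke orbits through the ordinary locus to force any two geometric components of $\Ss^{\circ}_{\overline{\Field_p}}$ to lie in the closure of a common generic component.
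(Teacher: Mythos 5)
Your proposal has a genuine gap at exactly the primes the theorem is about. You propose to realize $\mathsf{M}^{\circ}_{2d}$ as an open and closed substack of the integral model of the GSpin Shimura variety attached to the primitive lattice $L_{2d}$ itself, and then to apply the specialization theorem to this ``codimension-zero cycle.'' But the main theorem of the paper requires the ambient lattice $V_{\Int}$ to be \emph{self-dual at} $p$, and $L_{2d}$ has discriminant $\pm 2d$, so this hypothesis fails precisely when $p\mid 2d$ --- which is the only case not already covered by~\cite{mp:tatek3}. Moreover, the entire machinery of the paper (the ordinary Hecke correspondences of type $\lambda_p$, the minimal pairs $\tilde{\Lambda}\subset\Lambda$ of index $p$, the induction on the $p$-adic valuation of $\operatorname{disc}(\Lambda)$) is built for special cycles $\leftcirc\mathcal{Z}_K(\Lambda)$ with $\Lambda\neq 0$ sitting inside a Shimura variety that is hyperspecial at $p$; it is not designed to repair the bad reduction of the Shimura variety attached to $L_{2d}$ directly. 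Your closing paragraph acknowledges the obstacle at $p\mid 2d$ but defers to ``the body of the paper,'' which does not in fact address the setup you have chosen.

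The missing idea is a change of ambient space: one fixes $N=U^{\oplus 3}\oplus E_8^{\oplus 2}$, sets $L_d=\langle e-df\rangle^{\perp}\subset N$ of signature $(19,2)$, and then embeds $L_d$ as a direct summand of a lattice $V_{\Int}$ of signature $(20,2)$ that \emph{is} self-dual at $p$. The (rigidified, smooth-locus) moduli of K3 surfaces then maps by an open immersion not onto a union of components of a Shimura variety, but into the rank-one special cycle $\leftcirc\mathcal{Z}_K(2d)_{\Int_{(p)}}$ (indeed into its primitive locus) inside $\Ss_{K,(p)}$. Since $\mathrm{rank}(\Lambda)=1\leq(20-4)/2$, Proposition~\ref{prop:irred} applies and gives the bijection on geometric connected components between special and generic fibers; combined with the classical irreducibility of $\mathsf{M}^{\mathrm{sm}}_{2d,\Comp}$ via Torelli, this yields the theorem. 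Your generic-fiber discussion and the reduction to the known complex irreducibility are fine, but without the passage to the larger self-dual lattice and the reinterpretation of the K3 period space as a rank-one special cycle, the argument cannot get off the ground at the primes dividing $2d$.
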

When $p$ is odd and $p^2\nmid d$, this result was shown in~\cite{mp:tatek3}. In effect, what's happening in this paper is a reduction to this known case, by relating the ordinary locus of $\mathsf{M}^{\circ}_{2d/p^2}$ with that of $\mathsf{M}^{\circ}_{2d}$ using $p$-Hecke correspondences.

To state the somewhat technical main result I prove in this paper, we begin with a GSpin Shimura variety $\Sh_K$ associated with a quadratic lattice $V_{\Int}$ of signature $(n,2)$. On this variety, we have an infinite collection of special cycles $Z_K(\Lambda)\to \Sh_K$ associated with positive definite lattices $\Lambda$, which in the complex analytic fiber parameterize Noether-Lefschetz type loci where the canonical family of polarized $\Int$-Hodge structures $\bm{V}$ associated with $V_{\Int}$ pick up a collection of Hodge tensors generating a subspace isometric to $\Lambda$. As explained in the series of papers~\cites{mp:reg,Andreatta2017-gu,Andreatta2018-tt,Howard2020-fa,HMP:mod_codim}, the Shimura variety admits a natural integral model $\Ss_K$ over $\Int$, along with a cycle $\mathcal{Z}_K(\Lambda)$ extending $Z_K(\Lambda)$. Within $\mathcal{Z}_K(\Lambda)$, we have a certain open `non-degenerate' locus $\leftcirc \mathcal{Z}_K(\Lambda)$, whose complex fiber is the subspace on which $\Lambda$ spans a saturated submodule of the $\Int$-Hodge structure $\bm{V}$. Special cases of this construction include classical Noether-Lefschetz loci on the moduli of polarized K3 surfaces, where $\leftcirc \mathcal{Z}_K(\Lambda)$ parameterizes isometric embeddings of $\Lambda$ as a direct summand in the primitive Picard lattice.

In any case, we are now ready for the main result:
\begin{introthm}
	\label{introthm:main}
Let $p$ be a prime where $V_{\Int}$ is self-dual, and suppose that $\mathrm{rank}(\Lambda)\leq \frac{n-4}{2}$. Then:
\begin{enumerate}
		\item $\leftcirc\mathcal{Z}_K(\Lambda)_{\Int_{(p)}}$ is normal, flat, and equidimensional of dimension $n- \mathrm{rank}(\Lambda)+1 = \dim \Ss_K - \mathrm{rank}(\Lambda)$.
	\item The special fiber $\leftcirc\mathcal{Z}_K(\Lambda)_{\Field_p}$ is geometrically normal and equidimensional of dimension $n- \mathrm{rank}(\Lambda)$.
	\item The natural map
	\[
 \pi_{0}\bigl(\leftcirc \mathcal{Z}_K(\Lambda)_{\overline{\Field}_p}\bigr)\xrightarrow{\simeq}\pi_{0}\bigl(\leftcirc Z_K(\Lambda)_{\overline{\Rat}}\bigr).
\]
is a bijection.
\end{enumerate}
\end{introthm}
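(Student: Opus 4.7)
The plan is to separate the geometric content of (1)--(2), which is essentially a local analysis on the integral model, from the global irreducibility content of (3), where the $p$-Hecke machinery advertised in the introduction enters.

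For parts (1) and (2), I would argue étale-locally on $\Ss_K$. Near a point $x \in \leftcirc\mathcal{Z}_K(\Lambda)$, the non-degenerate special cycle should be cut out by $\rank(\Lambda)$ equations, obtained by demanding that a fixed basis of $\Lambda$ lift to absolute Hodge/crystalline tensors on the Kuga--Satake abelian variety. The self-duality of $V_{\Int}$ at $p$ makes $\Ss_{K,\Int_{(p)}}$ smooth of relative dimension $n$, and the non-degeneracy hypothesis should ensure the equations form a regular sequence. This realizes $\leftcirc\mathcal{Z}_K(\Lambda)_{\Int_{(p)}}$ as a local complete intersection of codimension $\rank(\Lambda)$, giving flatness, equidimensionality, and the asserted dimensions. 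Normality and geometric normality of the special fiber then follow from Serre's $R_1+S_2$ criterion, and one uses the bound $\rank(\Lambda)\leq (n-4)/2$ to ensure the locus where the cutting-out equations degenerate has codimension at least two.

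For part (3), surjectivity of the specialization map is formal from (1): each geometric component of the generic fiber extends uniquely to an irreducible component of $\leftcirc\mathcal{Z}_K(\Lambda)_{\Int_{(p)}}$ by Zariski closure, and flatness forces its special fiber to be nonempty. The genuine content is \emph{injectivity}, which amounts to showing that the special fiber of each generic geometric component is itself irreducible (equivalently, connected, using (2)). Following the template of Chai--Hida and de Jong, I would (a) reduce to showing that any two ordinary closed points in the special fiber of a fixed generic component lie in the same geometric component; (b) construct a supply of $p$-Hecke correspondences on $\Ss_K$ that stabilize $\leftcirc\mathcal{Z}_K(\Lambda)$, coming from isogenies of the Kuga--Satake abelian variety preserving the chosen $\Lambda$-tensors; (c) verify that these correspondences act transitively on the set of geometric components of the ordinary locus of a given generic component.

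The decisive obstacle is (c), and the numerical bound $\rank(\Lambda)\leq (n-4)/2$ should enter here. The orthogonal complement $\Lambda^{\perp}\subset V_{\Int}$ has signature $(n-\rank(\Lambda),2)$ with $n-\rank(\Lambda)\geq \rank(\Lambda)+4$, and this is presumably the threshold at which $\SO(\Lambda^{\perp})$ satisfies strong approximation at $p$ with connected fibers, so that the $p$-Hecke orbit is rich enough. Transitivity then splits into: (i) a Serre--Tate type identification of the formal deformation space at an ordinary point of $\leftcirc\mathcal{Z}_K(\Lambda)$ with a formal torus carrying a natural Hecke action, allowing one to lift Hecke moves to characteristic zero, where the complex uniformization makes irreducibility of $\leftcirc Z_K(\Lambda)_{\Comp}$ accessible; and (ii) a strong-approximation statement controlling the $p$-Hecke orbit on the relevant Shimura variety attached to $\SO(\Lambda^{\perp})$. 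The iterative flavor is already visible in the introduction's K3 application, where the comparison between $\mathsf{M}^{\circ}_{2d}$ and $\mathsf{M}^{\circ}_{2d/p^2}$ is made via $p$-Hecke; this suggests that one can induct on $\rank(\Lambda)$ by peeling off a single vector at a time and chaining correspondences along a flag $0\subset\Lambda_1\subset\cdots\subset\Lambda$, reducing to the codimension-one divisor case.
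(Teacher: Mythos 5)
Parts (1) and (2) are not reproved in the paper; they are quoted from the earlier work (Proposition~\ref{prop:ZLambda_props}, citing \cite{HMP:mod_codim}), and your local-complete-intersection sketch is a reasonable caricature of how that goes. The problems are in your plan for (3). Your framing of the content is correct (surjectivity is formal from flatness and normality; the issue is connectedness of the special fiber of the closure of each generic component), but the mechanism you propose does not match what is needed and, as stated, does not close up. First, the induction is on the wrong parameter: the paper inducts on the $p$-adic valuation of the discriminant $[\Lambda^\vee:\Lambda]$, not on $\mathrm{rank}(\Lambda)$. Your proposed reduction ``to the codimension-one divisor case'' has no base: the rank-one case with $p^2\mid d$ is precisely the unknown case (it is Theorem~\ref{introthm:k3s}, which is \emph{deduced from} this theorem; \cite{mp:tatek3} only covers $p^2\nmid d$). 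The actual base case is ``$\Lambda$ maximal at $p$,'' where $\leftcirc\mathcal{Z}_K(\Lambda)_{\Int_{(p)}}$ decomposes as a disjoint union of \emph{full} integral models $\Ss_{K^\flat_g,(p)}$ of smaller GSpin Shimura varieties, finite over $\Ss_{K,(p)}$, and the bijection on $\pi_0$ comes from the irreducibility of special fibers of toroidal compactifications (\cite[Cor.~4.1.11]{mp:toroidal}). You have no analogue of this input, and a chain $0\subset\Lambda_1\subset\cdots\subset\Lambda$ of increasing rank relates cycles of \emph{different} codimension only by closed immersions, which carry no $\pi_0$ information.

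Second, the Hecke correspondences that do the work do not stabilize $\leftcirc\mathcal{Z}_K(\Lambda)$: for a minimal pair $\tilde\Lambda\subset\Lambda$ with $[\Lambda:\tilde\Lambda]=p$, the paper builds a correspondence $\widehat{\mathrm{QIsog}}^{\ord}_{G,\lambda_p,\Xi}$ whose target map to $\leftcirc\widehat{\mathcal{Z}}^{\ord}_K(\tilde\Lambda)$ is an isomorphism and whose source map to $\leftcirc\widehat{\mathcal{Z}}^{\ord}_K(\Lambda)$ is a contracted product of the Igusa tower with the finite set $\mathrm{Par}^\circ_{\lambda_0}(W_g,U_g)(\Field_p)$; the Chai--Hida monodromy argument (Proposition~\ref{proposition:chai_hida_application}, fed by hypersymmetric CM points and weak approximation) shows this cover induces a bijection on $\pi_0$, and the matching computation in characteristic $0$ (Proposition~\ref{prop:char0_corr_cycles}, via the spinor norm and surjectivity of $\nu$ on the relevant stabilizer) lets one transport $P(\Lambda)$ to $P(\tilde\Lambda)$. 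Correspondences that preserve $\Lambda$, as you propose, are Hecke operators for $G^\flat=\GSpin(\Lambda^\perp)$ and only yield the Igusa-tower connectedness statement, not the comparison between special and generic fibers. Finally, the bound $\mathrm{rank}(\Lambda)\le(n-4)/2$ is not a strong-approximation threshold (spin groups satisfy that quite generally); it is what guarantees enough isotropic vectors orthogonal to $W_g$ so that the pointwise stabilizers $Q_W$, $\tilde Q_W$ are smooth with connected special fibers and the sets $\mathrm{Par}^\circ_{\lambda_0}(W_g,U_g)(\Field_p)$ are nonempty (Lemmas~\ref{lem:qT_smooth}, \ref{lem:mu_choice_flat}, \ref{lem:nice_cochar}), i.e., it makes the ordinary Hecke correspondences exist at all.
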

The first two assertions are already explained in~\cites{Howard2020-fa,HMP:mod_codim}. It is the last statement that is the focus of this paper.

The Hodge theoretic analogue of the essential idea of proof---which is by induction on the $p$-adic valuation of the discriminant of $\Lambda$---is easy to explain. Let's do so in the context of Theorem~\ref{introthm:k3s}. Suppose that $(X,\xi)$ is a primitively polarized K3 surface over $\Comp$ of degree $2d$. Then the Betti cohomology $H^2(X,\Int)$ is a pure Hodge structure of weight $2$, and the Poincar\'e pairing endows it with the structure of a quadratic space over $\Int$ that is isometric to $U = \bm{H}^{\oplus 3} \oplus E_8^{\oplus 2}$. Here, $\bm{H}$ is the hyperbolic plane, and $E_8$ is the root lattice corresponding to its eponymous Dynkin type. 

We will distinguish one hyperbolic plane $\bm{H}\subset U$, and choose a hyperbolic basis $e,f$ for it satisfying $e^2 = f^2 = 0, [e,f] = 1$. Let $U' = \bm{H}^\perp\subset U$ be its orthogonal complement, so that we can write $U$ as the orthogonal direct sum
\[
 U = \bm{H}\perp U'.
\]

We can choose the isometry 
\[
H^2(X,\Int)\xrightarrow{\simeq} U
\]
so that the Chern class of $\xi$ maps to the element $e+df\in \bm{H}$. Within $H^2(X,\Rat)$ we have a lattice corresponding on the right hand side to the subspace
\[
\langle p^{-1}e,pf\rangle \perp U'\subset U_{\Rat}.
\]
The basic point is that this lattice corresponds to an `isogenous' K3 surface $X'$ equipped with an isometry
\[
H^2(X',\Int)\xrightarrow{\simeq}\langle pe,p^{-1}f\rangle \perp U',
\]
and that it admits a canonical primitive polarization $\xi'$ of degree $2p^2d$, whose Chern class maps under the above isometry to the element $pe+p^2\cdot (p^{-1}f)$. 

If one does this carefully in families, one finds essentially a Hecke correspondence mapping the moduli of primitively polarized K3 surfaces of degree $2p^2d$ to that of degree $2d$, and this allows the direct comparison between their connected components.

Inspired by~\cite{De_Jong1993-al}, I show that a similar argument can be made in the special fiber if we work over the ordinary locus, and this gives us the proof of Theorem~\ref{introthm:main}. One can then easily deduce Theorem~\ref{introthm:k3s} as a special case where $\mathrm{rank}(\Lambda) = 1$ using the results of~\cite{mp:tatek3}. Here's the line of reasoning for the former theorem: 
\begin{itemize}
	\item The stack $\leftcirc \mathcal{Z}_K(\Lambda)_{\Int_{(p)}}$ is a disjoint union of open subspaces $\leftcirc\Ss_{K^\flat_g,(p)}$ of integral models $\Ss_{K^\flat_g,(p)}$ of GSpin Shimura varieties associated with certain direct summands $V^\flat_{g,\Int}\subset V_{\Int}$. When $\Lambda$ is \emph{maximal} at $p$, one can delete the qualifier `of open subspaces', and the compactifications from~\cite{mp:toroidal} now yield (3) of Theorem~\ref{introthm:main}. 

	\item When $\Lambda$ is contained in a positive definite lattice $\tilde{\Lambda}$, with $[\tilde{\Lambda}:\Lambda] = p$, by restricting to the ordinary loci in the mod-$p$ fiber, I use ordinary Hecke correpondences, combined with the Hodge-theoretic idea explained above, to show that there is a map 
  \[
	\pi:\leftcirc\mathcal{Z}^{\ord}_{K}(\Lambda)_{\Field_p}\to \leftcirc\mathcal{Z}^{\ord}_K(\tilde{\Lambda})_{\Field_p}
  \]

  \item I then use Serre-Tate deformation theory, a key argument of Chai-Hida for the irreducibility of monodromy of the ordinary Igusa tower, and the inductive hypothesis to show that $\pi$ induces a bijection on geometric connected components. 

  \item A comparison with the analogous construction in the generic fiber using classical Hecke correspondences now finishes the job.
\end{itemize}

Here is the organization of this paper:
\begin{itemize}
	\item The first two sections, which deal with ordinary loci in integral models of Shimura varieties and ordinary Hecke correspondences, are covering `well-known' ground, and are included for a lack of convenient reference that works in the generality that I need.

	\item Section~\ref{sec:monodromy} reviews the use of a slick argument of Chai and Hida for the control of the monodromy of Igusa towers, and applies it to the ordinary case we care about.

	\item In the first three sections, I work in the generality of Shimura varieties of Hodge type at primes with non-empty ordinary loci. In Section~\ref{sec:main}, I specialize to the case of GSpin Shimura varieties and---after some prerequisites on quadratic lattices and the group schemes associated with them covered in Section~\ref{sec:group_schemes}---apply the results from the three initial sections to prove the main results.
\end{itemize}

\section*{Acknowledgments}
This work was partially supported by NSF Postdoctoral Research Fellowship DMS-1204165 and NSF grants DMS-1502142/1802169/2200804. It is essentially a reworking of an old unpublished preprint~\cite{Madapusi_Pera2018-ia}, whose results will be redistributed between this article and the upcoming paper~\cite{serre-tate}. I thank the referees of that earlier preprint for many helpful comments and apologize to them for only now getting around to implementing some of those suggestions.

\section*{Conventions}

\begin{enumerate}
	\item We will fix a prime number $p$ for the entirety of this paper.

	\item Given a set $X$ and any Grothendieck site, we will write $\underline{X}$ for the locally constant sheaf over the site associated with the constant presheaf sending every object to $X$.

	\item Given a topos, a pro-finite group $H$ presented as an inverse limit of finite quotients $H_n$, and an object $S$ in the topos, an $\underline{H}$-torsor $\pi:\mathcal{P}\to S$ is an inverse system 
    \[
	\{\pi_n:\mathcal{P}_n\to S\}_{n\in\Int_{\geq 1}},
	\]
	 where $\pi_n$ is a torsor under $\underline{H_n)}$. In particular, this applies to the situation where $D$ is a smooth group scheme over $\Int_p$  with $H = D(\Int_p)$  and $H_n = D(\Int/p^n\Int)$.

	\item Given an $H$-torsor $\mathcal{P}$ as above and a finite $H_n$-set $Y$, we set
\[
\mathcal{P}_{Y} = \mathcal{P}\times^{H} Y \defn (\mathcal{P}_n\times Y)/H_n,
\]
where $H_n$ acts diagonally on the product. If $Y$ is a pro-finite set equipped with a continuous action of $H$, we can similarly define a contraction product $\mathcal{P}\times^{H} Y$ via an inverse limit of the construction applied at finite levels.

	\item For any finite set of primes $T$, we will set
	\[
     \Adele_f^T = \prod'_{\ell\notin T}\Rat_{\ell},
	\]
	the restricted product over all completions of $\Rat$ at finite places not in $T$. If $T = \{\ell\}$ is a singleton, we will write $\Adele_f^\ell$ instead.

	\item For any local or global field $F$ in characteristic $0$, we will write $\Reg{F}$ for its ring of integers.

	\item Suppose that $X$ is a scheme or stack over a localization $\Rg$ of $\Reg{F}$ in which $p$ is not invertible. For any place $v\vert p\nmid N$ of $F$, we will write $X_{(v)}$ (resp. $X_v$) for the base-change of $X$ over the localization $\Reg{F,(v)}$ (resp. the completion $\Reg{F,v}$) at $v$. For $M\in \Rg$, we will write $X[M^{-1}]$ for the base-change over $\Reg[M^{-1}]$.

   \item
  Suppose that $R$ is a ring and suppose that $\mathbf{C}$ is an $R$-linear tensor category that is a faithful tensor sub-category of $\operatorname{Mod}_R$, the category of $R$-modules. Suppose in addition that $\mathbf{C}$ is closed under taking duals, symmetric and exterior powers in $\operatorname{Mod}_R$. Then, for any object $D\in\Obj(\mathbf{C})$, we will denote by $D^\otimes$ the direct sum of the tensor, symmetric and exterior powers of $D$ and its dual.

  \item All formal schemes over $\Int_p$ will be $p$-adic: That is, they are functors on $p$-nilpotent rings.

  \item Given a connected reductive group $G$ over a field $F$,  we write $G^{\mathrm{der}}\subset G$ for its derived subgroup, $\rho_G:G^{\mathrm{\mathrm{sc}}}\to G$ for the simply connected cover of its derived group and $G^{\ad}$ for its adjoint quotient.

    \item Fix an algebraic closure $\bar{F}$ for $F$. For any torus $T$ over $F$, we set
\[
X_*(T)=\Hom(\Gmh{\bar{F}},T_{\bar{F}})\;;\; X^*(T)=\Hom(T_{\bar{F}},\Gmh{\bar{F}})
\] for the cocharacter and character groups of $T$, respectively.

\item For a perfect field $\kappa$, we will write $W(\kappa)$ for its ring of Witt vectors and denote by $\sigma:W(\kappa)\to W(\kappa)$ the canonical lift of Frobenius.

\item Given a product $X_1\times \cdots \times X_k$ in a category with finite products, we will write $\mathrm{pr}_i$ for the projection onto the $i$-th factor.
\end{enumerate}

\section{Ordinary loci of Shimura varieties}\label{sec:ord}

We will fix a Shimura datum $(G,X)$. The purpose of this section is to recall and situate some results of Noot~\cite{noot:ordinary} regarding the structure of the ordinary loci of integral models of Shimura varieties associated with this datum

\subsection{Background on Shimura varieties of Hodge type}
\label{subsec:background_on_shimura_varieties_of_hodge_type}

\subsubsection{}
Given $x\in X$, we have the associated homomorphism of $\Real$-groups:
\[
 h_x:\mathbb{S}=\Res_{\Comp/\Real}\Gmh{\Real}\to G_{\Real}.
\]
We also have the associated (minuscule) cocharacter:
\[
 \mu_x:\Gmh{\Comp}\xrightarrow{z\mapsto(z,1)}\Gmh{\Comp}\times\Gmh{\Comp}\xrightarrow{\simeq}\bb{S}_{\Comp}\xrightarrow{h_x}G_{\Comp}.
\]

The $G(\Real)$-conjugacy class of $h_x$, and hence the $G(\Comp)$-conjugacy class $\{\mu_X\}_{\infty}$ of $\mu_x$, is independent of the choice of $x$. Let $E\subset\Comp$ be the reflex field for $(G,X)$: This is the field of definition of $\{\mu_X\}_{\infty}$, and is a finite extension of $\Rat$. This gives us a geometric conjugacy class $[\mu]\in (\Hom(\Gm,G)/G)(E)$. Given a place $v\vert p$ of $E$ we obtain in turn a geometric conjugacy class $[\mu_v]$ defined over $E_v$

\begin{construction}
	\label{const:integral_models}
	 We will assume for the rest of the section that the Shimura datum is of Hodge type, so that it is equipped with an embedding 
\[
(G,X)\into (\GSp(H),\on{S}^{\pm}(H)) 
\]
into a Siegel Shimura datum.\footnote{One can relax this to asking for $(G,X)$ to be merely of abelian type, but I will restrict to the Hodge type case in this paper.} We fix a $\Int$-lattice $H_{\Int}\subset H$ on which the symplectic form is $\Int$-valued, and we take $K_p\subset G(\Rat_p)$ to be the stabilizer of $H_p = \Int_p\otimes_{\Int} H_{\Int}$. Let $K\subset G(\Adele_f)$ be a compact open subgroup of the form $K_pK^p$ for $K^p\subset G(\Adele_f^p)$ stabilizing the prime-to-$p$ lattice $\widehat{\Int}^p\otimes_{\Int}H_{\Int}$, and let $K^\sharp\subset \GSp(H)(\Adele_f)$ be the stabilizer of $\widehat{\Int}\otimes_{\Int}H_{\Int}$. Then we obtain a finite unramified map of Shimura varieties (or rather stacks)
\[
\Sh_K \defn \Sh_K(G,X)\to E\otimes_{\Rat}\Sh_{K^\sharp}\defn \Sh_{K^\sharp}(\GSp(H),\on{S}^{\pm}(H)).
\]
The Shimura variety $\Sh_{K^\sharp}$ has an integral model $\Ss_{K^\sharp}$ over $\Spec \Int$ as a certain moduli stack of polarized abelian varieties. Using the symplectic representation $H$ and the lattice $H_{\Int}$, we obtain a normal integral model $\Ss_K$ for $\Sh_K$ over $\Reg{E}$ by taking the normalization of $\Reg{E}\otimes_{\Int}\Ss_{K^\sharp}$ in $\Sh_K$; see for instance~\cite[(1.3.5)]{KMPS}. 
\end{construction}

\subsubsection{}\label{subsubsec:tensors}
Suppose that $G_{\Int_p}$ is a smooth group scheme over $\Int_p$ with connected special fiber and with generic fiber $G_{\Rat_p}$ such that $K_p = G_{\Int_p}(\Int_p)$. Suppose also that we have a collection of tensors $\{s_\alpha\}\subset H^\otimes_{\Int_p}$ such that their pointwise stabilizer in $\GL(H_{\Int_p})$ is $G_{\Int_p}$.
  
\begin{lemma}\label{lem:Hp_Gstruc}
The $p$-adic local system $\bm{H}_p = T_p(\mathcal{A})^\vee$ over $\Ss_K[1/p]$ is equipped with a canonical $G_{\Int_p}$-structure. More precisely, there exist canonical tensors $\{s_{\alpha,p}\}\subset H^0(\Ss_K[1/p],\bm{H}_p^\otimes)$ such that, at each geometric point $x:\Spec F\to \Ss_K[1/p]$, there is an isomorphism $H_{\Int_p}\xrightarrow{\simeq}\bm{H}_{p,x}$ carrying $\{s_{\alpha}\}$ to $\{s_{\alpha,p,x}\}$.
\end{lemma}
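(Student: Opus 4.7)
The plan is to build the tensors $\{s_{\alpha,p}\}$ first on the complex fiber of $\Sh_K$, then descend them to the canonical model over the reflex field $E$, and finally extend across the integral model $\Ss_K[1/p]$.

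For step one, over $\Sh_K(\Comp)$ the universal abelian scheme $\mathcal{A}\to\Sh_K$ carries a variation of Hodge structure whose Mumford--Tate group at every point is contained in $G$; equivalently, via the Artin comparison $R^1\pi^{\on{an}}_*\Int\otimes\Int_p\simeq T_p(\mathcal{A})^\vee=\bm{H}_p$, the monodromy of $\bm{H}_p$ on each connected component factors through $K_p=G_{\Int_p}(\Int_p)$ acting via the symplectic representation. Since each $s_\alpha\in H^\otimes_{\Int_p}$ is pointwise stabilized by $G_{\Int_p}$, it is monodromy-invariant and therefore defines a canonical global section $s_{\alpha,p,\Comp}\in H^0(\Sh_K(\Comp),\bm{H}_p^\otimes)$; at each complex point $x$, the standard trivialization $\bm{H}_{p,x}\simeq H_{\Int_p}$ carries $s_{\alpha,p,x}$ to $s_\alpha$, which matches the pointwise stabilizer with $G_{\Int_p}$.

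For step two, I would descend to $\Sh_K$ over $E$ by invoking Deligne's theorem that Hodge cycles on abelian varieties are absolutely Hodge: the sections $s_{\alpha,p,\Comp}$ arise as the $p$-adic \'etale realizations of Hodge cycles on the family $\mathcal{A}\to\Sh_K(\Comp)$, and absolute Hodgeness forces these realizations to be stable under $\Gal(\bar\Rat/E)$, so they descend to canonical global sections $s_{\alpha,p}\in H^0(\Sh_K,\bm{H}_p^\otimes)$. For step three, $\bm{H}_p$ extends to a locally constant $\Int_p$-sheaf on all of $\Ss_K[1/p]$ because $\mathcal{A}$ is defined over $\Ss_K$, and $\Ss_K[1/p]$ is normal by Construction~\ref{const:integral_models}; restriction along the open dense inclusion $\Sh_K\hookrightarrow\Ss_K[1/p]$ therefore induces a bijection on global sections of locally constant sheaves, via the surjection $\pi_1^\et(\Sh_K,\bar x)\twoheadrightarrow\pi_1^\et(\Ss_K[1/p],\bar x)$ on each component. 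The $s_{\alpha,p}$ thus extend uniquely to $\Ss_K[1/p]$, and the pointwise stabilizer condition at a geometric point in positive residue characteristic $\ell\neq p$ is verified by specialization from a nearby characteristic-zero point.

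The hard part will be the descent in step two: while absolute Hodgeness gives the cleanest statement, verifying it compatibly across all connected components of $\Sh_K(\Comp)$ (which the Galois action can permute) requires some care. After that, the construction on the complex fiber, the extension across $\Ss_K[1/p]$, and the pointwise stabilizer check are all essentially formal consequences of normality of the integral model and the fact that $\bm{H}_p$ is locally constant.
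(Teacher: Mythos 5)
Your proposal is essentially the argument the paper delegates to \cite[(1.3.4)]{KMPS}: produce the tensors over $\Comp$ from the double-coset uniformization (the monodromy factors through $K_p$, which fixes the $s_\alpha$), descend to $E$ via Deligne's theorem on Hodge cycles on abelian varieties, and extend over the normal integral model using the surjection $\pi_1^{\et}(\Sh_K)\twoheadrightarrow\pi_1^{\et}(\Ss_K[1/p])$. The one caveat is that in step two ``absolute Hodgeness forces Galois-stability'' is not automatic --- the descent in the cited reference is pinned down via the reciprocity law at special points, which is exactly the issue with components being permuted that you correctly flag as the step requiring care.
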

\begin{proof}
See the discussion in~\cite[(1.3.4)]{KMPS}.
\end{proof}

\begin{remark}
	\label{rem:de_rham_tensors}
The de Rham realization $\bm{H}_{\dR} = H^1_{\dR}(\mathcal{A}/\Ss_K)$, when restricted to the generic fiber $\Sh_K$, is also equipped with a canonical $G$-structure as a filtered vector bundle with integrable connection. That is, $\bm{H}_{\dR}\vert_{\Sh_K}$ is equipped with a two step Hodge filtration $\Fil^\bullet_{\mathrm{Hdg}}\bm{H}_{\dR}\vert_{\Sh_K}$ concentrated in degrees $0$ and $1$ and a Gauss-Manin connection, and we have tensors $\{s_{\alpha,\dR}\}\subset \Fil^0(\bm{H}^\otimes_{\dR}\vert_{\Sh_K})$ that are parallel for this connection. Moreover, for any point $x:\Spec L \to \Sh_K$ with $L$ algebraically closed, there exists an isomorphism $L\otimes_{\Rat}H\xrightarrow{\simeq}\bm{H}_{\dR,x}$ carrying $\{1\otimes s_\alpha\}$ to $\{s_{\alpha,\dR,x}\}$ such that the Hodge filtration pulls back to a filtration on $L\otimes_{\Rat}H$ that is split by a cocharacter in the conjugacy class $[\mu^{-1}]$.
\end{remark}

\begin{lemma}\label{lem:GIsoc_struc}
Fix a place $v\vert p$ of $E$. Suppose that we have a perfect field $\kappa$ of characteristic $p$ and a point $x_0\in \Ss_{K,v}(k)$. Associated with this is the abelian variety $\mathcal{A}_{x_0}$, as well as the $p$-divisible group $\mathcal{G}_{x_0} = \mathcal{A}_{x_0}[p^\infty]$. Then $\mathcal{G}_{x_0}$ is equipped with a canonical $G$-structure compatible with the $G_{\Int_p}$-structure over $\Ss_K[1/p]$. More precisely, the following holds: 
\begin{enumerate}
	\item If $\bm{H}_{\cris,x_0} = \Dieu(\mathcal{A}_{x_0})(W(\kappa))$ is the contravariant Dieudonn\'e module associated with $\mathcal{A}_{x_0}$, then there exist Frobenius invariant tensors
	\[
		 \{s_{\alpha,\cris,x_0}\}\subset \bm{H}_{\cris,x_0}^\otimes
	\]
	and an isomorphism $W(\kappa)[1/p]\otimes_{\Rat}H\xrightarrow{\simeq}\bm{H}_{\cris,x_0}$ carrying $\{1\otimes s_{\alpha}\}$ to $\{s_{\alpha,\cris,x_0}\}$.

	\item If $L/W(\kappa)[1/p]$ is a $p$-adically complete discrete valuation field with residue field $k$ and $x\in \Ss_K(\Reg{L})$ is a lift of $x_0$ with image $\overline{x}\in \Sh_K(\overline{L})$ for an algebraic closure $\overline{L}$ of $L$, then the $p$-adic comparison isomorphism
	\[
		B_{\cris}\otimes_{\Int_p}\bm{H}_{p,\overline{x}} \xrightarrow{\simeq}B_{\cris}\otimes_{W(\kappa)}\bm{H}_{\cris,x_0}
	\]
	carries $\{s_{\alpha,p,\overline{x}}\}$ to $\{s_{\alpha,\cris,x_0}\}$.
\end{enumerate}
\end{lemma}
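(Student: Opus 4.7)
The plan is to lift $x_0$ to a characteristic zero point $x$, use the stated $p$-adic comparison isomorphism to transport the étale tensors $s_{\alpha,p,\overline{x}}$ to crystalline tensors, and then verify that the result is independent of the choice of lift --- which simultaneously delivers assertion (2). Smoothness of $\Ss_{K,v}$ at $x_0$, which follows from the smoothness assumption on $G_{\Int_p}$ together with the Kisin-type construction of $\Ss_K$ in the hyperspecial setting, guarantees the existence of a lift $x\in \Ss_K(\Reg{L})$ for some finite extension $L/W(\kappa)[1/p]$ with residue field $\kappa$.

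Given such a lift, define $s_{\alpha,\cris,x_0}$ to be the image of $s_{\alpha,p,\overline{x}}$ under the crystalline comparison map. \emph{A priori} this lives in $B_{\cris}\otimes_{W(\kappa)}\bm{H}_{\cris,x_0}^\otimes$, but because each $s_{\alpha,p,\overline{x}}$ is a global section of $\bm{H}_p^\otimes$ on $\Ss_K[1/p]$ and hence Galois-invariant, standard $p$-adic Hodge theory for crystalline representations shows that its image is Frobenius-invariant and actually lies in $\bm{H}_{\cris,x_0}^\otimes[1/p]$. Integrality is verified by going through the canonical identification $\bm{H}_{\cris,x_0}^\otimes\otimes_{W(\kappa)}\Reg{L}\simeq \bm{H}_{\dR,x}^\otimes$: under this isomorphism the candidate crystalline tensors correspond to the de Rham tensors $s_{\alpha,\dR,x}$ from Remark \ref{rem:de_rham_tensors}, which lie in $\Fil^0$ integrally.

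Independence of the choice of lift --- the heart of the proof --- can be established either by deforming one lift to another through the universal deformation of $\mathcal{G}_{x_0}$ (the de Rham tensors are parallel under Gauss--Manin, so their crystalline trivializations agree at $x_0$), or by directly invoking the uniqueness of a Frobenius-invariant lift in $\bm{H}_{\cris,x_0}^\otimes[1/p]$ of a given class in $(\bm{H}_{\cris,x_0}^\otimes/\Fil^1_x)[1/p]$ for any fixed $x$. Assertion (2) falls out immediately from this uniqueness applied to the very lift $x$ under discussion, since the construction of $s_{\alpha,\cris,x_0}$ uses exactly the comparison isomorphism of (2).

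With independence in hand, the trivialization $W(\kappa)[1/p]\otimes_{\Rat}H\xrightarrow{\simeq}\bm{H}_{\cris,x_0}[1/p]$ matching tensors is produced by a Tannakian argument: the scheme of tensor-preserving isomorphisms is a torsor under $G_{\Int_p}$, which is trivial over $W(\bar{\kappa})[1/p]$ by the smoothness and connectedness of $G_{\Int_p}$ (Lang--Steinberg), and we descend using the Frobenius/Galois structure. The main obstacle throughout is the independence-of-lift step: it is what forces a careful reconciliation between the lift-dependent Hodge filtration and the intrinsic Frobenius structure on $\bm{H}_{\cris,x_0}$, and it is the step that genuinely encodes the compatibility between the étale and crystalline realizations.
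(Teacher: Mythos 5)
The paper does not prove this lemma at all --- it simply cites \cite[Prop.\ 1.3.7]{KMPS} --- so the only question is whether your argument stands on its own. Your overall strategy (lift to characteristic zero, push the \'etale tensors through the crystalline comparison, check independence of the lift) is indeed the Kisin--[KMPS] strategy, but as written it has a genuine gap at the integrality step. The comparison isomorphism together with Galois-invariance of $s_{\alpha,p,\overline{x}}$ only places $s_{\alpha,\cris,x_0}$ in $\bm{H}_{\cris,x_0}^\otimes[1/p]$, whereas assertion (1) claims membership in $\bm{H}_{\cris,x_0}^\otimes$ itself. Your fix --- passing through $\Reg{L}\otimes_{W(\kappa)}\bm{H}_{\cris,x_0}\simeq\bm{H}_{\dR,x}$ and asserting that the de Rham tensors of Remark~\ref{rem:de_rham_tensors} ``lie in $\Fil^0$ integrally'' --- is circular: that remark only produces $s_{\alpha,\dR}$ over the generic fiber $\Sh_K$, the Berthelot--Ogus comparison is canonical only after inverting $p$ (as the paper itself records in Remark~\ref{rem:hodge_cocharacter}), and integrality of the de Rham tensors at $x$ is essentially \emph{equivalent} to the integrality you are trying to prove. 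The actual argument requires Kisin's theory of Breuil--Kisin modules: one shows the Galois-stable lattice $\bm{H}_{p,\overline{x}}$ with its tensors corresponds under the functor $\mathfrak{M}(\cdot)$ to $\varphi$-invariant tensors in $\mathfrak{M}^\otimes$, and then specializes to the Dieudonn\'e module. Nothing in your proposal substitutes for this input.

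Two smaller points. First, the independence-of-lift step, which you correctly identify as the heart of the matter, is only sketched; your route (ii) does not work as stated, since the images of the two candidate tensors in $\bm{H}_{\cris,x_0}^\otimes/\Fil^1$ are taken with respect to \emph{different} Hodge filtrations coming from the two lifts, so there is no common quotient in which to compare them (route (i), via horizontality over the deformation space, is the one that can be made to work, but it presupposes having the tensors, with their parallelism, over the universal deformation --- again part of what is being constructed). Second, in this paper's generality $\Ss_K$ is a normalization and need not be smooth ($G_{\Int_p}$ is only assumed smooth, not reductive), so the existence of the lift $x$ should be deduced from flatness and normality of the local ring rather than from smoothness; this is easily repaired.
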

\begin{proof}
This is shown in~\cite[Prop. 1.3.7]{KMPS}.
\end{proof}

\begin{remark}
\label{rem:bmu_adm}
In the situation of assertion (1) of Lemma~\ref{lem:GIsoc_struc}, via a choice of isomorphism $W(\kappa)[1/p]\otimes_{\Int_p}H_{\Int_p}\xrightarrow{\simeq}\bm{H}_{\cris,x_0}$, the $\sigma$-semilinear isomorphism $\bm{H}_{\cris,x_0}\xrightarrow{\simeq}\bm{H}_{\cris,x_0}$ underlying its structure of an $F$-isocrystal corresponds to an element $b_{x_0}\in G(W(\kappa)[1/p])$ whose $\sigma$-conjugacy class $[b_{x_0}]\in B(G_{\Rat_p})$ is independent of the choice of the isomorphism. Moreover, this class is in fact $[\mu_v^{-1}]$-admissible; see~\cite[Lemma 1.3.9]{KMPS}.
\end{remark}

\begin{remark}
\label{rem:hodge_cocharacter}
In the situation of assertion (2) of Lemma~\ref{lem:GIsoc_struc}, we also have the Berthelot-Ogus comparison isomorphism
\[
	L\otimes_{W(\kappa)}\bm{H}_{\cris,x_0}\xrightarrow{\simeq}\bm{H}_{\dR,x}[p^{-1}]
\]
This carries $\{1\otimes s_{\alpha,\cris,x_0}\}$ to $\{s_{\alpha,\dR,x[p^{-1}]}\}$, and is compatible with the combination of the crystalline comparison isomorphism with the de Rham comparison isomorphism
\[
B_{\dR}\otimes_{\Int_p}\bm{H}_{p,\overline{x}} \xrightarrow{\simeq}B_{\dR}\otimes_{\Reg{L}}\bm{H}_{\dR,x_0}.
\]
In particular, the de Rham comparison isomorphism carries $\{s_{\alpha,p,\overline{x}}\}$ to $\{s_{\alpha,\dR,x}\}\subset \bm{H}_{\dR,x}^\otimes[1/p]$. Further, for any isomorphism $L\otimes_{\Rat}H\xrightarrow{\simeq}\bm{H}_{\dR,x}$ carrying $\{1\otimes s_{\alpha}\}$ to $\{s_{\alpha,\dR,x}\}$, the Hodge filtration $\Fil^1_{\mathrm{Hdg}}\bm{H}_{\dR,x}$ pulls back to a filtration on $L\otimes_{\Rat}H$ that is split by a cocharacter in the geometric conjugacy class $[\mu_v^{-1}]$.
\end{remark}

\begin{definition}
\label{defn:G_structure}
In the situation of Lemma~\ref{lem:GIsoc_struc}, we will say that $\mathcal{G}_{x_0}$ has a \defnword{$G_{\Int_p}$-structure} if we have $\{s_{\alpha,\cris,x_0}\}\subset \bm{H}_{\cris,x_0}^\otimes$ and if there exists an isomorphism $W(\kappa)\otimes_{\Int_p}H_{\Int_p}\xrightarrow{\simeq}\bm{H}_{\cris,x_0}$ carrying $\{1\otimes s_{\alpha}\}$ to $\{s_{\alpha,\cris,x_0}\}$.
\end{definition}

\begin{remark}
	\label{rem:hyperspecial}
Suppose that $G_{\Int_p}$ is reductive, so that $K_p$ is hyperspecial. Then, for any place $v\vert p$ of $E$, $\Ss_{K,(v)}$ is the integral canonical model for $\Sh_K$ over $\Reg{E,(v)}$ constructed in~\cites{kis:abelian,Kim2016-fb}, and is in particular independent of the choice of symplectic representation $H$ and the lattice $H_{\Int}$. Moreover, in this case, for all $x_0\in \Ss_{K,(v)}(k)$ as above, $\mathcal{G}_{x_0}$ is equipped with a $G_{\Int_p}$-structure; see~\cite[Corollary (1.4.3)]{kis:abelian} and~\cite[Theorem 2.5]{Kim2016-fb}.
\end{remark}

\subsection{Deformation theory over the ordinary locus}

Fix a place $v\vert p$ of $E$ such that $\Rat_p=E_v$, so that $[\mu_v]$ is defined over $\Rat_p$. In this subsection, we will look at the deformation rings of $\Ss_{K,(v)}$ at its ordinary points, under some assumptions.

\begin{assumption}
\label{assump:ordinary_cochar}
The conjugacy class $[\mu_v]$ admits a representative $\mu_v:\Gmh{\Int_p}\to G_{\Int_p}$ whose centralizer $M_{\Int_p}\subset G_{\Int_p}$ is smooth with connected special fiber.
\end{assumption}

\begin{remark}
Under this assumption, we have a decomposition $H_{\Int_p} = H^1_{\Int_p}\oplus H^0_{\Int_p}$ where $H^i_{\Int_p}$ is the eigenspace on which $\mu_v(z)$ acts via $z^{-i}$. Moreover, for all $\alpha$, we have $\mu_v(z)\dot s_{\alpha} = s_{\alpha}$.
\end{remark}

\begin{remark}
	\label{rem:hyperspecial_mu_integral}
If $G_{\Int_p}$ is reductive, then a representative $\mu_v$ always exists. To see this, one chooses a maximal torus $T\subset G_{\Int_p}$ contained in a Borel subgroup of $G_{\Int_p}$. There is a unique representative $\mu_v$ of the conjugacy class $[\mu_v]$ factoring through $T$ and dominant with respect to the choice of Borel, and this does the job for us. The centralizer of $\mu_v$ is a Levi subgroup of $G_{\Int_p}$ and is thus reductive with connected fibers.
\end{remark}

\begin{definition}
Given an algebraically closed field $\kappa$ in characteristic $p$, we will say that a point $x_0\in \Ss_{K,v}(\kappa)$ is \defnword{$\mu_v$-ordinary} or simply \defnword{ordinary} if $\mathcal{G}_{x_0}$ admits a $G_{\Int_p}$-structure, and if there exists an isomorphism $W(\kappa)\otimes_{\Int_p}H_{\Int_p}\xrightarrow{\simeq}\bm{H}_{\cris,x_0}$ with the following properties:
\begin{enumerate}
	\item It carries $\{1\otimes s_{\alpha}\}$ to $\{s_{\alpha,\cris,x_0}\}$.
	\item The $\sigma$-semilinear endomorphism of $\bm{H}_{\cris,x_0}$ arising from its structure of a Dieudonn\'e module conjugates under the isomorphism to the endomorphism $1\otimes\mu_v(p)^{-1}$ of $W(\kappa)\otimes_{\Int_p}H_p$. In other words, it acts as $p^i$ on $W(\kappa)\otimes_{\Int_p}H_{\Int_p}^i$, for $i=0,1$.
\end{enumerate}
\end{definition}

\begin{remark}
\label{rem:our_ordinary_is_ordinary}
	If $x_0$ is an ordinary point, the abelian variety $\mathcal{A}_{x_0}$ is ordinary in the usual sense: We have $\mathcal{G}_{x_0} \simeq \mathcal{G}_{x_0}^{\et}\times \mathcal{G}_{x_0}^{\mult}$, where $\mathcal{G}_{x_0}^{\et}$ (resp. $\mathcal{G}_{x_0}^{\mult}$) is an \'etale (resp. a multiplicative) $p$-divisible group. Under the isomorphism $W(\kappa)\otimes_{\Int_p}H_{\Int_p}\xrightarrow{\simeq}\bm{H}_{\cris,x_0}$ as in the definition, $W(\kappa)\otimes_{\Int_p}H^0_{\Int_p}$ (resp. $W(\kappa)\otimes_{\Int_p}H^1_{\Int_p}$) maps isomorphically onto the Dieudonn\'e module $\bm{H}_{\cris,x_0}^{\et}$ for $\mathcal{G}_{x_0}^{\et}$ (resp. $\bm{H}_{\cris,x_0}^{\mult}$ for $\mathcal{G}_{x_0}^{\mult}$).
\end{remark}

\begin{remark}
	\label{rem:unramified_ordinary}
If $G_{\Int_p}$ is reductive, then the converse of Remark~\ref{rem:our_ordinary_is_ordinary} holds: If $\mathcal{A}_{x_0}$ is ordinary in the usual sense, then $x_0$ is ordinary in our sense here. Indeed, by Remark~\ref{rem:hyperspecial}, $\mathcal{G}_{x_0}$ is equipped with a $G_{\Int_p}$-structure. Moreover, there exists a choice of isomorphism witnessing the $G_{\Int_p}$-structure such that the element $b_{x_0}$ from Remark~\ref{rem:bmu_adm} is of the form $\nu(p)$ for some cocharacter $\nu:\Gmh{W(\kappa)}\to G_{W(\kappa)}$ acting with weights $0,1$ on $W(\kappa)\otimes_{\Int_p}H_{\Int_p}$. We can further assume that $\nu$ factors through $T_{W(\kappa)}$, where $T\subset G_{\Int_p}$ is a maximal torus as in Remark~\ref{rem:hyperspecial_mu_integral}, and that it is dominant with respect to the choice of Borel made there. Now the condition that the $\sigma$-conjugacy class of $\nu(p)$ is $[\mu_v^{-1}]$-admissible forces the equality $\nu = \mu_v^{-1}$.
\end{remark}

\begin{notation}
	Given an algebraically closed field $\kappa$ in characteristic $p$, write $\mathrm{Art}_{W(\kappa)}$ for the category of Artin local $W(\kappa)$-algebras with residue field $\kappa$. Given $C\in \mathrm{Art}_{W(\kappa)}$, write $\mx_C$ for its maximal ideal. Note that $1+\mx_C$ as a group under multiplication is $p^n$-torsion for $n$ sufficiently large and so can be viewed as a $\Int_p$-module.
\end{notation}

\begin{definition}
Given a $\Int_p$-module $M$, the associated \defnword{formal torus with character group $M$} is the functor $\widehat{T}_M$ on $\mathrm{Art}_{W(\kappa)}$ given by
\[
	\widehat{T}_M(C) = \Hom_{\Int_p}(M,1+\mx_C).
\]
We will call $M^\vee = \Hom_{\Int_p}(M,\Int_p)$ the \defnword{cocharacter group} for $\widehat{T}_M$.
\end{definition}

\begin{notation}
	Set 
\[
\mathcal{G}_0^{\et} = \underline{\Hom}(H_{\Int_p}^0,\Rat_p/\Int_p)\;;\; \mathcal{G}_0^{\mult} = \underline{\Hom}(H_{\Int_p}^1,\mu_{p^\infty}).
\]
These are $p$-divisible groups over $\Int_p$. 
\end{notation}

\begin{remark}
	\label{rem:classical_serre-tate}
Set 
\[
\mathcal{G}_0^{\et} = \underline{\Hom}(H_{\Int_p}^0,\Rat_p/\Int_p)\;;\; \mathcal{G}_0^{\mult} = \underline{\Hom}(H_{\Int_p}^1,\mu_{p^\infty}).
\]
These are $p$-divisible groups over $\Int_p$. 

Suppose that we have an ordinary point $x_0\in \Ss_{K,v}(\kappa)$. Fix a choice of isomorphism witnessing the ordinariness of $x_0$: This gives rise via Dieudonn\'e theory to isomorphisms of $p$-divisible groups
\[
\kappa\otimes_{\Int_p}\mathcal{G}_0^{\et}\xrightarrow{\simeq} \mathcal{G}_{x_0}^{\et}\;;\; \kappa\otimes_{\Int_p}\mathcal{G}_0^{\mult}\xrightarrow{\simeq} \mathcal{G}_{x_0}^{\mult}.
\]
By classical Serre-Tate ordinary theory~\cite{katz:serre-tate}, the deformation functor $\mathrm{Def}_{\mathcal{G}_{x_0}}$ on $\mathrm{Art}_{W(\kappa)}$ is now isomorphic to the functor
\begin{align*}
\widehat{\Ext}(\mathcal{G}_0^{\et},\mathcal{G}_0^{\mult}): C&\mapsto \widehat{\Ext}^1(C\otimes_{\Int_p}\mathcal{G}_0^{\et},C\otimes_{\Int_p}\mathcal{G}_0^{\mult})\\
&\xrightarrow{\simeq}\Hom(H^1_{\Int_p},H^0_{\Int_p})\otimes_{\Int_p}\widehat{H}^1(\Spec C,\mu_{p^\infty})\\
&\xrightarrow{\simeq}\Hom(H^1_{\Int_p},H^0_{\Int_p})\otimes_{\Int_p}(1+ \mathfrak{m}_C).
\end{align*}
Here, $\widehat{H}^1(\Spec C,\mu_{p^\infty})$ is the set of isomorphism classes of extensions of $\mu_{p^\infty}$ by $\Rat_p/\Int_p$ as fppf sheaves over $\Spec C$ equipped with a trivialization over $\kappa$. The last isomorphism is obtained from Kummer theory. 

In particular, $\mathrm{Def}_{\mathcal{G}_{x_0}}$ is isomorphic to the formal torus $\widehat{T}$ over $W(\kappa)$ with cocharacter group $\Hom(H^1_{\Int_p},H^0_{\Int_p})$. This isomorphism is well-defined up to the action of $M_{\Int_p}(\Int_p)$ on the cocharacter group.
\end{remark}

% \begin{remark}
% \label{rem:serre-tate_coordinates}
% Fix bases $\{e_1,\ldots,e_g\}$ and $\{f_1,\ldots,f_g\}$ for $H^0_{\Int_p}$ and $H^1_{\Int_p}$, respectively, and let $\{e_1^\vee,\ldots,e_g^\vee\}$ be the associated dual basis for $H_{\Int_p}^{1,\vee}$. We then obtain an identification
% \[
% 	\widehat{T}\xrightarrow{\simeq} \Spf W(\kappa)\pow{q_{i,j}-1}_{1\leq i,j\leq g}\xrightarrow{\simeq}
% \]
% where a point of $x\in \widehat{T}(C) = \Hom(H^1_{\Int_p}\otimes_{\Int_p}H^{0,\vee}_{\Int_p},1+\mx_C))$ given by a bilinear pairing $q_x:H^1_{\Int_p}\times H^{0,\vee}_{\Int_p}\to 1+\mx_C$ is carried to the point with coordinates $q_{i,j}(x) = q_x(f_i,e_j^\vee)$. Under this identification, the $p$-power map on $\widehat{T}$ corresponds to the endomorphism $\varphi:q_{i,j}\mapsto q_{i,j}^p$, which is a lift of the mod-$p$ Frobenius.
% \end{remark}

\begin{remark}
	\label{rem:crystalline_realization_serre-tate}
Let  $R$ be the coordinate ring of $\widehat{T}$ and let $\mathcal{G}^{\mathrm{univ}}$ be the universal deformation of $\mathcal{G}_{x_0}$ over $R$. Dieudonn\'e theory over such formally smooth rings associates with $\mathcal{G}^{\mathrm{univ}}$ its Dieudonn\'e module $M = \Dieu(\mathcal{G}^{\mathrm{univ}})(R)$, which is equipped with a topologically locally nilpotent integrable connection $\nabla$ and a $\varphi$-semilinear map $F:M\to M$ that is horizontal for the connection. This can be described explicitly following de Jong~\cite[\S 4.3]{dejong:formal_rigid}: We have the tautological element
\[
	q^{\mathrm{univ}}\in \Hom(H_{\Int_p}^1,H_{\Int_p}^0)\otimes_{\Int_p}(1+\mx_R) 
\]
mapping to
\[
	\dlog(q^{\mathrm{univ}})\in \Hom(H_{\Int_p}^1,H_{\Int_p}^0)\otimes_{\Int_p}\hat{\Omega}^1_{R/W(\kappa)}\subset \End(H_{\Int_p})\otimes_{\Int_p}\hat{\Omega}^1_{R/W(\kappa)}
\]
We now have $M = R\otimes_{\Int_p}H_{\Int_p}$. The connection $\nabla$ is given by $d\otimes 1+\dlog(q^{\mathrm{univ}})$, and we have $F = \varphi\otimes \mu_v(p)^{-1}$.
\end{remark}

\begin{definition}
	The \defnword{canonical lift} of $\mathcal{G}_{x_0}$ is the lift $\mathcal{G}^{\mathrm{can}}_{x_0}$ over $W(\kappa)$ corresponding to the identity section of the formal torus $\widehat{T}$. The corresponding 
\end{definition}

\begin{definition}
	Let $\Lie U^-_{\mu_v}\subset \Lie G_{\Int_p}$ be the eigenspace on which the adjoint action of $\mu_p(z)$ is via $z$. We have
	\[
		\Lie U^-_{\mu_v} = \Lie G \cap \Hom(H^1_{\Int_p},H^0_{\Int_p})\subset \End(H_{\Int_p})
	\]
	where we are viewing $\Hom(H^1_{\Int_p},H^0_{\Int_p})$ as the space of endomorphisms of $H_{\Int_p}$ whose kernel contains $H^0_{\Int_p}$ and whose image is contained in $H^0_{\Int_p}$. Let $\widehat{T}_G$ be the formal torus with cocharacter group $\Lie U^-_{\mu_v}$: This is a sub-formal torus of $\widehat{T}$.
\end{definition}

\begin{remark}
	\label{rem:crystalline_tensors_families}
Let $R_G$ be the ring of functions of $\widehat{T}_G$: it is equipped with a canonical Frobenius lift $\varphi_G$ given by the $p$-power map. The map $f:R\to R_G$ corresponding to $\widehat{T}_G\hookrightarrow \widehat{T}$ is compatible with Frobenius lifts. Via base-change, we now see from Remark~\ref{rem:crystalline_realization_serre-tate} that the Dieudonn\'e module for the universal $p$-divisible group over $R_G$ is given by $M_G = R_G\otimes_{\Int_p}H_{\Int_p}$ with connection given by $\nabla_G = d\otimes 1 + \dlog(f(q_{\mathrm{univ}}))$ and $\varphi_G$-semilinear endomorphism $F_G = \varphi_G\otimes \mu_v(p)^{-1}$. Now, we have
\[
	f(q_{\mathrm{univ}})\in \Lie U_{\mu_v}^-\otimes_{\Int_p}(1+\mx_{R_G})\subset \Lie G\otimes_{\Int_p}(1+\mx_{R_G}).
\]
This shows that the tensors $s_{\alpha,R_G} = 1\otimes s_{\alpha,\cris,x_0}\in M_G^{\otimes}$ are parallel for the connection. They are also invariant under $F_G$.
\end{remark}

\begin{proposition}
	\label{prop:ordinary_deformations}
Let $x_0\in \Ss_{K,v}(\kappa)$ be ordinary, and let $\widehat{U}_{x_0}$ be the deformation space for $\Ss_{K,v}$ at $x_0$. If the image of $\widehat{U}_{x_0}$ in $\mathrm{Def}_{\mathcal{G}_{x_0}}\xrightarrow{\simeq}\widehat{T}$ contains the canonical lift, then $\widehat{U}_{x_0}$ is isomorphic to $\widehat{T}_G\subset \widehat{T}$.
\end{proposition}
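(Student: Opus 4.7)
The plan is to realize $\widehat{T}_G$ as a section of the Serre--Tate map $\iota\colon\widehat{U}_{x_0}\to\widehat{T}$, with the canonical lift hypothesis serving as the integral anchor needed to construct the section, and then deduce the isomorphism from a dimension count. First I would show that $\iota$ factors through $\widehat{T}_G\hookrightarrow\widehat{T}$. Using Remarks~\ref{rem:de_rham_tensors} and~\ref{rem:hodge_cocharacter}, the parallel de Rham tensors on $\Sh_K$ correspond via the Berthelot--Ogus comparison to horizontal Frobenius-invariant extensions of $\{s_{\alpha,\cris,x_0}\}$ on the Dieudonn\'e module of the universal $p$-divisible group over $\widehat{U}_{x_0}$. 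In the explicit Serre--Tate connection $\nabla=d\otimes 1+\dlog(q^{\mathrm{univ}})$ of Remark~\ref{rem:crystalline_realization_serre-tate}, horizontality of the tensors amounts to $\iota^*\dlog(q^{\mathrm{univ}})$ taking values in $\Lie G\cap\Hom(H_{\Int_p}^1,H_{\Int_p}^0)=\Lie U^-_{\mu_v}$, which is precisely the closed condition cutting out $\widehat{T}_G\subset\widehat{T}$.

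In the reverse direction, I would construct a section $s\colon\widehat{T}_G\to\widehat{U}_{x_0}$ as follows. By Remark~\ref{rem:crystalline_tensors_families}, the universal $p$-divisible group over $\widehat{T}_G$ comes equipped with horizontal Frobenius-invariant tensors $\{s_{\alpha,R_G}\}$, hence with a $G_{\Int_p}$-structure. Via Serre--Tate this promotes to a polarized abelian scheme $\mathcal{A}^{\mathrm{univ}}/\widehat{T}_G$, giving a map $\widehat{T}_G\to\Ss_{K^\sharp}$; on the generic fiber the Berthelot--Ogus comparison translates the tensors into parallel de Rham tensors satisfying Remark~\ref{rem:de_rham_tensors}, refining this to a map $\widehat{T}_G[1/p]\to\Sh_K$. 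The canonical lift hypothesis supplies a $W(\kappa)$-point $\tilde x\in\widehat{U}_{x_0}$ lying over the identity section of $\widehat{T}_G$, providing an integral anchor. Since $\widehat{T}_G$ is normal (being formally smooth over $W(\kappa)$) and $\Ss_K$ is defined as the normalization of $\Reg{E,v}\otimes\Ss_{K^\sharp}$ in $\Sh_K$, the universal property of normalization extends this to $\widehat{T}_G\to\Ss_K$; because it reduces to $x_0$ mod the maximal ideal, it factors through $\widehat{U}_{x_0}$.

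By construction of $s$, the $p$-divisible group pulled back along $s$ is (tautologically) the restriction of the universal object over $\widehat{T}$ to $\widehat{T}_G$, so $\iota\circ s$ is the inclusion $\widehat{T}_G\hookrightarrow\widehat{T}$; combined with the first step this forces $\widehat{T}_G\xrightarrow{s}\widehat{U}_{x_0}\to\widehat{T}_G$ to be the identity, exhibiting $s$ as a closed immersion. Both formal schemes are formally smooth over $W(\kappa)$ of relative dimension $\dim\Lie U^-_{\mu_v}$ (for $\widehat{U}_{x_0}$ this is the standard Kodaira--Spencer computation at the smooth ordinary point $x_0$), so the closed immersion $s$ is an isomorphism. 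I expect the principal obstacle to lie in the middle step: under Assumption~\ref{assump:ordinary_cochar} one has $G_{\Int_p}$ only smooth with connected special fiber, so $\Ss_K$ is defined via normalization and lacks a direct Kisin-type universal property at hyperspecial level; the canonical lift hypothesis is exactly the $W(\kappa)$-point needed to promote the generic-fiber map $\widehat{T}_G[1/p]\to\Sh_K$ to an integral morphism through the normalization.
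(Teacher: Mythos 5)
Your proposal takes a genuinely different route from the paper. The paper's proof rests on Noot's linearity theorem (\cite[Theorem 3.7]{noot:ordinary}): after a finite extension $L/W(\kappa)[1/p]$, every irreducible component of $\widehat{U}_{x_0,\Reg{L}}$ is a torsion translate of a formal subtorus of $\widehat{T}_{\Reg{L}}$. The subtorus is then identified as $\widehat{T}_G$ by a pointwise argument: for $x\in\widehat{U}_{x_0}(\Reg{L})$ the Hodge filtration equals $\exp(\ell(x))(L\otimes H^1_{\Int_p})$ by Katz's computation, Remark~\ref{rem:hodge_cocharacter} forces $\ell(x)\in L\otimes\Lie U^-_{\mu_v}$, the canonical-lift hypothesis kills the torsion translate, and a dimension count finishes. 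You instead try to avoid Noot's theorem by directly constructing a section $\widehat{T}_G\to\widehat{U}_{x_0}$; this would be a real gain if it worked, but there is a genuine gap at the decisive step.

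The gap is in your passage from the $G_{\Int_p}$-structured family over $\widehat{T}_G$ (Remark~\ref{rem:crystalline_tensors_families}) to a map $\widehat{T}_G[1/p]\to\Sh_K$. Parallel de Rham tensors of the type described in Remark~\ref{rem:de_rham_tensors} are properties \emph{enjoyed by} points of $\Sh_K$; they are not a criterion for a point of $\Ss_{K^\sharp}$ to lie on $\Sh_K$. A point of $\Sh_K$ requires \'etale-level data --- a reduction of the structure group of the pro-\'etale torsor to $K$ --- and producing this from crystalline or de Rham data over $\widehat{T}_G$ is precisely the hard point of the theory of integral models: at hyperspecial level it is a theorem of Kisin proved via CM-lifting arguments, and in the present generality (where $G_{\Int_p}$ is only smooth with connected special fiber) no such result is available; the appeal to Noot's theorem is designed to circumvent exactly this. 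Granted the generic-fibre map, your normalization step is fine, but the map cannot be extracted from the tensors for free. A secondary problem sits in your first step: horizontality of the de Rham tensors constrains only $\iota^*\dlog(q^{\mathrm{univ}})$, and $\dlog$ has a nontrivial kernel on $1+\mx_C$ (for instance it vanishes identically on $C=W(\kappa)/p^n$), so the condition you write down is strictly weaker than the one cutting out $\widehat{T}_G\subset\widehat{T}$ and cannot by itself rule out a constant (or torsion) translate. This is why the paper works with the honest $p$-adic logarithm on $\Reg{L}$-points rather than with the connection, and why even then the residual torsion ambiguity must be resolved by Noot's theorem together with the canonical-lift hypothesis.
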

\begin{proof}
First, note that by construction and the Serre-Tate theorem for deformations of abelian varieties, the map $\widehat{U}_{x_0}\to \widehat{T}$ is finite. Moreover, $\widehat{U}_{x_0} = \Spf R_{x_0}$ where $R_{x_0}$ is a complete Noetherian local $W(\kappa)$-algebra of dimension $\dim \Ss_K = \dim \widehat{T}_G$. Moreover, by a theorem of Noot~\cite[Theorem 3.7]{noot:ordinary}, there is a finite extension $L/W(\kappa)[1/p]$ such that each irreducible component of $\widehat{U}_{x_0,\Reg{L}} \defn \Spf(\Reg{L}\otimes_{W(\kappa)}R_{x_0})$ is isomorphic to the translate by a torsion point of a formal sub-torus of $\widehat{T}_{\Reg{L}}$.

Consider the logarithm map
\[
\ell:\widehat{T}(\Reg{L}) = \Hom(H_{\Int_p}^1,H_{\Int_p}^0)\otimes_{\Int_p}(1+\mx_L)\xrightarrow{1\otimes \log}\Hom(H_{\Int_p}^1,H_{\Int_p}^0)\otimes_{\Int_p} L.
\]
I claim that we have
\[
\ell(\widehat{U}_{x_0}(\Reg{L})) \subset \Lie U^-_{\mu_v} \otimes_{\Int_p} L.
\] 
For this, we will need the following interpretation of the map $\ell$: Given a lift $x\in \widehat{T}(\Reg{L})$, we obtain a $p$-divisible group $\mathcal{G}_x$. As in Remark~\ref{rem:hodge_cocharacter}, we have a canonical isomorphism $L\otimes_{W(\kappa)}\bm{H}_{\cris,x_0}\xrightarrow{\simeq}\bm{H}_{\dR,x}[1/p]$, and we therefore have an isomorphism
\[
L\otimes_{\Int_p}H_{\Int_p}\xrightarrow{\simeq}\bm{H}_{\dR,x}[1/p]
\]
by our choice of fixed isomorphism. The Hodge filtration arising from this identification is of the form
\[
\Fil^1_x(L\otimes_{\Int_p}H_{\Int_p}) = \exp(N_x)(L\otimes_{\Int_p}H_{\Int_p}^1),
\]
for some $N_x\in L\otimes_{\Int_p}\Hom(H_{\Int_p}^1,H_{\Int_p}^0)$. It follows from a computation of Katz~\cite[A.3]{Deligne1981-xy} that, at least up to sign, we have
\[
N_x = \ell(x)\in \Hom(H_{\Int_p}^1,H_{\Int_p}^0)\otimes_{\Int_p}L.
\]

Now, by Remark~\ref{rem:hodge_cocharacter}, if $x\in \widehat{U}_{x_0}(\Reg{L})$, then $\Fil^1_x(L\otimes_{\Int_p}H_{\Int_p})$ is split by a cocharacter conjugate to $\mu_v^{-1}$. In particular, the parabolic subgroup $P_x\subset G_L$ stabilizing $\Fil^1_x(L\otimes_{\Int_p}H_{\Int_p})$ is $G(L)$-conjugate to the subgroup $P_L\subset G_L$ stabilizing $L\otimes_{\Int_p}H^1_{\Int_p}$. That is, within the set $\mathrm{Gr}(L)$ of subspaces of middle dimension in $H_L$, $\Fil^1_x(L\otimes_{\Int_p}H_{\Int_p})$ is in the image of $G(L)$. However, the map 
\[
	L\otimes_{\Int_p}\Hom(H^1_{\Int_p},H^0_{\Int_p})\xrightarrow{N\mapsto \exp(N)(L\otimes_{\Int_p}H^1_{\Int_p})}\mathrm{Gr}(L)
\]
is injective, with image isomorphic to the $L$-points of the open Bruhat cell, and the pre-image of the image of $G(L)$ is precisely $L\otimes_{\Int_p}\Lie U^-_{\mu_v}$. This proves the claim.

Now, it follows that the irreducible components of $\widehat{U}_{x_0,\Reg{L}}$ are all translates by torsion points of $\widehat{T}_{G,\Reg{L}}$. By our hypothesis, one of these components must contain the identity section, and must therefore be isomorphic to $\widehat{T}_{G,\Reg{L}}$. This shows that $\widehat{T}_G$ is in the iamge of $\widehat{U}_{x_0}$, and by dimension considerations we see that we must in fact have $\widehat{U}_{x_0}\xrightarrow{\simeq}\widehat{T}_G$.
\end{proof}

\begin{remark}
	\label{rem:hodge_tensors_integral}
Maintain the hypotheses of Proposition~\ref{prop:ordinary_deformations}. Let $\widehat{U}^{\an}_{x_0}$ be the rigid analytic space over $W(\kappa)[1/p]$ associated with $\widehat{U}_{x_0}$.  The proposition shows that this is isomorphic to an open unit ball of dimension $\dim \Sh_K$ contained in the rigid analytic space $\Sh_{K,W(\kappa)[1/p]}^{\an}$. Over the latter, the restriction of the vector bundle $\bm{H}_{\dR}$ is equipped with an integrable connection and parallel tensors $\{s_{\alpha,\dR}\}$. On the other hand, under the isomorphism $\widehat{U}_{x_0}\xrightarrow{\simeq}\widehat{T}_G$, the vector bundle with integrable connection $M_G$ over the latter constructed in Remark~\ref{rem:crystalline_tensors_families} pulls back to the restriction of $\bm{H}_{\dR}$, and the tensors $\{s_{\alpha,R_G}\}$ constructed in \emph{loc. cit.} pull back to horizontal tensors in $H^0(\widehat{U}^{\an}_{x_0},\bm{H}^\otimes_{\dR})$. These tensors agree with the restrictions of $\{s_{\alpha,\dR}\}$: Indeed, since both collections are parallel for the connection, it suffices to verify that they agree at one point. One can do this at the point corresponding to the canonical lift via Remark~\ref{rem:hodge_cocharacter}.
\end{remark}

\emph{The following assumption will be in force for the rest of this section.}
\begin{assumption}
\label{assump:canonical_lift}
For all $\kappa$ algebraically closed and all ordinary points $x_0\in \Ss_{K,v}(\kappa)$, the image of $\widehat{U}_{x_0}$ in $\mathrm{Def}_{\mathcal{G}_{x_0}}$ contains the canonical lift.
\end{assumption}

\begin{remark}
	\label{rem:hyperspecial_canonical_lift}
The above assumption---and hence the conclusion of Proposition~\ref{prop:ordinary_deformations}---is always valid when $G_{\Int_p}$ is reductive. See for instance~\cite[Theorem 6.5]{Shankar2021-ik}.
\end{remark}

\begin{corollary}
	\label{cor:ordinary_openness}
The ordinary points form an open substack $\Ss^{\ord}_{K,k(v)}$ of the special fiber $\Ss_{K,k(v)} = k(v)\otimes_{\Reg{E,v}}\Ss_{K,v}$.
\end{corollary}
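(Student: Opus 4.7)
The strategy is to show that every ordinary point $x_0 \in \Ss_{K,v}(\kappa)$ has an entire formal neighborhood of ordinary points in $\Ss_{K,k(v)}$, and to upgrade this to Zariski openness. The key input is Proposition~\ref{prop:ordinary_deformations}, which under Assumption~\ref{assump:canonical_lift} identifies $\widehat{U}_{x_0}$ with the formal sub-torus $\widehat{T}_G \subset \widehat{T}$, together with the explicit description supplied by Remark~\ref{rem:crystalline_tensors_families} of the universal Dieudonn\'e module and tensors $\{s_{\alpha,R_G}\}$ over $\widehat{T}_G$.

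The first step is to verify that every geometric point $y$ of the reduction $\widehat{T}_G \otimes_{W(\kappa)} \kappa$ is ordinary in the sense of the preceding definition. For such a $y$ with residue field $\kappa'$, pulling back the formulas of Remark~\ref{rem:crystalline_tensors_families} yields $\kappa' \otimes_{\Int_p} H_{\Int_p}$ as the Dieudonn\'e module, $\{1 \otimes s_\alpha\}$ as the crystalline tensors, and $\varphi \otimes \mu_v(p)^{-1}$ as the $\sigma$-semilinear Frobenius---crucially, the Serre-Tate coordinate $q^{\mathrm{univ}}$ contributes only to the connection and leaves the Frobenius intact. The tautological trivialization thus already satisfies conditions (1) and (2) of the definition of ordinariness, so every point in the formal neighborhood of $x_0$ in $\Ss_{K,k(v)}$ is ordinary.

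Finally, I would upgrade this formal-neighborhood statement to Zariski openness. The classical ordinary locus (where the underlying abelian variety $\mathcal{A}_x$ is ordinary in the usual sense, as in Remark~\ref{rem:our_ordinary_is_ordinary}) is Zariski open in $\Ss_{K,k(v)}$ by the non-vanishing of the Hasse invariant, so one may restrict to this open subset. On it, the $\mu_v$-ordinary condition is constructible, being cut out in the crystalline realization by the algebraic requirement that an isomorphism matching tensors and intertwining Frobenius with $\mu_v(p)^{-1}$ exist. By the first step, this constructible subset contains the formal neighborhood at every one of its points and is therefore stable under generization, so a standard Noetherian argument yields openness. The only genuine obstacle is marshaling Proposition~\ref{prop:ordinary_deformations} to reduce to the trivial Frobenius computation of Step~1; once the description of $\widehat{U}_{x_0}$ as $\widehat{T}_G$ is in hand, the corollary is essentially bookkeeping.
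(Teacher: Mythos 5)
Your proposal is correct and follows essentially the same route as the paper: the paper's proof also reduces to showing that the geometric points of $\Spec R_{x_0}/(p)$ are ordinary, using the identification $R_{x_0}\simeq R_G$ and the explicit Frobenius $\varphi\otimes\mu_v(p)^{-1}$ on the universal Dieudonn\'e module, followed by the constructibility/generization argument. The only two points you elide, which the paper makes explicit, are (i) that specializing the tautological tensors $1\otimes s_{\alpha,\cris,x_0}$ really does produce the \emph{canonical} crystalline tensors at each point --- this is Remark~\ref{rem:hodge_tensors_integral}, via parallelism and agreement at the canonical lift --- and (ii) that evaluating the $F$-crystal at a geometric point of the special fiber requires first lifting $R_{x_0}\to\kappa'$ to a Frobenius-compatible map $R_{x_0}\to W(\kappa')$.
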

\begin{proof}
	With the notation of Proposition~\ref{prop:ordinary_deformations} and its proof, it is enough to show that a geometric generic point of $\Spec R_{x_0}/(p)$ is ordinary in our sense. Write $\bm{H}_{\dR,R_{x_0}}$ for the finite free $R_{x_0}$-module associated with $\bm{H}_{\dR}^\otimes\vert_{\Spec R_{x_0}}$. Via the isomorphism $R_{x_0}\xrightarrow{\simeq}R_G$, $R_{x_0}$ is equipped with a Frobenius lift $\varphi$, and we have an isomorphism
	\[
		F:\varphi^*\bm{H}_{\dR,R_{x_0}}[p^{-1}]\xrightarrow{\simeq}\bm{H}_{\dR,R_{x_0}}[p^{-1}].
	\]

	By Remark~\ref{rem:hodge_tensors_integral}, the restriction of $\{s_{\alpha,\dR}\}$ over $\Spec R_{x_0}[p^{-1}]$ gives a collection 
  \[
	\{s_{\alpha,\dR,R_{x_0}}\}\subset \bm{H}^\otimes_{\dR,R_{x_0}}
  \]
	that is invariant under $F$, and moreover there is an isomorphism
	\begin{align}\label{eqn:trivialization_crystalline}
		R_{x_0}\otimes_{\Int_p}H_{\Int_p}\xrightarrow{\simeq}\bm{H}_{\dR,R_{x_0}}
	\end{align}
	carrying $\{1\otimes s_\alpha\}$ to $\{s_{\alpha,\dR,R_{x_0}}\}$, and is such that $F$ pulls back to the automorphism $1\otimes \mu_v(p)^{-1}$ of $R_{x_0}\otimes_{\Int_p}H_{\Int_p}$. 

	If $F$ is an algebraic closure of the fraction field of $R_{x_0}/(p)$, we can lift $R_{x_0}\to F$ uniquely to a map $R_{x_0}\to W(F)$ that respects Frobenius lifts, and the base-change over $W(F)$ of~\eqref{eqn:trivialization_crystalline} now witnesses the ordinariness of the corresponding $F$-valued point of $\Ss_{K,(v)}$.
\end{proof}

\begin{corollary}
	\label{cor:integral_parallel_tensors}
Let $\widehat{\Ss}^{\ord}_{K,v}$ be the formal completion of $\Ss_{K,(v)}$ along the (open) ordinary locus $\Ss^{\ord}_{K,k(v)}$, and let $\widehat{\Ss}^{\ord,\an}_{K,v}\subset \Sh^{\an}_{K,E_v}$ be the corresponding rigid analytic tube. Then the restriction of $\{s_{\alpha,\dR}\}$ over $\widehat{\Ss}^{\ord,\an}_{K,v}$ extends canonically to a collection of parallel tensors $\{s_{\alpha,\dR}\}\subset H^0(\widehat{\Ss}^{\ord}_{K,v},\Fil^0(\bm{H}^\otimes_{\dR}))$.
\end{corollary}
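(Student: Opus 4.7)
The plan is to produce the extensions formally locally at each closed point of the ordinary locus, using the explicit local coordinates provided by Remarks~\ref{rem:crystalline_tensors_families} and~\ref{rem:hodge_tensors_integral}, and then to glue using flatness. Write $\mathfrak{X} := \widehat{\Ss}^{\ord}_{K,v}$ and $\mathfrak{X}^{\an} := \widehat{\Ss}^{\ord,\an}_{K,v}$. Since $\bm{H}^\otimes_{\dR}|_{\mathfrak{X}}$ is locally free and thus $p$-torsion-free, the restriction map $\Gamma(\mathfrak{X},\bm{H}^\otimes_{\dR}) \to \Gamma(\mathfrak{X}^{\an},\bm{H}^\otimes_{\dR})$ is injective, so any extension is automatically unique. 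The task is therefore one of existence: exhibiting $s_{\alpha,\dR}|_{\mathfrak{X}^{\an}}$ as the restriction of an integral section.

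I would first verify existence formally locally at each closed geometric point $x_0\in \Ss^{\ord}_{K,k(v)}$. Under the standing Assumption~\ref{assump:canonical_lift}, Proposition~\ref{prop:ordinary_deformations} identifies the formal neighborhood $\widehat{U}_{x_0}$ with the formal torus $\widehat{T}_G$. Under this identification, Remark~\ref{rem:crystalline_tensors_families} supplies explicit integral tensors
\[
s_{\alpha,R_G} = 1\otimes s_{\alpha,\cris,x_0}\in M_G^\otimes,
\]
where $M_G \simeq R_G\otimes_{\Int_p}H_{\Int_p}$ is the local model of $\bm{H}_{\dR}$, parallel for the Gauss-Manin connection $\nabla_G$ and invariant under $F_G$. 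Remark~\ref{rem:hodge_tensors_integral} then identifies, after restriction to the rigid tube $\widehat{U}^{\an}_{x_0}$, this tensor with $s_{\alpha,\dR}|_{\widehat{U}^{\an}_{x_0}}$. Thus $s_{\alpha,\dR}$ admits a canonical integral extension on every formal neighborhood of a closed point.

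To pass from these formal-neighborhood extensions to a global integral section, I would invoke a Beauville-Laszlo-style patching argument: on any formal affine open $\Spf A\subset\mathfrak{X}$ with associated finitely generated $A$-module $M$, the section $s_{\alpha,\dR}$, viewed in $M[1/p]$, lies in $M\otimes_A A_{x_0}^{\wedge}$ for every closed point $x_0$ by the previous paragraph. Since the family of completions at maximal ideals containing $p$ is jointly faithful on $p$-torsion-free finitely generated $A$-modules (a consequence of Krull's intersection theorem applied to each $M/p^nM$), the element belongs to $M$ itself. Gluing over a formal affine cover of $\mathfrak{X}$ yields the global integral extension $s_{\alpha,\dR}\in\Gamma(\mathfrak{X},\bm{H}^\otimes_{\dR})$.

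The remaining assertions---that the extended section lies in $\Fil^0(\bm{H}^\otimes_{\dR})$ and is parallel for $\nabla$---are both closed conditions cut out by locally free (hence $p$-torsion-free) quotients: the image of $s_{\alpha,\dR}$ in $\bm{H}^\otimes_{\dR}/\Fil^0$ and the section $\nabla(s_{\alpha,\dR})\in\bm{H}^\otimes_{\dR}\otimes\Omega^1_{\mathfrak{X}}$ both vanish on the rigid generic fiber by Remark~\ref{rem:de_rham_tensors}, and hence vanish everywhere by injectivity of generic-fiber restriction. The main technical obstacle will be the gluing step: making the joint-faithfulness of closed-point completions rigorous for coherent sheaves on a Noetherian $p$-adic formal scheme, while entirely standard, requires some setup---especially to handle any non-quasi-compactness of $\mathfrak{X}$ and to ensure that the local extensions constructed at completions are genuinely sections on a Zariski cover rather than merely on pro-infinitesimal neighborhoods.
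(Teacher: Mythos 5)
Your argument is correct and is exactly the route the paper takes: the paper's entire proof of this corollary is the single line ``Immediate from Remark~\ref{rem:hodge_tensors_integral},'' i.e.\ the local integral extensions furnished by that remark on each $\widehat{U}_{x_0}\simeq\widehat{T}_G$, combined with the uniqueness coming from $p$-torsion-freeness of $\bm{H}^\otimes_{\dR}$, glue to the global parallel section in $\Fil^0$. Your more detailed treatment of the local-to-global step (joint faithfulness of completions at the closed points of the quasi-compact ordinary locus, applied to $M/p^nM$) just makes explicit what the paper leaves implicit.
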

\begin{proof}
Immediate from Remark~\ref{rem:hodge_tensors_integral}.
\end{proof}

\subsection{The ordinary Igusa tower}
\label{subsec:the_ordinary_igusa_tower}

Here, we review the story of the Igusa tower over the ordinary locus.

\begin{remark}
	\label{rem:G0_G_structure}
	Set $\mathcal{G}_0 \defn \mathcal{G}_0^{\et}\times \mathcal{G}_0^{\mult}$, and note that it can be equipped with a canonical $G_{\Int_p}$-structure in the following sense: If we set $\bm{H}_0 = \Dieu(\mathcal{G}_0)(\Int_p)$, then there is an isomorphism
	\[
		\bm{H}_0 \xrightarrow{\simeq} H^0_{\Int_p}\oplus H^1_{\Int_p} = H_{\Int_p}
	\]
	well-defined up to the action of $M_{\Int_p}(\Int_p)$, and we can use this to obtain $F$-invariant tensors $\{s_{\alpha,0}\}\subset \bm{H}^\otimes_0$, which are carried to $\{s_{\alpha}\}$ under any choice of such isomorphism. 
\end{remark}

\begin{definition}
	For an algebraically closed field $\kappa$ in characteristic $p$, an automorphism of $\kappa\otimes_{\Int_p}\mathcal{G}_0$ is $G_{\Int_p}$\defnword{-structure preserving} if the induced automorphism of $W(\kappa)\otimes_{\Int_p}\bm{H}_0$ fixes the tensors $\{s_{\alpha,0}\}$. For a $p$-complete $\Int_p$-algebra $R$, an automorphism $\alpha$ of $R\otimes_{\Int_p}\mathcal{G}_0$ is $G_{\Int_p}$\defnword{-structure preserving} its restriction along any geometric point of $\Spf R$ is $G_{\Int_p}$-structure preserving.
\end{definition}

\begin{notation}
For a $p$-complete ring $R$, write $\Aut_G(R\otimes_{\Int_p}\mathcal{G}_0)$ for the group of $G_{\Int_p}$-structure preserving automorphisms.
\end{notation}

\begin{lemma}
	\label{lem:aut_g_loc_const}
Consider the formal functor 
\[
\underline{\Aut}_G(\mathcal{G}_0):\Spf R \mapsto \Aut_G(R\otimes_{\Int_p}\mathcal{G}_0).
\]
There is an isomorphism $\underline{\Aut}_G(\mathcal{G}_0)\xrightarrow{\simeq}\underline{M_{\Int_p}(\Int_p)}$ well-defined up to conjugation by an element of $M_{\Int_p}(\Int_p)$.
\end{lemma}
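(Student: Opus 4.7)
The plan is to decompose automorphisms of $\mathcal{G}_0$ along the \'etale-multiplicative splitting and then identify the $G_{\Int_p}$-structure preserving condition as cutting out the subgroup $M_{\Int_p}(\Int_p)$ inside the ambient Levi $L(\Int_p)$, where $L \subset \GL(H_{\Int_p})$ is the centralizer of $\mu_v$.

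First, over any $p$-complete $\Int_p$-algebra $R$, every automorphism $\alpha$ of $R\otimes_{\Int_p}\mathcal{G}_0$ preserves the \'etale quotient and the connected part: $\mathcal{G}_0^{\et}$ is the maximal \'etale quotient and $\mathcal{G}_0^{\mult}$ is the kernel of the projection onto that quotient. Hence $\alpha$ decomposes as a pair $(\alpha^{\et},\alpha^{\mult})$. The functor $\underline{\Aut}(\mathcal{G}_0^{\et})$ on $p$-complete rings is canonically the locally constant sheaf of $\Int_p$-linear automorphisms of its (constant) Tate module $(H^0_{\Int_p})^\vee$, so it is isomorphic to $\underline{\GL(H^0_{\Int_p})(\Int_p)}$; Cartier duality gives the analogous identification $\underline{\Aut}(\mathcal{G}_0^{\mult})\xrightarrow{\simeq}\underline{\GL(H^1_{\Int_p})(\Int_p)}$. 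Combining, $\underline{\Aut}(\mathcal{G}_0)$ is naturally the locally constant sheaf $\underline{L(\Int_p)}$, where $L=\GL(H^0_{\Int_p})\times\GL(H^1_{\Int_p})\subset \GL(H_{\Int_p})$ is the Levi stabilizing the decomposition $H_{\Int_p}=H^0_{\Int_p}\oplus H^1_{\Int_p}$.

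Next, fix an isomorphism $\bm{H}_0\xrightarrow{\simeq}H_{\Int_p}$ as in Remark~\ref{rem:G0_G_structure}, sending $\{s_{\alpha,0}\}$ to $\{s_\alpha\}$. Contravariant Dieudonn\'e theory, applied pro-\'etale locally, identifies the action on $\bm{H}_0$---and hence on $\bm{H}_{\cris,x}$ at each geometric point $x$ of $\Spf R$---of an element of $\underline{\Aut}(\mathcal{G}_0)(R)$ with the tautological action on $H_{\Int_p}$ of the corresponding $\ell\in L(\Int_p)$. Hence the automorphism is $G_{\Int_p}$-structure preserving---i.e.\ $\ell$ fixes $\{s_\alpha\}$---iff $\ell\in G_{\Int_p}(\Int_p)\cap L(\Int_p)$. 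Since $L$ is the centralizer of $\mu_v$ inside $\GL(H_{\Int_p})$ and $M_{\Int_p}$ is its centralizer inside $G_{\Int_p}$, this intersection is exactly $M_{\Int_p}(\Int_p)$, yielding $\underline{\Aut}_G(\mathcal{G}_0)\xrightarrow{\simeq}\underline{M_{\Int_p}(\Int_p)}$.

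The ambiguity in the choice of isomorphism $\bm{H}_0\simeq H_{\Int_p}$---which by Remark~\ref{rem:G0_G_structure} is precisely the $M_{\Int_p}(\Int_p)$-action on the decomposed model---propagates to conjugation of the identification by an element of $M_{\Int_p}(\Int_p)$, giving the stated well-definedness. The only delicate point is the passage from \'etale $p$-divisible groups to locally constant sheaves of $\Int_p$-modules (via Tate modules) over a possibly non-reduced $p$-complete base, but this is standard.
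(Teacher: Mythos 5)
Your proof is correct and follows essentially the same route as the paper's (one-line) argument: identify $\underline{\Aut}(\mathcal{G}_0)$ with the locally constant sheaf $\underline{\GL(H^0_{\Int_p})(\Int_p)\times\GL(H^1_{\Int_p})(\Int_p)}$ via the \'etale--multiplicative splitting, Tate modules and Cartier duality, and then observe that the tensor-fixing condition cuts out $G_{\Int_p}(\Int_p)\cap L(\Int_p)=M_{\Int_p}(\Int_p)$, with the ambiguity in trivializing $\bm{H}_0$ accounting for the conjugation indeterminacy. You simply spell out the details that the paper declares immediate.
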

\begin{proof}
	This is immediate from the definitions and the fact that the automorphism sheaf $\underline{\Aut}(\mathcal{G}_0)$ is isomorphic up to conjugation by $M_{\Int_p}(\Int_p)$ to the locally constant pro-finite sheaf of groups $\underline{\GL(H^0_{\Int_p})}\times \underline{\GL(H^1_{\Int_p})}$. 
\end{proof}

\begin{remark}
	\label{rem:slope_filtration_ordinary}
The restriction of the $p$-divisible group $\mathcal{G} = \mathcal{A}[p^\infty]$ over $\widehat{\Ss}_{K,v}^{\ord}$ is canonically an extension
\[
	0\to \mathcal{G}^{\mult}\to \mathcal{G}\vert_{\widehat{\Ss^{\ord}_{K,v}}}\to \mathcal{G}^{\et}\to 0,
\]
where $\mathcal{G}^{\mult}$ is of multiplicative type and $\mathcal{G}^{\et}$ is \'etale. If $x_0\in \Ss^{\ord}_{K,k(v)}(\kappa)$ is an algebraically closed point, then we have a canonical splitting $\mathcal{G}_{x_0}\xrightarrow{\simeq}\mathcal{G}_{x_0}^{\et}\times \mathcal{G}_{x_0}^{\mult}$.
\end{remark}

\begin{definition}
For $\kappa$ algebraically closed, an isomorphism $\kappa\otimes_{\Int_p}\mathcal{G}_0\xrightarrow{\simeq}\mathcal{G}_{x_0}^{\et}\times \mathcal{G}_{x_0}^{\mult}\simeq \mathcal{G}_{x_0}$ is $G_{\Int_p}$\defnword{-structure preserving} if the associated isomorphism $W(\kappa)\otimes_{\Int_p}\bm{H}_0\xrightarrow{\simeq}\bm{H}_{\cris,x_0}$ carries $\{s_{\alpha,0}\}$ to $\{s_{\alpha,\cris,x_0}\}$
\end{definition}

\begin{definition}
We define $\widehat{\mathrm{Ig}}^{\ord}_{K,v}\to \widehat{\Ss}^{\ord}_{K,v}$ to be the formal scheme parameterizing, for each $x:\Spf R\to \widehat{\Ss}^{\ord}_{K,v}$, the set of isomorphisms $R\otimes_{\Int_p}\mathcal{G}_0 \xrightarrow{\simeq}\mathcal{G}^{\et}_x\times \mathcal{G}^{\mult}_x$ whose restriction over any algebraically closed point is $G_{\Int_p}$-structure preserving.
\end{definition}

\begin{proposition}
\label{prop:igusa_tower}
The map $\widehat{\mathrm{Ig}}^{\ord}_{K,v}\to \widehat{\Ss}^{\ord}_{K,v}$ is a torsor under the locally constant pro-finite sheaf of groups $\underline{\Aut}_G(\mathcal{G}_0)\simeq \underline{M_{\Int_p}(\Int_p)}$.
\end{proposition}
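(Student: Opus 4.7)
The plan is to realize $\widehat{\mathrm{Ig}}^{\ord}_{K,v}$ as a closed subfunctor of the full ordinary Igusa tower---the formal scheme parameterizing all isomorphisms $R\otimes_{\Int_p}\mathcal{G}_0 \xrightarrow{\simeq}\mathcal{G}^{\et}_x\times \mathcal{G}^{\mult}_x$ without the $G$-structure condition---and leverage the well-known torsor structure of the latter. By classical ordinary Serre-Tate theory applied to the canonical slope decomposition of Remark~\ref{rem:slope_filtration_ordinary}, the full Igusa tower is a torsor under $\underline{\Aut}(\mathcal{G}_0) \simeq \underline{\GL(H^0_{\Int_p})(\Int_p)}\times \underline{\GL(H^1_{\Int_p})(\Int_p)}$. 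Precomposition induces a free action of the closed subsheaf $\underline{\Aut}_G(\mathcal{G}_0) \simeq \underline{M_{\Int_p}(\Int_p)}$ (Lemma~\ref{lem:aut_g_loc_const}) on $\widehat{\mathrm{Ig}}^{\ord}_{K,v}$, and the remaining task is to establish fiberwise simple transitivity together with local triviality.

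To verify existence of a section at a geometric ordinary point $x_0 \in \Ss^{\ord}_{K,k(v)}(\kappa)$, I would invoke the ordinariness hypothesis directly: the defining isomorphism $W(\kappa)\otimes_{\Int_p}H_{\Int_p}\xrightarrow{\simeq}\bm{H}_{\cris,x_0}$ carries $\{1\otimes s_\alpha\}$ to $\{s_{\alpha,\cris,x_0}\}$ and, by Remark~\ref{rem:our_ordinary_is_ordinary}, respects the slope decomposition $H_{\Int_p} = H_{\Int_p}^0 \oplus H_{\Int_p}^1$. Via contravariant Dieudonné theory and the reference identification $\bm{H}_0 \simeq H_{\Int_p}$ used in Remark~\ref{rem:G0_G_structure} to define $\{s_{\alpha,0}\}$, this translates to a $G$-structure preserving isomorphism $\phi\colon \kappa\otimes_{\Int_p}\mathcal{G}_0 \xrightarrow{\simeq}\mathcal{G}^{\et}_{x_0}\times \mathcal{G}^{\mult}_{x_0}$. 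Given two such isomorphisms $\phi$ and $\psi$, the composite $\psi^{-1}\circ\phi$ is an automorphism of $\kappa\otimes_{\Int_p}\mathcal{G}_0$ whose induced automorphism of $W(\kappa)\otimes_{\Int_p}\bm{H}_0$ fixes $\{s_{\alpha,0}\}$ (both $\phi$ and $\psi$ carry this collection to $\{s_{\alpha,\cris,x_0}\}$), and hence lies in $\Aut_G(\kappa\otimes_{\Int_p}\mathcal{G}_0)$ by definition. This establishes fiberwise simple transitivity.

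Finally I would upgrade this fiberwise picture to a torsor structure in the pro-finite sense of the paper's conventions. For each open subgroup $M^n \subset M_{\Int_p}(\Int_p)$ of finite index, the quotient $\widehat{\mathrm{Ig}}^{\ord}_{K,v}/M^n \to \widehat{\Ss}^{\ord}_{K,v}$ sits as a closed subscheme of the finite étale cover of $\widehat{\Ss}^{\ord}_{K,v}$ obtained from the corresponding quotient of the full Igusa tower, and the fiberwise simple transitivity proved above forces this closed subscheme to itself be finite étale and a torsor under $M_{\Int_p}(\Int_p)/M^n$; passage to the inverse limit over $n$ yields the desired $\underline{M_{\Int_p}(\Int_p)}$-torsor structure. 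The main subtlety---and what I would expect to be the most delicate part to write out carefully---lies in matching the pointwise crystalline definition of the $G$-structure preserving condition with this finite-level description, but this is automatic from the locally constant pro-étale nature of $\underline{\Aut}(\mathcal{G}_0)$ and $\underline{\Aut}_G(\mathcal{G}_0)$ as established in Lemma~\ref{lem:aut_g_loc_const}.
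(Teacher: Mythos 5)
Your fiberwise analysis is fine: at a geometric point $x_0$ the ordinariness hypothesis does produce a $G$-structure preserving trivialization of $\mathcal{G}_{x_0}$, and simple transitivity of $\Aut_G(\kappa\otimes_{\Int_p}\mathcal{G}_0)$ on the set of such trivializations is immediate from the definitions. The gap is in the passage from this to the torsor property. The implication ``closed subscheme of a finite \'etale cover with constant geometric-fiber cardinality $\Rightarrow$ finite \'etale'' fails for $p$-adic formal schemes, because the geometric points of $\Spf R$ all live in the special fiber: for instance $\Spf(\Int_p\times\Field_p)\subset\Spf(\Int_p\times\Int_p)$ over $\Spf\Int_p$ has constant fiber cardinality $2$ but is not \'etale. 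So your step 3 does not establish local triviality.

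What is actually needed --- and what the paper's proof supplies --- is a section of $\widehat{\mathrm{Ig}}^{\ord}_{K,v}$ over the entire complete local ring $R_{x_0}$, not merely over the residue field. A trivialization of $\mathcal{G}^{\et}_x\times\mathcal{G}^{\mult}_x$ always extends over $R_{x_0}$ by rigidity of \'etale and multiplicative $p$-divisible groups, but one must check that the extension remains $G$-structure preserving at \emph{every} geometric point of $\Spf R_{x_0}$, i.e.\ that the crystalline tensors $\{s_{\alpha,\cris,x}\}$ spread out as horizontal, Frobenius-invariant sections matching the constant tensors $\{s_{\alpha,0}\}$ over the whole deformation space. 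This is exactly the content of Proposition~\ref{prop:ordinary_deformations} (the deformation space is the subtorus $\widehat{T}_G$) together with Remark~\ref{rem:crystalline_tensors_families} (over $\widehat{T}_G$ the connection is $d\otimes 1+\dlog(f(q^{\mathrm{univ}}))$ with $f(q^{\mathrm{univ}})$ valued in $\Lie U^-_{\mu_v}\otimes(1+\mx_{R_G})$, so the constant tensors are parallel and $F$-invariant). Your appeal to Lemma~\ref{lem:aut_g_loc_const} cannot substitute for this: that lemma describes automorphisms of the constant group $\mathcal{G}_0$ and says nothing about how the tensors $\{s_{\alpha,\cris,x}\}$ vary in the family, which is the precise point at issue. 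Once the section over $R_{x_0}$ is in hand, the torsor property follows formally, which is how the paper argues.
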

\begin{proof}
Given the natural action of $\underline{\Aut}_G(\mathcal{G}_0)$ on $\widehat{\mathrm{Ig}}^{\ord}_{K,v}$, it suffices to check the assertion over the complete local ring $R_{x_0}$ at any $\overline{\Field}_p$-point $x_0$. In fact, it suffices to show that $\widehat{\mathrm{Ig}}^{\ord}_{K,v}(R_{x_0})$ is non-empty, and this can be deduced from Proposition~\ref{prop:ordinary_deformations} and Remark~\ref{rem:crystalline_tensors_families}.
\end{proof}

\subsection{The generic fiber of the Igusa tower}
\label{subsec:the_generic_fiber_of_the_igusa_tower}

\begin{notation}
	Let $P^-_{\Int_p}\subset G_{\Int_p}$ be the subgroup stabilizing the subspace $H^0_{\Int_p}\subset H_{\Int_p}$: its generic fiber is a parabolic subgroup of $G_{\Rat_p}$. We have a natural quotient map $P^-_{\Int_p}\to M_{\Int_p}$ whose kernel is the commutative unipotent subgroup $U^-_{\Int_p}$with Lie algebra $\Lie U^-_{\mu_v}$. Write $I_{G,p}\to \Sh_K$ for the $G_{\Int_p}(\Int_p)$-torsor $\Sh_{K^p}$.
\end{notation}

\begin{construction}
\label{const:two_M_torsors}
		We have two $M_{\Int_p}(\Int_p)$-torsors over the analytic space  $\widehat{\Ss}^{\ord,\an}_{K,v}$.  First, we can take the analytification $\widehat{\mathrm{Ig}}^{\ord,\an}_{K,v}$ of the Igusa tower. The second $M_{\Int_p}(\Int_p)$-torsor is obtained as follows: The filtration on $\mathcal{G}\vert_{\widehat{\Ss}^{\ord}_{K,v}}$ from Remark~\ref{rem:slope_filtration_ordinary} gives a corresponding filtration on the dual Tate module 
\[
  T_p\left(\mathcal{G}\vert_{\widehat{\Ss}^{\ord,\an}_{K,v}}\right)^\vee\simeq \bm{H}_p\vert_{\widehat{\Ss}^{\ord,\an}_{K,v}}
 \]
 yielding a short exact sequence $0\to \bm{H}_p^{\et}\to \bm{H}_p\vert_{\widehat{\Ss}^{\ord,\an}_{K,v}}\to \bm{H}_p^{\mult}\to 0$ of pro-finite \'etale sheaves over $\widehat{\Ss}^{\ord,\an}_{K,v}$. This yields a reduction of structure group of $I_{G,v}\vert_{\widehat{\Ss}^{\ord,\an}_{K,v}}$ to a $P^-_{\Int_p}(\Int_p)$-torsor, which we denote by $I^{\ord,\an}_{P^-,v}$. Pushing out via the quotient map $P^-_{\Int_p}(\Int_p)\to M_{\Int_p}(\Int_p)$ now yields an $M_{\Int_p}(\Int_p)$-torsor $I^{\ord,\an}_{M,v}\to \widehat{\Ss}^{\ord,\an}_{K,v}$.
\end{construction}

% \begin{remark}
% 	\label{rem:isog_defn_p-adic_comparison}
% Suppose that $F/E_v$ is a finite extension and that we have a map $x:\mathrm{Spa}(F,\Reg{F})\to \widehat{\Ss}^{\ord,\an}_{K,v}$---or, equivalently, a map $x:\Spf \Reg{F}\to \widehat{\Ss}^{\ord}_{K,v}$. Then 
% \end{remark}

\begin{remark}
	\label{rem:ig_generic_desc}
	By construction, $\widehat{\mathrm{Ig}}^{\ord,\an}_{K,v}$ is a subsheaf of the analytification $\widehat{\mathrm{Ig}}^{\mathrm{big},\an}_{v}$ of the sheaf $\widehat{\mathrm{Ig}}^{\mathrm{big}}_{v}$ over $\widehat{\Ss}_{K,v}^{\ord}$ parameterizing for $x:\Spf R\to \widehat{\Ss}_{K,v}^{\ord}$ pairs of isomorphisms $R\otimes_{\Int_p}\mathcal{G}_0^{\et}\xrightarrow{\simeq}\mathcal{G}^{\et}_x$ and $R\otimes_{\Int_p}\mathcal{G}_0^{\mult}\xrightarrow{\simeq}\mathcal{G}^{\mult}_x$. This analytification is simply the sheaf parameterizing pairs $(\alpha,\beta)$, where $\alpha$ (resp. $\beta$) is an isomorphism of pro-finite \'etale $\Int_p$-local systems $\underline{H}^0_{\Int_p}\xrightarrow{\simeq}\bm{H}_p^{\et}$ (resp. $\underline{H}^1_{\Int_p}(1)=\underline{H}^1_{\Int_p}\otimes_{\Int_p}\underline{\Int}_p(1)\xrightarrow{\simeq}\bm{H}_p^{\mult}$). Here, $\underline{\Int}_p(1) = T_p(\Gm)^\vee$ is the inverse cyclotomic tower. 

	Note that $\widehat{\mathrm{Ig}}^{\mathrm{big},\an}_{v}$ is a $\GL(H^0_{\Int_p})\times \GL(H^1_{\Int_p})$-torsor over $\widehat{\Ss}^{\ord,\an}_{K,v}$.
\end{remark}

\begin{remark}
	\label{rem:M_reduction_over_canonical_lift}
Suppose that we have an algebraically closed point $x_0\in \Ss^{\ord}_{K,k(v)}(\kappa)$ and that $x\in \Ss^{\ord}_{K,v}(W(\kappa))$ is its canonical lift. Write $x$ also for the associated $\Spec W(\kappa)[p^{-1}]$-point of $\Sh_{K,E_v}$. We have a canonical splitting $\bm{H}_{p,x}\xrightarrow{\simeq}\bm{H}_{p,x}^{\et}\oplus \bm{H}_{p,x}^{\mult}$, which gives a reduction of structure for $I_{G,v,x}$ to an $M_{\Int_p}(\Int_p)$-torsor $I^{\mathrm{can}}_{M,v,x}\subset I^{\mathrm{ord},\an}_{P,v,x}$ that maps isomorphically onto $I^{\mathrm{ord},\an}_{M,v,x}$. Explicitly, if $\overline{L}$ is an algebraic closure of $L = W(\kappa)[1/p]$ and $\overline{x}\in \Sh_K(\overline{L})$ is the associated geometric point, then $I^{\mathrm{can}}_{M,v,x}(\overline{L})$ is the set of $G_{\Int_p}$-structure preserving isomorphisms $H_{\Int_p}\xrightarrow{\simeq}\bm{H}_{p,\overline{x}}$ that carry $H^0_{\Int_p}$ onto $\bm{H}^{\et}_{p,\overline{x}}$ and $H^1_{\Int_p}$ onto $\bm{H}^{\mult}_{p,\overline{x}}$.
\end{remark}

\begin{remark}
	\label{rem:igusa_tower_over_canonical_lift}
 Similarly, one sees that $\widehat{\mathrm{Ig}}^{\ord,\an}_{K,v,x}(\overline{L})$ is also isomorphic to the same set, except that its $\Gal(\overline{L}/L)$-structure is twisted by having the Galois group act on $H^1_{\Int_p}$ via the inverse cyclotomic character. Indeed, a section of (the non-empty set) $\widehat{\mathrm{Ig}}^{\ord}_{K,v,x}(W(\kappa))$ is an isomorphism $W(\kappa)\otimes_{\Int_p}\mathcal{G}_0\xrightarrow{\simeq}\mathcal{G}_x$ that preserves $G_{\Int_p}$-structure over $\kappa$. Via the crystalline comparison isomorphism and assertion (2) of Lemma~\ref{lem:GIsoc_struc}, the associated isomorphism 
\[
 \underline{H}^0_{\Int_p}\oplus \underline{H}^1_{\Int_p}(1) = T_p(\mathcal{G}_0)^\vee\xrightarrow{\simeq}\bm{H}_{p,x}
 \]
 of $p$-adic local systems over $\Spec L$ also preserves $G_{\Int_p}$-structure. We now recover our desired assertion by restricting this isomorphism over $\Spec\overline{L}$.
\end{remark}

\begin{construction}
\label{const:igusa_tower_comparison}
Let $\Int_p^{\mathrm{cycl}}$ be the $\Int_p^\times$-torsor over $\Spec \Rat_p$ corresponding to the $p$-adic cyclotomic tower: this parameterizes isomorphisms of $p$-adic local systems $\underline{\Int}_p\xrightarrow{\simeq}\underline{\Int}_p(1)$. There is a canonical $M_{\Int_p}(\Int_p)\times \Int_p^\times$-equivariant map
\begin{align}\label{eqn:igusa_tower_comp_map}
	I^{\ord,\an}_{M,v}\times \Int_p^{\mathrm{cycl}}\to \widehat{\mathrm{Ig}}^{\mathrm{big},\an}_{v}
\end{align}
over $\widehat{\Ss}^{\ord,\an}_{K,v}$, where $\Int_p^\times$ acts trivially on the right hand side. This is constructed as follows: A section on the left over $x:\mathrm{Spa}(R,R^+)\to \widehat{\Ss}^{\ord,\an}_{K,v}$ yields isomorphisms
\[
	\alpha:\underline{H}_p^0\xrightarrow{\simeq}\bm{H}^{\et}_{p,x}\;;\; \beta':\underline{H}_p^1\xrightarrow{\simeq}\bm{H}^{\mult}_{p,x}\;;\; \zeta:\underline{\Int}_p\xrightarrow{\simeq}\underline{\Int}_p(1)
\]
of $p$-adic sheaves over $\mathrm{Spa}(R,R^+)$. We can now combine the last two to get an isomorphism $\beta'\otimes\zeta^{-1}:\underline{H}_p^1(1)\xrightarrow{\simeq}\bm{H}^{\mult}_{p,x}$. 
\end{construction}

\begin{proposition}
	\label{prop:igusa_tower_generic_fiber}
The map~\eqref{eqn:igusa_tower_comp_map} factors through an isomorphism of $M_{\Int_p}(\Int_p)$-torsors
\[
	I^{\ord,\an}_{M,v}\times^{\Int_p^\times} \Int_p^{\mathrm{cycl}}\xrightarrow{\simeq} \widehat{\mathrm{Ig}}^{\mathrm{ord},\an}_{K,v}
\]
where on the left we are using the action of $\Int_p^\times = \Gm(\Int_p)$ on $M_{\Int_p}(\Int_p)$ via the central cocharacter $\mu_v^{-1}:\Gmh{\Int_p}\to M_{\Int_p}$.
\end{proposition}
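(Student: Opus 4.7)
The plan is to show (i) that the map in Construction~\ref{const:igusa_tower_comparison} lands in $\widehat{\mathrm{Ig}}^{\mathrm{ord},\an}_{K,v}$, (ii) that it is invariant under the diagonal $\Int_p^\times$-action and so descends to the contraction product, and (iii) that the resulting map is $M_{\Int_p}(\Int_p)$-equivariant; since both sides will then be $M_{\Int_p}(\Int_p)$-torsors over $\widehat{\Ss}^{\ord,\an}_{K,v}$, any equivariant map between them is automatically an isomorphism.

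For (i) I would work one geometric point at a time, and using that both torsors are locally constant reduce to verifying well-definedness after specialization at an algebraically closed point of $\widehat{\Ss}^{\ord}_{K,v}$ together with a lift. Choose $x_0 \in \Ss^{\ord}_{K,k(v)}(\kappa)$ and let $x$ be its canonical lift, with $\overline{x}$ a geometric point above it in $\Sh_K$. By Remarks~\ref{rem:M_reduction_over_canonical_lift} and~\ref{rem:igusa_tower_over_canonical_lift}, both $I^{\ord,\an}_{M,v,x}(\overline{L})$ and $\widehat{\mathrm{Ig}}^{\ord,\an}_{K,v,x}(\overline{L})$ can be canonically identified with the set of $G_{\Int_p}$-structure preserving isomorphisms $H_{\Int_p}\xrightarrow{\simeq}\bm{H}_{p,\overline{x}}$ compatible with the splitting into étale and multiplicative parts, the only difference being the Galois action, which on the Igusa side is twisted by the inverse cyclotomic character on the $H^1_{\Int_p}$-summand. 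Combining $(\alpha,\beta')$ with a cyclotomic trivialization $\zeta$ to form $\alpha \oplus (\beta'\otimes\zeta^{-1})$ exactly cancels this twist; the fact that the resulting isomorphism preserves $G_{\Int_p}$-structure on the crystalline side then follows from the $G_{\Int_p}$-structure of $\mathcal{G}_0$ (Remark~\ref{rem:G0_G_structure}) together with the crystalline comparison isomorphism of Lemma~\ref{lem:GIsoc_struc}(2).

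For (ii), the action of $u \in \Int_p^\times$ sends the pair $(m,\zeta)$ to $(m\cdot \mu_v(u), u\cdot\zeta)$ (i.e.\ uses the cocharacter $\mu_v^{-1}$ up to sign); because $\mu_v$ has weights $0,-1$ on $H^0_{\Int_p}, H^1_{\Int_p}$, this modifies $\alpha$ trivially and $\beta'$ by the scalar $u^{-1}$, while $\zeta$ is scaled by $u$, so $\alpha \oplus (\beta'\otimes\zeta^{-1})$ is unchanged. For (iii), both source and target carry a residual $M_{\Int_p}(\Int_p)$-action, and the map is equivariant by direct inspection of the construction; the torsor property on the source is standard, and on the target is Proposition~\ref{prop:igusa_tower}. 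The main obstacle is step (i), where one has to carefully match up the $G_{\Int_p}$-structures across the étale, crystalline, and de~Rham realizations using the canonical lift as a pivot; once this bookkeeping is done, the other steps are formal.
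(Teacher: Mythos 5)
Your proposal is correct and follows essentially the same route as the paper: descent through the contraction product via the weight $-1$ action of $\mu_v$ on $H^1_{\Int_p}$, followed by a pointwise check at canonical lifts using Remarks~\ref{rem:M_reduction_over_canonical_lift} and~\ref{rem:igusa_tower_over_canonical_lift}, with local constancy of the torsors reducing everything to one classical point per connected component. The only cosmetic difference is that the paper folds your steps (i) and (iii) into a single "test over a classical point in each component" argument rather than invoking torsor-equivariance abstractly.
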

\begin{proof}
	First, note that, because of the centrality of $\mu_v^{-1}$, the quotient on the left inherits the structure of an $M_{\Int_p}(\Int_p)$-torsor. The fact that~\eqref{eqn:igusa_tower_comp_map} factors through this quotient is immediate from the construction, and amounts to the observation that $\mu_v(z)$ acts on $H^1_{\Int_p}$ via multiplication by $z^{-1}$. To see that it maps isomorphically onto the other $M_{\Int_p}(\Int_p)$-torsor $\widehat{\mathrm{Ig}}^{\mathrm{ord},\an}_{K,v}$, it suffices to test over a classical point in each connected component of $\widehat{\Ss}^{\ord,\an}_{K,v}$. We can do so over the canonical lifts of the $\overline{\Field}_p$-points of $\Ss^{\ord}_{K,k(v)}$ where Remarks~\ref{rem:M_reduction_over_canonical_lift} and~\ref{rem:igusa_tower_over_canonical_lift} do the job.
\end{proof}

\section{Hecke correspondences}\label{sec:hecke}

This section is a review of $p$-Hecke correspondences on Shimura varieties via isogenies of abelian varieties. We will consider both the generic fiber and the ordinary locus. The setup will be as in \S~\ref{subsec:background_on_shimura_varieties_of_hodge_type}.

\subsection{Isogenies and $p$-Hecke correspondences in the generic fiber}
\label{subsec:isogenies_and_p_hecke_correspondences_in_the_generic_fiber}

\begin{definition}
\label{defn:isogG_generic_pointwise}
Suppose that $L$ is an algebraically closed field in characteristic $0$ and that we have $s,t\in \Sh_K(L)$. A $p$-quasi-isogeny \footnote{By this, we mean an element $f\in \Hom(\mathcal{A}_{s},\mathcal{A}_{t})\otimes\Rat$ such that, for some $m,n\ge 0$, $p^mf\in \Hom(\mathcal{A}_{s},\mathcal{A}_{t})$ is an isogeny of degree $p^n$.} $f:\mathcal{A}_s\dashrightarrow \mathcal{A}_t$ \defnword{preserves $G$-structure} if the associated isomorphism $f^*:\bm{H}_{p,t}[p^{-1}]\xrightarrow{\simeq}\bm{H}_{p,t}[p^{-1}]$ carries $\{s_{\alpha,p,t}\}$ to $\{s_{\alpha,p,s}\}$. The \defnword{type} of such a $p$-quasi-isogeny is the unique class 
\[
\pow{g}\in G_{\Int_p}(\Int_p)\backslash G(\Rat_p)/G_{\Int_p}(\Int_p) = K_p\backslash G(\Rat_p)/K_p
\]
such that, for any choice of trivialization $\alpha:H_{\Int_p}\xrightarrow{\simeq}\bm{H}_{p,s}$ (resp. $\beta:H_{\Int_p}\xrightarrow{\simeq}\bm{H}_{p,t}$) carrying $\{s_{\alpha}\}$ to $\{s_{\alpha,p,s}\}$ (resp. to $\{s_{\alpha,p,t}\}$), we have $\beta^{-1}\circ (f^*)^{-1}\circ \alpha\in \pow{g}\subset G(\Rat_p)$.
\end{definition}

\begin{definition}
	\label{defn:isogG_generic}
For $s,t:\Spec R\to \Sh_K$ arbitrary, a $p$-quasi-isogeny $\mathcal{A}_s\dashrightarrow \mathcal{A}_t$ \defnword{preserves $G$-structure} (\defnword{and has type $\pow{g}$}) if its restriction over every geometric point of $\Spec R$ does so.\footnote{For connected $\Spec R$ it is enough to check at \emph{some} geometric point.}
\end{definition}

\begin{notation}
	With $s,y:\Spec R\to \Sh_K$ as above, write $\mathrm{QIsog}_G(s,t)$ for the space of $p$-quasi-isogenies from $\mathcal{A}_s$ to $\mathcal{A}_t$ preserving $G$-structure. Write $\mathrm{Isog}_G(s,t)$ for its subspace consisting of honest isogenies $\mathcal{A}_s\to \mathcal{A}_t$ and $\mathrm{QIsog}_{G,\pow{g}}(s,t)$ for the subspace consisting of quasi-isogenies of type $\pow{g}$. If $s=t$, we will write $\Aut^\circ_G(s)$ instead of $\mathrm{QIsog}_G(s,s)$. We will view each of $\mathrm{QIsog}_G,\mathrm{QIsog}_{G,\pow{g}},\mathrm{Isog}_G$ as presheaves over $\Sh_K\times \Sh_K$\footnote{The product is over $\Spec E$.} via the pair $(s,t)$.
\end{notation}

\begin{remark}
	\label{rem:isogG_generic_type_hyperspecial}
If $G_{\Int_p}$ is reductive, then, for a choice of maximal torus $T\subset G_{\Int_p}$ as in Remark~\ref{rem:hyperspecial_mu_integral}, the Cartan decomposition tells us that every double coset $\pow{g}$ admits a unique representative of the form $\lambda(p)$ for some dominant cocharacter $\lambda\in X_*(T)$ defined over $\Int_p$. In this situation, we will also write $\mathrm{QIsog}_{G,\lambda}(s,t)$ instead of  $\mathrm{QIsog}_{G,\pow{g}}(s,t)$. 
\end{remark}

\begin{construction}
	For any double coset $\pow{g}\in  K_p\backslash G(\Rat_p)/K_p$ and any choice of $h\in \pow{g}\subset G(\Rat_p)$, set $K_g = K \cap gKg^{-1}$. We have two maps $s_g,t_g:\Sh_{K_g}\to \Sh_K$ where $s_g$ arises from conjugation by $g^{-1}$ and $t_g$ from the natural inclusion, using which we can view $\Sh_{K_g}$ as a scheme over $\Sh_K\times \Sh_K$.
\end{construction}

\begin{proposition}
\label{prop:isogG_generic_desc}
There is an isomorphism $\mathrm{QIsog}_{G,\pow{g}}\xrightarrow{\simeq}\Sh_{K_g}$ over $\Sh_K\times \Sh_K$.
\end{proposition}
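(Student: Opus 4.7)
My plan is to construct a morphism $\Phi:\Sh_{K_g}\to \mathrm{QIsog}_{G,\pow{g}}$ over $\Sh_K\times\Sh_K$, and then verify it is an isomorphism by checking bijectivity on geometric points and invoking that both sides are finite étale over $\Sh_K$ via the target projection.

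To construct $\Phi$, I would exploit the adelic/Hodge-type moduli description in the generic fiber. A point of $\Sh_{K_g}$ over an $E$-scheme $T$ gives, via the two projections $s_g,t_g$, a pair $s,t\in\Sh_K(T)$, and the pullbacks of the universal abelian scheme on $\Sh_K$ inherit two $K$-level structures that agree away from $p$ and differ at $p$ by $g\in G(\Rat_p)$, owing to $K_g=K\cap gKg^{-1}$. In the complex-analytic uniformization $(x,aK_g)\mapsto\bigl((x,agK),(x,aK)\bigr)$, the rational Hodge structures on the two sides are canonically identified by $x$ and the prime-to-$p$ adelic lattices coincide, so one obtains a canonical $p$-quasi-isogeny $f:\mathcal{A}_s\dashrightarrow \mathcal{A}_t$ whose relative position at $p$ is precisely $g$. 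The tensors $\{s_{\alpha,p}\}$ of Lemma~\ref{lem:Hp_Gstruc} are then automatically preserved by $f^*$, so $f$ is $G$-structure preserving and of type $\pow{g}$, yielding the desired $\Phi$.

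To prove bijectivity of $\Phi$ on geometric points, fix an algebraically closed $L$ of characteristic $0$ and $(s,t)\in(\Sh_K\times\Sh_K)(L)$. Given $f\in\mathrm{QIsog}_{G,\pow{g}}(s,t)$, use Lemma~\ref{lem:Hp_Gstruc} to choose tensor-preserving trivializations $\alpha:H_{\Int_p}\xrightarrow{\simeq}\bm{H}_{p,s}$ and $\beta:H_{\Int_p}\xrightarrow{\simeq}\bm{H}_{p,t}$; by definition of type, $\beta^{-1}\circ(f^*)^{-1}\circ\alpha\in K_p g K_p$, and adjusting $\alpha,\beta$ by elements of $K_p$ (which does not alter the induced $K$-level structures) I may arrange this element to equal $g$ exactly. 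Combined with the prime-to-$p$ identification of Tate modules induced by $f$---which respects the $K^p$-level structures of $s$ and $t$ since $f$ is a $p$-quasi-isogeny---this produces an adelic element well-defined modulo the $K_g$-action, hence a point of $\Sh_{K_g}(L)$ mapping to $(s,t)$. This construction is manifestly inverse to $\Phi$ at the level of $L$-points and is natural in $L$.

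Both $\mathrm{QIsog}_{G,\pow{g}}$ and $\Sh_{K_g}$ are finite étale over $\Sh_K$ via the target projection: the former because its fiber over any geometric point is controlled by the finite double coset set $K_p\backslash K_pgK_p$, and the latter by the classical theory of level coverings. As $\Phi$ is compatible with the target projections and is a bijection on geometric points, it is an isomorphism. The main delicacy I anticipate is the family version of the forward construction, namely making rigorous that the two $K$-level structures on the pullback of the universal abelian scheme over $\Sh_{K_g}$ assemble to a genuine $p$-quasi-isogeny between the pullbacks preserving the package $\{s_{\alpha,p}\}$; this is precisely where the Hodge-type hypothesis enters, guaranteeing that the relative position of the two $\Int_p$-lattices lies in $G(\Rat_p)\subset \GL(H\otimes\Rat_p)$ rather than merely in the general linear group.
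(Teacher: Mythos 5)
Your overall strategy (produce a map in one direction, check it on geometric points, conclude via \'etaleness) is workable in principle and lives in the same adelic double-coset world as the paper's argument, but as written it has two genuine gaps, and they sit exactly where the content of the proposition lies.

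First, the forward construction. You build the quasi-isogeny over $\Sh_{K_g}$ out of the complex uniformization $(x,aK_g)\mapsto\bigl((x,agK),(x,aK)\bigr)$. That produces an analytic quasi-isogeny over $\Sh_{K_g,\Comp}$; to get a morphism $\Sh_{K_g}\to \mathrm{QIsog}_{G,\pow{g}}$ of $E$-stacks you still need algebraization (relative GAGA for the universal abelian scheme) and descent from $\Comp$ to $E$, neither of which you address --- you flag the family version as ``the main delicacy'' and then leave it open. The paper sidesteps this entirely by working algebraically from the start: it uses the $K_p$-torsor $\Sh_{K^p}\to\Sh_K$ of trivializations of $\bm{H}_p$ and the identification $\Sh_{K_g}\simeq \Sh_{K^p}\times^{K_p}[K_pgK_p/K_p]$, and the key mechanism is the dictionary between $\Int_p$-lattices in $\bm{H}_{p,t}[p^{-1}]$ and $p$-quasi-isogenies into $\mathcal{A}_t$: given a locally constant coset $g_1K_p\subset\pow{g}$ and a trivialization $\beta$, the local system $\beta[p^{-1}](\underline{g_1K_p})$ is of the form $(f^*)^{-1}(T_p(\mathcal{A})^\vee)$ for a quasi-isogeny $f$. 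That dictionary (or something equivalent) is unavoidable, and your write-up never invokes it.

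Second, your concluding step requires knowing in advance that $\mathrm{QIsog}_{G,\pow{g}}$ is representable and (finite) \'etale over $\Sh_K$ via the target projection. Your justification --- that the geometric fibers are controlled by the finite set $K_p\backslash K_pgK_p$ --- only bounds the fibers of a presheaf; it gives neither representability nor \'etaleness, both of which are needed before ``bijective on geometric points'' can be upgraded to ``isomorphism.'' One can fill this with standard facts (Hom-schemes of abelian schemes are unramified and locally of finite type, quasi-isogenies lift uniquely through nilpotents, the tensor and type conditions are open and closed), but it is not automatic. The paper's proof needs none of this: it constructs mutually inverse maps of presheaves functorially in $R$-points, so representability of $\mathrm{QIsog}_{G,\pow{g}}$ comes out as a consequence rather than being an input.
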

\begin{proof}
	Let's begin by constructing an isomorphism such that we have a commuting diagram
	\[
	\begin{diagram}
		\mathrm{QIsog}_{G,\pow{g}}&\rTo^{\simeq}&\Sh_{K_g}\\
		&\rdTo(1,2)_t\ldTo(1,2)_{t_g}\\
		&\Sh_K
	\end{diagram}
	\]
	For this, set $K_{g,p} = K_p\cap gK_pg^{-1}$ and note that we have canonical bijections
	\[
		K/K_g \xrightarrow{\simeq}K_p/K_{g,p}\xrightarrow[\simeq]{kK_{g,p}\mapsto kgK_p}K_pgK_p/K_p.
	\]
	From this one deduces: If $\Sh_{K^p}\to \Sh_K$ is the $K_p$-torsor parameterizing trivializations of $\bm{H}_p$ with its $G_{\Int_p}$-structure, then we have
	\[
		\Sh_{K^p}\times^{G_{\Int_p}(\Int_p)}[K_pg K_p/K_p]\xrightarrow{\simeq}\Sh_{K_g},
	\]
	so that we have a canonical isomorphism
	\[
		\Sh_{K^p}\times_{\Sh_K,t_g}\Sh_{K_g}\xrightarrow{\simeq}\Sh_{K^p}\times [K_pg K_p/K_p].
	\]

	On the other hand, suppose that we have $s,t:\Spec R\to \Sh_K$, a $G_{\Int_p}$-structure preserving trivialization $\beta:\underline{H}_{\Int_p}\xrightarrow{\simeq}\bm{H}_{p,t}$ of $p$-adic local systems over $\Spec R$, and a $G_{\Int_p}$-structure preserving $p$-quasi-isogeny $f:\mathcal{A}_s\dashrightarrow \mathcal{A}_t$ of type $\pow{g}$. Consider the composition
	\[
		\gamma:\bm{H}_{p,s}\xrightarrow{(f^*)^{-1}}\bm{H}_{p,t}[p^{-1}]\xrightarrow[\simeq]{\beta^{-1}}\underline{H}_{\Int_p}[p^{-1}].
	\]
	For any geometric point $y:\Spec L\to \Spec R$, since $f$ has type $\pow{g}$, we have
	\[
	\gamma_y(\bm{H}_{p,s\circ y}) = g_1^{-1}H_{\Int_p}\subset H_{\Rat_p}
	\]
	for some coset $g_1K_p\subset \pow{g}$, which is locally constant on $\Spec R$.
	In this way, we get a map
	\[
	\Sh_{K^p}\times_{\Sh_K,t}\mathrm{QIsog}_{G,\pow{g}}\to \Sh_{K^p}\times [K_pg K_p/K_p]\xrightarrow{\simeq}\Sh_{K^p}\times_{\Sh_K,t_g}\Sh_{K_g}.
	\]
  One checks that it is $K_p$-equivariant and so descends to a map $\mathrm{QIsog}_{G,\pow{g}}\to \Sh_{K_g}$.

  To construct the inverse, suppose now that we have $t:\Spec R\to \Sh_K$, $\beta:\underline{H}_{\Int_p}\xrightarrow{\simeq}\bm{H}_{p,t}$ and a locally constant map $\gamma:\Spec R\to K_pg^{-1} K_p/K_p$. We want to show that this arises from a pair $(s,f)$, where $s\in \Sh_K(R)$ and $f\in \mathrm{QIsog}_{G,\pow{g}}(s,t)$. Without loss of generality, we can assume that $\gamma$ is constant and corresponds to a coset $g_1K_p\subset \pow{g}$. We now get a $p$-adic local system
  \[
  	\beta[p^{-1}](\underline{g_1K_p})\subset \bm{H}_{p,t}[p^{-1}],
  \]
  which is of the form $(f^*)^{-1}(T_p(\mathcal{A})^\vee)$ for a $p$-quasi-isogeny $\mathcal{A}\dashrightarrow \mathcal{A}_t$ of abelian schemes over $\Spec R$. One checks now that we have $\mathcal{A}\simeq \mathcal{A}_s$ where $s\in \Sh_K(R)$ is the image of $(t,\beta,\gamma)$ under the map
  \[
  	\Sh_{K^p}\times [K_pg K_p/K_p]\xrightarrow{\simeq}\Sh_{K^p}\times_{\Sh_K,t_g}\Sh_{K_g}\xrightarrow{\mathrm{pr}_2}\Sh_{K_g}\xrightarrow{s_g}\Sh_K.
  \]

  This both completes the construction of the desired isomorphism and also shows that it is compatible with the maps $s$ and $s_g$.
\end{proof}

\subsection{Ordinary isogenies and $p$-Hecke correspondences}
\label{sub:ordinary_isogenies_and_p_hecke_correspondences}

Fix a place $v\vert p$ of $E$.

\begin{definition}
\label{defn:isogG_special_fiber}
	Suppose that $\kappa$ is algebraically closed in characteristic $p$ and that we have $s_0,t_0\in \Ss_{K,(v)}(\kappa)$. A $p$-quasi-isogeny  $f:\mathcal{A}_{s_0}\dashrightarrow \mathcal{A}_{t_0}$ \defnword{preserves $G$-structure} if the associated isomorphism $f^*:\bm{H}_{\cris,t_0}[p^{-1}]\xrightarrow{\simeq} \bm{H}_{\cris,s_0}[p^{-1}]$ carries $\{s_{\alpha,\cris,t_0}\}$ to $\{s_{\alpha,\cris,s_0}\}$. In general, if we have $s,t:\Spf R\to \Ss_{K,v}$ for some $p$-complete ring $R$, then a $p$-quasi-isogeny $\mathcal{A}_s\dashrightarrow\mathcal{A}_t$ \defnword{preserves $G$-structure} if its restriction over every algebraically closed point of $\Spf R$ preserves $G$-structure. 
\end{definition}

\begin{notation}
	With $s,t:\Spf R\to \Ss_{K,v}$ as above, write $\mathrm{QIsog}_G(s,t)$ for the space of $p$-quasi-isogenies from $\mathcal{A}_s$ to $\mathcal{A}_t$ preserving $G$-structure, and $\mathrm{Isog}_G(s,t)$ for its subspace consisting of honest isogenies $\mathcal{A}_s\to \mathcal{A}_t$. If $s=t$, we will write $\Aut^\circ_G(s)$ instead of $\mathrm{QIsog}_G(s,s)$.
\end{notation}

From here on, we will restrict ourselves to the completion along the ordinary locus $\widehat{\Ss}_{K,v}^{\ord}$ with Assumption~\ref{assump:canonical_lift} in force.

\begin{definition}
		If we have $s,t:\Spf R\to \widehat{\Ss}^{\ord}_{K,v}$ for some $p$-complete ring $R$, a quasi-isogeny $\mathcal{G}^{\et}_s\times \mathcal{G}^{\mult}_s\dashrightarrow\mathcal{G}^{\et}_t\times \mathcal{G}^{\mult}_t$ \defnword{preserves $M$-structure} if its restriction over every algebraically closed point of $\Spf R$ preserves $G$-structure. Here, we are using the property that, for all algebraically closed points $x_0\in \Ss^{\ord}_{K,k(v)}(\kappa)$, we have a canonical isomorphism $\mathcal{G}_{x_0}\xrightarrow{\simeq}\mathcal{G}^{\et}_{x_0}\times \mathcal{G}^{\mult}_{x_0}$.
\end{definition}

\begin{notation}
	With $s,t:\Spf R\to \Ss_{K,v}$ as above, write $\mathrm{QIsog}_M(s,t)$ for the space of quasi-isogenies from $\mathcal{G}^{\et}_s\times \mathcal{G}^{\mult}_s$ to $\mathcal{G}^{\et}_t\times \mathcal{G}^{\mult}_t$ preserving $M$-structure, and $\mathrm{Isog}_M(s,t)$ for its subspace consisting of honest isogenies $\mathcal{A}_s\to \mathcal{A}_t$.
\end{notation}

\begin{definition}
	\label{defn:isogG_ordinary_type_pointwise}
If $\kappa$ is algebraically closed and we have $f\in \mathrm{QIsog}_G(s_0,t_0)$ for $s_0,t_0\in \Ss^{\ord}_{K,k(v)}(\kappa)$, the \defnword{type} of $f$ is the double coset $\pow{m(f)}\in M_{\Int_p}(\Int_p)\backslash M(\Rat_p)/M_{\Int_p}(\Int_p)$ determined as follows. Choose trivializations 
\[
	\alpha:W(\kappa)\otimes_{\Int_p}H_{\Int_p}\xrightarrow{\simeq}\bm{H}_{\cris,s_0}\;;\; \beta:W(\kappa)\otimes_{\Int_p}H_{\Int_p}\xrightarrow{\simeq}\bm{H}_{\cris,t_0}
\]
witnessing the ordinariness of $s_0$ and $t_0$: these are each well-defined up to multiplication by $M_{\Int_p}(\Int_p)$. Then $\pow{m(f)}$ is the class of $\beta\circ (f^*)^{-1}\circ\alpha^{-1}\in M(\Rat_p)$.
\end{definition}

\begin{definition}
\label{defn:isogG_ordinary_type}
	If we have $s,t:\Spf R\to \widehat{\Ss}^{\ord}_{K,v}$, $f\in \mathrm{Isog}_G(s,t)$\footnote{It is important to work with honest isogenies here, since we will have use for their deformation theory. Quasi-isogenies on the other hand always deform uniquely.} and $\pow{m}\in M_{\Int_p}(\Int_p)\backslash M(\Rat_p)/M_{\Int_p}(\Rat_p)$ then we will say that $f$ \defnword{has type }$\pow{m}$ if its restriction to each algebraically closed point of $\Spf R$ has this property. We will write $\mathrm{QIsog}_{G,\pow{m}}(s,t)$ for the subspace consisting of such $f$.
\end{definition}

\begin{example}
\label{ex:frobenius_type}
	Suppose that $R$ is an $\Field_p$-algebra and that we have $s:\Spec R\to \Ss_{K,k(v)}^{\ord}$. Hitting $s$ with the absolute Frobenius of $R$ gives another point $s^{(1)}\in  \Ss_{K,k(v)}^{\ord}(R)$ and we have the relative Frobenius map $F_s:\mathcal{A}_s\to \mathcal{A}_{s^{(1)}}$. This is a $p$-isogeny, and is in fact $G$-structure preserving with type $[\mu_v(p)^{-1}]$.
\end{example}

\begin{remark}
Definition~\ref{defn:isogG_ordinary_type} gives something non-empty only when $\pow{m}$ is contained in $M(\Rat_p)\cap \End(H_{\Int_p})$. This will suffice for our purposes, but one can extend the definition usefully to all double cosets $\pow{m}$ in the following way: Choose $r\ge 0$ such that $\pow{p^rm}$ is contained in $M(\Rat_p)\cap \End(H_{\Int_p})$, and define $\mathrm{QIsog}_{G,\pow{m}}(s,t)$ to be the subspace of $\mathrm{QIsog}_G(s,t)$ consisting of $f$ such that $p^r$ lies in $\mathrm{QIsog}_{G,\pow{p^rm}}(s,t)$. By definition, multiplication-by-$p^r$, gives an isomorphism $\mathrm{QIsog}_{G,\pow{m}}(s,t)\xrightarrow{\simeq}\mathrm{QIsog}_{G,\pow{p^rm}}(s,t)$.
\end{remark}

\begin{definition}
	\label{defn:isogM_ordinary_type}
If we have $s,t:\Spf R\to \widehat{\Ss}^{\ord}_{K,v}$, $f\in \mathrm{QIsog}_M(s,t)$ and $\pow{m}\in M_{\Int_p}(\Int_p)\backslash M(\Rat_p)/M_{\Int_p}(\Int_p)$ then we will say that $f$ \defnword{has type }$\pow{m}$ if its restriction to each algebraically closed point of $\Spf R$ has this property. We will write $\mathrm{QIsog}_{M,\pow{m}}(s,t)$ for the subspace consisting of such $f$.
\end{definition}

\begin{notation}
	Fix a double coset $\pow{m}\in M_{\Int_p}(\Int_p)\backslash M(\Rat_p)/M_{\Int_p}(\Int_p)$. Let
\[
	\widehat{\mathrm{QIsog}}^{\ord}_{G,\pow{m}}\xrightarrow{(s,t)} \widehat{\Ss}^{\ord}_{K,v}\times \widehat{\Ss}^{\ord}_{K,v}\footnote{The product is of formal stacks over $\Spf \Reg{E,v}$.}\;;\;	\widehat{\mathrm{QIsog}}^{\ord}_{M,\pow{m}}\xrightarrow{(s,t)} \widehat{\Ss}^{\ord}_{K,v}\times \widehat{\Ss}^{\ord}_{K,v}
\]
be the formal presheaves given by $\mathrm{QIsog}_{G,\pow{m}}(s,t)$ and $\mathrm{QIsog}_{M,\pow{m}}(s,t)$, respectively. 
\end{notation}

\begin{remark}
	\label{rem:isogM_ordinary_type_hyperspecial}
If $G_{\Int_p}$ is reductive, then so is $M_{\Int_p}$, and for the choice of maximal torus $T\subset G_{\Int_p}$ as in Remark~\ref{rem:hyperspecial_mu_integral}, the Cartan decomposition tells us that every double coset $\pow{m}$ admits a unique representative of the form $\lambda(p)$ for some cocharacter $\lambda\in X_*(T)$ defined over $\Int_p$ and dominant \emph{with respect to} $M_{\Int_p}$. In this situation, we will replace the $\pow{m}$ in the subscript with $\lambda$, and write $\mathrm{QIsog}_{G,\lambda}(s,t)$ and $\mathrm{QIsog}_{M,\lambda}(s,t)$ instead. This notation will also be used with $\lambda$ replaced by any other cocharacter in its $M_{\Int_p}$-conjugacy class. 
\end{remark}

\begin{construction}
Suppose that we have $s,t:\Spf R\to \widehat{\Ss}^{\ord}_{K,v}$, $f\in \mathrm{QIsog}_{M,\pow{m}}(s,t)$ and a section
\[
\beta:R\otimes_{\Int_p}\mathcal{G}_0\xrightarrow{\simeq}\mathcal{G}^{\et}_{t}\times \mathcal{G}^{\mult}_t 
\]
of $\widehat{\mathrm{Ig}}^{\ord}_{K,v}(t)$. For any section $\alpha\in\widehat{\mathrm{Ig}}^{\ord}_{K,v}(s)$, the composition
\[
	R\otimes_{\Int_p}\mathcal{G}_0\xrightarrow[\simeq]{\alpha}\mathcal{G}^{\et}_s\times \mathcal{G}^{\mult}_s\dashrightarrow \mathcal{G}^{\et}_t\times \mathcal{G}^{\mult}_t\xrightarrow[\simeq]{\beta^{-1}}R\otimes_{\Int_p}\mathcal{G}_0
\]
is a self-quasi-isogeny that preserves $G$-structure, and so yields a locally constant map $\Spf R\to M(\Rat_p)$. The induced map to $M(\Rat_p)/M_{\Int_p}(\Int_p)$ is independent of the choice of $\alpha$ and lands in $M_{\Int_p}(\Int_p) mM_{\Int_p}(\Int_p)/M_{\Int_p}(\Int_p)$. In this way, we obtain a canonical map
\begin{align}\label{eqn:isogM_target}
\widehat{\mathrm{Ig}}^{\ord}_{K,v}\times_{\widehat{\Ss}^{\ord}_{K,v},t}\widehat{\mathrm{QIsog}}^{\ord}_{M,\pow{m}}&\to \widehat{\mathrm{Ig}}^{\ord}_{K,v}\times \underline{M_{\Int_p}(\Int_p) mM_{\Int_p}(\Int_p)/M_{\Int_p}(\Int_p)}.
\end{align}
By noting that taking inverses of quasi-isogenies flips source and target and switches a type $\pow{m}$ to its inverse $\pow{m^{-1}}$, we also obtain a map
\begin{align}\label{eqn:isogM_source}
\widehat{\mathrm{Ig}}^{\ord}_{K,v}\times_{\widehat{\Ss}^{\ord}_{K,v},s}\widehat{\mathrm{QIsog}}^{\ord}_{M,\pow{m}}&\to \widehat{\mathrm{Ig}}^{\ord}_{K,v}\times \underline{M_{\Int_p}(\Int_p) m^{-1}M_{\Int_p}(\Int_p)/M_{\Int_p}(\Int_p)}.
\end{align}
\end{construction}

\begin{notation}
	For a double coset $\pow{m} = M_{\Int_p}(\Int_p)mM_{\Int_p}(\Int_p)$, set
	\[
		\widehat{\mathrm{Ig}}^{\ord}_{\pow{m}} \defn \widehat{\mathrm{Ig}}^{\ord}_{K,v}\times^{M_{\Int_p}(\Int_p)}\underline{M_{\Int_p}(\Int_p)mM_{\Int_p}(\Int_p)/M_{\Int_p}(\Int_p)}.
	\]
\end{notation}

\begin{proposition}
	\label{prop:isogM_desc_without_isoms}
The maps~\eqref{eqn:isogM_target} and~\eqref{eqn:isogM_source} descend to maps sitting in commuting diagrams of formal stacks 
	\begin{align*}
			\begin{diagram}
		\widehat{\mathrm{QIsog}}^{\ord}_{M,\pow{m}}&\rTo&\widehat{\mathrm{Ig}}^{\ord}_{\pow{m}^{-1}}\\
		&\rdTo(1,2)_s\ldTo(1,2)\\
		&\widehat{\Ss}^{\ord}_{K,v}
	\end{diagram}\qquad;\qquad
	\begin{diagram}
		\widehat{\mathrm{QIsog}}^{\ord}_{M,\pow{m}}&\rTo&\widehat{\mathrm{Ig}}^{\ord}_{\pow{m}}\\
		&\rdTo(1,2)_t\ldTo(1,2)\\
		&\widehat{\Ss}^{\ord}_{K,v}
	\end{diagram}
	\end{align*}
where the diagonal maps are finite \'etale over $\widehat{\Ss}_{K,v}^{\ord}$.\footnote{We will see in the next subsection that the top arrows are in fact isomorphisms.}
\end{proposition}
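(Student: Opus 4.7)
The plan is to establish descent of the pre-constructed maps~\eqref{eqn:isogM_target} and~\eqref{eqn:isogM_source} along the $\underline{M_{\Int_p}(\Int_p)}$-torsor structure on $\widehat{\mathrm{Ig}}^{\ord}_{K,v}$, and then to read off finite étaleness of the diagonal maps from the pro-finite étale structure of the Igusa tower combined with finiteness of the relevant double coset.

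For descent of~\eqref{eqn:isogM_target}, fix a point $(\beta, f)$ of $\widehat{\mathrm{Ig}}^{\ord}_{K,v}\times_{\widehat{\Ss}^{\ord}_{K,v},t}\widehat{\mathrm{QIsog}}^{\ord}_{M,\pow{m}}$ and let $\alpha$ be an auxiliary trivialization at $s$, so that the image is $(\beta, [\beta^{-1}\circ f\circ \alpha])$ with the coset taken modulo $M_{\Int_p}(\Int_p)$ on the right; the construction recalled in the excerpt already shows that this is independent of $\alpha$. Replacing $\beta$ by $\beta \cdot g$ for $g \in M_{\Int_p}(\Int_p)$ replaces the composition by $g^{-1}\circ\beta^{-1}\circ f\circ \alpha$, so the coset changes by left multiplication by $g^{-1}$. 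This is exactly the equivalence relation $(\beta g, c)\sim(\beta, g\cdot c)$ defining the contracted product $\widehat{\mathrm{Ig}}^{\ord}_{\pow{m}}$, so~\eqref{eqn:isogM_target} descends to the desired map $\widehat{\mathrm{QIsog}}^{\ord}_{M,\pow{m}}\to \widehat{\mathrm{Ig}}^{\ord}_{\pow{m}}$ commuting with $t$. The argument for~\eqref{eqn:isogM_source} is entirely symmetric: taking $\alpha$ as primary and $\beta$ auxiliary, inverting $f$ gives a quasi-isogeny of type $\pow{m^{-1}}$ (immediate from Definition~\ref{defn:isogG_ordinary_type_pointwise}, as inversion sends the double coset $M_{\Int_p}(\Int_p)m M_{\Int_p}(\Int_p)$ to $M_{\Int_p}(\Int_p)m^{-1} M_{\Int_p}(\Int_p)$), and the same equivariance check produces the descended map to $\widehat{\mathrm{Ig}}^{\ord}_{\pow{m}^{-1}}$ over $s$.

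For the finite étale assertion, the crucial input is that the double coset $M_{\Int_p}(\Int_p) m M_{\Int_p}(\Int_p)/M_{\Int_p}(\Int_p)$ is a \emph{finite} set: the stabilizer $M_{\Int_p}(\Int_p)\cap m M_{\Int_p}(\Int_p) m^{-1}$ is open in the compact group $M_{\Int_p}(\Int_p)$. Hence the action of $M_{\Int_p}(\Int_p)$ on this finite set factors through a finite quotient $M_{\Int_p}(\Int_p/p^n)$, and the contracted product $\widehat{\mathrm{Ig}}^{\ord}_{\pow{m}}$ is obtained by pushing the finite-level $\underline{M_{\Int_p}(\Int_p/p^n)}$-torsor (a constituent of the pro-system defining $\widehat{\mathrm{Ig}}^{\ord}_{K,v}$ via Proposition~\ref{prop:igusa_tower}) along this action on a finite set. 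The result is a finite disjoint union of quotients of finite étale covers by subgroups of $M_{\Int_p}(\Int_p/p^n)$, hence finite étale over $\widehat{\Ss}^{\ord}_{K,v}$.

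I do not anticipate any genuinely hard step here; the main hazard is careful bookkeeping of left- versus right-actions of $M_{\Int_p}(\Int_p)$ and of the type switch under quasi-isogeny inversion when moving between~\eqref{eqn:isogM_target} and~\eqref{eqn:isogM_source}. The deeper assertion that the top horizontal arrows are isomorphisms of torsors—which is what is needed to compare Hecke correspondences with the Igusa tower—is explicitly deferred to the next subsection per the footnote, and will presumably require Serre–Tate deformation input to upgrade the trivializations $(\alpha,\beta)$ on $\mathcal{G}_0$ into an actual lift of the quasi-isogeny to $R$-level.
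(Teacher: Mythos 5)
Your descent argument is fine and matches the paper's (the paper dismisses it as a straightforward equivariance check; your explicit verification that replacing $\beta$ by $\beta g$ twists the coset by $g^{-1}$, matching the contraction-product relation, and your handling of the type switch $\pow{m}\mapsto\pow{m^{-1}}$ under inversion, are both correct). Your argument that $\widehat{\mathrm{Ig}}^{\ord}_{\pow{m}}\to\widehat{\Ss}^{\ord}_{K,v}$ is finite \'etale --- finiteness of the coset space $M_{\Int_p}(\Int_p)mM_{\Int_p}(\Int_p)/M_{\Int_p}(\Int_p)$ because the stabilizer is open in a compact group, then contraction along a finite-level quotient --- is also correct.

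However, there is a genuine gap: the proposition asserts that \emph{both} diagonal maps in each diagram are finite \'etale, and you have only treated the right-hand ones. The left-hand diagonals are $s,t:\widehat{\mathrm{QIsog}}^{\ord}_{M,\pow{m}}\to\widehat{\Ss}^{\ord}_{K,v}$, and their finite \'etaleness is the substantive geometric content of the statement; it is also exactly what is needed later (in the proof of Corollary~\ref{cor:qisogM_ordinary_desc}) to upgrade a bijection on $\overline{\Field}_p$-points of the top horizontal arrows to an isomorphism, so you cannot recover it from the deferred isomorphism claim without circularity. The paper's argument is: $\widehat{\mathrm{QIsog}}^{\ord}_{M,\pow{m}}$ is an open and closed substack of the stack of isogenies $\mathcal{G}^{\et}_s\times\mathcal{G}^{\mult}_s\to\mathcal{G}^{\et}_t\times\mathcal{G}^{\mult}_t$ of degree bounded by some $p^k$, and via either projection this stack parameterizes finite flat subgroup schemes of rank at most $p^k$ in a split ordinary (\'etale times multiplicative) $p$-divisible group; such subgroup schemes are rigid --- they deform uniquely and are classified by locally constant data over the ordinary base --- so the projection is represented by a finite \'etale stack. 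This rigidity of subgroups of $\mathcal{G}^{\et}\times\mathcal{G}^{\mult}$ is the input your proposal is missing, and without it the finiteness and \'etaleness of $s$ and $t$ do not follow from anything you have written.
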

\begin{proof}
 That the map descends amounts to checking $K_p$-equivariance, which is straightforward.

 By construction, $\widehat{\mathrm{QIsog}}^{\ord}_{M,\pow{m}}$ is isomorphic---for some $k\ge 1$ sufficiently large---to an open and closed substack of the formal stack $\widehat{\mathrm{Isog}}_{p^k}$ over $\widehat{\Ss}^{\ord}_{K,v}\times \widehat{\Ss}^{\ord}_{K,v}$ whose fiber over a point $(s,t)$ parameterizes $p$-isogenies
 \[
 	\mathcal{G}^{\et}_s\times \mathcal{G}^{\mult}_s \to \mathcal{G}^{\et}_t\times \mathcal{G}^{\mult}_t
 \]
 of degree bounded by $p^k$. Via the projection onto the factor via $s$ (resp. $t$) this formal stack is simply parameterizing finite flat subgroup schemes of $\mathcal{G}^{\et}_s\times \mathcal{G}^{\mult}_s$ (resp. $\mathcal{G}^{\et}_t\times \mathcal{G}^{\mult}_t$) of rank at most $p^k$. This is represented by a finite \'etale stack over $\widehat{\Ss}^{\ord}_{K,v}$.
\end{proof}

\begin{construction}
\label{const:torus_for_isogenies}
Fix an algebraically closed field $\kappa$ of characteristic $p$. For $m\in M(\Rat_p)\cap \End(H_{\Int_p})$, write $m^0,m^1$ for the induced endomorphisms of $H^0_{\Int_p}$ and $H^1_{\Int_p}$. We then get two maps
\[
\_\circ m^1, m^0\circ \_:\Lie U^-_{\mu_v}\to \Hom(H^1_{\Int_p},H^0_{\Int_p})
\]
which in turn give us maps $\psi^1_m,\psi^0_m:\widehat{T}_G \to \widehat{T}$ of formal tori over $W(\kappa)$.

Let $\widehat{T}_{G,m}\subset \widehat{T}_{G}\times \widehat{T}_G$\footnote{The product here is over $\Spf W(\kappa)$.} be the diagonalizable formal group sitting in an exact sequence
\[
1\to \widehat{T}_{G,m}\xrightarrow{(s_m,t_m)} \widehat{T}_{G}\times \widehat{T}_G\xrightarrow{\psi^1_m\circ \mathrm{pr}_1-\psi^0_m\circ \mathrm{pr}_2}\widehat{T}.
\]
\end{construction}

\begin{remark}
	\label{rem:TGm_finite_flat}
The maps $\_\circ m^1, m^0\circ \_$ are injective, and after inverting $p$ one differs from the other by conjugation by $m$. This implies that the maps $s_m$ and $t_m$ are both finite flat maps that are purely inseparable mod-$p$: Indeed, the corresponding maps of character groups are isomorphisms of $\Rat_p$-vector spaces after inverting $p$.
\end{remark}

\begin{proposition}
	\label{prop:isogG_ordinary_deformation_rings}
Suppose that we have $(s_0,t_0,f)\in \widehat{\mathrm{QIsog}}^{\ord}_{G,\pow{m}}(\kappa)$, and let $\widehat{U}_{(s_0,t_0,f)}$ be the deformation functor for $\widehat{\mathrm{QIsog}}^{\ord}_{G,\pow{m}}$ on $\mathrm{Art}_{W(\kappa)}$. Then we can choose isomorphisms $\widehat{T}_G\xrightarrow{\simeq}\widehat{U}_{s_0}$ and $\widehat{T}_G\xrightarrow{\simeq}\widehat{U}_{t_0}$ as in Proposition~\ref{prop:ordinary_deformations} such that we have a commuting diagram
\[
	\begin{diagram}
		\widehat{T}_{G,m}&\rTo^{(s_m,t_m)}&\widehat{T}_G\times \widehat{T}_G\\
		\dTo^{\simeq}&&\dTo_{\simeq}\\
		\widehat{U}_{(s_0,t_0,f)}&\rTo_{(s,t)}&\widehat{U}_{s_0}\times \widehat{U}_{t_0}
	\end{diagram}
\]
where the left vertical arrow is also an isomorphism.
\end{proposition}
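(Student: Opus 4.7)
The strategy is to compute the deformation functor $\widehat{U}_{(s_0,t_0,f)}$ via the explicit Serre--Tate/crystalline description from Remarks~\ref{rem:classical_serre-tate}--\ref{rem:crystalline_realization_serre-tate} and match it directly against the kernel defining $\widehat{T}_{G,m}$.

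First, I would fix trivializations witnessing the ordinariness of $s_0$ and $t_0$ that are compatible with $f$, i.e. chosen so that the induced map on contravariant Dieudonn\'e modules $f^*:\bm{H}_{\cris,t_0}[p^{-1}]\xrightarrow{\simeq}\bm{H}_{\cris,s_0}[p^{-1}]$ becomes the constant endomorphism $1\otimes m$ for a fixed representative $m\in M(\Rat_p)\cap\End(H_{\Int_p})$ of the class $\pow{m}$. Such a pair exists by the very definition of the type (Definition~\ref{defn:isogG_ordinary_type_pointwise}), where the $M_{\Int_p}(\Int_p)$-ambiguity in each of $\alpha,\beta$ matches the freedom in choosing a representative of the double coset. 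These trivializations then determine the isomorphisms $\widehat{U}_{s_0}\xrightarrow{\simeq}\widehat{T}_G$ and $\widehat{U}_{t_0}\xrightarrow{\simeq}\widehat{T}_G$ through Proposition~\ref{prop:ordinary_deformations}.

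Next, by Tate rigidity for quasi-isogenies of $p$-divisible groups combined with Serre--Tate theory for the abelian schemes, a lift of $f$ between chosen lifts of $s_0$ and $t_0$ is unique when it exists, so the natural map $\widehat{U}_{(s_0,t_0,f)}\to\widehat{U}_{s_0}\times\widehat{U}_{t_0}\simeq\widehat{T}_G\times\widehat{T}_G$ is a monomorphism whose image consists of those $(q_s,q_t)$ for which $f$ deforms. Using Remark~\ref{rem:crystalline_realization_serre-tate}, the Dieudonn\'e modules of the universal deformations over each copy of $\widehat{T}_G$ are both identified with $R\otimes_{\Int_p}H_{\Int_p}$ carrying Frobenius $\varphi\otimes\mu_v(p)^{-1}$ and connections $\nabla_s=d\otimes 1+\dlog(q_s)$ and $\nabla_t=d\otimes 1+\dlog(q_t)$; under the compatible trivializations, $f^*$ is the constant map $1\otimes m$. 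Frobenius compatibility is automatic because $m\in M$ centralizes $\mu_v(p)$, and since $m$ also preserves the weight decomposition $H=H^0\oplus H^1$, the horizontality condition $(1\otimes m)\circ\nabla_t=\nabla_s\circ(1\otimes m)$ collapses (after killing the $H^0$-component on which both $\dlog(q_s)$ and $\dlog(q_t)$ vanish) to the identity
\[
m^0\circ\dlog(q_t)=\dlog(q_s)\circ m^1
\]
in $\Hom(H^1_{\Int_p},H^0_{\Int_p})\otimes_{\Int_p}\fdiff{R}$. Exponentiating to the formal torus, this is precisely the equation $\psi^0_m(q_t)=\psi^1_m(q_s)$ cutting out $\widehat{T}_{G,m}\subset\widehat{T}_G\times\widehat{T}_G$.

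Surjectivity onto $\widehat{T}_{G,m}$ is then the converse: starting from any pair $(q_s,q_t)$ in the kernel, the calculation above shows that $1\otimes m$ is horizontal for the two connections, and by Berthelot--Ogus and Dieudonn\'e theory this lifts $f$ canonically to a quasi-isogeny of the corresponding deformed $p$-divisible groups and hence, by Serre--Tate, of the abelian schemes. The commutativity of the square is immediate from the construction, since both vertical identifications originate from the same pair of universal deformations. The main obstacle will be the bookkeeping of source versus target in the horizontality calculation---verifying that it is $\psi^1_m$ which pairs with the $q_s$-coordinate and $\psi^0_m$ with the $q_t$-coordinate rather than the reverse, a matchup ultimately dictated by the contravariance of crystalline cohomology and the action of $m$ on the graded pieces $H^0,H^1$. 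A minor secondary point is to confirm that the lift thus produced really is an honest $p$-quasi-isogeny of type $\pow{m}$ between the deformed abelian varieties, which reduces to the already noted rigidity together with the fact that type is a locally constant invariant.
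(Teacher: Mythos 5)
Your overall architecture matches the paper's: choose the trivializations $\alpha,\beta$ witnessing ordinariness so that $f$ becomes the constant element $m$, use Proposition~\ref{prop:ordinary_deformations} to identify $\widehat{U}_{s_0}$ and $\widehat{U}_{t_0}$ with $\widehat{T}_G$, observe that lifts of $f$ are unique when they exist (so the question is to identify the locus in $\widehat{T}_G\times\widehat{T}_G$ where $f$ deforms), and arrive at the equation $\psi^1_m(q_s)=\psi^0_m(q_t)$ with the correct source/target bookkeeping. Where you diverge is in how you characterize that locus: you work with the de Jong--Katz description of the Dieudonn\'e module with connection $d\otimes 1+\dlog(q)$ from Remark~\ref{rem:crystalline_realization_serre-tate} and impose horizontality of $1\otimes m$, whereas the paper works directly with Serre--Tate extension classes: $f^{\et}$ and $f^{\mult}$ lift uniquely, and $f$ lifts precisely when the pullback $(f^{\et})^*\mathcal{G}_t$ and the pushforward $f^{\mult}_*\mathcal{G}_s$ agree as extensions of $\mathcal{G}_0^{\et}$ by $\mathcal{G}_0^{\mult}$; since pullback and pushforward act on $\widehat{\Ext}^1\simeq\widehat{T}$ exactly by $\psi^0_m$ and $\psi^1_m$, the lifting locus is literally the kernel defining $\widehat{T}_{G,m}$.

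The gap in your route is the step ``exponentiating to the formal torus.'' The horizontality condition you derive is $\dlog\bigl(\psi^1_m(q_s)\cdot\psi^0_m(q_t)^{-1}\bigr)=0$, and the kernel of $\dlog$ on $1+\mx_C$ for $C\in\mathrm{Art}_{W(\kappa)}$ is in general strictly larger than $\{1\}$: for instance $\hat{\Omega}^1_{(W(\kappa)/p^2)/W(\kappa)}=0$, so over $C=W(\kappa)/p^2$ every pair $(q_s,q_t)$ satisfies the pointwise horizontality equation, yet $f$ certainly does not lift to every such deformation. So ``$1\otimes m$ is horizontal over $C$'' (with differentials computed relative to $C/W(\kappa)$) is \emph{not} equivalent to the exact equality $\psi^1_m(q_s)=\psi^0_m(q_t)$ in $\widehat{T}(C)$, and the latter is what cuts out $\widehat{T}_{G,m}$. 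To repair this you would have to either (a) phrase the criterion crystal-theoretically, evaluating on a PD-thickening of $C$ rather than on $C$ itself, which after unwinding recovers precisely the exact Serre--Tate equation, or (b) work with the maximal closed formal subscheme of $\widehat{T}_G\times\widehat{T}_G$ over which the universal $1$-form vanishes and prove it equals $\widehat{T}_{G,m}$ (containment in one direction is clear since $\dlog(u-1+1)\in R\,dI$ for $u-1\in I$; maximality is the nontrivial part and is essentially equivalent to the extension-class computation). The paper's argument sidesteps this entirely because the Serre--Tate coordinate \emph{is} the extension class and the functoriality of $\widehat{\Ext}^1$ under $f^{\et}$ and $f^{\mult}$ is exact, not merely exact after applying $\dlog$. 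Your remaining points (Frobenius equivariance of $1\otimes m$ being automatic since $m$ centralizes $\mu_v$, local constancy of the type, uniqueness of lifts) are fine.
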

\begin{proof}
  Choose isomorphisms 
\[
  \mathcal{G}_{t_0} \xleftarrow[\simeq]{\beta}\kappa\otimes_{\Int_p}\mathcal{G}_0\xrightarrow[\simeq]{\alpha}\mathcal{G}_{s_0}
 \]
 witnessing the ordinariness of $t_0$ and $s_0$ such that the $G$-structure preserving self-isogeny
 \[
 \beta^{-1}\circ	f\circ \alpha: \kappa\otimes_{\Int_p}\mathcal{G}_0\to \kappa\otimes_{\Int_p}\mathcal{G}_0
 \]
 corresponds to $m\in M(\Rat_p) = \Aut^{\circ}_G(\mathcal{G}_0)$. By Proposition~\ref{prop:ordinary_deformations}, we can use $\alpha$ and $\beta$ to get isomorphisms $\widehat{T}_G\xrightarrow{\simeq}\widehat{U}_{s_0}$ and $\widehat{T}_G\xrightarrow{\simeq}\widehat{U}_{t_0}$

	Over $\widehat{T}_G\times \widehat{T}_G\simeq \widehat{U}_{s_0}\times \widehat{U}_{t_0}$, we have two extensions $\mathcal{G}_s$ and $\mathcal{G}_t$ of $\mathcal{G}_0^{\et}$ by $\mathcal{G}_0^{\mult}$, and the locus $\widehat{U}_{(s_0,t_0,f)}$ is where the isogeny $f$ lifts to an isogeny between these extensions. Now, the \'etale  part $f^{\et}$ (resp. multiplicative part $f^{\mult}$) of $f$ lifts uniquely, and is given by
	\begin{align*}
		\mathcal{G}_0^{\et} = \underline{\Hom}(H^0_{\Int_p},\Rat_p/\Int_p)&\xrightarrow{\_\circ m^0}\underline{\Hom}(H^0_{\Int_p},\Rat_p/\Int_p) = \mathcal{G}_0^{\et}\\
		\text{(resp. }	\mathcal{G}_0^{\mult} = \underline{\Hom}(H^1_{\Int_p},\mu_{p^\infty})&\xrightarrow{\_\circ m^1}\underline{\Hom}(H^1_{\Int_p},\mu_{p^\infty}) = \mathcal{G}_0^{\mult}).
	\end{align*}

	We now obtain two further extensions over $\widehat{T}_G\times \widehat{T}_G$: One by pulling back $\mathcal{G}_t$ along $f^{\et}$ and the other by pushing forward along $f^{\mult}$. The locus where $f$ lifts is the same as the locus where these two extensions are isomorphic. To finish, one just checks that the first extension is obtained from the map $\psi^0_m:\widehat{T}_G\to \widehat{T}$, and the second from the map $\psi^1_m:\widehat{T}_G\to \widehat{T}$, where we are using Serre-Tate ordinary theory to identify $\widehat{T}$ with the deformation space of such extensions.
\end{proof}

\begin{corollary}
	\label{cor:isogG_ordinary_deformation_desc}
The map $\widehat{\mathrm{QIsog}}^{\mathrm{ord}}_{G,\pow{m}}\to \widehat{\mathrm{QIsog}}^{\mathrm{ord}}_{M,\pow{m}}$ is a finite flat homeomorphism.
\end{corollary}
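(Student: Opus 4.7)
The plan is to combine the explicit deformation-theoretic description from Proposition~\ref{prop:isogG_ordinary_deformation_rings} with the finite \'etaleness of the $M$-side furnished by Proposition~\ref{prop:isogM_desc_without_isoms}, reducing the corollary to the fact, recorded in Remark~\ref{rem:TGm_finite_flat}, that $s_m$ is finite flat and purely inseparable modulo $p$.

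First I would check bijectivity on $\kappa$-points for $\kappa$ algebraically closed of characteristic $p$. By Remark~\ref{rem:slope_filtration_ordinary} there is a canonical $G$-structure preserving splitting $\mathcal{G}_{x_0}\xrightarrow{\simeq}\mathcal{G}^{\et}_{x_0}\times\mathcal{G}^{\mult}_{x_0}$ at every ordinary $\kappa$-point; consequently a $G$-structure preserving $p$-quasi-isogeny $\mathcal{G}_{s_0}\dashrightarrow\mathcal{G}_{t_0}$ of type $\pow{m}$ is literally the same datum as an $M$-structure preserving quasi-isogeny of the corresponding split forms of type $\pow{m}$, and the forgetful map is therefore a bijection on $\kappa$-points.

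Next I would factor the map as
\[
\widehat{\mathrm{QIsog}}^{\ord}_{G,\pow{m}}\xrightarrow{\ \phi\ }\widehat{\mathrm{QIsog}}^{\ord}_{M,\pow{m}}\xrightarrow{\ s\ }\widehat{\Ss}^{\ord}_{K,v}.
\]
By Proposition~\ref{prop:isogM_desc_without_isoms} the second arrow is finite \'etale, while by Proposition~\ref{prop:isogG_ordinary_deformation_rings} the composite $s\circ\phi$ on completed local rings at $(s_0,t_0,f)$ is identified with $s_m\colon\widehat{T}_{G,m}\to\widehat{T}_G$. By Remark~\ref{rem:TGm_finite_flat} this $s_m$ is finite flat and purely inseparable modulo $p$. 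Since $s$ is formally \'etale, both of these properties descend from the composite $s\circ\phi$ to $\phi$ itself.

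Combined with the bijection on $\overline{\Field}_p$-points (which forces the mod-$p$ reduction of $\phi$ to be surjective on closed points, hence surjective since $\phi$ is finite), we conclude that $\phi$ is finite flat and its mod-$p$ reduction is radicial and surjective, hence a universal homeomorphism of $p$-adic formal schemes. The main obstacle is simply the deformation-theoretic bookkeeping that links $\phi$, $s_m$, and the $s$-projections on both sides; once this is arranged using Propositions~\ref{prop:isogG_ordinary_deformation_rings} and~\ref{prop:isogM_desc_without_isoms}, the topological and flatness conclusions follow formally.
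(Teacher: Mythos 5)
Your proposal is correct and follows essentially the same route as the paper: bijectivity on algebraically closed points, reduction to completed local rings via the representability/finiteness of the $M$-side over $\widehat{\Ss}^{\ord}_{K,v}$, identification of the completion with $s_m:\widehat{T}_{G,m}\to\widehat{T}_G$ via Proposition~\ref{prop:isogG_ordinary_deformation_rings}, and then Remark~\ref{rem:TGm_finite_flat}. The only difference is presentational: you make explicit the factorization through the finite \'etale source projection and the descent of finite flatness along it, which the paper leaves implicit.
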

\begin{proof}
	By construction, the map is an isomorphism on algebraically closed points. Moreover, by definition, $\widehat{\mathrm{QIsog}}^{\mathrm{ord}}_{G,\pow{m}}$ is an open and closed substack of the formal stack over $\widehat{\Ss}^{\ord}_{K,v}\times \widehat{\Ss}^{\ord}_{K,v}$ parameterizing isogenies between $\mathrm{pr}_1^* \mathcal{A}$ and $\mathrm{pr}_2^*\mathcal{A}$. Therefore, it is enough now to know that the map is a finite flat homeomorphism on completions at algebraically closed points. This follows from Proposition~\ref{prop:isogG_ordinary_deformation_rings} and Remark~\ref{rem:TGm_finite_flat}.
\end{proof}

% \subsection{Correspondences between different Shimura varieties}
% \label{sub:correspondences_between_different_shimura_varieties}

% For our main application, we will actually need a more general notion of $p$-Hecke correspondences. Suppose that we have another Shimura datum $(G^\flat,X^\flat)$ and level subgroup $K^\flat = K^\flat_pK^{\flat,p}\subset G^\flat(\Adele_f)$, along with a smooth integral model $G^\flat_{\Int_p}$ for $G^\flat$ with $K^\flat_p = G^\flat_{\Int_p}(\Int_p)$, and suppose that the following properties hold:
% \begin{enumerate}
% 	\item 
% \end{enumerate}

\subsection{Comparison between the two notions of $p$-Hecke correspondences}

Here, we will look at the relationship between $p$-Hecke correspondences over the generic fiber and over the ordinary locus.

\begin{remark}
	\label{rem:qisog_points_comparison}
Suppose that $s_0,t_0\in \Ss^{\ord}_{K,k(v)}(\kappa)$ are algebraically closed points, that $L/W(\kappa)[1/p]$ is a finite extension and that $s,t\in \Ss_{K,v}(\Reg{L})$ are lifts of $s_0,t_0$. Write $s_{\eta},t_{\eta}$ for the associated $L$-valued points of $\Sh_K$. Viewing $s,t$ as $\Spf\Reg{L}$-valued points of $\widehat{\Ss}^{\ord}_{K,v}$, we obtain the space $\mathrm{QIsog}_G(s,t)$. On the other hand, we also have the space $\mathrm{QIsog}_G(s_{\eta},t_{\eta})$ from \S~\ref{subsec:isogenies_and_p_hecke_correspondences_in_the_generic_fiber}. 
\end{remark}

\begin{proposition}
\label{prop:qisog_points_comparison}
	There is a canonical isomorphism
\[
	\mathrm{QIsog}_G(s,t)\xrightarrow{\simeq} \mathrm{QIsog}_G(s_{\eta},t_{\eta}).
\]
Furthermore, if $s$ and $t$ are the canonical lifts of $s_0$ and $t_0$, respectively, then the natural map $\mathrm{QIsog}_G(s,t)\to \mathrm{QIsog}_G(s_0,t_0)$ is also an isomorphism.
\end{proposition}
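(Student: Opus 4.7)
The plan is to handle the two isomorphisms separately, reducing each to a combination of classical rigidity facts and the tensor compatibility built into the $p$-adic comparison isomorphism.

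For the first isomorphism, I would begin by algebraizing: since $\Reg{L}$ is complete, the maps $s,t:\Spf \Reg{L}\to \widehat{\Ss}^{\ord}_{K,v}$ extend uniquely to honest maps $\Spec \Reg{L}\to \Ss_{K,v}$, giving abelian schemes $\mathcal{A}_s,\mathcal{A}_t$ over $\Reg{L}$ with generic fibers realizing $s_\eta,t_\eta$. On the level of underlying quasi-isogenies (ignoring $G$-structure), the N\'eron mapping property, applied to $\mathcal{A}_s$ and $\mathcal{A}_t$ which are their own N\'eron models, gives $\Hom(\mathcal{A}_s,\mathcal{A}_t)\xrightarrow{\simeq}\Hom(\mathcal{A}_{s_\eta},\mathcal{A}_{t_\eta})$, hence the same for quasi-isogenies. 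So the underlying map is a bijection, and what remains is to match the two notions of $G$-structure preservation.

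For this, I would invoke Lemma~\ref{lem:GIsoc_struc}(2). Unpacking the definitions, $G$-structure preservation of $f$ over $\Spf \Reg{L}$ means preservation at the unique algebraically closed point, i.e.\ that $f^*$ on $\bm{H}_{\cris,t_0}[p^{-1}]\to \bm{H}_{\cris,s_0}[p^{-1}]$ carries $\{s_{\alpha,\cris,t_0}\}$ to $\{s_{\alpha,\cris,s_0}\}$; $G$-structure preservation of $f_\eta$ over $\Spec L$ means that $f_\eta^*$ on $p$-adic realizations at the geometric generic point $\overline{x}$ carries $\{s_{\alpha,p,\overline{t_\eta}}\}$ to $\{s_{\alpha,p,\overline{s_\eta}}\}$. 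By functoriality of the crystalline comparison, the induced square of $B_{\cris}$-linear maps between the two realizations at source and target commutes, and Lemma~\ref{lem:GIsoc_struc}(2) identifies the two tensor sets on each side; hence one preservation is equivalent to the other.

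For the second isomorphism, the map $\mathrm{QIsog}_G(s,t)\to \mathrm{QIsog}_G(s_0,t_0)$ is restriction to the special fiber. Injectivity follows from the rigidity of quasi-isogenies between $p$-divisible groups over $\Reg{L}$: a quasi-isogeny has at most one lift under any nilpotent thickening. For surjectivity, fix $f_0\in \mathrm{QIsog}_{G,\pow{m}}(s_0,t_0)$ for some double coset $\pow{m}$; after multiplying by a sufficiently high power of $p$, we may replace $f_0$ by an honest isogeny. Proposition~\ref{prop:isogG_ordinary_deformation_rings} then identifies the deformation space of the triple $(s_0,t_0,f_0)$ with the formal subgroup $\widehat{T}_{G,m}\subset \widehat{T}_G\times \widehat{T}_G\simeq \widehat{U}_{s_0}\times \widehat{U}_{t_0}$. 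The canonical lifts of $s_0$ and $t_0$ correspond to the identity sections of $\widehat{T}_G$, so $(s,t)$ corresponds to $(0,0)$, which factors uniquely through the origin of the formal subgroup $\widehat{T}_{G,m}$. The resulting unique lift over the pair of canonical lifts is the desired element of $\mathrm{QIsog}_G(s,t)$ restricting to $f_0$.

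The main obstacle, as I see it, is the compatibility of $G$-structure preservation between the crystalline and \'etale sides in the first step. The N\'eron extension of isogenies and the deformation-theoretic argument via Proposition~\ref{prop:isogG_ordinary_deformation_rings} are each fairly direct; the nontrivial input is to check, uniformly in the choice of trivializations, that $f^*$ carries one full collection of Hodge-type tensors to the other, which requires a careful invocation of the functoriality of the $p$-adic comparison isomorphism with tensors.
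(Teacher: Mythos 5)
Your proposal is correct and follows essentially the same route as the paper: the N\'eron mapping property for the underlying quasi-isogenies plus Lemma~\ref{lem:GIsoc_struc}(2) for matching the crystalline and \'etale notions of $G$-structure preservation, and Serre--Tate ordinary theory for the canonical-lift statement. Your second half merely unwinds the paper's one-line appeal to Serre--Tate deformation theory more explicitly, via the observation that $\widehat{T}_{G,m}$ is a formal subgroup of $\widehat{T}_G\times\widehat{T}_G$ and hence contains the identity section corresponding to the pair of canonical lifts; this is a valid and slightly more self-contained way to see the same fact.
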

\begin{proof}
	Write $\mathrm{QIsog}(\mathcal{A}_s,\mathcal{A}_t)$ (resp. $\mathrm{QIsog}(\mathcal{A}_{s_\eta},\mathcal{A}_{t_\eta}))$ for the space of $p$-quasi-isogenies from $\mathcal{A}_s$ to $\mathcal{A}_t$ (resp. between their generic fibers). By the N\'eronian property of abelian schemes, the natural map
	\[
		\mathrm{QIsog}(\mathcal{A}_s,\mathcal{A}_t)\to \mathrm{QIsog}(\mathcal{A}_{s_\eta},\mathcal{A}_{t_\eta})
	\]
	is a bijection. Therefore, we only have to check that the two \emph{a priori} different notions of $G$-structure preservation, one using the crystalline realization (on the source) and the other using the \'etale realization (on the target) are compatible. This follows from assertion (2) of Proposition~\ref{lem:GIsoc_struc}. 

	The second assertion follows from Serre-Tate ordinary deformation theory, which shows that the natural map
	\[
		\mathrm{QIsog}(\mathcal{A}_s,\mathcal{A}_t)\to \mathrm{QIsog}(\mathcal{A}_{s_0},\mathcal{A}_{t_0})
	\]
	is a bijection when $s$ and $t$ are canonical lifts.
\end{proof}

\begin{remark}
	\label{rem:qisog_points_comparison_types}
Let $\iota$ be the inverse of the first isomorphism from Proposition~\ref{prop:qisog_points_comparison}. Given $\pow{g}\in K_p\backslash G(\Rat_p)/K_p$, one sees, using Proposition~\ref{prop:igusa_tower_generic_fiber}, that we have
\[
	\iota\left( \mathrm{QIsog}_{G,\pow{g}}(s_{\eta},t_{\eta})\right)\subset \bigsqcup_{\pow{m}\in \mathcal{S}(\pow{g})}\mathrm{QIsog}_{G,\pow{m}}(s,t)
\]
where $\mathcal{S}(\pow{g})\subset M_{\Int_p}(\Int_p)\backslash M(\Rat_p)/M_{\Int_p}(\Int_p)$ consists of those double cosets which admit representatives $m$ in the image of $K_pgK_p\cap P^-(\Rat_p)$.
\end{remark}

\begin{remark}
	\label{rem:qisog_comparison_global}
We can globalize Proposition~\ref{prop:qisog_points_comparison}. Over $\widehat{\Ss}^{\ord,\an}_{K,v}\times \widehat{\Ss}^{\ord,\an}_{K,v}$, we have the analytification $\widehat{\mathrm{QIsog}}^{\ord,\an}_G$, and we also have the restriction of the rigid analytification $\mathrm{QIsog}^{\an}_G$ of the stack of $G$-structure preserving isogenies in the generic fiber. These are both finite \'etale over each factor of the product, and so Proposition~\ref{prop:qisog_points_comparison} shows that we actually have an isomorphism
\[
\mathrm{QIsog}^{\an}_G\vert_{\widehat{\Ss}^{\ord,\an}_{K,v}\times \widehat{\Ss}^{\ord,\an}_{K,v}}\xrightarrow{\simeq}	\widehat{\mathrm{QIsog}}^{\ord,\an}_G.
\]
\end{remark}

\begin{corollary}
	\label{cor:qisogM_ordinary_desc}
The top horizontal arrows in Proposition~\ref{prop:isogM_desc_without_isoms} are isomorphisms.
\end{corollary}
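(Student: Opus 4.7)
Both arrows asserted to be isomorphisms are morphisms between formal stacks that are finite étale over $\widehat{\Ss}^{\ord}_{K,v}$: the source by Proposition~\ref{prop:isogM_desc_without_isoms}, and each target $\widehat{\mathrm{Ig}}^{\ord}_{\pow{m}^{\pm 1}}$ as the contracted product of the pro-finite étale $\underline{M_{\Int_p}(\Int_p)}$-torsor $\widehat{\mathrm{Ig}}^{\ord}_{K,v}$ of Proposition~\ref{prop:igusa_tower} against a finite $M_{\Int_p}(\Int_p)$-set. It therefore suffices to check bijectivity on fibers over each algebraically closed point $t_0 \in \widehat{\Ss}^{\ord}_{K,v}(\kappa)$. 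Inverting quasi-isogenies interchanges $s$ and $t$ and replaces $\pow{m}$ by $\pow{m^{-1}}$, so I treat only the right-hand arrow $\widehat{\mathrm{QIsog}}^{\ord}_{M,\pow{m}} \to \widehat{\mathrm{Ig}}^{\ord}_{\pow{m}}$ sitting over the target projection.

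Fixing an Igusa section $\beta_{t_0} \in \widehat{\mathrm{Ig}}^{\ord}_{K,v}(t_0)$ trivialises the fiber of the target as the finite set $M_{\Int_p}(\Int_p) m M_{\Int_p}(\Int_p)/M_{\Int_p}(\Int_p)$, while the fiber of the source consists of pairs $(s_0, f)$ with $s_0 \in \widehat{\Ss}^{\ord}_{K,v}(\kappa)$ and $f \in \mathrm{QIsog}_{M,\pow{m}}(s_0, t_0)$. The forward map sends $(s_0, f)$ to the class $[\beta_{t_0}^{-1}\circ f\circ \alpha_{s_0}]$ for any Igusa section $\alpha_{s_0}$ at $s_0$, well-defined modulo the $M_{\Int_p}(\Int_p)$-action.

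For the inverse, given a representative $m' \in M(\Rat_p)$ of a double coset, lift $t_0$ to its canonical lift $t \in \Ss_{K,v}(W(\kappa))$; the section $\beta_{t_0}$ extends uniquely to a $G$-structure preserving trivialization $\beta_t$ of $\mathcal{G}_t$ via the canonicity of the étale and multiplicative deformations. The composition $\beta_t\circ m': W(\kappa)\otimes_{\Int_p}\mathcal{G}_0 \dashrightarrow \mathcal{G}_t$ is a $G$-structure preserving quasi-isogeny of type $\pow{m}$. Passing to the generic fiber, via the $p$-adic étale realization and Proposition~\ref{prop:qisog_points_comparison}, it corresponds to a modification of $\bm{H}_{p,t_\eta}$ which defines, by the Néronian property of abelian schemes, a new abelian variety $\mathcal{A}_{s_\eta}$ over $W(\kappa)[1/p]$ equipped with a $G$-structure preserving quasi-isogeny $f_\eta \in \mathrm{QIsog}_{G,\pow{m}}(s_\eta, t_\eta)$. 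Proposition~\ref{prop:isogG_generic_desc} applied with $g = m'$ identifies $s_\eta$ with a point of $\Sh_K(W(\kappa)[1/p])$, and since $\Ss_K$ is the normalization of the Siegel integral model in $\Sh_K$, the good-reduction model of $\mathcal{A}_{s_\eta}$ extends $s_\eta$ to an integral point $s \in \Ss_{K,v}(W(\kappa))$. The reduction $(s_0, f)$ (with $f$ identified via Proposition~\ref{prop:qisog_points_comparison} with the reduction of $\beta_t\circ m'$) furnishes the desired pre-image. Well-definedness modulo the $M_{\Int_p}(\Int_p)\times M_{\Int_p}(\Int_p)$-action on $m'$ follows from the independence of the forward map from the choice of $\alpha_{s_0}$.

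The hard part is the construction of $\mathcal{A}_{s_\eta}$ and of its integral model, which requires assembling Proposition~\ref{prop:qisog_points_comparison} (to bridge the crystalline realization over $\kappa$ and the étale realization in the generic fiber), Proposition~\ref{prop:isogG_generic_desc} (to recognise $s_\eta$ as a Shimura point), the Néronian property of abelian schemes (to extend $\mathcal{A}_{s_\eta}$ integrally), and the normalisation characterisation of $\Ss_K$ (to lift to an integral Shimura point). The canonical lift of $t_0$ is the device that keeps all these translations compatible.
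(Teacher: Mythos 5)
Your proposal is correct and rests on the same pillars as the paper's argument: reduction to geometric fibers over a fixed $t_0$ (both sides being finite \'etale over $\widehat{\Ss}^{\ord}_{K,v}$), the canonical lift of $t_0$, the special/generic comparison of Proposition~\ref{prop:qisog_points_comparison}, and the generic-fiber description of Hecke correspondences in Proposition~\ref{prop:isogG_generic_desc}. The packaging differs in one respect worth noting: you build a two-sided inverse directly by starting from a coset representative $m'$ and manufacturing the isogenous point $s_\eta$ in the generic fiber of the canonical lift, whereas the paper proves injectivity and surjectivity separately, obtaining surjectivity by factoring the fiber map through $G(\Rat_p)/G_{\Int_p}(\Int_p)\to M(\Rat_p)/M_{\Int_p}(\Int_p)$ and invoking the Iwasawa decomposition $G(\Rat_p)=U^-_{\mu_v}(\Rat_p)M(\Rat_p)G_{\Int_p}(\Int_p)$. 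Your route is cleaner for surjectivity (you simply take $g=m'$ as the preimage), but you are thinnest exactly where the paper spends its effort, namely on the injectivity half: one must check that the constructed map is a genuine \emph{left} inverse, i.e.\ that two pairs $(s_0,f)$, $(\tilde s_0,\tilde f)$ with the same class in $M_{\Int_p}(\Int_p)mM_{\Int_p}(\Int_p)/M_{\Int_p}(\Int_p)$ coincide. The paper does this by observing that $\tilde f^{-1}\circ f$ is then an isomorphism of abelian varieties, lifting everything to canonical lifts via the second assertion of Proposition~\ref{prop:qisog_points_comparison}, and appealing to the uniqueness built into Proposition~\ref{prop:isogG_generic_desc}; this is recoverable from your construction (the lattice $(\beta_t\circ m')_*T_p(\mathcal{G}_0)$ determines $(s_\eta,f_\eta)$, and $s$ is the unique integral extension of $s_\eta$), but it needs to be said. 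Two small corrections: the inverse need only be well defined under the \emph{right} $M_{\Int_p}(\Int_p)$-translation of $m'$, since the fiber of $\widehat{\mathrm{Ig}}^{\ord}_{\pow{m}}$ is the set of left cosets inside the double coset rather than the double coset itself; and you should record why the reduction $s_0$ is again $\mu_v$-ordinary in the enhanced sense (transport the witnessing trivialization along the $M$-structure preserving quasi-isogeny), so that $(s_0,f)$ really lies in $\widehat{\mathrm{QIsog}}^{\ord}_{M,\pow{m}}$.
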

\begin{proof}
	It is enough to check that the horizontal arrow in the second diagram is an isomorphism. Since both source and target are finite \'etale over $\widehat{\Ss}^{\ord}_{K,v}$, it is enough to know that the map is an isomorphism on $\kappa = \overline{\Field}_p$-points. Fix $t_0\in \Ss_{K,k(v)}^{\ord}(\kappa)$, and let $\mathrm{QIsog}^{\ord}_{G,t_0}(\kappa)$ be the fiber of $t:\widehat{\mathrm{QIsog}}^{\ord}_{G}(\kappa)\to \widehat{\Ss}^{\ord}_{K,v}(\kappa)$ over $t_0$. For a fixed choice of isomorphism $\beta:W(\kappa)\otimes_{\Int_p}H_{\Int_p}\xrightarrow{\simeq}\bm{H}_{\cris,t_0}$ as in Definition~\ref{defn:isogG_ordinary_type_pointwise}, we obtain a map 
	\begin{align*}
				\mathrm{QIsog}^{\ord}_{G,t_0}(\kappa)&\to M(\Rat_p)/M_{\Int_p}(\Int_p)\\
				f&\mapsto m(f)M_{\Int_p}(\Int_p),
	\end{align*}
	where $m(f)\in M(\Rat_p)$ is the class of $\beta\circ (f^*)^{-1}\circ \alpha^{-1}$ for any choice of isomorphism $\alpha$ as in the same definition. It is now enough to know that this map is an isomorphism. 

	For this, let $t\in \Ss_K(W(\kappa))$ be the canonical lift of $t_0$ and let $t_\eta\in \Sh_K(L)$ with $L = W(\kappa)[p^{-1}]$ be its generic fiber. We will now show that the map is injective. Indeed, if we have $f\in \mathrm{QIsog}_G(s_0,t_0)$ and $\tilde{f}\in \mathrm{QIsog}_G(\tilde{s}_0,t_0)$ such that $m(f)M_{\Int_p}(\Int_p) = m(\tilde{f})M_{\Int_p}(\Int_p)$, then, since $m(\tilde{f})^{-1}m(f)\in M_{\Int_p}(\Int_p)$, one finds that $\tilde{f}^{-1}\circ f\in \mathrm{QIsog}_G(s_0,\tilde{s}_0)$ is an isomorphism $\mathcal{A}_{s_0}\xrightarrow{\simeq} \mathcal{A}_{\tilde{s}_0}$. If $s,\tilde{s}$ are the canonical lifts, the second assertion in Proposition~\ref{prop:qisog_points_comparison}, this isomorphism lifts to an element in $\mathrm{QIsog}_G(s,\tilde{s})$ that is an isomorphism of abelian schemes. One now finds from Proposition~\ref{prop:isogG_generic_desc} that this is only possible if $s$ and $\tilde{s}$, along with the corresponding lifts of $f$ and $\tilde{f}$, yield the same point of the fiber of $t:\mathrm{QIsog}_G\to \Sh_K$ over $t_\eta$. 

	To show surjectivity, fix an algebraic closure $\overline{L}$ for $L$, and let $\mathrm{QIsog}_{G,t_\eta}(\overline{L})$ be the fiber of $t:\mathrm{QIsog}_{G}(\overline{L})\to \Sh_K(\overline{L})$ over $t_\eta$. We then have a commutative diagram
	\[
		\begin{diagram}
			\mathrm{QIsog}_{G,t_\eta}(\overline{L})&\rTo^{f\mapsto g(f)G_{\Int_p}(\Int_p)}_{\simeq}&G(\Rat_p)/G_{\Int_p}(\Int_p)\\
			\dTo&&\dTo\\
      \mathrm{QIsog}^{\ord}_{G,t_0}(\kappa)&\rTo_{f\mapsto m(f)M(\Rat_p)}&M(\Rat_p)/M_{\Int_p}(\Int_p)
		\end{diagram}
	\] 
	Here, the top arrow is the isomorphism obtained by taking the fibers of the isomorphism from Proposition~\ref{prop:isogG_generic_desc} over $t_\eta\in \Sh_K(\overline{L})$ in the second factor and taking the disjoint union over all $\pow{g}$. The vertical arrow on the left is obtained from Remark~\ref{rem:qisog_comparison_global} and the reduction map. 

	It is enough to show that the right vertical map is surjective. Unwinding definitions, and using Proposition~\ref{prop:igusa_tower_generic_fiber}, one gets the following description of this map: Write $t_{\overline{\eta}}$ for the points $t_\eta$ viewed as a $\bar{L}$-valued point. Using the Iwasawa decomposition 
\[
G(\Rat_p) = U^-_{\mu_v}(\Rat_p)M(\Rat_p)G_{\Int_p}(\Int_p),
\]
we can write $g\in G(\Rat_p)$ in the form $n^-(g)m(g)k(g)$, where the coset $m(g)M_{\Int_p}(\Int_p)$ is canonically determined. The right vertical map sends $gG_{\Int_p}(\Int_p)$ to $m(g)M_{\Int_p}(\Int_p)$. This finishes the proof of surjectivity.
\end{proof}

\section{An argument of Chai and Hida}
\label{sec:monodromy}

The purpose of this section is to abstract some ideas due to Chai and Hida on a `pure thought' study of the monodromy of Igusa towers over Shimura varieties, and apply them to the particular situation of ordinary loci. All the key ideas can already be found in~\cite{Hida2011-fi} and~\cite{Chai2011-ik}. It is also possible that the main statement here can be deduced from the very general results of van Hoften and Xiao~\cite{vanhoften_xiao}.

\subsection{The abstract setup}

\begin{proposition}\label{ord:proposition:weakapprox}
Let $H$ be a connected reductive group over $\Rat$ such that $H_{\Rat_p}$ contains a maximal torus that splits over a cyclic extension of $\Rat_p$ (this hypothesis holds in particular when $H$ is unramified at $p$). Then $H$ satisfies weak approximation with respect to $\{p,\infty\}$; that is, $H(\Rat)$ is dense in $H(\Rat_p)\times H(\Real)$.
\end{proposition}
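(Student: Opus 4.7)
The plan is to reduce weak approximation for $H$ to the simpler cases of a simply connected semisimple group and of a torus, and then to apply the cyclic splitting hypothesis only for the torus piece. Concretely, I would form the exact sequence $1 \to H^{\mathrm{der}} \to H \to T \to 1$ with $T = H/H^{\mathrm{der}}$ a torus, together with the simply connected cover $\rho_H \colon H^{\mathrm{sc}} \to H^{\mathrm{der}}$. By Sansuc's reduction, weak approximation for $H$ at $S = \{p,\infty\}$ follows once it is established for both $H^{\mathrm{sc}}$ and $T$ at $S$, since the long exact sequence of Galois cohomology splits the obstruction accordingly.

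For the simply connected semisimple piece, weak approximation at any finite set of places is part of the Kneser-Harder-Platonov theory and holds with no further hypothesis. The content of the proposition is therefore concentrated in weak approximation for the torus $T$ at $\{p,\infty\}$. Since a maximal torus of $H_{\Rat_p}$ surjects onto $T_{\Rat_p}$, the cyclic splitting hypothesis descends: $T_{\Rat_p}$ splits over a cyclic extension of $\Rat_p$, so the action of $\Gal(\overline{\Rat}_p/\Rat_p)$ on $X^*(T)$ factors through a finite cyclic quotient.

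By the Voskresenskii-Sansuc duality, weak approximation for the torus $T$ at $S$ is controlled by a Shafarevich-type kernel in the Galois cohomology of $X^*(T)$ consisting of classes trivial at every place outside $S$. The local analysis at $p$ becomes tractable under cyclicity, since Tate cohomology of a cyclic group is two-periodic and pairs cleanly via local Tate duality with the cocharacter lattice. One would then invoke a Poitou-Tate and Chebotarev density argument---in the pure-thought spirit of Chai and Hida reviewed in the next section---to show that any putative nonzero class in the kernel would violate the global reciprocity relations among its local invariants, forcing it to be zero.

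The main obstacle is the interaction with the archimedean place: the hypothesis is purely local at $p$, while the conclusion requires density at both $p$ and $\infty$. One must ensure that no residual obstruction survives at $\infty$, where cyclicity of $\Gal(\Comp/\Real) = \Int/2\Int$ is automatic but gives no additional control; here, the freedom to pick auxiliary splitting primes via Chebotarev must absorb any archimedean defect.
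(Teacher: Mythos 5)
The first reduction is where your argument breaks down. It is not true that weak approximation for $H^{\mathrm{sc}}$ and for the torus $T = H/H^{\mathrm{der}}$ at $S$ together imply weak approximation for $H$ at $S$. If it were, every semisimple group would satisfy weak approximation at every finite set of places (for such $H$ the torus $T$ is trivial and $H^{\mathrm{sc}}$ always satisfies weak approximation by Platonov's theorem), yet non--simply-connected semisimple groups can have nontrivial deviation $A(S,H)$; this is exactly the phenomenon studied in Ch.~7 of Platonov--Rapinchuk and in Sansuc's work, where the defect is controlled by the kernel $F=\ker(\rho_H\colon H^{\mathrm{sc}}\to H^{\mathrm{der}})$. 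Your "Sansuc reduction" silently discards the $H^1(\Rat_v,F)$ terms in the long exact sequence, which are the whole point: the hypothesis that $H_{\Rat_p}$ contains a maximal torus split by a cyclic extension of $\Rat_p$ is needed precisely to tame this contribution, not (as in your sketch) to handle the cocharacter lattice of the abelianization. Your final paragraph correctly senses that the maps $H^{\mathrm{sc}}(\Real)\to H^{\mathrm{der}}(\Real)$ and $H(\Rat_v)\to T(\Rat_v)$ need not be surjective, but "absorbing the defect by Chebotarev" is not an argument, and the torus step itself is left as an invocation rather than a proof.

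The paper's route avoids this trap. By Platonov--Rapinchuk, Prop.~2.11, there are quasi-trivial tori $T_1,T_2$ over $\Rat$ and an integer $m\ge 1$ with a central isogeny $\widetilde{H}^m\times T_1\to H^m\times T_2$, and one can arrange that $T_1,T_2$ split over the same field as the maximal central torus of $H$. Since weak approximation is insensitive to multiplying by quasi-trivial tori and to passing between $H$ and $H^m\times T_2$, one reduces to the case where $H$ admits a central cover $H_1\to H$ with $H_1$ a product of a simply connected group and a quasi-trivial torus. The kernel of this cover is central in $H_1$, hence splits over a cyclic extension of $\Rat_p$ by hypothesis, and Prop.~7.10 together with Cor.~2 of Ch.~7 of Platonov--Rapinchuk then yields the density statement at $\{p,\infty\}$. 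If you wish to salvage your decomposition, you must first pass to a $z$-extension so that the derived group becomes simply connected (making the relevant $H^1$ of the kernel vanish at finite places), and only then does the problem genuinely reduce to weak approximation for a quotient torus, where the cyclicity at $p$ can be brought to bear.
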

\begin{proof}
 This is essentially contained in \cite{plat_rap}. If $H$ is semi-simple and simply connected, the result follows directly from Theorem 7.8 of \emph{loc. cit.} In general, let $\widetilde{H}$ be the simply connected cover of the derived group of $H$. Then we find from Proposition 2.11 of \emph{loc. cit.} that there are quasi-trivial\footnote{This means that the Galois representation attached to the character group is a permutation representation.} tori $T_1$ and $T_2$ over $\Rat$, and an integer $m\geq 1$ such that there is a central isogeny: $\widetilde{H}^m\times T_1\rightarrow H^m\times T_2$. In fact, the proof of this result shows that we can choose $T_1$ and $T_2$ to have the same splitting field as the maximal central torus of $H$. It is easy to see $H$ satisfies weak approximation with respect to $\{p,\infty\}$ whenever $H^m\times T_2$ does, so we can replace $H$ by the latter group and assume that it admits a central cover $H_1\to H$ where $H_1$ is a product of a semi-simple, simply connected group with a quasi-trivial torus.

 Let $F$ be the kernel of $H_1\to H$: It is a central sub-group of $H_1$, and so, by our hypothesis, splits over a cyclic extension of $\Rat_p$. The result now follows from Proposition 7.10 and Corollary 2 in Ch. 7 of \emph{loc. cit.}.
\end{proof}

\begin{corollary}\label{ord:cor:weakapprox}
Let the notation be as in the hypotheses of Proposition~\ref{ord:proposition:weakapprox} above, and let $H_{\Int_{(p)}}$ be a smooth group scheme over $\Int_{(p)}$ with generic fiber $H$. For any $\Int_{(p)}$-algebra $R$ set $H(R) = H_{\Int_{(p)}}(R)$. With the hypotheses as in the proposition, for any integer $n\geq 1$, the map 
\[
H(\Int_{(p)})\to H(\Int/p^n\Int) 
\]
is surjective.
\end{corollary}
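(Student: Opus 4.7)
The plan is to deduce the corollary from Proposition~\ref{ord:proposition:weakapprox} by combining the density of $H(\Rat)$ in $H(\Rat_p) \times H(\Real)$ with the smoothness of the integral model. The smoothness of $H_{\Int_{(p)}}$ and the identification $H(\Int_{(p)}) = H(\Rat) \cap H(\Int_p)$ inside $H(\Rat_p)$ are the key technical inputs, where the intersection takes place via the canonical embedding $H(\Rat) \hookrightarrow H(\Rat_p)$ and reflects the fact that $\Int_{(p)} = \Rat \cap \Int_p$ for an affine smooth scheme.

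Given $\bar{g} \in H(\Int/p^n\Int)$, the first step is to lift it to some $g_p \in H(\Int_p)$. This is where smoothness of $H_{\Int_{(p)}}$ enters: the reduction map $H(\Int_p) \to H(\Int/p^n\Int)$ is surjective, as one sees by the infinitesimal lifting property along the nilpotent thickening $\Spec \Int/p^n\Int \hookrightarrow \Spec \Int_p$ (or by Hensel's lemma applied to a smooth affine cover).

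Next, consider the coset $g_p K_n \subset H(\Rat_p)$, where $K_n = \ker\bigl(H(\Int_p) \to H(\Int/p^n\Int)\bigr)$. This is an open neighborhood of $g_p$ in the $p$-adic topology on $H(\Rat_p)$, and by construction every element of $g_p K_n$ lies in $H(\Int_p)$ and reduces mod $p^n$ to $\bar{g}$. By Proposition~\ref{ord:proposition:weakapprox}, $H(\Rat)$ is dense in $H(\Rat_p) \times H(\Real)$, so we may select an element $g \in H(\Rat)$ whose image in $H(\Rat_p)$ lies in the nonempty open set $g_p K_n$ (the real factor being unconstrained, we simply ask $g$ to lie in $(g_p K_n) \times H(\Real)$).

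Finally, since $g \in H(\Rat)$ maps into $H(\Int_p) \subset H(\Rat_p)$, it defines an element of $H(\Int_{(p)})$, and by construction its image in $H(\Int/p^n\Int)$ equals $\bar{g}$. The only nontrivial point is the translation between the topological statement of weak approximation and the congruence statement desired here; this translation is clean because the reduction kernels $K_n$ form a fundamental system of open neighborhoods of the identity in $H(\Int_p)$ (by smoothness, $H(\Int_p)$ is a $p$-adic Lie group with these as congruence subgroups). No further obstacle arises.
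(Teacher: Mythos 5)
Your argument is correct and is essentially the paper's own proof, just unwound element-by-element: the paper writes the same three ingredients (surjectivity of $H(\Int_p)\to H(\Int/p^n\Int)$ by smoothness, density of $H(\Rat)$ in $H(\Rat_p)$ applied to the open coset of the congruence kernel $P_n$, and the identification $H(\Rat)\cap H(\Int_p)=H(\Int_{(p)})$) as the single identity $H(\Int_p)=H(\Rat)P_n\cap H(\Int_p)=H(\Int_{(p)})P_n$. No gaps.
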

\begin{proof}
Let $P_n=\ker(H(\Int_p)\to H(\Int/p^n\Int))$. By (\ref{ord:proposition:weakapprox}), $H(\Rat)P_n=H(\Rat_p)$. We now have:
  \[
   H(\Int_p)=H(\Rat_p)\cap H(\Int_p)=H(\Rat)P_n\cap H(\Int_p)=H(\Int_{(p)})P_n.
  \]
The corollary now follows, since the map $H(\Int_p)\to H(\Int/p^n\Int)$ is surjective by the smoothness of $H_{\Int_{(p)}}$.
\end{proof}

\begin{notation}
Suppose that $S$ is an algebraic or formal algebraic stack, and that we have an $H(\Int_p)$-torsor over $S$ given by a sequence $\mathcal{P} = \{ \mathcal{P}_n\}_{n\geq 1}$ of compatible finite \'etale $H(\Int/p^n\Int)$-torsors $\mathcal{P}_n\to S$. If $\mathsf{X}$ is an $H_{\Int_p}$-equivariant scheme over $\Int_p$, for every $n\ge 1$, we set
\[
\mathcal{P}_{\mathsf{X},n} = \mathcal{P}_{\mathsf{X}(\Int/p^n\Int)}.
\] 
\end{notation}

\subsubsection{}\label{subsubsecchai_hida_setup}
Suppose now that $G$ is a reductive group over $\Rat$, $T$ is a finite set of primes containing $p$, and that $S$ is a scheme over $\Int_{(p)}$ equipped with an action of $G(\Adele_f^T)$. Assume that this action lifts to one on the $H(\Int_p)$-torsor $\mathcal{P}$ that commutes with the $H(\Int_p)$-action. Suppose further that there is another reductive group $J$ over $\Rat$ with the following properties:
\begin{itemize}
	\item There exists an embedding 
	\[
     \psi: J_{\Adele_f^T}\hookrightarrow G_{\Adele_f^T}.
	\]
	\item There exists a (necessarily smooth) model $J_{\Int_{(p)}}$ for $J$ over $\Int_{(p)}$, and an isomorphism
\[
\varphi:J_{\Int_{(p)}}\otimes_{\Int_{(p)}}\Int_p \xrightarrow{\simeq}H_{\Int_p}
\]
\end{itemize}

In particular, we have an embedding
\[
\Phi:J(\Int_{(p)})\xrightarrow{m\mapsto (\varphi(m),\psi(m))} H(\Int_p)\times G(\Adele_f^T),
\]
inducing for every $n\geq 1$ a map
\[
\Phi_n: J(\Int_{(p)})\xrightarrow{m\mapsto (\varphi_n(m),\psi(m))} H(\Int/p^n\Int)\times G(\Adele_f^T).
\]

\begin{notation}
	Suppose that $Q\subset H_{\Int_p}$ is a closed $\Int_p$-subgroup scheme such that $\mathsf{X} = H_{\Int_p}/Q$ is represented by a scheme. Let $\widetilde{J}_{\Int_{(p)}}$ (resp. $\widetilde{H}_{\Int_p}$) be the normalization of $J_{\Int_{(p)}}$ (resp. $H_{\Int_p}$) in $\widetilde{J}$ (resp. $\widetilde{H}$), and let $\widetilde{Q}$ be the pre-image of $Q$ in $\widetilde{H}_{\Int_p}$. Let $Z_{H,p}\subset H_{\Int_p}$ be the Zariski closure of the center $Z_H\subset H$.
\end{notation}

\begin{proposition}\label{prop:trivial_set}
Suppose that the following conditions hold:
\begin{enumerate}
	\item\label{hyp:sc}$\rho_G(\widetilde{G}(\Adele_f^T))$ acts trivially on $\pi_0(S)$;
	\item\label{hyp:isotropic}For all $\ell\notin T$, $G_{\Rat_\ell}$ is isotropic;
	\item\label{hyp:hecke}$G(\Adele_f^T)$ acts transitively on $\pi_0(S)$;
	\item\label{hyp:trivial}$\Phi(J(\Int_{(p)}))$ fixes a point $\varpi\in \pi_0(\mathcal{P})$;
	\item\label{hyp:central}$Q$ contains $Z_{H,p}$;
    \item\label{hyp:smooth}The $\Int_{(p)}$-group $\widetilde{J}_{\Int_{(p)}}$ (equivalently, the $\Int_p$-group $\widetilde{H}_{\Int_p}$) and the $\Int_p$-group $\widetilde{Q}$ are smooth with connected special fiber.
\end{enumerate}
Then, for every $n\in \Int_{\geq 1}$, the map $\pi_0(\mathcal{P}_{\mathsf{X},n})\to \pi_0(S)$ is a bijection.
\end{proposition}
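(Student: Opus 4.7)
The plan is to split the claim into surjectivity, which is automatic since $\mathcal{P}_{\mathsf{X},n}\to S$ is a surjective fiber bundle, and injectivity. For the latter, the lift of the $G(\Adele_f^T)$-action to $\mathcal{P}$ commutes with $H(\Int_p)$, yielding a $G(\Adele_f^T)$-equivariant map $\pi_0(\mathcal{P}_{\mathsf{X},n})\to\pi_0(S)$; by the transitivity hypothesis~(\ref{hyp:hecke}) it suffices to prove that the fiber over the class $[S^\circ]\in\pi_0(S)$ image of $\varpi$ is a singleton. This fiber is canonically the set of $\Gamma_n$-orbits on $\mathsf{X}(\Int/p^n\Int)=H(\Int/p^n\Int)/Q(\Int/p^n\Int)$, where $\Gamma_n\subset H(\Int/p^n\Int)$ is the monodromy image of $\pi_1(S^\circ)$, so the goal reduces to proving
\[
\Gamma_n\cdot Q(\Int/p^n\Int)=H(\Int/p^n\Int).
\]

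The first input is the containment $\varphi_n(\rho_J(\widetilde J(\Int_{(p)})))\subset\Gamma_n$. Writing $\varpi_n\in\pi_0(\mathcal{P}_n)$ for the reduction of $\varpi$, hypothesis~(\ref{hyp:trivial}) together with the commutativity of the $H(\Int_p)$- and $G(\Adele_f^T)$-actions gives $\psi(m)\cdot\varpi_n=\varphi_n(m)^{-1}\cdot\varpi_n$ for all $m\in J(\Int_{(p)})$. For $m=\rho_J(\widetilde m)$ with $\widetilde m\in\widetilde J(\Int_{(p)})$, the element $\psi(m)$ lies in $\rho_G(\widetilde G(\Adele_f^T))$, which by~(\ref{hyp:sc}) fixes $[S^\circ]$. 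I would upgrade this to triviality of the action of $\rho_G(\widetilde G(\Adele_f^T))$ on $\pi_0(\mathcal{P}_n)$ itself by applying strong approximation for $\widetilde G$ at places outside $T$---available thanks to the isotropicity hypothesis~(\ref{hyp:isotropic})---and using path-connectedness of $\widetilde G(\Real)$ to kill any residual permutation of the profinite set $\pi_0(\mathcal{P}_n)$. Combined with the above identity, this yields $\varphi_n(m)\cdot\varpi_n=\varpi_n$, i.e.\ $\varphi_n(m)\in\Gamma_n$.

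The second input is Corollary~\ref{ord:cor:weakapprox} applied to $\widetilde J$: hypothesis~(\ref{hyp:smooth}) supplies smoothness of $\widetilde J_{\Int_{(p)}}$ with connected special fiber, while the analogous property for $\widetilde H_{\Int_p}$, together with the semisimplicity of the simply connected group $\widetilde J$, forces $\widetilde J_{\Rat_p}$ to be unramified, so the corollary applies and $\widetilde J(\Int_{(p)})\twoheadrightarrow \widetilde H(\Int/p^n\Int)$. Pushing forward via $\rho_H$ and combining with hypothesis~(\ref{hyp:central}) yields
\[
\Gamma_n\cdot Q(\Int/p^n\Int)\supset \rho_H(\widetilde H(\Int/p^n\Int))\cdot Z_{H,p}(\Int/p^n\Int),
\]
and the isogeny $\widetilde H_{\Int_p}\times Z_{H,p}\to H_{\Int_p}$, which is smooth and surjective with connected fibers (since $H_{\Int_p}=\widetilde H_{\Int_p}\cdot Z_{H,p}$ as group schemes), forces the right side to equal $H(\Int/p^n\Int)$ by Hensel.

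The main obstacle will be the strong-approximation step in the second paragraph: topologizing $\pi_0(\mathcal{P}_n)$ appropriately and verifying that the $\widetilde G(\Adele_f^T)$-action on it is continuous enough for density of $\widetilde G(\Rat)\cdot\widetilde G(\Real)$ in $\widetilde G(\Adele_f^T)\cdot\widetilde G(\Real)$ to imply the desired triviality. Once this is established, the remaining reductions and group-theoretic manipulations are routine.
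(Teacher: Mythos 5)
Your skeleton matches the paper's: reduce via hypothesis~\eqref{hyp:hecke} to a single fiber, use hypothesis~\eqref{hyp:trivial} to place a large arithmetic subgroup inside the stabilizer of $\varpi$, and invoke Corollary~\ref{ord:cor:weakapprox} to get surjectivity mod $p^n$. But two of your steps have genuine gaps.

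The step you yourself flag as the main obstacle is indeed where your route breaks, and the fix is a different tool, not a repair of strong approximation. The abstract setup places no continuity hypothesis on the $G(\Adele_f^T)$-action on $\mathcal{P}$, so there is no topology on $\pi_0(\mathcal{P}_n)$ for which density arguments apply; and even granting density of $\widetilde{G}(\Rat)$ you would still need to know that the rational points act trivially, which is not among the hypotheses. What the paper uses instead is the Kneser--Tits property: for $\ell\notin T$ the group $\widetilde{G}_{\Rat_\ell}$ is isotropic and simply connected, so $\widetilde{G}(\Rat_\ell)$ has \emph{no proper finite-index subgroups}. Since $\pi_0(\mathcal{P}_{\mathsf{X},n})\to \pi_0(S)$ has finite fibers and $\rho_G(\widetilde{G}(\Rat_\ell))$ fixes every element of $\pi_0(S)$ by hypothesis~\eqref{hyp:sc}, the stabilizer of any component of $\mathcal{P}_{\mathsf{X},n}$ has finite index and is therefore everything. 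This is purely group-theoretic and needs no continuity.

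Your final step is false as stated: $\rho_H(\widetilde{H}(\Int/p^n\Int))\cdot Z_{H,p}(\Int/p^n\Int)$ need not equal $H(\Int/p^n\Int)$. For $H=\GL_2$ with $\widetilde{H}=\SL_2$ and $Z_{H,p}=\Gm$, the left-hand side consists of matrices whose determinant is a square in $(\Int/p^n\Int)^\times$. The isogeny $\widetilde{H}_{\Int_p}\times Z_{H,p}\to H_{\Int_p}$ has finite, not connected, fibers, so neither Hensel nor Lang applies to it; surjectivity of fppf sheaves does not yield surjectivity on $\Int/p^n\Int$-points. Hypothesis~\eqref{hyp:central} exists precisely to circumvent this: because $Z_{H,p}\subset Q$, the monomorphism $\widetilde{H}_{\Int_p}/\widetilde{Q}\to H_{\Int_p}/Q=\mathsf{X}$ is also surjective, hence an isomorphism of fppf sheaves, and then hypothesis~\eqref{hyp:smooth} together with Lang's theorem for the smooth connected group $\widetilde{Q}$ gives $\mathsf{X}(\Int/p^n\Int)=\widetilde{H}(\Int/p^n\Int)/\widetilde{Q}(\Int/p^n\Int)$. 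Combined with Corollary~\ref{ord:cor:weakapprox} applied to $\widetilde{J}$, this is what yields transitivity of your $\Gamma_n$ on the fiber. (Relatedly, your identification of the fiber with $H(\Int/p^n\Int)/Q(\Int/p^n\Int)$ is only justified a posteriori by this same argument: the fiber is $\mathsf{X}(\Int/p^n\Int)$, which a priori could be strictly larger.)
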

\begin{proof}
We will need the following consequence of the Kneser-Tits conjecture (see~\cite[Theorem 7.6]{plat_rap}): For any simply connected isotropic group $D$ over $\Rat_\ell$, $D(\Rat_\ell)$ does not admit any finite index sub-groups. 

This, combined with hypotheses~\eqref{hyp:sc} and~\eqref{hyp:isotropic}, implies that $\rho_G(\widetilde{G}(\Rat_\ell))$ acts trivially on $\pi_0(\mathcal{P}_{\mathsf{X},n})$ as well. Let $\varpi$ be as in hypothesis~\eqref{hyp:trivial}, and let $F_\varpi\subset \pi_0(\mathcal{P}_{\mathsf{X},n})$ be the fiber over the image of $\varpi$ in $\pi_0(S)$. By hypothesis~\eqref{hyp:hecke}, it is enough to show that $F_\varpi$ is a singleton: Any other fiber is a translate of this by an element of $G(\Adele_f^T)$.

Hypothesis~\eqref{hyp:trivial} implies that the subgroup
\[
\tilde{H}_n \coloneqq \{\varphi_n(m):\;m\in J(\Int_{(p)})\text{, $\psi(m)\in \rho_G(\widetilde{G}(\Adele_f^T))$}\}\subset H(\Int/p^n\Int)
\]
fixes the image of $\varpi$ in $\pi_0(\mathcal{P}_{\mathsf{X},n})$ (which we once again denote by $\varpi$). Here, $\varphi_n:J(\Int_{(p)})\to H(\Int/p^n\Int)$ is obtained by reduction-mod-$p^n$ from the map $\varphi$.

It is now enough to show that $\tilde{H}_n$ acts transitively on the fiber $F_\varpi\subset \pi_0(\mathcal{P}_{\mathsf{X},n})$. For this, it is enough to know that it surjects onto $\mathsf{X}(\Int/p^n\Int)$ via the map induced by $\varphi_n$. Note, however that $\tilde{H}_n$ contains $\rho_J(\widetilde{J}(\Int_{(p)}))$. Therefore, it is enough to show that the latter surjects onto $\mathsf{X}(\Int/p^n\Int)$.

First, note that the natural map
\[
\widetilde{H}_{\Int_p}/\widetilde{Q}\to H_{\Int_p}/Q
\]
is an isomorphism of fppf sheaves over $\Int_p$. Indeed, it is a monomorphism by definition, and hypothesis~\eqref{hyp:central} implies that it is also surjective.  

Now, hypothesis~\eqref{hyp:smooth} ensures that $\widetilde{H}_{\Int_p}$ and $\widetilde{Q}$ are smooth over $\Int_p$ with connected fibers. Therefore, by Lang's theorem~\cite{Lang1956-vh}, we have 
\[
\mathsf{X}(\Int/p^n\Int) = \widetilde{H}(\Int/p^n\Int)/\widetilde{Q}(\Int/p^n\Int).
\]

It now follows from~\eqref{ord:cor:weakapprox} that $\rho_J(\widetilde{J}(\Int_{(p)}))$ maps surjectively onto $\mathsf{X}(\Int/p^n\Int)$ via $\varphi_n$.
\end{proof}

\subsection{Hecke action on connected components of Shimura varieties}
Let $(G,X)$ be a Shimura datum. Write $g\mapsto g^{\ad}$ for the natural map $G\to G^{\ad}$, and set
\[
G(\Rat)_+ = \{g\in G(\Rat):\; g^{\ad}\in G(\Real)^0\}
\]
where $G(\Real)^0\subset G(\Real)$ is the topological connected component of the identity.

\subsubsection{}\label{subsubsecshim_var_setup}
Fix a compact open subgroup $K_p \subset G(\Rat_p)$. For any sufficiently small compact open $K\subset G(\Adele_f)$ of the form $K_pK^p$ with $K^p\subset G(\Adele_f^p)$, we obtain a Shimura variety $\Sh_K = \Sh_K(G,X)$ over the reflex field $E$ with
\[
\Sh_K(\Comp) = G(\Rat)\backslash (X\times G(\Adele_f)/K).
\]
We will be interested in the inverse limit
\[
\Sh_{K_p} = \varprojlim_{K^p\subset G(\Adele_f^p)}\Sh_{K_pK^p},
\]
which is a scheme over $\Rat$. There is a natural action of $G(\Adele_f^p)$ on $\Sh_{K_p}$ obtained over $\Comp$ via the right multiplication action on $G(\Adele_f)$. 

\begin{lemma}
\label{lem:hecke_action_conncomp}
Let $\pi_0(\Sh_{K_p,\overline{\Rat}})$ be the set of connected components of $\Sh_{p,\overline{\Rat}}$. Suppose that $G(\Rat))_+$ is dense in $G(\Rat_p)$; for instance, this is the case if $G$ satisfies the hypotheses of Proposition~\ref{ord:proposition:weakapprox}. Then:
\begin{enumerate}
	\item $G(\Adele_f^p)$ acts transitively on $\pi_0(\Sh_{K_p,\overline{\Rat}})$.
	\item $\rho_G(\tilde{G}(\Adele_f^p))$ acts trivially on $\pi_0(\Sh_{K_p,\overline{\Rat}})$.
\end{enumerate}
\end{lemma}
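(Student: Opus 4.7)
The plan is to identify $\pi_0(\Sh_{K_p,\overline{\Rat}})$ with a classical adelic double coset and then translate both assertions into elementary statements about that coset space. Fixing an embedding $\overline{\Rat}\hookrightarrow\Comp$ and using that the topological components of $X$ are connected, the standard complex-analytic description of $\Sh_{K_p}$ yields
\[
\pi_0(\Sh_{K_p,\overline{\Rat}}) = G(\Rat)_+ \backslash G(\Adele_f)/K_p,
\]
realized as the inverse limit over open compact subgroups $K^p\subset G(\Adele_f^p)$ of the corresponding finite-level double cosets. Writing $G(\Adele_f)=G(\Rat_p)\times G(\Adele_f^p)$, the $G(\Adele_f^p)$-action becomes right translation on the second factor, and $K_p$ acts by right translation on the first.

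For (1), given a class represented by $(g_p,g^p)\in G(\Adele_f)$, the density hypothesis applied to the nonempty open set $g_p K_p\subset G(\Rat_p)$ produces $\gamma\in G(\Rat)_+$ with $\gamma=g_pk_p$ for some $k_p\in K_p$; acting on $(g_p,g^p)$ by $\gamma^{-1}$ on the left and by $(k_p,1)$ on the right then carries it to $(1,\gamma^{-1}g^p)$, exhibiting $[(g_p,g^p)]$ as the right $G(\Adele_f^p)$-translate of $[(1,1)]$ by $\gamma^{-1}g^p$. For (2), I would use (1) together with the observation that $\rho_G(\tilde G(\Adele_f^p))$ is normal in $G(\Adele_f^p)$ (since the conjugation action of $G$ on $G^{\mathrm{der}}$ lifts canonically to $\tilde G$) to reduce to showing stabilization of the single class $[(1,1)]$. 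Unwinding the equivalence at each finite level $K^p$, this amounts to producing $\gamma_{K^p}\in G(\Rat)_+$ whose image in $G(\Rat_p)$ lies in $K_p$ and whose image in $G(\Adele_f^p)$ differs from $\rho_G(\tilde g)$ by an element of $K^p$. Such $\gamma_{K^p}$ will be extracted from strong approximation for $\tilde G$: picking $\tilde\gamma\in\tilde G(\Rat)$ sufficiently close to $(1,\tilde g)\in\tilde G(\Rat_p)\times\tilde G(\Adele_f^p)$ and setting $\gamma_{K^p}=\rho_G(\tilde\gamma)\in G(\Rat)$ does the job, and $\gamma_{K^p}$ lies in $G(\Rat)_+$ because $\tilde G(\Real)$ is connected and therefore maps into the identity component of $G^{\mathrm{ad}}(\Real)$.

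The main obstacle is justifying strong approximation for $\tilde G$, since the explicit hypothesis of the lemma supplies only weak approximation for $G$. The proof must therefore appeal to the standard assumption that every $\Rat$-simple factor of $\tilde G$ is noncompact at infinity, which via Platonov--Rapinchuk makes $\tilde G(\Rat)$ dense in $\tilde G(\Adele_f)$; this assumption is automatic for the GSpin and abelian-type Shimura data in the intended applications. An alternative route is to observe that the $G(\Adele_f^p)$-action on $\pi_0$ factors through Deligne's reciprocity map to $T(\Rat)^{\dagger}\backslash T(\Adele_f)/\overline{\nu}(K_p)$ with $T=G/G^{\mathrm{der}}$, on which $\tilde G(\Adele_f^p)$ acts trivially for the algebraic reason that $\tilde G\to T$ is the zero map; but the bijectivity of this reciprocity map is itself the content of strong approximation. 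Everything else is a short bookkeeping argument with the left--right actions on the double coset space.
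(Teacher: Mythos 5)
Your proof is correct and follows essentially the same route as the paper: both rest on Deligne's complex-analytic description of $\pi_0(\Sh_K(\Comp))$ as a double coset space, use the density of $G(\Rat)_+$ in $G(\Rat_p)$ for transitivity, and use strong approximation for $\tilde G$ (automatic from the Shimura-datum axiom that $G^{\ad}$ has no $\Rat$-simple factor compact at infinity) for the triviality of the $\rho_G(\tilde G(\Adele_f^p))$-action. The only difference is that the paper cites Deligne's~(2.1.3), where the quotient by $\rho_G(\tilde G(\Adele_f))$ is already built in, whereas you unpack that step explicitly.
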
	
\begin{proof}
By~\cite[(2.1.3)]{deligne:corvallis}, the set of connected components of $\Sh_K(\Comp)$ is a torsor under the group
\[
\overline{\pi}_0\pi(G)/K \defn G(\Adele_f)/\rho_G(\tilde{G}(\Adele_f))G(\Rat)_+K,
\]
where $G(\Rat)_+\subset G(\Rat)$ is the stabilizer of a connected component of $X$, the action being induced from that of $G(\Adele_f)$ on itself via right multiplication. This implies that $\pi_0(\Sh_{K_p,\overline{\Rat}})$ is a torsor under
\[
G(\Adele_f)/\rho_G(\tilde{G}(\Adele_f))G(\Rat)_+K_p.
\]

Assertion (2) follows immediately from this; and assertion (1) follows from the additional observation that, under our hypotheses, $G(\Rat)_+K_p = G(\Rat_p)$.
\end{proof}

\subsection{Kisin's analogue of Tate's theorem}

Suppose now that $(G,X)$ is of Hodge type and fix a place $v\vert p$ of $E$. 

\begin{construction}
	By Construction~\ref{const:integral_models}, for a given choice of symplectic representation $H$ with lattice $H_{\Int}$, we obtain an integral model $\Ss_K$ over $\Reg{E}$, and hence a model $\Ss_{K,(v)}$ over $\Reg{E,(v)}$. If we vary $K^p$, then we obtain a tower of such models $\Ss_{K_pK^p,(v)}$ where the transition maps $\Ss_{K_pK^p,(v)}\to \Ss_{K_p\tilde{K}^p,(v)}$ for $K^p\subset \tilde{K}^p$ are finite \'etale, and we can take the inverse limit
\[
	\Ss_{K_p,(v)} \defn \varprojlim_{K^p}\Ss_{K_pK^p,(v)}.
\]
The $G(\Adele_f^p)$-action on $\Sh_{K_p}$ extends to one on $\Ss_{K_p,(v)}$: More precisely, over $\Ss_{K_p,(v)}$, we have a canonical isomorphism of $\Adele_f^p$-local systems
\[
\epsilon^p:\underline{\Adele}_f^p\otimes_{\Rat}H\xrightarrow{\simeq}\hat{V}^p(\mathcal{A})^\vee,
\]
where on the right hand side we have the dual of the prime-to-$p$ ad\'elic Tate module of $\mathcal{A}$, and a functorial point $x$ of $\Ss_{K_p,(v)}$ gives rise to a pair $(\mathcal{A}_x,\epsilon_x)$, where $\mathcal{A}_x$ is an abelian scheme and $\epsilon_x$ is a trivialization as above of its dual prime-to-$p$ ad\'elic Tate module. For $g\in G(\Adele_f^p)$, the functorial point $g\cdot x$ will be the unique one giving rise to the pair $(\mathcal{A}_x,\epsilon_x\circ g^{-1})$.
\end{construction}

\begin{construction}
	Let the notation be as in Lemma~\ref{lem:GIsoc_struc}, but assume that $\kappa$ is the algebraic closure of a finite field $k$, and, for any $m\in \Int_{\geq 1}$, let $k_m\subset \kappa$ be the unique degree $m$ extension of $k$. Suppose also that $x_0$ is defined over $k$ and  write $x_{0,m}$ for the corresponding $k_m$-point of $\Ss_{K_pK^p}$. Let $J^\circ_{x_{0,m}} = J^\circ_{\mathcal{G}_{x_{0,m}}}$ be the algebraic group over $\Rat_p$ such that $J^{\circ}_{x_0}(\Rat_p) = \mathrm{QIsog}_G(x_{0,m},x_{0,m})$. In more detail, if $\bm{H}_{\cris,x_{0,m}}$, then the $G$-structure up to isogeny from Lemma~\ref{lem:GIsoc_struc} gives us a subgroup $\GL(\bm{H}_{x_{0,m}})$ that can be identified with $G_{W(k_m)[1/p]}$, and, for any $\Rat_p$-algebra $R$, we have
\[
J^{\circ}_{x_0}(R) = G(W(k_m)\otimes_{\Int_p}R)\cap(\underline{\Aut}^\circ_F(\bm{H}_{\cris,x_{0,m}}))(R) \subset \GL(R\otimes_{W(k_m)\bm{H}_{\cris,x_{0,m}}}),
\]
where $\underline{\Aut}^\circ_F(\bm{H}_{\cris,x_{0,m}})$ is the algebraic group over $\Rat_p$ obtained as the group scheme of invertible elements in the ring $\End_F(\bm{H}_{\cris,x_{0,m}}[1/p])$ of endomorphisms of the $F$-isocrystal $\bm{H}_{\cris,x_{0,m}}[1/p]$.

Let $\underline{\Aut}^\circ(\mathcal{A}_{x_{0,m}})$ be the algebraic group over $\Rat$ obtained as the group scheme of invertible elements in $\End(\mathcal{A}_{x_{0,m}})_{\Rat}$; then we have a natural map of $\Rat_p$-group schemes
\[
\Rat_p\otimes_{\Rat}\underline{\Aut}^\circ(\mathcal{A}_{x_{0,m}})\to \underline{\Aut}^\circ_F(\bm{H}_{\cris,x_{0,m}})
\]

We now define an algebraic group $I^{\circ}_{x_{0,m}}$ over $\Rat$ such that for any $\Rat$-algebra $R$, we have
\[
I^{\circ}_{x_{0,m}}(R) = \underline{\Aut}^\circ(\mathcal{A}_{x_{0,m}})(R)\cap J^{\circ}_{x_{0,m}}(\Rat_p\otimes_{\Rat}R)\subset \underline{\Aut}^\circ_F(\bm{H}_{\cris,x_{0,m}})(\Rat_p\otimes_{\Rat}R).
\]
\end{construction}

\begin{proposition}
\label{proposition:Aut_isom_tate}
With the notation as above:
\begin{enumerate}
	\item $I^{\circ}_{x_{0,m}}$ is a connected reductive group over $\Rat$.
	\item For $m$ sufficiently divisible, the natural map
\[
\Rat_p\otimes_{\Rat}I^{\circ}_{x_{0,m}}\to J^{\circ}_{x_{0,m}}
\]
is an isomorphism of algebraic groups over $\Rat_p$.
	\item The action of $I^{\circ}_{x_{0,m}}$ on the prime-to-$p$ ad\'elic Tate module $\widehat{V}^p(\mathcal{A}_{x_0})$, via the trivialization
	\[
    \epsilon_{x_0}:\Adele_f^p\otimes_{\Int_{(p)}}H_{(p)}\xrightarrow{\simeq}\widehat{V}^p(\mathcal{A}_{x_0})^\vee
	\]
	gives rise to an embedding
	\[
    \Adele_f^p\otimes_{\Rat}I^{\circ}_{x_{0,m}}\hookrightarrow G_{\Adele_f^p}.
	\]
\end{enumerate}
\end{proposition}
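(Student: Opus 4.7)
The plan is to establish all three assertions simultaneously by combining Tate's theorem at $\ell \neq p$ with Tate's theorem for $p$-divisible groups at $p$, using the $G$-structure (tensors $\{s_\alpha\}$) to cut out $I^\circ_{x_{0,m}}$ inside $\underline{\Aut}^\circ(\mathcal{A}_{x_{0,m}})$. The overarching strategy is to show that the algebraic group $I^\circ_{x_{0,m}}$ becomes, after base change to each completion of $\Rat$, the subgroup of $G$ centralizing the Frobenius element acting via the appropriate cohomological realization, and then to deduce the global properties from the local ones.

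First I would handle assertion (3). Fix a rational prime $\ell \neq p$. The trivialization $\epsilon_{x_0}^\ell$ identifies $V_\ell(\mathcal{A}_{x_{0,m}})^\vee$ with $\Rat_\ell \otimes H$, and carries the Galois-equivariant tensors $\{s_{\alpha,\ell,x_{0,m}}\}$ to $\{1 \otimes s_\alpha\}$, so the image of $\Gal(\kappa/k_m)$ lies in $G(\Rat_\ell)$. By Tate's theorem on endomorphisms of abelian varieties over finite fields,
\[
\End(\mathcal{A}_{x_{0,m}}) \otimes_{\Int} \Rat_\ell \xrightarrow{\simeq} \End_{\Gal(\kappa/k_m)}(V_\ell(\mathcal{A}_{x_{0,m}})).
\]
Taking units and intersecting with the $G_{\Rat_\ell}$-structure, we find that $\Rat_\ell \otimes_\Rat I^\circ_{x_{0,m}}$ is the centralizer in $G_{\Rat_\ell}$ of (a power of) the Frobenius. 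Ranging over all $\ell \neq p$ packages into the embedding of (3), since everything is compatible with the restricted tensor product.

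Next I would treat the $p$-adic side. By Tate's theorem for $p$-divisible groups over finite fields (due to Tate, extended appropriately; here the abelian variety context makes it standard),
\[
\End(\mathcal{G}_{x_{0,m}}) \otimes_{\Int_p} \Rat_p \xrightarrow{\simeq} \End_{F^m}(\bm{H}_{\cris,x_{0,m}}[1/p]).
\]
Combined with the $G$-structure from Lemma~\ref{lem:GIsoc_struc}, this identifies $\Rat_p \otimes_\Rat (\underline{\Aut}^\circ(\mathcal{A}_{x_{0,m}}))$ with the centralizer of $F^m$ inside $G_{W(k_m)[1/p]}$, restricted to what is already defined over $\Rat_p$. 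For $m$ sufficiently divisible, $F^m$ stabilizes (in the sense that further enlarging $m$ only multiplies by a central power), so the intersection defining $I^\circ_{x_{0,m}}$ becomes precisely the centralizer of $b_{x_0}^m \sigma^m = b_{x_0}^m$ (once $\sigma^m$ fixes the coefficients). Hence the natural map $\Rat_p \otimes_\Rat I^\circ_{x_{0,m}} \to J^\circ_{x_{0,m}}$ becomes an isomorphism for $m$ sufficiently divisible, proving (2).

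Finally, for (1), reductivity and connectedness follow by checking locally: at each $\ell \neq p$ the group is the centralizer in the reductive group $G_{\Rat_\ell}$ of a semisimple element (Frobenius acts semisimply on $V_\ell$ by Weil's Riemann hypothesis for abelian varieties), hence reductive; at $p$ the identification with a centralizer inside $G_{W(k_m)[1/p]}$ of a semisimple element (after passing to a sufficient power) likewise gives reductivity. A connected algebraic $\Rat$-group whose base change to every completion is connected reductive is connected reductive, so we are done after replacing $I^\circ_{x_{0,m}}$ by its identity component, which is harmless by definition. The main obstacle I anticipate is the $p$-adic step: carefully tracking the $G$-structure through Dieudonné theory so that the tensor-stabilizer condition cuts out the correct subgroup, and verifying that ``$m$ sufficiently divisible'' really does force the centralizer of $F^m$ (a priori defined in $\GL$ of the isocrystal) to descend to a $\Rat_p$-structure matching the Frobenius centralizer inside $G$; the reference to the framework of Lemma~\ref{lem:GIsoc_struc} and Remark~\ref{rem:bmu_adm} is essential here to pin down the $\sigma$-conjugacy class $[b_{x_0}]$ and its centralizer.
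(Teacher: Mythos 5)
There is a genuine gap, and it sits exactly where you locate your ``main anticipated obstacle.'' The group $I^{\circ}_{x_{0,m}}$ is defined by imposing only the crystalline tensor-preservation condition at $p$ on $\underline{\Aut}^\circ(\mathcal{A}_{x_{0,m}})$. Tate's theorem (at $\ell\neq p$, and its $p$-divisible-group analogue at $p$) identifies the \emph{full} endomorphism algebra $\End(\mathcal{A}_{x_{0,m}})\otimes\Rat_v$ with the Frobenius centralizer in $\End$ of the corresponding realization; it says nothing about the subgroup cut out by the tensors $\{s_\alpha\}$, which are not known to be algebraic and hence do not automatically descend to a condition defined over $\Rat$. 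Concretely: by construction one only gets the containment $\Rat_p\otimes_{\Rat}I^{\circ}_{x_{0,m}}\subseteq J^{\circ}_{x_{0,m}}$; the surjectivity asserted in (2) is precisely the statement that enough elements of $\End(\mathcal{A}_{x_{0,m}})\otimes\Rat$ have tensor-preserving crystalline realization, and this does not follow from ``Tate's theorem combined with the $G$-structure.'' It is the main theorem of Kisin's work on mod $p$ points of Hodge-type Shimura varieties, which is why the paper's proof consists of citing Corollary 2.2.10 of \cite{KMPS} (the section is titled ``Kisin's analogue of Tate's theorem'' for this reason). Your argument for (2) proves only an injection.

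The same problem infects (3). You assert that $\Rat_\ell\otimes I^{\circ}_{x_{0,m}}$ equals the Frobenius centralizer in $G_{\Rat_\ell}$, but since $I^{\circ}_{x_{0,m}}$ is defined with no condition at $\ell$, neither containment is formal: an element of $I^{\circ}_{x_{0,m}}(\Rat)$ preserves the crystalline tensors by fiat, but there is no a priori reason its $\ell$-adic realization preserves $\{s_{\alpha,\ell}\}$. The paper handles this by introducing the a priori smaller subgroup $I^{\circ,p}_{x_{0,m}}\subseteq I^{\circ}_{x_{0,m}}$ for which (3) holds by definition, and then deduces $I^{\circ,p}_{x_{0,m}}=I^{\circ}_{x_{0,m}}$ from Kisin's theorem applied to $I^{\circ,p}_{x_{0,m}}$; so (3) is also a consequence of the deep input, not of Tate's theorem at $\ell$. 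Finally, for (1) the paper obtains reductivity from positivity of the Rosati involution (the real points are compact modulo scalars), whereas your local-centralizer argument presupposes the identifications in (2) and (3); and your closing move of ``replacing $I^{\circ}_{x_{0,m}}$ by its identity component'' is not harmless, since connectedness is part of the assertion being proved.
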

\begin{proof}
The reductivity of $I^{\circ}_{x_{0,m}}$ is a consequence of the fact that its real points are a compact Lie group modulo scalars: see (2.1.3) of~\cite{KMPS}.

We can define a subgroup $I^{\circ,p}_{x_{0,m}}\subset I^{\circ}_{x_{0,m}}$ as the largest $\Rat$-subgroup whose action on $\widehat{V}^p(\mathcal{A}_{x_0})^\vee$ gives rise, via $\epsilon_{x_0}$, to an embedding $\Adele_f^p\otimes_{\Rat}I^{\circ,p}_{x_{0,m}}\hookrightarrow G_{\Adele_f^p}$.

Then Corollary 2.2.10 of~\cite{KMPS} shows that, for $m$ sufficiently divisible, the natural map $\Rat_p\otimes_{\Rat}I^{\circ,p}_{x_{0,m}}\to J^{\circ}_{x_{0,m}}$ is an isomorphism, which \emph{a fortiori}, implies that $I^{\circ,p}_{x_{0,m}} = I^{\circ}_{x_{0,m}}$, and so verifies assertions (2) and (3).
\end{proof}

\subsection{Monodromy over the ordinary loci of Shimura varieties}
\label{subsec:monodromy_shimura}

We will now put ourselves in the situation of \S~\ref{subsec:the_ordinary_igusa_tower} so that we have the formal Igusa tower $\widehat{\mathrm{Ig}}^{\ord}_{K_pK^p,v}\to \widehat{\Ss}^{\ord}_{K_pK^p,v}$ with special fiber $\mathrm{Ig}^{\ord}_{K_pK^p,k(v)}\to \Ss^{\ord}_{K_pK^p,k(v)}$. Taking the limit over $K^p$ gives an $M_{\Int_p}(\Int_p)$-torsor
\[
	\mathrm{Ig}^{\ord}_{K_p,k(v)}\to \Ss^{\ord}_{K_p,k(v)}.
\]

\begin{proposition}
\label{proposition:chai_hida_application}
Suppose that the following conditions hold:
\begin{enumerate}
	\item\label{hypo:smooth}$M_{\Int_p}$ and $\tilde{M}_{\Int_p}$ are smooth over $\Int_p$ with connected special fiber.

	\item\label{hypo:weakapprox}$M$ admits a maximal torus splitting over a cyclic extension of $\Rat_p$ (see the hypotheses of Proposition~\ref{ord:proposition:weakapprox}).

	\item\label{hypo:Qsmooth}$Q\subset M_{\Int_p}$ is a closed subgroup scheme containing $Z_{M_{\Int_p}}$ with $\tilde{Q}$ smooth with connected special fiber.

	\item\label{hypo:conncomp}If $\overline{\Int}_p\subset \overline{\Rat}_p$ is the subring of algebraic $p$-adic integers, then the map
	\[
     \pi_0(\Ss^{\ord}_{K_p,\overline{\Field}_p})\to \pi_0(\Ss_{K_p,\overline{\Int}_p}) \simeq \pi_0(\Sh_{K_p,\overline{\Rat}_p})
	\]
	is a bijection.

	\item\label{hypo:isotropic}For every prime $\ell\neq p$, $G_{\Rat_\ell}$ is isotropic.

	\item\label{hypo:hypersymm}There exists a finite field $k\subset \overline{\Field}_p$, a compact open $K^p\subset G(\Adele_f^p)$ and $x_0\in \Ss^{\ord}_{K^pK_p}(k)$ such that the map
	\[
      \Aut^{\circ}_G(\mathcal{G}_{x_0})\to \Aut^{\circ}_G(\overline{\Field}_p\otimes_k \mathcal{G}_{x_0})
	\]
	is a bijection.
\end{enumerate}
Then for $\mathsf{X} = M_{\Int_p}/Q$, and every $n\in \Int_{\ge 1}$, the natural map
\[
\pi_0(\mathrm{Ig}^{\mathrm{ord}}_{K_p,k(v),\mathsf{X},n})\to \pi_0(\Ss_{K_p,(v)}^{\mathrm{ord}})
\]
is a bijection.
\end{proposition}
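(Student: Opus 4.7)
The plan is to deduce this from Proposition~\ref{prop:trivial_set}, applied with $H = M_{\Int_p}$, with $S = \Ss^{\ord}_{K_p,k(v)}$ equipped with its $G(\Adele_f^p)$-action and with that action lifted to the $M_{\Int_p}(\Int_p)$-torsor $\mathcal{P} = \mathrm{Ig}^{\ord}_{K_p,k(v)}$, and with $J$ taken to be the reductive $\Rat$-group $I^{\circ}_{x_0}$ from Proposition~\ref{proposition:Aut_isom_tate} attached to the hypersymmetric point $x_0$ supplied by hypothesis (6). After possibly replacing $x_0$ by one of its positive-degree Frobenius translates so that parts (2) and (3) of that proposition apply, we obtain the embedding $\psi\colon J_{\Adele_f^p}\hookrightarrow G_{\Adele_f^p}$ and the isomorphism $\Rat_p\otimes_{\Rat} J \xrightarrow{\simeq} J^{\circ}_{x_0}$. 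Because $x_0$ is ordinary, $J^{\circ}_{x_0}\simeq M_{\Rat_p}$, and hypothesis (6) combined with Lemma~\ref{lem:aut_g_loc_const} identifies $\Aut_G(\mathcal{G}_{x_0})$ with $M_{\Int_p}(\Int_p)$; this produces a smooth model $J_{\Int_{(p)}}$ and an isomorphism $\varphi\colon J_{\Int_{(p)}}\otimes\Int_p \xrightarrow{\simeq} M_{\Int_p}$, putting us in the setup of~\S\ref{subsubsecchai_hida_setup}.

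It then remains to verify the six hypotheses of Proposition~\ref{prop:trivial_set}. Its hypothesis (2) is our hypothesis (5); its hypothesis (5) is our hypothesis (3); and its hypothesis (6) follows from our hypotheses (1) and (3) via the identification $\tilde{J}_{\Int_{(p)}}\otimes\Int_p \simeq \tilde{M}_{\Int_p}$. Hypotheses (1) and (3) of Proposition~\ref{prop:trivial_set} (triviality of the $\tilde{G}(\Adele_f^p)$-action and transitivity of the $G(\Adele_f^p)$-action on $\pi_0(S)$) follow from Lemma~\ref{lem:hecke_action_conncomp} after using our hypothesis (4) to identify $\pi_0(\Ss^{\ord}_{K_p,(v)})$ with $\pi_0(\Sh_{K_p,\overline{\Rat}_p})$; the weak-approximation input for that lemma is supplied by our hypothesis (2) via Proposition~\ref{ord:proposition:weakapprox} applied to $G$ (whose $\Rat_p$-form contains $M_{\Rat_p}$ as a Levi), and the weak-approximation step internal to the proof of Proposition~\ref{prop:trivial_set} for $J_{\Rat_p}\simeq M_{\Rat_p}$ is likewise provided by hypothesis (2).

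The main obstacle, and the step where hypothesis (6) is genuinely used, is the verification of hypothesis (4) of Proposition~\ref{prop:trivial_set}: that $\Phi(J(\Int_{(p)}))$ fixes a point $\varpi\in\pi_0(\mathcal{P})$. Take $\varpi$ to be the connected component containing $(x_0,\beta)$, where $\beta\colon \mathcal{G}_0\xrightarrow{\simeq}\mathcal{G}^{\et}_{x_0}\times\mathcal{G}^{\mult}_{x_0}$ witnesses the ordinariness of $x_0$. For $\gamma\in J(\Int_{(p)})$, the Hecke translate $\psi(\gamma)\cdot x_0$ is represented by $\mathcal{A}_{x_0}$ with level structure $\epsilon_{x_0}\circ\psi(\gamma)^{-1}$, which is brought back to $\epsilon_{x_0}$ by the honest isogeny $\gamma^{-1}\colon\mathcal{A}_{x_0}\to\mathcal{A}_{x_0}$---this is exactly how $\psi(\gamma)$ is defined in Proposition~\ref{proposition:Aut_isom_tate}(3). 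Hence $\psi(\gamma)\cdot x_0 = x_0$ inside $\Sh_{K_p}$, and the Igusa trivialization is transported to $\gamma_{*}\circ\beta$, where $\gamma_{*}$ is the induced automorphism of $\mathcal{G}^{\et}_{x_0}\times\mathcal{G}^{\mult}_{x_0}$. The subsequent twist by $\varphi(\gamma)\in M_{\Int_p}(\Int_p)$ yields $\gamma_{*}\circ\beta\circ\varphi(\gamma)^{-1}$, and the compatibility of \'etale and crystalline realizations in Lemma~\ref{lem:GIsoc_struc}(2) is precisely the statement that $\varphi(\gamma) = \beta^{-1}\circ\gamma_{*}\circ\beta$, so this expression equals $\beta$ and $\varpi$ is fixed. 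Without hypothesis (6) one could not guarantee that $\varphi(J(\Int_{(p)}))$ is large enough in $M_{\Int_p}(\Int_p)$ to drive the Kneser--Tits and weak-approximation engine of Proposition~\ref{prop:trivial_set} to completion, so hypothesis (6) is the true linchpin of the argument.
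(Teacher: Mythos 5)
Your proposal is correct and follows essentially the same route as the paper's proof: invoke Proposition~\ref{prop:trivial_set} with $H=M_{\Int_p}$, $\mathcal{P}$ the Igusa tower and $J=I^{\circ}_{x_0}$ for the hypersymmetric point of hypothesis (6), match up the six hypotheses exactly as you do, and verify the fixed-point condition \eqref{hyp:trivial} by observing that the global automorphism $\gamma$ itself identifies the Hecke-translated data with the original Igusa trivialization. Your explicit remark that hypothesis (2) for the Levi $M$ also supplies the density of $G(\Rat)_+$ in $G(\Rat_p)$ needed for Lemma~\ref{lem:hecke_action_conncomp} is a point the paper leaves implicit, but otherwise the two arguments coincide.
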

\begin{proof}
This is an application of Proposition~\ref{prop:trivial_set}. In the notation there, we will take $S = \Ss_{K_p,(v)}^{\mathrm{ord}}$, $H = J_0$, $\mathcal{P} = \widehat{\mathcal{P}}$, $G = G$, and $T = \{p\}$. 

By Lemma~\ref{lem:hecke_action_conncomp} and Assumption~\eqref{hypo:conncomp}, $\tilde{G}(\Adele_f^p)$ acts trivially on $\pi_0(\widehat{\mathcal{C}})$, which verifies Assumption~\ref{hyp:sc} from~\eqref{prop:trivial_set}. 

Assumption~\eqref{hyp:isotropic} in~\eqref{prop:trivial_set} is Assumption~\eqref{hypo:isotropic} here.

Assumption~\eqref{hyp:hecke} in~\eqref{prop:trivial_set} follows from Assumption~\eqref{hypo:conncomp} here and Lemma~\ref{lem:hecke_action_conncomp}.

Assumptions~\eqref{hyp:central} and~\eqref{hyp:smooth} in~\eqref{prop:trivial_set} follow from Assumptions~\eqref{hypo:smooth} and ~\eqref{hypo:Qsmooth} here.

To finish, it remains to find the $\Int_{(p)}$-group scheme $J_{\Int_{(p)}}$ as in~\eqref{subsubsecchai_hida_setup} with generic fiber $J$ satisfying Assumption~\eqref{hyp:trivial}. 

Let $x_0$ be as in Assumption~\eqref{hypo:hypersymm}. By replacing $k$ with a suitable extension, we can assume that $I^{\circ}_{x_0} = I^{\circ}_{x_0,1}$, $J^{\circ}_{x_0} = J^{\circ}_{x_0,1}$ are such that we have the isomorphism
\[
\Rat_p\otimes_{\Rat}I^{\circ}_{x_0}\xrightarrow{\simeq}J^{\circ}_{x_0}
\]
given to us by Proposition~\ref{proposition:Aut_isom_tate}.

We will take $J = I^{\circ}_{x_0}$ and $J_{\Int_{(p)}}$ to be the Zariski closure of $J$ in the $\Int_{(p)}$-group $\underline{\Aut}(\mathcal{A}_{x_0})_{(p)}$ obtained as the group scheme of invertible elements in the $\Int_{(p)}$-algebra $\End(\mathcal{A}_{x_0})\otimes\Int_{(p)}$. We can also interpret $J_{\Int_{(p)}}$ as the largest subgroup of $J$ acting on $\mathcal{G}_{x_0}$ via \emph{automorphisms} instead of self-quasi-isogenies.

Choose a lift $\eta\in \mathrm{Ig}^{\ord}_{K_p,k(v)}(\overline{\Field}_p)$ of $x_0$. This gives a $G$-structure preserving isomorphism $\overline{\Field}_p\otimes_{\Int_p}\mathcal{G}_0\xrightarrow{\simeq}\overline{\Field}_p\otimes_k \mathcal{G}_{x_0}$, which we also denote by $\eta$. Assumption~\ref{hypo:hypersymm} now gives us isomorphisms
\[
\varphi:J_{\Rat_p}\xrightarrow{\simeq}J^{\circ}_{x_0}\xrightarrow{\simeq}\Aut^{\circ}_G(\overline{\Field}_p\otimes_k \mathcal{G}_{x_0})\xrightarrow{\simeq}\Aut^{\circ}_G(\overline{\Field}_p\otimes_{\Int_p} \mathcal{G}_{0})\xrightarrow{\simeq}M_{\Rat_p},
\]
where the penultimate isomorphism is obtained via conjugation by $\eta$, and the last one is from Lemma~\ref{lem:aut_g_loc_const}. This isomorphism maps $\Int_p\otimes_{\Int_{(p)}}J_{\Int_{(p)}}$ onto $M_{\Int_p}$. 

Now, (3) of Proposition~\ref{proposition:Aut_isom_tate} shows that the action of $M(\Adele_f^p)$ on the prime-to-$p$ ad\'elic Tate module of $\mathcal{A}_{\overline{x}_0}$, along with the trivialization $\epsilon_{\overline{x}_0}$, gives an embedding $\psi:J_{\Adele_f^p}\hookrightarrow G_{\Adele_f^p}$.

In this way, we get a map
\[
\Phi:J(\Int_{(p)})\xrightarrow{(\varphi,\psi)} M_{\Int_p}(\Int_p)\times G(\Adele_f^p).
\]

To verify Assumption~\eqref{hyp:trivial}, it is now enough to show that for all $m\in J(\Int_{(p)})$, $\Phi(m)$ fixes $\eta$. For this, note that $\Phi(m)(\eta)$ corresponds to the same underlying abelian variety $\mathcal{A}_{\overline{x}_0}$ but with $\epsilon_{\overline{x}_0}$ replaced by $\epsilon_{\overline{x}_0}\circ \psi(m)^{-1}$ and $\eta$ replaced by the point of $\widehat{\mathrm{Ig}}^{\ord}_{K,v}$ above $\Phi(m)(\overline{x}_0)$ corresponding to the isomorphism $\varphi(m)\circ \eta$.

This means that the isomorphism 
\[
m:\mathcal{A}_{\overline{x}_0}\xrightarrow{\simeq}\mathcal{A}_{\Phi(m)(\overline{x}_0)} = \mathcal{A}_{\overline{x}_0}
\]
is $G$-structure preserving and carries $\eta$ to the isomorphism $\varphi(m)\circ \eta$. Therefore, arguing as in the proof of Corollary~\ref{cor:qisogM_ordinary_desc}, one sees that $\eta = \Phi(m)(\eta)$, thus finishing the proof of the proposition.
\end{proof}

\begin{definition}\label{ord:defn:hypersymmetric}
 We will say that $x_0$ is \defnword{hypersymmetric} if it satisfies Assumption~\eqref{hypo:hypersymm} above. The definition is originally due to Chai~\cite{Chai2011-ik} in the case where $G = \GSp(H)$.
\end{definition}

\begin{remark}
	\label{rem:hypersymmetric_one}
Suppose that $\mathcal{A}_{x_0}$ is hypersymmetric as an abelian variety; that is, suppose that the natural map
	\[
      \Int_p\otimes_{\Int}\End(\mathcal{A}_{x_0})\to \End(\overline{\Field}_p\otimes_k\mathcal{G}_{x_0}).
	\]
	is an isomorphism. Then it is immediate that $x_0$ is hypersymmetric in the sense of the definition above. In fact, it is enough to assume that $\mathcal{A}_{x_0}$ is isogenous to a hypersymmetric abelian variety: any abelian variety isogenous to a hypersymmetric one is itself hypersymmetric.
\end{remark}

\begin{remark}
\label{rem:hypersymmetric_two}
Any ordinary elliptic curve $\mathcal{E}$ is hypersymmetric in the above sense: The right hand side is $\Int_p\times \Int_p$, and so it is enough to know that $\End(\mathcal{E})$ has rank at least $2$ as a $\Int$-module (since the image of the map in question is saturated), which is clear, since Frobenius does not act as a scalar.
\end{remark}

\begin{remark}
\label{rem:hypersymmetric_CM}
	Suppose that we have an imaginary quadratic extension $L/\Rat$ and a map $T_L\defn \Res_{L/\Rat}\Gm\to G$ whose real fiber yields an element of $X$. Suppose also that $L$ is split at $p$ and that $G_{\Int_p}(\Int_p)\subset G(\Rat_p)$ contains the image of $(\Reg{L}\otimes_{\Int}\Int_p)^\times$. Then the mod-$v$ reduction of the CM points arising from $T_L$ are all hypersymmetric points of $\Ss^{\ord}_{K_p,k(v)}$ contains many hypersymmetric points: Indeed, the symplectic representation $H$, viewed as a representation of $T_L$, must be isomorphic to a direct sum of the tautological representation on $L$, for weight reasons. Therefore, if $x_0$ is a point of $\Ss_K$ that is the reduction of a special point arising from $T_L$, then $\mathcal{A}_{x_0}$ is isogenous to a power of the CM elliptic curve associated with that point, which is ordinary, since we have assumed that $p$ is split in $L$. By Remark~\ref{rem:hypersymmetric_two}, $x_0$ is hypersymmetric.
\end{remark}

\begin{remark}
	In~\cite{MR4419972}, one finds a somewhat general criterion for when a Newton stratum in a Shimura variety at a place of hyperspecial level contains a hypersymmetric point. Specialized to the ordinary case, it says that the ordinary locus contains a hypersymmetric point precisely when $G$ admits a subgroup $I\subset G$ that is the centralizer of an elliptic element and whose Dynkin diagram, when viewed as a $\Gal(\overline{\Rat}_p/\Rat_p)$-equivariant graph, is isomorphic to that of $M_{\Int_p}$.
\end{remark}

\section{Group schemes associated with quadratic lattices}
\label{sec:group_schemes}

In this section, we will prove some technical results about group schemes associated with quadratic lattices that will be employed to prove our main irreducibility results. The reader can refer back to the results here as necessary.

\subsection{Applications of Witt's extension theorem}
\label{subsec:applications_of_witt_s_extension_theorem}
Fix a \defnword{self-dual} quadratic space $(N,Q)$ over $\Int_p$: This is a quadratic form 
\[
Q:N\to \Int_p
\]
on a finite free $\Int_p$-module $N$ that is such that the associated bilinear form 
\[
[x,y]_Q = Q(x+y) - Q(x)- Q(y)
\]
on $N$ is non-degenerate. Note that when $p=2$ this forces $n$ to be even.

We have the reductive $\Int_p$-group scheme $\mathrm{GSpin}(N)$, sitting in two short exact sequences of reductive groups
\begin{align*}
1\to \Gm&\to \GSpin(N)\to \SO(N)\to 1;\\
1\to \mathrm{Spin}(N)&\to \GSpin(N)\xrightarrow{\nu}\Gm\to 1.
\end{align*}
Here, $\nu: \mathrm{GSpin}(N)\to \Gm$ is the spinor norm. For more details, see~\cite[\S~1]{mp:reg}.

In the first part of this section, we will see that the various lemmas from \S~2 of \cite{mp:reg}hold in quite some generality, without in particular the hypothesis that $p>2$.

\begin{lemma}
\label{lem:GSpin_torsor_points}
Suppose that $F$ is a field over $\Int_p$. Suppose that we have two proper direct summands $W_1,W_2\subset N_F$ such that there is an isometry $f:W_1\xrightarrow{\simeq}W_2$ of quadratic spaces (with their inherited quadratic forms) over $F$. Suppose that $W_1$ has codimension at least $2$ in $N_F$. Then there exists $h\in \SO(N)(F)$ such that $h(w_1) = f(w_1)$ for all $w_1\in W_1$.
\end{lemma}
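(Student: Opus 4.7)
The plan is to invoke Witt's extension theorem to produce an extension of $f$ in the full orthogonal group $\Orth(N)(F)$, and then correct its determinant, if necessary, by composing with a reflection fixing $W_2=f(W_1)$ pointwise. The codimension hypothesis $\codim_F W_1\geq 2$ is precisely what creates enough room in $W_2^\perp$ for this correction.

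Concretely, the self-duality of $(N,Q)$ over $\Int_p$ makes the associated bilinear form $[\,,\,]_Q$ non-degenerate on $N_F$, so Witt's extension theorem applies to the isometry $f:W_1\xrightarrow{\simeq}W_2$ and yields some $h_0\in\Orth(N)(F)$ with $h_0|_{W_1}=f$. If $\det h_0=1$, we set $h=h_0$ and are done. Otherwise it suffices to exhibit an element $g\in\Orth(N)(F)$ with $g|_{W_2}=\mathrm{id}$ and $\det g=-1$: then $h:=g\circ h_0$ lies in $\SO(N)(F)$ and still agrees with $f$ on $W_1$, since $f(w)\in W_2$ for $w\in W_1$ and $g$ fixes $W_2$ pointwise.

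To construct $g$, observe that $\dim_F W_2^\perp=\dim_F N-\dim_F W_1\geq 2$. Provided that $W_2^\perp$ contains an anisotropic vector $v$ with $Q(v)\in F^\times$---which is automatic when $W_2$ is itself non-degenerate, since then $W_2^\perp$ is a non-degenerate quadratic space of $F$-dimension at least $2$ and hence cannot be entirely isotropic---one may take $g$ to be the reflection $r_v(x)=x-[x,v]_Q\, v/Q(v)$, which lies in $\Orth(N)(F)$, has determinant $-1$, and fixes $W_2\subset v^\perp$ pointwise. The main technical point is thus the existence of an anisotropic vector in $W_2^\perp$, and this is exactly what the codimension $\geq 2$ hypothesis is designed to deliver; in the degenerate case one has to work a little harder, replacing the single reflection by a more elaborate element of the stabilizer of $W_2$ built from the parabolic structure associated with the radical of $W_2$, but the basic strategy is unchanged.
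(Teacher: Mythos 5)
Your opening move is the same as the paper's: apply Witt's extension theorem to get some $h_0\in \mathrm{O}(N)(F)$ extending $f$, and then try to fix up the component by multiplying with an element of $\mathrm{O}(N)(F)\smallsetminus \SO(N)(F)$ that fixes $W_2$ (equivalently $W_1$) pointwise. The paper records exactly this reduction as its second preliminary observation. However, the two places where you defer the work are precisely where the actual content of the lemma lies, and one of them is not a deferral but a dead end.

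First, the characteristic $2$ issue. The paper fixes a prime $p$ which may be $2$, and this section is explicitly written to remove the hypothesis $p>2$ from the earlier literature. Over a field of characteristic $2$ every element of $\mathrm{O}(N)(F)$ has determinant $1$, so your test ``if $\det h_0=1$ we are done'' always passes and your argument, as written, would place $h_0$ in $\SO(N)(F)$ unconditionally, which is false. The correct invariant separating the two components is the Dickson invariant, not the determinant; the reflection $r_v$ does have nontrivial Dickson invariant, so the correction mechanism can be salvaged, but the bookkeeping has to be redone, and ``non-degenerate'' no longer implies ``self-dual'' for the induced form on $W_2^\perp$, so the existence of a usable $v$ needs a separate argument (the paper devotes two paragraphs to exactly these characteristic $2$ cases, e.g.\ $W_1=F\cdot u_1$ with $Q(u_1)=1$).

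Second, and more seriously, the ``degenerate case'' cannot be handled by ``a more elaborate element of the stabilizer of $W_2$'': when $W_2$ is a maximal isotropic subspace (which is a proper direct summand of codimension $\ge 2$ as soon as $\dim N\ge 4$, hence within the scope of the lemma), any $g\in \mathrm{O}(N)(F)$ fixing $W_2$ pointwise preserves $W_2^\perp=W_2$ and acts trivially on $N/W_2\simeq W_2^\vee$, so it lies in the unipotent radical of the Siegel parabolic and hence in $\SO(N)(F)$. There is no element of the non-identity component fixing $W_2$ pointwise, so the post-correction strategy is structurally unavailable; one must instead show directly that \emph{some} extension of $f$ lands in $\SO(N)(F)$. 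This is why the paper treats the isotropic case by a different mechanism (transitivity of $\SO(N)(F)$ on isotropic Grassmannians, together with the surjection from the parabolic onto $\GL$ of the isotropic subspace), and then reduces the general $W_1$ to the self-dual and isotropic cases via a Witt decomposition of the radical. That reduction is the bulk of the proof and is missing from your proposal.
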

\begin{proof}
Let us make some preliminary observations:
\begin{itemize}
	\item If $W_1$ is isotropic, then the lemma reduces to the transitivity of the action of $\SO(N)(F)$ on the Grassmannian parameterizing isotropic subspaces of $N_F$ of fixed rank.
	\item By Witt's extension theorem~\cite[\S 4, Th\'eor\'eme 1]{Bourbaki2006-qu}, there exists an element $g\in \mathrm{O}(N)(F)$ such that $g(w_1) = f(w_1)$ for all $w_1\in W_1$. In particular, if we knew that there is an element of $\mathrm{O}(N)(F)\backslash \mathrm{SO}(N)(F)$ restricting to the identity on $W_1$, then we would be done. 
\end{itemize}

Now, suppose that $W_1$ is itself self-dual with the inherited quadratic form. Then we have $N= W_1 \oplus W_1^{\perp}$  where
\[
W_1^{\perp} = \{n\in N_F:\;[n,w] = 0\text{ for all $w\in W_1$ }\}
\]
is also self-dual. We can now pick any element of $\mathrm{O}(W_1^{\perp})(F)\backslash\SO(W_1^{\perp})(F)$ to extend the identity on $W_1$ to an element $g'\in \mathrm{O}(N)(F)\backslash \mathrm{SO}(N)(F)$.

Next, suppose that $W_1 = F\cdot u_1$ where $Q(u_1) \neq 0$. After scaling if necessary we can assume that $Q(u_1) = 1$. If $2$ is invertible in $F$, then this is a special case of the second paragraph. Assume therefore that $F$ has charactersitic $2$: in this case $\dim N_F>2$ is even, and we can find $e_1\in N_F$ such that $Q(e_1) = 0$ and $[e_1,u_1]_Q = 1$, so that $u_1 = e_1+e'_1$, where $Q(e'_1) = 0$ and $[e_1,e'_1]_Q = 1$. Now, $e_1,e'_1$ span a self-dual proper subspace of $N_F$, and we return to the situation from the previous paragraph. 

Suppose now that $W_1$ is \emph{non-degenerate}: this means that the projective quadric defined by the restriction of $Q$ to $W_1$ is a smooth $F$-scheme. If $2$ is invertible in $F$, then this is equivalent to saying that $W_1$ is self-dual, and we return to a previously considered case. Otherwise, we have the possibility that there exists $u_1\in W_1$ such that $Q(u_1) = 1$, and such that $(F\cdot u_1)^\perp = W_1$. Choose a direct sum decomposition $W_1 = F\cdot u_1 \oplus V_1$, and set $V_2 = f(V_1)\subset W_2$. Then $V_1\subset N_F$ is self-dual, and so by the second paragraph of the proof, there is an $h'\in \SO(N)(F)$ with $h'(v_1) = f(v_1)$ for all $v_1\in V_1$. If $h'(u_1) = f(u_1)$, then we are done. Otherwise, note that $V_2^{\perp}$ has dimension at least $4$. Therefore, by the previous paragraph, we can find $h''\in \SO(V_2^{\perp})(F)$ such that $h''(h'(u_1)) = f(u_1)$. We now take $h$ to be the composition of $h'$ with the element of $\SO(N)(F)$ that restricts to the identity on $V_1$ and to $h''$ on $V_1^{\perp}$.

Finally, let $W_1$ be arbitrary, and, for $i=1,2$, let $V_i\subset W_i$ be the subspace consisting of the isotropic vectors in the radical $W_i\cap W_i^{\perp}$. Then we have $f(V_1) = V_2$. Choose Witt decompositions
\[
N_F = V_1 \oplus U_1 \oplus V'_1 = V_2 \oplus U_2 \oplus V'_2.
\]
Note that we have $W_i = V_i \oplus (U_i\cap W_i)$ for $i=1,2$ and that $U_i\cap W_i\subset U_i$ is a non-degenerate subspace of a self-dual quadratic space. With this, we can reduce to the situation in the previous paragraph.
\end{proof}

\begin{lemma}
\label{lem:GSpin_surj_W}
Suppose that $R$ is a $\Int_p$-algebra, and that $W\subset N_R$ is a direct summand. For any $R$-algebra $S$ set
\[
A_W(S) = \{\varphi\in \Hom_S(W_S,N_S):\;[\varphi(w),w]_Q =0\text{, for all $w\in W_S$}\}
\]
Then: 
\begin{enumerate}
	\item $A_W(R)$ is locally free over $R$;
	\item For any $R$-algebra $S$, we have 
	\[
   S\otimes_RA_W(R) =A_W(S)\subset \Hom_S(W_S,N_S).
	\]
\end{enumerate}
\item The map
\[
S\otimes_{\Int_p}\Lie \SO(N) \xrightarrow{X\mapsto X\vert_{W_S}}A_W(S)
\]
is surjective. 
\end{lemma}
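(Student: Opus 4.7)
The plan is to handle (1) and (2) together by realizing $A_W$ as the kernel of an explicit $R$-linear map of locally free modules whose formation commutes with base change, and then to obtain (3) by an explicit extension argument exploiting the self-duality of $N$.

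For (1) and (2), I will consider the $R$-linear map
\[
\gamma \colon \Hom_R(W,N) \longrightarrow \on{Quad}_R(W), \qquad \varphi \longmapsto q_\varphi,
\]
where $q_\varphi(w) = [\varphi(w),w]_Q$ and $\on{Quad}_R(W)$ denotes the (locally free) $R$-module of quadratic forms on $W$. By construction $A_W(R) = \ker \gamma$, so it suffices to prove $\gamma$ is surjective: then $A_W(R)$ is a direct summand of $\Hom_R(W,N)$, hence locally free, and the resulting short exact sequence of locally free modules remains exact after any base change $R \to S$, giving both (1) and (2). To prove surjectivity, I will work Zariski-locally on $\Spec R$ so that $W$ admits a basis $e_1,\ldots,e_r$; using that $W \subset N$ is a direct summand together with the self-duality of $N$, the map $N \to W^\vee$, $n \mapsto [n,-]|_W$, is surjective. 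Then given a target $q(\sum x_i e_i) = \sum_{i\le j}a_{ij}x_ix_j$, choose $f_1,\ldots,f_r \in N$ with $[f_i,e_i]=a_{ii}$, $[f_i,e_j]=a_{ij}$ for $i<j$ and $[f_j,e_i]=0$ for $i<j$, and set $\varphi(e_j)=f_j$; then $q_\varphi = q$.

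For (3), the self-duality of $N$ identifies $\Lie\SO(N)$ with the module $A_2(N)$ of alternating bilinear forms on $N$, via $X \leftrightarrow b_X$ with $b_X(n_1,n_2) = [Xn_1,n_2]$ (the defining condition $[Xn,n]=0$ of the Lie algebra is precisely alternation of $b_X$, which is strictly stronger than skew-symmetry when $p=2$). After base change, $S\otimes_{\Int_p}\Lie\SO(N)$ becomes the module of alternating bilinear forms on $N_S$. Under the same identification, $A_W(S)$ corresponds to bilinear forms $b'\colon W_S \times N_S \to S$ whose restriction to $W_S \times W_S$ is alternating, and the map of the lemma becomes restriction of alternating forms from $N_S\times N_S$ to $W_S\times N_S$. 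To lift a given $b'$, pick (Zariski-locally on $\Spec R$) a complement $N = W \oplus W''$, so that $N_S = W_S \oplus W''_S$, and define an alternating form $b$ on $N_S$ by
\[
b|_{W_S \times N_S} = b', \qquad b|_{W''_S \times W_S}(w'',w) = -b'(w,w''), \qquad b|_{W''_S \times W''_S} = 0;
\]
a short computation using the alternating property of $b'|_{W_S\times W_S}$ shows $b$ is alternating on $N_S$, producing the required preimage.

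The only genuinely substantive step is the surjectivity of $\gamma$; everything else is formal. The whole argument will be uniform in the residue characteristic because I use $\on{Quad}_R(W)$ rather than $\Sym^2 W^\vee$, and alternating rather than merely skew-symmetric bilinear forms---both of these are the correct intrinsic modules in characteristic $2$, which is where any subtleties would otherwise have to be addressed separately.
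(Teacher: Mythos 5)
Your argument for (1) and (2) is essentially the paper's, seen from the dual side: the paper exhibits $A_W(S)$ as the preimage, under the surjection $\pi_Q\colon\Hom_S(W_S,N_S)\to\Hom_S(W_S,W_S^\vee)$, of the locally free submodule of alternating maps, while you take the kernel of the further quotient $\on{Bil}(W_S)\to\on{Quad}(W_S)$; these are the same submodule, and both descriptions yield local freeness and base-change compatibility for the same linear-algebraic reason (your explicit check that $\gamma$ is surjective, via the surjectivity of $N\to W^\vee$ coming from self-duality, is the one detail the paper leaves implicit). For (3), however, your route is genuinely different: the paper first uses (1) and (2) to reduce surjectivity to the case where $R$ is an algebraically closed field, then invokes Lemma 2.2 of \cite{mp:reg} together with a characteristic-$2$ case analysis (treating non-degenerate but non-self-dual $W$ by passing through a hyperbolic plane), whereas you translate the whole statement via self-duality into extending a bilinear form $b'$ on $W_S\times N_S$ with alternating restriction to $W_S\times W_S$ to an alternating form on $N_S\times N_S$, and produce the extension explicitly from a complement $N=W\oplus W''$. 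Your construction is correct (the only point to verify is $b(w+w'',w+w'')=b'(w,w)=0$, which you note), it is uniform in the residue characteristic because you work with alternating rather than skew-symmetric forms, and it avoids both the reduction to fibers and the external citation; what it costs is nothing beyond the observation that $\Lie\SO(N)\cong\on{Alt}^2(N)$ under self-duality, so on balance it is the cleaner and more self-contained proof of (3).
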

\begin{proof}
Consider the surjective map of finite locally free $S$-modules:
\[
\pi_Q:\Hom_S(W_S,N_S)\to \Hom_S(W_S,W_S^\vee)
\]
induced by the dual surjection $N_S\to W_S^\vee$. Then we see that $A_W(S)$ is the pre-image under $\pi_Q$ of the locally free sub-module consisting of maps $\varphi$ such that $\varphi(w)(w) = 0$ for all $w\in W_S$.\footnote{Concretely, if $W_S$ is free of rank $n$, then we can identify this sub-module with the space of $n\times n$ anti-symmetric matrices with zeros along the diagonal.} From this, the first two assertions of the lemma are immediate.

For the third and final assertion, it now suffices to prove it under the hypothesis that $R = F$ is an algebraically closed field, where we are essentially in the situation of Lemma 2.2 of~\cite{mp:reg}. The only modification one needs in that proof is to consider the case where $W\subset N_F$ is non-degenerate but not self-dual---once again, a characteristic $2$ phenomenon---which, as in the proof of Lemma~\ref{lem:GSpin_torsor_points} above, reduces to the case where $W = F\cdot u$ with $Q(u) = 1$. That is, given a vector $v\in W^\perp\subset N_F$, we must find $X\in F\otimes_{\Int_p}\Lie\SO(N)$, such that $X(u) = v$. For this, we can assume that $v$ as in \emph{loc. cit.}, we can assume that $u = e+f$, where $e,f$ are isotropic vectors spanning a hyperbolic plane $U\subset N_F$; then 
\[
\varphi:U\xrightarrow{e\mapsto e+v-u\;;\;f\mapsto f}N_F
\]
is an  element of $A_U(F)$ satisfying $\varphi(u) = v$, and so we reduce the requisite surjectivity statement to the case where $W$ is itself self-dual, which is covered by the argument in~\cite{mp:reg}.
\end{proof}

\begin{lemma}
\label{lem:GSpin_torsor_lifts}
Suppose that we have a surjection $R\to \overline{R}$ of $\Int_p$-algebras with square-zero kernel $I$. Suppose that we have direct summands $W_1,W_2\subset N_R$ such that there is an isomorphism $f:W_1\xrightarrow{\simeq}W_2$ of quadratic spaces (with their inherited quadratic forms)  over $R$. Suppose also that there exists $g'\in \SO(N)(R)$ such that
\[
g'(w_1)-f(w_1)\in I\otimes_{\Int_p}N\subset N_R\text{, for all $w_1\in  W$}.
\]
Then there exists $g\in \SO(N)(R)$ such that $g(w_1) = f(w_1)$ for all $w_1\in W_1$.
\end{lemma}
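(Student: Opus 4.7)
The plan is to reduce the problem to producing an infinitesimal correction $h \in \ker\bigl(\SO(N)(R) \to \SO(N)(\overline{R})\bigr)$ such that $g := g'\cdot h$ satisfies $g(w_1) = f(w_1)$ for all $w_1 \in W_1$. Equivalently, setting $\varphi(w_1) \defn (g')^{-1}f(w_1) - w_1$, I want $h \in \SO(N)(R)$ with $h \equiv 1 \pmod{I}$ and $h\vert_{W_1} = \id + \varphi$. Since $I^2 = 0$, the kernel of reduction $\SO(N)(R) \to \SO(N)(\overline{R})$ is identified with $I \otimes_{\Int_p} \Lie\SO(N)$ via $X \mapsto 1+X$, so the problem becomes finding $X \in I\otimes_{\Int_p}\Lie\SO(N)$ whose restriction to $W_{1,R}$ equals $\varphi$.

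First I would verify that $\varphi$ takes values in $I\otimes_{\Int_p}N \subset N_R$: this is exactly the hypothesis on $g'$, using that $(g')^{-1}$ also fixes $W_1$ modulo $I$ (or, more directly, that $I\otimes N$ is stable under $\SO(N)(R)$, so $g'(w_1) - f(w_1)\in I\otimes N$ implies $(g')^{-1}f(w_1) - w_1 \in I\otimes N$). Next I would check that $\varphi$ lies in $I\otimes_{\Int_p} A_{W_1}(R)$ — equivalently, that $[\varphi(w_1),w_1]_Q = 0$ for all $w_1 \in W_{1,R}$. For this, since both $f$ and $(g')^{-1}$ are isometries, $Q((g')^{-1}f(w_1)) = Q(w_1)$, so expanding
\[
Q(w_1) = Q(w_1 + \varphi(w_1)) = Q(w_1) + [\varphi(w_1),w_1]_Q + Q(\varphi(w_1))
\]
gives $[\varphi(w_1),w_1]_Q = -Q(\varphi(w_1))$. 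But $\varphi(w_1) \in I\otimes N$, and since $I^2 = 0$ the quadratic form $Q$ vanishes on $I\otimes N$, so $Q(\varphi(w_1)) = 0$ and the claim follows.

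Now I would apply part (3) of Lemma~\ref{lem:GSpin_surj_W} with $S = R$ to conclude that the restriction map $R\otimes_{\Int_p}\Lie\SO(N) \twoheadrightarrow A_{W_1}(R)$ is surjective. Tensoring this surjection of $R$-modules with $I$ (and using part (1), which guarantees that $A_{W_1}(R)$ is locally free, so that $I\otimes_R A_{W_1}(R)$ injects into $A_{W_1}(R)$ as $I\cdot A_{W_1}(R)$), we obtain a surjection
\[
I\otimes_{\Int_p}\Lie\SO(N) \twoheadrightarrow I\cdot A_{W_1}(R).
\]
Since $\varphi$ lies in the target, we can lift it to the required $X$. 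Setting $h = 1+X$ and $g = g'\cdot h$ finishes the proof.

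The only genuine content is the verification that $\varphi \in I\otimes A_{W_1}$, which relies crucially on $I^2 = 0$ to kill the quadratic term; everything else is an assembly of the infrastructure already set up in Lemma~\ref{lem:GSpin_surj_W}. I do not anticipate a serious obstacle — the statement is designed to be the deformation-theoretic shadow of Lemma~\ref{lem:GSpin_torsor_points}, and the proof runs along exactly parallel lines once the linearization is performed.
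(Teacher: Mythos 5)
Your proof is correct and follows essentially the same route as the paper's (which simply points to \cite[Lemma 2.8]{mp:reg} and sketches exactly this linearization: produce $X\in I\otimes_{\Int_p}\Lie\SO(N)$ restricting to $w_1\mapsto w_1-(g')^{-1}f(w_1)$ on $W_1$ via Lemma~\ref{lem:GSpin_surj_W}, then correct $g'$ by $1-\tilde X$). Your explicit verification that the quadratic term $Q(\varphi(w_1))$ dies because $I^2=0$, so that $\varphi$ genuinely lands in $I\otimes A_{W_1}(R)$, is precisely the detail the paper leaves implicit.
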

\begin{proof}
This is shown exactly as in~\cite[Lemma 2.8]{mp:reg}: For $G=\SO(N)$, the point is to show that there exists $X\in I\otimes_{\Int_p}\Lie G$ such that $X(w_1) = w_1 - g'^{-1}f(w_1)$ for all $w_1\in W_1$ using Lemma~\ref{lem:GSpin_surj_W} above, and to then replace $g'$ with $g'\circ (1-\tilde{X})$, where $\tilde{X}\in I\otimes_{\Int_p}\Lie G$ is a lift of $X$. 
\end{proof}

\begin{lemma}\label{lem:smooth_fiber}
Let $N_0$ be a non-degenerate quadratic space over a field $k$, and let $W_0\subset N_0$ be a $k$-subspace of codimension at least $2$. Define subgroups
\[
Q_{W_0}\subset \mathrm{GSpin}(N_0)\;;\; \tilde{Q}_{W_0}\subset \mathrm{Spin}(N_0)\;;\; \bar{Q}_{W_0}\subset \SO(N_0)
\]
as above. Then $Q_{W_0},\tilde{Q}_{W_0},\bar{Q}_{W_0}$ are smooth connected $k$-algebraic groups.
\end{lemma}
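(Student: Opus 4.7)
The plan is to establish the result first for $\bar{Q}_{W_0}\subset \SO(N_0)$ and then transfer it to $Q_{W_0}$ and $\tilde{Q}_{W_0}$ via the canonical isogenies relating these groups.

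For smoothness of $\bar{Q}_{W_0}$, I verify the infinitesimal lifting criterion directly. Given a square-zero thickening $R\twoheadrightarrow \bar{R}$ and an element $\bar{g}\in \bar{Q}_{W_0}(\bar{R})$, smoothness of $\SO(N_0)$ produces a lift $g'\in \SO(N_0)(R)$; since $g'$ restricts to the identity on $W_0\otimes\bar{R}$, Lemma~\ref{lem:GSpin_torsor_lifts} applied with $W_1=W_2=W_0$ and $f=\mathrm{id}_{W_0}$ yields $g\in \SO(N_0)(R)$ with $g|_{W_0}=\mathrm{id}$, hence $g\in \bar{Q}_{W_0}(R)$. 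So $\bar{Q}_{W_0}$ is formally smooth, and being of finite type, is smooth.

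For connectedness of $\bar{Q}_{W_0}$, I would decompose $W_0 = V\oplus W_0^{\mathrm{nd}}$ via its radical $V\defn W_0\cap W_0^\perp$ and a complement $W_0^{\mathrm{nd}}$ on which the inherited form is non-degenerate, and then extend to an orthogonal Witt-type decomposition $N_0 = W_0^{\mathrm{nd}}\perp (V\oplus V')\perp U$ with $V\oplus V'$ a hyperbolic Witt summand of $(W_0^{\mathrm{nd}})^\perp$ and $U$ non-degenerate. A direct verification identifies $\bar{Q}_{W_0}$ with the pointwise stabilizer of the totally isotropic subspace $V\subset V\oplus V'\oplus U$ inside $\SO(V\oplus V'\oplus U)$, the latter being embedded in $\SO(N_0)$ as the identity on $W_0^{\mathrm{nd}}$. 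This stabilizer fits in a split extension $1\to U_V\to \bar{Q}_{W_0}\to \SO(U)\to 1$, where $U_V$ is the unipotent radical of the parabolic of $\SO(V\oplus V'\oplus U)$ stabilizing $V$ as a subspace. Both $U_V$ (smooth connected unipotent) and $\SO(U)$ (smooth connected over any field, for any non-degenerate $U$ of any dimension) are smooth and connected, so $\bar{Q}_{W_0}$ is as well.

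The remaining two cases are handled by transfer along the canonical isogenies. For $Q_{W_0}$, the preimage of $\bar{Q}_{W_0}$ under $\GSpin(N_0)\to\SO(N_0)$ sits in a short exact sequence $1\to\Gm\to Q_{W_0}\to \bar{Q}_{W_0}\to 1$ of smooth group schemes whose kernel $\Gm$ and quotient $\bar{Q}_{W_0}$ are both smooth connected, so $Q_{W_0}$ is smooth connected. For $\tilde{Q}_{W_0}\subset\Spin(N_0)$, I would rerun the decomposition argument inside $\Spin(V\oplus V'\oplus U)\hookrightarrow\Spin(N_0)$: the unipotent piece $U_V$ lifts uniquely to $\Spin$, and the reductive Levi piece lifts to a smooth connected subgroup closely related to $\Spin(U)$, realizing $\tilde{Q}_{W_0}$ as a semidirect product of two smooth connected groups. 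The step I expect to be the main obstacle is the characteristic-$2$ case here: since $\Spin(N_0)\to\SO(N_0)$ is an infinitesimal isogeny with non-reduced kernel $\mu_2$ rather than an \'etale double cover, the naive dimension and tangent-space bookkeeping requires extra care, and one must verify that the Levi lift remains smooth and that the central $\mu_2\subset\Spin(N_0)$ is absorbed into the identity component of $\tilde{Q}_{W_0}$. Both points ultimately follow from the smoothness and connectedness of $\Spin(U)$ in all characteristics, together with the fact that unipotent groups admit unique lifts along central isogenies.
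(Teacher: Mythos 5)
Your architecture is close to the paper's own: the paper likewise realizes the pointwise stabilizer as a vector group extended by the Spin group of a smaller non-degenerate space, sitting inside a parabolic, and likewise moves between the three groups using the central $\Gm$ and the spinor norm. Your smoothness argument via the infinitesimal lifting lemma is fine in substance, and the deduction of $Q_{W_0}$ from $\bar{Q}_{W_0}$ via $1\to\Gm\to Q_{W_0}\to\bar{Q}_{W_0}\to 1$ is correct.

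The genuine gap is in characteristic $2$, and it occurs earlier than where you flag it. You set $V\defn W_0\cap W_0^{\perp}$, the radical of the restricted bilinear form, and treat $V$ as totally isotropic: you pair it with a complement $V'$ into a hyperbolic summand and form the parabolic stabilizing $V$. In characteristic $2$ the radical need not be totally isotropic for $Q$: since $[u,u]_Q=2Q(u)=0$ for every $u$, the radical can contain vectors with $Q(u)\neq 0$ (for example $W_0=k\cdot u$ with $Q(u)=1$ gives $V=W_0$ anisotropic). For such $V$ there is no hyperbolic summand $V\oplus V'$ and no parabolic stabilizing $V$, so the extension $1\to U_V\to\bar{Q}_{W_0}\to\SO(U)\to 1$ does not exist as written; the surrounding orthogonal decomposition of $N_0$ also fails, since your $W_0^{\mathrm{nd}}$ need not be self-dual for the bilinear form. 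The paper's proof instead takes $V_0\subset W_0\cap W_0^{\perp}$ to be only the \emph{isotropic} vectors of the radical (a proper subspace in general in characteristic $2$), uses the parabolic stabilizing $V_0$, and absorbs the anisotropic part of the radical into the non-degenerate quotient $W_0/V_0$, whose Spin group supplies the reductive piece. Relatedly, your treatment of $\tilde{Q}_{W_0}$ is only a sketch, and it is the one case that cannot be outsourced: in characteristic $2$ one cannot deduce smoothness of $\tilde{Q}_{W_0}=\ker(\nu\vert_{Q_{W_0}})$ from that of $Q_{W_0}$, since the kernel of a homomorphism to $\Gm$ from a smooth group can be non-smooth (e.g.\ $\mu_2$); this is precisely why the paper proves the statement for $\tilde{Q}_{W_0}$ first, by the corrected decomposition inside $\Spin(N_0)$, and then derives the other two groups from the exact sequences. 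Your plan to rerun the decomposition in $\Spin$ is the right move, but it inherits the radical issue above, and the blanket assertion that $\SO(U)$ is smooth and connected for non-degenerate $U$ of any dimension over any field is itself one of the delicate points for odd-dimensional $U$ in characteristic $2$.
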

\begin{proof}
It is enough to show this for $\tilde{Q}_{W_0}$: Indeed, we have short exact sequences of group schemes:
\begin{align*}
1\to \Gm&\to Q_{W_0}\to \overline{Q}_{W_0}\to 1;\\
1\to \tilde{Q}_{W_0}&\to Q_{W_0}\xrightarrow{\nu}\Gm\to 1.
\end{align*} 
The surjectivity of $\nu\vert_{Q_{W_0}}$ follows from the surjectivity of its restriction to the central $\Gm$ in $Q_{W_0}$.

Let $W'_0\subset W_0$ be the radical for the restriction of the symmetric bilinear form to $W_0$, let $V_0\subset W'_0$ be the subspace consisting of all the isotropic vectors (we have $V_0 = W'_0$  unless $k$ has characteristic $2$), and let $U_0 = (V_0)^\perp$ be its orthogonal complement. Let $P_0\subset \mathrm{Spin}(N_0)$ be the parabolic subgroup stabilizing $V_0$; its Levi quotient can be identified with
\[
\GL(V_0)\times \Spin(U_0/V_0).
\]
 Let $M_0$ of the Levi quotient consisting of elements of the form $(1,g)$, where $g$ restricts to the identity on $W_0/V_0$, and let $P'_0\subset P_0$ be its pre-image. Note that $M_0 = \mathrm{Spin}(W_0/V_0)$, where $W_0/V_0$ is a non-degenerate quadratic space. In particular, it is connected, and so is its pre-image $P'_0$.

We have $P'_0 \simeq U(P_0)\rtimes M_0$, where $U(P_0)\subset P_0$ is the unipotent radical. It can now be checked that, under any such isomorphism, $\tilde{Q}_{W_0}$ is mapped isomorphically onto the (smooth, connected) subgroup 
\[
V_0\rtimes M_0\subset U(P_0)\rtimes M_0
\]
where $V_0\subset U(P_0)$ consists of those elements $g$ whose restriction to $W_0$ is the identity.
\end{proof}

\subsubsection{}\label{subsubsecqW}
Suppose that we have a direct summand $W\subset N$. Let $Q_W\subset \mathrm{GSpin}(N)$ be the $\Int_p$-group scheme such that for every $\Int_p$-algebra $R$, we have
\[
Q_W(R) = \{g\in \GSpin(N)(R):\;g\cdot w = w\text{, for all $w\in W_R$}\}.
\]
Set $\tilde{Q}_W = \mathrm{Spin}(N)\cap Q_W = \ker \nu\vert_{Q_W}$, and let $\overline{Q}_W \subset \mathrm{SO}(N)$ be the image of $Q_W$. As an immediate consequence of Lemma~\ref{lem:smooth_fiber} we obtain:

\begin{lemma}
\label{lem:qT_smooth}
Suppose that $\mathrm{rank}(W)\leq \mathrm{rank}(N)-2$. Then:
\begin{enumerate}
	\item  We have two short exact sequences of group schemes:
\begin{align*}
1\to \Gm&\to Q_W\to \overline{Q}_W\to 1;\\
1\to \tilde{Q}_W&\to Q_W\xrightarrow{\nu}\Gm\to 1.
\end{align*} 
	\item The group schemes $Q_W$ and $\overline{Q}_W$ are smooth over $\Int_p$ with geometrically connected fibers.
	\item The map $\nu\vert_{Q_W}$ is smooth and so $\tilde{Q}_W$ is also smooth over $\Int_p$.
\end{enumerate}
\end{lemma}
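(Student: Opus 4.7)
The plan is to bootstrap the fiberwise statements provided by Lemma~\ref{lem:smooth_fiber} up to statements over $\Int_p$, using the infinitesimal lifting criterion (via Lemma~\ref{lem:GSpin_torsor_lifts}) together with Nakayama on Lie algebras.

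For (1), I would obtain both short exact sequences by restricting the standard spinor norm sequences for $\GSpin(N)$. The central $\Gm \subset \GSpin(N)$ acts trivially on $N$, hence trivially on $W$, so it lies inside $Q_W$; this produces the kernel of the first sequence. The surjectivity of $Q_W \to \overline{Q}_W$ is fppf-local: any fppf-lift of $\bar{g} \in \overline{Q}_W$ to $\GSpin(N)$ acts on $N$ via $\bar{g}$, which fixes $W$, so lies in $Q_W$. The second sequence is the restriction of the spinor norm SES to $Q_W$, and the surjectivity of $\nu\vert_{Q_W}$ is inherited from the surjectivity of $\nu$ restricted to the central $\Gm \subset Q_W$, which is the squaring map $\Gm \to \Gm$, itself fppf-surjective.

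For (2), I would prove smoothness of $\overline{Q}_W$ over $\Int_p$ by verifying the infinitesimal lifting criterion. Given a square-zero thickening $R \to \overline{R}$ of $\Int_p$-algebras with ideal $I$ and $\bar{g} \in \overline{Q}_W(\overline{R})$, smoothness of $\SO(N)$ supplies a lift $g' \in \SO(N)(R)$, and Lemma~\ref{lem:GSpin_torsor_lifts} applied with $W_1 = W_2 = W_R$ and $f = \id$ adjusts $g'$ to an element $g \in \SO(N)(R)$ literally fixing $W_R$, i.e.\ $g \in \overline{Q}_W(R)$. Combined with finite presentation (as a closed subscheme of $\SO(N)$), this gives smoothness. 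Then $Q_W$ is smooth because $Q_W \to \overline{Q}_W$ is a $\Gm$-torsor by the first SES of (1), and geometric connectedness of the fibers is delivered directly by Lemma~\ref{lem:smooth_fiber}.

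For (3), the key point is that the derivative $d\nu\vert_{\Lie Q_W} \colon \Lie Q_W \to \Lie \Gm$ is surjective as a map of free $\Int_p$-modules. On each geometric fiber over $\Spec \Int_p$, Lemma~\ref{lem:smooth_fiber} gives that $\tilde{Q}_W$ is smooth connected of dimension $\dim \overline{Q}_W = \dim Q_W - 1$ (noting that $\tilde{Q}_W \subsetneq Q_W$, since the central $\Gm \subset Q_W$ maps nontrivially under $\nu$ via squaring). Equivalently, $\ker(d\nu\vert_{\Lie Q_W}) = \Lie \tilde{Q}_W$ has codimension exactly one, so $d\nu\vert_{\Lie Q_W}$ is fiberwise surjective; Nakayama applied to its cokernel then upgrades this to surjectivity over $\Int_p$. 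Since $Q_W$ and $\Gm$ are smooth over $\Int_p$ and the derivative of the homomorphism $\nu\vert_{Q_W}$ at the identity is surjective, the morphism is smooth at the identity and, by translation in the group scheme, smooth everywhere. Smoothness of $\tilde{Q}_W$ follows at once as the fiber over $1 \in \Gm$ of a smooth morphism. The only real obstacle is the characteristic $2$ case, where the naive argument via multiplication-by-$2$ on the central $\Lie \Gm$ breaks down; but the codimension hypothesis enters Lemma~\ref{lem:smooth_fiber} precisely to guarantee the fiberwise dimension count that forces surjectivity of $d\nu\vert_{\Lie Q_W}$ even in this case, after which the rest is routine descent from fibers to $\Int_p$.
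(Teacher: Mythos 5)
Your proof is correct and is essentially the argument the paper intends: the lemma is stated there as ``an immediate consequence of Lemma~\ref{lem:smooth_fiber}'', and the details you supply --- fiberwise smoothness and connectedness from that lemma, formal smoothness of $\overline{Q}_W$ over $\Int_p$ via the infinitesimal lifting criterion and Lemma~\ref{lem:GSpin_torsor_lifts}, and the Lie-algebra dimension count for $\nu\vert_{Q_W}$ --- are exactly the mechanism the paper itself deploys in the proof of Lemma~\ref{lem:GSpin_torsor}. The only point worth making explicit is that when you invoke Lemma~\ref{lem:GSpin_torsor_lifts} with $W_1=W_2=W_R$ and $f=\id$, you need the corrected element $g$ to reduce to $\bar{g}$ modulo $I$; this is not part of the statement of that lemma, but is immediate from its proof, which produces $g = g'\circ(1-\tilde{X})$ with $\tilde{X}\in I\otimes_{\Int_p}\Lie G$.
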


\begin{lemma}\label{lem:GSpin_torsor}
Suppose that $R$ is a $\Int_p$-algebra, that $W$ has codimension at least $2$ in $N$, and that we have another embedding of quadratic spaces over $R$, $j:W_R\hookrightarrow N_R$ mapping onto a direct summand. Then the functor on $R$-algebras
\[
S\mapsto \{g\in \GSpin(N)(S):\;g^{-1}(w) = j(w)\text{ for all $w\in W_S$}\}
\]
is represented by a $Q_W$-torsor over $\Spec R$. In particular, if $R = \Int_p$, then we always have $g\in \GSpin(N)(\Int_p)$ such that $g^{-1}(w) = j(w)$ for all $w\in W$.
\end{lemma}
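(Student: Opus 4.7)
The plan is to promote the functor in the statement --- call it $F$ --- to a $Q_W$-torsor by (a) exhibiting a simply transitive $Q_W$-action, (b) showing local non-emptiness by reducing to $\SO(N)$ and applying the Witt-theoretic Lemmas~\ref{lem:GSpin_torsor_points} and~\ref{lem:GSpin_torsor_lifts}, and then (c) deducing the $R=\Int_p$ case from Hensel's lemma together with Lang's theorem, using the smoothness and connectedness of $Q_W$ from Lemma~\ref{lem:qT_smooth}.

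First I would verify that left multiplication inside $\GSpin(N)$ defines a left action of $Q_W$ on $F$: given $q\in Q_W(S)$ and $g\in F(S)$, one has $(qg)^{-1}(w)=g^{-1}q^{-1}(w)=g^{-1}(w)=j(w)$. The action is simply transitive on non-empty fibers, since for $g_1,g_2\in F(S)$ the element $q=g_2g_1^{-1}$ satisfies $q^{-1}(w)=g_1g_2^{-1}(w)=g_1(j(w))=w$, placing $q$ in $Q_W(S)$.

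Next, I would study the analogous $\SO$-level functor
\[
\overline{F}(S)=\{h\in \SO(N)(S):h^{-1}(w)=j(w)\text{ for all }w\in W_S\},
\]
which is representable as a closed subscheme of $\SO(N)_R$ cut out by the obvious conditions on the direct summand $W_R$. Applying Lemma~\ref{lem:GSpin_torsor_points} at each residue field $k$ of $\Spec R$ (with $W_1=j(W)_k$, $W_2=W_k$, $f=j^{-1}$) yields non-empty fibers, while Lemma~\ref{lem:GSpin_torsor_lifts} shows that $\overline{F}$ is formally smooth. Thus $\overline{F}$ is a smooth $\overline{Q}_W$-torsor, hence fppf-locally trivial. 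Pulling back along the central $\Gm$-extension $\GSpin(N)\to \SO(N)$ yields
\[
F \;=\; \overline{F}\times_{\SO(N)}\GSpin(N)
\]
as a $\Gm$-torsor over $\overline{F}$ --- Zariski-locally trivial by Hilbert~90 on affines --- and hence $F$ itself is a $Q_W$-torsor over $\Spec R$.

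For the case $R=\Int_p$: the torsor $F$ is smooth over $\Spec\Int_p$, so Hensel's lemma makes $F(\Int_p)\to F(\Field_p)$ surjective. The reduction $F_{\Field_p}$ is a torsor under the smooth connected algebraic group $(Q_W)_{\Field_p}$ over the finite field $\Field_p$, so by Lang's theorem it is trivial, giving $F(\Field_p)\neq\emptyset$ and hence $F(\Int_p)\neq\emptyset$. The substantive content of the argument lies entirely in the earlier lemmas; the main potential pitfall is the passage from the $\SO$-level torsor to the $\GSpin$-level one, but this is automatic since $\GSpin(N)\to \SO(N)$ is a $\Gm$-torsor, and any pullback of a $\Gm$-torsor remains a $\Gm$-torsor.
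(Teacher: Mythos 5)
Your proof is correct and follows essentially the same route as the paper's: simple transitivity of the left $Q_W$-action, local non-emptiness and smoothness via Lemmas~\ref{lem:GSpin_torsor_points} and~\ref{lem:GSpin_torsor_lifts}, and Lang's theorem plus smoothness to produce the $\Int_p$-point. The only difference is that you make explicit the passage from the $\SO(N)$-level torsor to the $\GSpin(N)$-level one via the central $\Gm$-extension, a step the paper leaves implicit; this is a correct and worthwhile clarification, since the two cited lemmas are stated for $\SO(N)$.
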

\begin{proof}
Clearly, the functor is represented by a closed subscheme of $\GSpin(N)_R$. Moreover, $Q_W$ acts on this subscheme by left multiplication, and this action is simply transitive whenever the $S$-points of the subscheme are non-empty. 

It is now enough to show that the functor admits sections over any strictly henselian local ring of $R$. But this is clear from Lemmas~\ref{lem:GSpin_torsor_points} and~\ref{lem:GSpin_torsor_lifts}, which together show that the subscheme in question is faithfully flat and smooth over $R$.

The second assertion now from Lang's theorem~\cite{Lang1956-vh}, combined with the fact that $Q_W$ is smooth with connected special fiber: this implies that every $Q_W$-torsor over $\Spec \Int_p$ admits a section.
\end{proof}

\subsection{Certain subspaces of quadric Grassmannians}
\label{subsec:certain_subspaces_of_quadric_grassmannians}

\begin{notation}
	Let $\{\lambda_0\}$ be the conjugacy class of minuscule cocharacters of $\GSpin(N)$ characterized by the following properties:
\begin{itemize}
	\item For the left multiplication action on the Clifford algebra $C(N)$, $\lambda_0$ has weights $0,-1$.
	\item Via the action of $\lambda_0$, $N$ acquires a weight space decomposition of the form:
\[
N = N^1 \oplus N^0 \oplus N^{-1},
\]
where $N^{\pm 1}$ are complementary isotropic lines and $N^0\subset N$ is the subspace orthogonal to both. 
\end{itemize}
\end{notation}

\begin{definition}
Let $\mathrm{Par}_{\lambda_0}$ be the Grassmannian parameterizing isotropic lines in $N$. Given a $\Int_p$-algebra $R$, we will call an isotropic line $J\subset N_R$ $W$-\defnword{generic} if the following conditions hold:
\begin{itemize}
	\item $J+W_R\subset N_R$ is a local direct summand of rank $\mathrm{rank}(W)+1$; equivalently, $J$ maps isomorphically onto a local direct summand of $N_R/W_R$.
	\item The map
	\[
     W_R \xrightarrow{t\mapsto [t,\cdot]_Q}\Hom_R(J,R)
	\]
	is surjective.
\end{itemize}
$W$-generic isotropic lines are parameterized by an open subscheme $\mathrm{Par}^{\circ}_{\lambda_0}(W)$ of $\mathrm{Par}_{\lambda_0}$
\end{definition}

\begin{definition}
\label{defn:W_generic_type_U}
	Let $\mathbb{P}(W)$ be the projective space over $\Int_p$ parameterizing hyperplanes in $T$. Then there is a natural map $\mathrm{Par}^{\circ}_{\lambda_0}(W)\to \mathbb{P}(W)$ sending an isotropic line $J\in \mathrm{Par}^{\circ}_{\lambda_0}(W)(R)$ to the kernel of the associated surjection from $W_R$ to $\Hom_R(J,R)$. Fix $U\in \mathbb{P}(W)(\Int_p)$, and let
\[
\mathrm{Par}^{\circ}_{\lambda_0}(W,U)\subset \mathrm{Par}^{\circ}_{\lambda_0}(W)
\]
be the fiber above it. 
\end{definition}

\begin{lemma}\label{lem:parT_smooth}
$\mathrm{Par}^{\circ}_{\lambda_0}(W,U)$ is a smooth scheme over $\Int_p$.
\end{lemma}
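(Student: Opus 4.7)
The plan is to realise $\mathrm{Par}^{\circ}_{\lambda_0}(W,U)$ as the fiber over the $\Int_p$-point $U\in\mathbb{P}(W)(\Int_p)$ of the natural projection $\pi\colon \mathrm{Par}^{\circ}_{\lambda_0}(W)\to \mathbb{P}(W)$ of Definition~\ref{defn:W_generic_type_U}, and to deduce the claimed smoothness by proving that $\pi$ itself is a smooth morphism between smooth $\Int_p$-schemes. First, $\mathrm{Par}_{\lambda_0}$ is the quadric $\{Q=0\}\subset \mathbb{P}(N)$; at a primitive isotropic $v$ the differential of $Q$ is the linear form $[v,\cdot]_Q$, which is nonzero by the non-degeneracy of the bilinear form (self-duality of $(N,Q)$). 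Hence $\mathrm{Par}_{\lambda_0}$ is smooth over $\Int_p$ (even at $p=2$), so its open subscheme $\mathrm{Par}^{\circ}_{\lambda_0}(W)$ is too; the target $\mathbb{P}(W)$ is evidently smooth.

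By the Jacobian criterion for morphisms of smooth $\Int_p$-schemes, smoothness of $\pi$ reduces to checking that at every geometric point $J$ with image $U=\pi(J)$ the differential
\[
d\pi\colon T_J\mathrm{Par}^{\circ}_{\lambda_0}(W) = \Hom(J, J^\perp/J)\longrightarrow T_U\mathbb{P}(W) = \Hom(U, W/U)
\]
is surjective. The main computation would be to pin down this differential: deforming $J$ to $\{j+\epsilon\tilde\varphi(j)\}\subset N\otimes k[\epsilon]$ for a lift $\tilde\varphi\colon J\to J^\perp$ of $\varphi$, and tracking the induced deformation of $\ker(W\to J^\vee)$, should yield that $d\pi(\varphi)$ is the unique $\psi\colon U\to W/U$ satisfying
\[
\langle \psi(u), j\rangle_Q \;=\; -\,[u,\tilde\varphi(j)]_Q \qquad \text{for all } u\in U,
\]
where $j$ spans $J$ and $\langle -, j\rangle_Q\colon W/U\xrightarrow{\simeq}k$ is the isomorphism induced by the pairing. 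The inclusion $U\subset J^\perp$ makes $[u,\tilde\varphi(j)]_Q$ independent of the lift $\tilde\varphi$.

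Surjectivity of $d\pi$ then reduces to surjectivity of the evaluation map $J^\perp/J\to U^\vee$, $v\mapsto [-,v]_Q|_U$, whose kernel is $(J+U)^\perp/J$. The openness conditions defining $\mathrm{Par}^{\circ}_{\lambda_0}(W)$ force $J\not\subset W\supset U$, so $J\cap U=0$ and hence $\rank(J+U)=\rank(W)$; a rank count then gives the image the correct rank $\rank(U)=\rank(U^\vee)$, proving surjectivity. Therefore $\pi$ is smooth, and its fiber $\mathrm{Par}^{\circ}_{\lambda_0}(W,U)=\pi^{-1}(U)$ is smooth over $\Int_p$. The only real obstacle is the tangent-space calculation in the middle paragraph, especially the verification that the formula for $d\pi(\varphi)$ descends to $J^\perp/J$; once that is in place, the rank count is essentially immediate.
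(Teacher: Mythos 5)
Your argument is correct, but it is not the route the paper takes. The paper's proof is a one-step identification: since the condition $\ker(W\to J^\vee)=U$ forces $J\subset U^\perp$, it realizes $\mathrm{Par}^{\circ}_{\lambda_0}(W,U)$ as an open subscheme of the quadric $\mathrm{M}^{\mathrm{loc}}$ of isotropic lines in $U^\perp\subset N$, and then invokes the known description (from \cite[(2.10)]{mp:reg}) of the singular locus of that quadric in each geometric fiber --- namely the isotropic lines contained in the radical of $U^\perp_k$, which lie inside $U_k\subset W_k$ --- and observes that a $W$-generic line, being complementary to $W_k$, cannot lie there. You instead keep the ambient smooth quadric $\mathrm{Par}_{\lambda_0}$ (smooth precisely because $N$ is self-dual, so $dQ_v=[v,\cdot]_Q$ never vanishes on primitive vectors, even at $p=2$) and prove that the projection $\pi:\mathrm{Par}^{\circ}_{\lambda_0}(W)\to\mathbb{P}(W)$ is smooth by a direct tangent-space computation; your identification of $d\pi(\varphi)$ via $\langle\psi(u),j\rangle_Q=-[u,\tilde\varphi(j)]_Q$ is the right formula, the independence of lifts follows from $U\subset J^\perp$ exactly as you say, and the surjectivity reduces, as you note, to the rank count $\dim J^\perp-\dim(J+U)^\perp=\dim U$, which uses $J\cap W=0$ (part of $W$-genericity) and non-degeneracy of the form on $N_k$. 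Your approach is self-contained and proves slightly more (smoothness of the whole family over $\mathbb{P}(W)$, not just the one fiber), at the cost of carrying out the deformation computation that the paper avoids by citing the local-model analysis of \cite{mp:reg}.
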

\begin{proof}
Note that we can identify this scheme with the open subscheme of the projective $\Int_p$-scheme $\mathrm{M}^{\mathrm{loc}}$ parameterizing isotropic lines in $U^{\perp}\subset N$; see~\cite[(2.10)]{mp:reg}. The singular locus of any geometric fiber over a field $k$ of this scheme can be identified with the projective space of isotropic lines contained in the radical of $U_k$; but no line arising from a point of $\mathrm{Par}^{\circ}_{\lambda_0}(W,U)(k)$ can be contained in $U_k\subset W_k$, given that it has to be complementary to $W_k$.
\end{proof}

\subsubsection{}

We will assume now that the quadratic form induced on $W_{\Rat_p}$ is non-degenerate, and that $\mathrm{Par}^{\circ}_{\lambda_0}(W,U)(\Int_p)$ is non-empty, and we will choose a point $J_0$ in it. Let $H_{\Int_p}\subset \mathrm{GSpin}(N)$ be the closed $\Int_p$-subgroup scheme with
\[
H_{\Int_p}(R) = \{g\in \mathrm{GSpin}(N)(R):\;g\cdot w = w\text{, for all $w\in W_R$}\}
\]
for any $\Int_p$-algebra $R$. Let $Q\subset H_{\Int_p}$ be the stabilizer of $J_0$: Since $Q$ has to preserve the non-degenerate pairing of $J_0$ against $W$, it follows that $Q$ actually fixes $J_0$ pointwise, and is thus the pointwise stabilizer of $J_0\oplus W\subset N$. In other words, in the notation of~\eqref{subsubsecqW}, we have $H_{\Int_p} = Q_W$ and $Q = Q_{J_0\oplus W}$.

\begin{lemma}
\label{lem:smooth_grp_schemes}
Let $\widetilde{Q}$ and $\widetilde{H}_{\Int_p}$ be defined as above Proposition~\ref{prop:trivial_set}. Suppose that $\mathrm{rank}(W)\leq \mathrm{rank}(N)-3$.
\begin{enumerate}
	\item $H_{\Int_p}$ is a smooth $\Int_p$-group scheme with reductive generic fiber.
	\item $H_{\Rat_p}$ admits a maximal torus that splits over a quadratic extension of $\Rat_p$.
	\item $Q$ is also a smooth $\Int_p$-group scheme.
	\item The quotient $H_{\Int_p}/Q$ is represented by a smooth scheme over $\Int_p$, and there is an isomorphism
	\[
    H_{\Int_p}/Q\xrightarrow{\simeq}\mathrm{Par}^{\circ}_{\lambda_0}(W,U)
	\]
	of smooth $\Int_p$-schemes.
   \item The $\Int_p$-group schemes $\widetilde{H}_{\Int_p}$ and $\widetilde{Q}$ are smooth with connected special fibers.
   \item Let $\nu:\mathrm{GSpin}(N)\to \mathbb{G}_m$ be the spinor norm; then its restriction to $Q$ is surjective. In particular, the map on $\Field_p$-points
   \[
   \nu:Q(\Field_p)\to \Field_p^\times
   \]
   is surjective.
\end{enumerate}
\end{lemma}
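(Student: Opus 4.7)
The plan is to deduce the lemma from the machinery already assembled in Lemmas~\ref{lem:qT_smooth}, \ref{lem:parT_smooth}, and~\ref{lem:GSpin_torsor}, applied to the direct summands $W$ and $J_0\oplus W$, both of codimension at least $2$ in $N$ under the hypothesis $\mathrm{rank}(W)\leq \mathrm{rank}(N)-3$. First, for (1) and (3), observe that $H_{\Int_p}=Q_W$ and $Q=Q_{J_0\oplus W}$, so their smoothness is immediate from Lemma~\ref{lem:qT_smooth}. For the reductivity of $H_{\Rat_p}$, I will use the non-degeneracy of $W_{\Rat_p}$ to obtain an orthogonal decomposition $N_{\Rat_p}=W_{\Rat_p}\oplus W^{\perp}_{\Rat_p}$, which gives an isomorphism $H_{\Rat_p}\xrightarrow{\simeq}\GSpin(W^{\perp}_{\Rat_p})$, a reductive group.

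Part (2) then follows from the classification of tori in $\SO$ of a non-degenerate quadratic space over $\Rat_p$: decomposing $W^{\perp}_{\Rat_p}$ into hyperbolic planes plus an anisotropic kernel of dimension at most $4$, the maximal torus of the anisotropic summand is either trivial, a norm-one torus of a quadratic extension, or the torus of norm-one elements in a quaternion division algebra over $\Rat_p$, all of which split over a quadratic extension.

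Part (4) is the main technical step. I would define $\phi:H_{\Int_p}\to \mathrm{Par}^{\circ}_{\lambda_0}(W,U)$ by $g\mapsto g^{-1}(J_0)$; since $g$ is isometric and fixes $W$ pointwise, $g^{-1}(J_0)$ is isotropic, $W$-generic, and has the same $U\subset W$ as $J_0$. The $\phi$-fibers are cosets of $Q$, so $\phi$ factors through an injection $H_{\Int_p}/Q\hookrightarrow \mathrm{Par}^{\circ}_{\lambda_0}(W,U)$. The hard part is showing fppf-local surjectivity: given $J\in \mathrm{Par}^{\circ}_{\lambda_0}(W,U)(R)$, the surjective pairings $W\to J_0^{\vee}$ and $W\to J^{\vee}$ share the kernel $U$, so dualizing provides a canonical isomorphism $\alpha:J_0\xrightarrow{\simeq}J$ via $J_0\simeq (W/U)^{\vee}\simeq J$. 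Setting $j(j_0+w)=\alpha(j_0)+w$ then gives an isometric embedding $(J_0\oplus W)_R\hookrightarrow N_R$ onto the direct summand $J\oplus W$ that restricts to the identity on $W$ (the pairings with $W$ match by construction and both lines are isotropic), to which Lemma~\ref{lem:GSpin_torsor} supplies fppf-locally an extension $g\in \GSpin(N)$ with $g^{-1}(j_0+w)=j(j_0+w)$; such a $g$ automatically fixes $W$ and satisfies $g^{-1}(J_0)=J$. Combined with smoothness of $\mathrm{Par}^{\circ}_{\lambda_0}(W,U)$ from Lemma~\ref{lem:parT_smooth} and of $Q$ from (3), this will promote the sheaf-theoretic bijection to an isomorphism of smooth $\Int_p$-schemes.

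Finally, for (5), I would identify $\widetilde{H}_{\Int_p}$ with $\tilde Q_W$ and $\widetilde{Q}$ with $\tilde Q_{J_0\oplus W}$: on generic fibers these are the kernels of the spinor norm on the respective $Q$-groups, which for $H_{\Rat_p}=\GSpin(W^{\perp}_{\Rat_p})$ agrees with the simply connected cover $\mathrm{Spin}(W^{\perp}_{\Rat_p})$ of the derived group. Smoothness and connectedness of special fibers then come from Lemma~\ref{lem:qT_smooth}(3) and Lemma~\ref{lem:smooth_fiber}. For (6), Lemma~\ref{lem:qT_smooth}(3) applied to $J_0\oplus W$ shows $\nu\vert_Q:Q\to \Gm$ is smooth; its restriction to the central $\Gm\subset Q$ is squaring, which is surjective on $\overline{\Field}_p$-points, so the morphism is surjective. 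The $\Field_p$-level statement then follows from Lang's theorem applied to the smooth connected kernel $\tilde Q_{J_0\oplus W}$ over $\Field_p$.
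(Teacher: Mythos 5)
Your proposal is correct and takes essentially the same route as the paper: (1), (3), (5) and the surjectivity of $\nu\vert_Q$ come from Lemmas~\ref{lem:qT_smooth} and~\ref{lem:smooth_fiber}, reductivity and (2) from the identification $H_{\Rat_p}\simeq\GSpin(W_{\Rat_p}^{\perp})$ together with the hyperbolic-plus-anisotropic decomposition, (4) from the orbit map and Lemma~\ref{lem:GSpin_torsor} (your explicit construction of the isometry $j$ via $J_0\simeq(W/U)^{\vee}\simeq J$ is exactly the step the paper leaves implicit), and the $\Field_p$-point statement from Lang's theorem. The only quibble is a phrasing in (2): for an anisotropic quaternary space the relevant maximal torus is a product of norm-one tori of quadratic subfields of the quaternion algebra (the full norm-one group $\mathrm{SL}_1(D)$ is not a torus), but the conclusion that everything splits over a quadratic extension is unaffected.
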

\begin{proof}
Assertions (1), (3), (5), and the first part of assertion (6) are immediate from Lemma~\ref{lem:qT_smooth}. Note that the reductivity of the generic fiber $H_{\Rat_p}$ is a consequence of the hypothesis that $W_{\Rat_p}$ is a non-degenerate quadratic space, so that we can identify
\[
H_{\Rat_p} = \GSpin(W_{\Rat_p}^\perp)\subset \GSpin(N_{\Rat_p}).
\]

As for (2), note that $W_{\Rat_p}^\perp$ is isometric to the orthogonal sum of an anisotropic quadratic space of dimension $\le 4$ and copies of the hyperbolic plane. In particular, it suffices to show (2) under the additional assumption that $V \defn W_{\Rat_p}^\perp$ is anisotropic. Here, an easy case-by-case analysis using the classification from~\cite[\S 25]{shimura:quadratic} does the job.

The second part of (6) is immediate from (5) and Lang's theorem on connected groups over finite fields~\cite{Lang1956-vh}.

It only remains to prove (4), which, by Lemma~\ref{lem:parT_smooth}, comes down to showing that the map
\[
H_{\Int_p}\xrightarrow{g\mapsto g\cdot J_0} \mathrm{Par}^{\circ}_{\lambda_0}(W,U)
\]
induces an isomorphism of fppf sheaves $H_{\Int_p}/Q\xrightarrow{\simeq} \mathrm{Par}^{\circ}_{\lambda_0}(W,U)$. This follows from Lemma~\ref{lem:GSpin_torsor}, which shows that, for any $\Int_p$-algebra $R$, and any $J\in \mathrm{Par}^{\circ}_{\lambda_0}(W,U)(R)$, the scheme of sections $g$ of $H_{\Int_p}(S)$ for $R$-algebras $S$ such that
\[
g(S\otimes_RJ_0) = J
\]
is a torsor over $\Spec R$ under the group scheme $Q$.
\end{proof}

% \begin{lemma}
% 	\label{lem:HZp_orbit_decomposition}
% Suppose that $\mathrm{rank}(W)\leq \mathrm{rank}(N)-3$. Then the list of orbits for the action of $H_{\Int_p}(\Int_p)$ (equivalently, of $H_{\Int_p}(\Field_p)$) on $\mathrm{Par}_{\lambda_0}(\Field_p)$ is as follows (with each orbit appearing exactly once):
% \begin{enumerate}
% 	\item For each $\bar{U}\in \mathbb{P}(W)(\Field_p)$ with a lift $U\in \mathbb{P}(W)(\Int_p)$, the subset $\mathrm{Par}_{\lambda_0}(W,U)(\Field_p)\subset \mathrm{Par}_{\lambda_0}(\Field_p)$: this is independent of the choice of $U$.
% 	\item Singletons $\{J\}$ consisting of isotropic lines $J\subset W_{\Field_p}$.
% 	\item The set of isotropic lines $J\subset W^\perp_{\Field_p}\subset N_{\Field_p}$ with $J\cap W_{\Field_p} = 0$.
% \end{enumerate}
% \end{lemma}
% \begin{proof}
	
% \end{proof} 

\begin{remark}
	\label{rem:quadric_desc_par_lambda_0}
The map
\[
	\GSpin(N)(\Int_p)\xrightarrow{h\mapsto h\lambda_0(p)\GSpin(N)(\Int_p)}\GSpin(N)(\Int_p)\lambda_0(p)\GSpin(N)(\Int_p)/\GSpin(N)(\Int_p)
\]
yields a bijection
\[
	\GSpin(N)(\Int_p)/(\GSpin(N)(\Int_p)\cap \lambda_0(p)\GSpin(N)(\Int_p)\lambda_0(p)^{-1})\xrightarrow{\simeq}\GSpin(N)(\Int_p)\lambda_0(p)\GSpin(N)(\Int_p)/\GSpin(N)(\Int_p).
\]
On the other hand, using the minusculeness of $\lambda_0$, one finds that 
\[
\GSpin(N)(\Int_p)\cap \lambda_0(p)\GSpin(N)(\Int_p)\lambda_0(p)^{-1}\subset \GSpin(N)(\Int_p)
\]
is simply the pre-image of the $\Field_p$-points of the parabolic subgroup of $\GSpin(N)$ fixing the isotropic line $N^1$. In particular, we have a bijection
\[
\GSpin(N)(\Int_p)/(\GSpin(N)(\Int_p)\cap \lambda_0(p)\GSpin(N)(\Int_p)\lambda_0(p)^{-1})\xrightarrow[\simeq]{h\mapsto hN^1_{\Field_p}}\mathrm{Par}_{\lambda_0}(\Field_p),
\]
where $\mathrm{Par}_{\lambda_0}$ is the quadric Grassmannian parameterizing isotropic lines in $N$.
\end{remark}

\begin{remark}
	\label{rem:lat_descp_lambda_0}
Let $\mathrm{Lat}$ be the set of self-dual $\Int_p$-lattices in $N[p^{-1}]$. Consider the map
\[
	\GSpin(N)(\Int_p)\xrightarrow{g\mapsto g\lambda_0(p)N}\mathrm{Lat}.
\]
Its image is exactly the set $\mathrm{Lat}_{\lambda_0}$ of lattices of the form $hN$ for $h\in \GSpin(N)(\Int_p)\lambda_0(p)\GSpin(N)(\Int_p)$ and is in bijection with $\GSpin(N)(\Int_p)\lambda_0(p)\GSpin(N)(\Int_p)/\GSpin(N)(\Int_p)$. By the discussion in Remark~\ref{rem:quadric_desc_par_lambda_0}, we find that there is a bijection 
\begin{align}\label{eqn:lambda_0_lattice_par_bij}
	\mathrm{Par}_{\lambda_0}(\Field_p)\xrightarrow{\simeq}\mathrm{Lat}_{\lambda_0}
\end{align}
This bijection is in fact canonical and can be described as follows:  Suppose that we have an isotropic line $\bar{N}(-1)\in \mathrm{Par}_{\lambda_0}(\Field_p)$. Pick any isotropic lift $N(-1)\subset N$ of $\bar{N}(-1)$ and choose a complementary line $N(1)\subset N$. The lattice
\[
\tilde{N}  = p^{-1}N(-1)\oplus N(0) \oplus pN(1),
\]
where $N(0) = N(-1)^\perp \cap  N(1)^\perp$ is now the associated point in $\mathrm{Lat}_{\lambda_0}$.
\end{remark}

\begin{lemma}
\label{lem:nice_cochar}
Suppose that we have another $\Int_p$-lattice $\tilde{W}\subset W$ with $[W:\tilde{W}] = p$. Choose any hyperplane $U\in \mathbb{P}(W)(\Int_p)$ such that $\tilde{W} = U + pW$. Suppose that we have
\[
\mathrm{rank}(W)\leq \frac{\mathrm{rank}(N)-3}{2}.
\]
Then: 
\begin{enumerate}
	\item $\mathrm{Par}^{\circ}_{\lambda_0}(W,U)(\Field_p)$ is non-empty\footnote{Note that this set only depends on the image of $U$ in $N_{\Field_p}$, which is simply the image of $\tilde{W}$ and so is independent of the choice of $U$.} and, via~\eqref{eqn:lambda_0_lattice_par_bij}, maps onto the set of self-dual $\Int_p$ lattices $\tilde{N}\subset N[p^{-1}]$ such that $\tilde{N}\cap W[p^{-1}] = \tilde{W}\subset W[p^{-1}]$.

  \item Given $\tilde{N}$ as in (1), $N\subset N[p^{-1}]$ is the \emph{unique} self-dual lattice such that
  \begin{itemize}
  	\item $N = h\tilde{N}$ for some $h\in \GSpin(N)(\Int_p)\lambda_0(p)^{-1}\GSpin(N)(\Int_p)$.
  	\item $N\cap W[p^{-1}] = W\subset W[p^{-1}]$
  \end{itemize}
\end{enumerate}
\end{lemma}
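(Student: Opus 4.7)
The plan is to derive both parts from a direct lattice-theoretic computation of $\tilde N\cap W[p^{-1}]$, where $\tilde N$ is produced from an isotropic line $\bar J\in\mathrm{Par}_{\lambda_0}(\Field_p)$ via the bijection of Remark~\ref{rem:lat_descp_lambda_0}.

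For the second assertion of (1), I will fix $\bar J$ together with a lift $N_-\subset N$, a complementary isotropic line $N_+\subset N$, and $N_0=N_-^\perp\cap N_+^\perp$, so that $\tilde N=p^{-1}N_-\oplus N_0\oplus pN_+$. Writing $w\in W\subset N$ as $w=w_-+w_0+w_+$ with projections $\pi_\pm,\pi_0$, the condition $w\in\tilde N$ amounts to $w_+\in pN_+$; identifying $\bar N_+\simeq\bar J^\vee$ via the pairing shows that the mod-$p$ reduction of $\pi_+|_W$ coincides with the pairing map $\bar W\to\bar J^\vee$, whose kernel is the hyperplane $U_{\bar J}\subset\bar W$ defining the map to $\mathbb{P}(W)$. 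An analogous analysis of $p^{-k}W\cap\tilde N$ for $k\geq 1$ shows that the only additional contributions beyond $\tilde N\cap W$ occur at $k=1$ and come from $w\in W\setminus pW$ with $\bar w\in\bar J$ and $w_+\in p^2N_+$; the latter constraint can always be met by shifting $w$ by an element of $pW$ (using surjectivity of $\pi_+|_W$ when the pairing is nonzero), so such extras exist iff $\bar J\subset\bar W$. Combining these, $\tilde N\cap W[p^{-1}]=\tilde W$ precisely when $\bar J\not\subset\bar W$, the pairing $\bar W\to\bar J^\vee$ is surjective, and its kernel equals $\bar U$---which is exactly the condition $\bar J\in\mathrm{Par}^\circ_{\lambda_0}(W,U)(\Field_p)$.

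For non-emptiness in (1), I will rewrite the open conditions using the identity $\bar W^\perp=\bar U^\perp\cap\bar w_1^\perp$ for any $\bar w_1\in\bar W\setminus\bar U$, so that $\mathrm{Par}^\circ_{\lambda_0}(W,U)_{\Field_p}$ is the locus of isotropic lines in $\bar U^\perp$ avoiding both $\bar W$ and $\bar W^\perp$. The restriction of the bilinear form to $\bar U^\perp$ has rank at least $n-2r+2\geq 5$ by the hypothesis $r\leq(n-3)/2$, so the quadric of isotropic lines in $\bar U^\perp$ is smooth and geometrically irreducible of dimension $d\geq 3$ over $\Field_p$; a standard point count ($\sim p^d$ points on the ambient quadric against $O(p^{d-1})$ points on each of the two excluded subquadrics) then furnishes the required $\Field_p$-points.

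Finally, for (2), I will apply the bijection of Remark~\ref{rem:lat_descp_lambda_0} with $\lambda_0^{-1}$ relative to $\tilde N$: any candidate $N'$ has the form $p\tilde N_-+\tilde N_0+p^{-1}\tilde N_+$ for some isotropic line $\bar J^*\subset\bar{\tilde N}$ lifted by $\tilde N_+$. Picking $w_1\in W\setminus\tilde W$, one has $w_1\notin\tilde N$ (since $\tilde N\cap W[p^{-1}]=\tilde W$) while $pw_1\in pW\subset\tilde W\subset\tilde N$, so its image $\overline{pw_1}\in\bar{\tilde N}$ is nonzero and automatically isotropic (as $p^2Q(w_1)\equiv 0\pmod p$). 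The containment $w_1\in N'$ translates to $w_{1,-}\in p\tilde N_-$, $w_{1,0}\in\tilde N_0$, $w_{1,+}\in p^{-1}\tilde N_+$; multiplying by $p$ forces $\overline{pw_1}$ to lie in the line $\bar J^*$, which therefore equals $\Field_p\cdot\overline{pw_1}$ and is uniquely determined. Hence $N'$ is unique, and $N$ itself supplies existence since, by (1), it is a $\lambda_0^{-1}$-neighbour of $\tilde N$ with $W\subset N$ saturated by hypothesis. The main technical point I expect is the careful analysis of the higher-denominator contributions $p^{-k}W\cap\tilde N$ in (1), as this is where the dichotomy $\bar J\subset\bar W$ versus $\bar J\not\subset\bar W$ enters and controls whether $\tilde N$ sees more of $W[p^{-1}]$ than just $\tilde W$.
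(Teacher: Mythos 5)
Your proposal is correct and follows essentially the same route as the paper: parameterize the neighbouring lattices by isotropic lines in $N_{\Field_p}$ via the canonical bijection, compute the intersection with $W[p^{-1}]$ to characterize $\mathrm{Par}^{\circ}_{\lambda_0}(W,U)(\Field_p)$, and for (2) project $pw_1$ (the paper's $pw_0$) onto the weight spaces to pin down the isotropic line uniquely. The only real divergence is in the forward direction of (1), where the paper simplifies the computation by choosing the complementary isotropic line $N(1)$ orthogonal to $W$ so that no denominators appear, whereas you keep an arbitrary complement and control the terms $p^{-k}W\cap\tilde N$ by hand; both work, and your point count for non-emptiness is the same idea as the paper's observation about quadrics of rank $\geq 3$ over $\Field_p$ (note only that the quadric of isotropic lines in $\bar U^{\perp}$ need not be smooth when $\bar U$ is degenerate, though this does not affect the count).
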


\begin{proof}
In this proof, if $Y$ is a $\Int_p$-module, we will write $\overline{Y}$ for the $\Field_p$-vector space $Y\otimes_{\Int_p}\Field_p$.

We will need the following observation: If $Y$ is a self-dual quadratic space over $\Int_p$ of rank $r\geq 3$, then $\overline{Y}$ is isotropic. This means that the space of isotropic lines in $\overline{Y}$ is birationally isomorphic to $\mathbb{P}^{r-2}_{\Field_p}$, and so admits many $\Field_p$-rational points. These in turn can be lifted to isotropic lines in $Y$ via Hensel's lemma. Moreover, these lines can be chosen to lie outside the quadric associated with any proper direct summand of $Y$.

Fix a direct sum decomposition
\[
W = U \oplus \Int_p w_0,
\]
so that
\[
\tilde{W} = U\oplus \Int_p pw_0.
\]
Let $V_1 = U^\perp\subset N$ be the orthogonal complement to $U$. The radical of $\overline{V}_1$ is isomorphic to that of $\overline{U}$, which, by our hypotheses, shows that $\overline{V}_1$ admits a non-degenerate quadratic subspace of rank at least $3$. In particular, by the second paragraph, we can find an isotropic line $N(-1)\subset N$ that is orthogonal to $U$, has trivial intersection with $W$, and such that there exists $y\in N(-1)$ with
\[
[y,w_0]_Q\in \Int_p^\times.
\]
Any such $N(-1)$ corresponds to a point in $\mathrm{Par}^{\circ}_{\lambda_0}(W,U)(\Int_p)$, which is therefore non-empty.

Let $\tilde{N}\in \mathrm{Lat}_{\lambda_0}$ be associated with an isotropic line $\bar{N}(-1)\in \mathrm{Par}_{\lambda_0}(\Field_p)$. We need to show that we have $\tilde{N}\cap W[p^{-1}] = \tilde{W}$ precisely when $\bar{N}(-1)$ belongs to $\mathrm{Par}^{\circ}_{\lambda_0}(W,U)(\Field_p)$. 

Suppose first that $\bar{N}(-1)$ is in $\mathrm{Par}^{\circ}_{\lambda_0}(W,U)(\Field_p)$; then we can lift it to a line $N(-1)$ in $\mathrm{Par}^{\circ}_{\lambda_0}(W,U)(\Int_p)$, and, using the observation from the second paragraph once again, we can find an isotropic line $N(1)\subset N$ that is orthogonal to $W$, and which is complementary to $N(-1)$. 

Let $w_0\in W$ be as above. Then one sees that $N(-1)$ must pair non-degenerately with $w_0$, and so we have $w_0(1)\notin pN(1)$, where $w_0(i)$ is the component of $w_0$ in $N(i)$. Moreover, by construction, we have
\[
W\subset N(1)^\perp = N(0)\oplus N(1)\;; U\subset N(-1)^\perp\cap N(1)^\perp = N(0).
\]
This shows that we must have
\[
\tilde{N}\cap W[p^{-1}] = (N(0)\oplus pN(1))\cap W = U\oplus \Int_p \cdot pw_0 = \tilde{W}\subset W.
\]

For the converse, suppose that $\tilde{N}\cap W[p^{-1}] = \tilde{W}$. Then in particular, we find that, given a decomposition $N = N(-1) \oplus N(0)\oplus N(1)$ as above with
\[
\tilde{N} = p^{-1}N(-1)\oplus N(0) \oplus pN(1),
\]
we must have $U = U[p^{-1}]\cap \tilde{N}$, and $w_0(1)\in N(1)\backslash pN(1)$ (where, once again, this is the projection of $w_0$ onto $N(1)$). The first condition implies that $\overline{U}$ is contained in $\bar{N}(-1)\oplus \bar{N}(0)$, but does not contain $\bar{N}(-1)$ (the latter because $p^{-1}u\notin \tilde{N}$ for $u\in U\backslash pU$). This, together with the second condition, shows that $\bar{N}(-1)$ belongs to $\mathrm{Par}^\circ_{\lambda_0}(W,U)(\Field_p)$.

For assertion (2), suppose that we have a decomposition
\[
\tilde{N} = \tilde{N}(-1)\oplus \tilde{N}(0)\oplus \tilde{N}(1)
\]
arising from a cocharacter $\lambda:\Gm\to \GSpin(\tilde{N})$ in the conjugacy class ${\lambda_0}$ such that $\lambda(p)^{-1}\tilde{N}\cap W[p^{-1}] = W$. Let $\tilde{w}_0(i)$ be the projection of $pw_0$  onto $\tilde{N}(i)$. Then since $p^{-1}\tilde{w}_0(i)\in p^{-i}\tilde{N}(i)$ by hypothesis, we see that
\[
\tilde{w}_0(-1)\in p^2\tilde{N}(-1)\;;\;\tilde{w}_0(0)\in p\tilde{N}(0)\;;\; \tilde{w}_0(1)\in \tilde{N}(1).
\] 
This shows that the image of $\tilde{N}(1)$ in $\tilde{N}_{\Field_p}$ can only be the isotropic line spanned by the image of $pw_0$. This means that $\tilde{\lambda}(p)^{-1}\tilde{N} = N$.
\end{proof}

\section{Cycles on GSpin Shimura varieties}\label{sec:main}

In this section, we apply the above considerations to the special case of GSpin Shimura varieties associated with quadratic spaces over $\Rat$, and show that the certain irreducible special cycles in their generic fibers continue to have irreducible reduction over $\overline{\Field}_p$. Combined with the methods of~\cite{mp:tatek3}, this yields a proof of the irreducibility of the moduli of primitively polarized K3 surfaces of fixed degree---and that of lattice polarized K3 surfaces---in any characteristic.

\subsection{Special cycles on orthogonal and GSpin Shimura varieties}

We review the story of special cycles on GSpin (and orthogonal) Shimura varieties associated with quadratic spaces over $\Rat$ of signature $(n,2)$. Details can be found in~\cite{mp:reg},~\cite{Howard2020-fa}, though the presentation here most closely hews to that found in~\cite[\S 2.1,2.2,3.2]{HMP:mod_codim}.

\subsubsection{}
 The starting point is a quadratic space $(V,Q)$ over $\Rat$ with signature $(n,2)$ for some $n\geq 4$. The quadratic form $Q$ gives rise to a symmetric pairing
\[
[x,y]_Q = Q(x+y)  - Q(x) - Q(y)
\]
on $V$.

Associated with this is the reductive group $G = \GSpin(V)$ over $\Rat$, as well as a Hermitian symmetric domain $X$ that parameterizes the space of oriented negative definite planes in $V_{\Real}$. The pair $(G,X)$ is a Shimura datum of Hodge type with reflex field $\Rat$; a choice of symplectic representation is given by the Clifford algebra $H \coloneqq C(V)$, on which $G$ acts via left multiplication.

We will assume that the quadratic space has been chosen so that it admits a lattice $V_{\Int}\subset V$ on which the quadratic form $Q$ is $\Int$-valued and is such that the completion at $p$, $V_{\Int_p}\subset V_{\Rat_p}$ is a \emph{self-dual} lattice.  In this situation, $G_{\Rat_p}$ admits a reductive model
\[
G_{\Int_{(p)}} = \GSpin(V_{\Int_{(p)}}).
\]

Associated with $V_{\Int}$, we have a compact open subgroup $K_{V_{\Int}} = \prod_\ell K_{V_{\Int_\ell}}\subset G(\Adele_f)$, where, for each prime $\ell$, $K_{V_{\Int_\ell}}\subset G(\Rat_\ell)$ is the largest compact open subgroup contained in $C(V_{\Int_\ell})^\times$ and acting trivialy on $V_{\Int_\ell}^\vee/V_{\Int_\ell}$.

\begin{construction}
[Integral models]
\label{const:integral_models}
For a compact open subgroup $K\subset G(\Adele_f)$, we have the associated Shimura variety (or more precisely stack) $\Sh_K \coloneqq \Sh_K(G,X)$ over $\Rat$. Suppose that $K$ is of the form $K^pK_{V_{\Int_p}}$, and let $S_K$ be the set of primes such that, for $\ell\notin S_K$, we have $K_\ell = K_{V_{\Int_\ell}}$. We then have a normal integral model $\Ss_K$ over $\Int[S_K^{-1}]$ characterized as follows: At any prime $\ell\notin S_K$ such that $V_{\Int_\ell}$ is self-dual, $\Ss_{K,\Int_{(\ell)}}$ is the smooth integral canonical model for $\Sh_K$ constructed in~\cite{kis:abelian} and~\cite{Kim2016-fb}. For arbitrary $\ell\notin S_K$, we have the following characterization: Choose an isometric embedding $V_{\Int}\hookrightarrow V^\sharp_{\Int}$ of quadratic lattices with the following properties:
\begin{enumerate}
	\item The embedding maps onto a direct summand of $V^\sharp_{\Int}$.
	\item $V^\sharp_{\Int}$ has signature $(n^\sharp,2)$.
	\item $V^\sharp_{\Int_\ell}$ is self-dual.
\end{enumerate}
Such an embedding always exists; see~\cite[Lemma B.1.1]{HMP:mod_codim}. Let $\Ss_{K^\sharp,\Int_{(\ell)}}$ be the smooth integral canonical model for the Shimura variety $\Sh_{K^\sharp}$ associated with $V^\sharp$. Then $\Ss_{K,\Int_{(\ell)}}$ is the normalization of $\Ss_{K^\sharp,\Int_{(\ell)}}$ in $\Sh_K$.

Note that this is a localization of Construction~\ref{const:integral_models}: One can choose $V^\sharp_{\Int}$ such that it is self-dual at all finite places, in which case $\Sh_{K^\sharp}$ admits an integral canonical model $\Ss_{K^\sharp}$ over $\Int$, and $\Ss_K$ is obtained by taking the normalization of this model in $\Sh_K$ and inverting the primes in $S_K$.
\end{construction}

\subsubsection{}
Let $\Sigma_K$ be the set of primes $\ell$ such that either $\ell\in S_K$ or $V_{\Int_\ell}$ is \emph{not} self-dual: this excludes $p$ by our hypotheses. The lattice 
\[
H_{\Int} = C(V_{\Int})\subset H = C(V) 
\]
gives us an abelian scheme $\mathcal{A}\to \Ss_K[\Sigma_K^{-1}]$. The lattice $V_{\Int}\subset V$ gives rise to canonical sub-sheaves
\[
\bm{V}_? \subset \underline{\End}\bigl(\bm{H}_?\bigr)
\]
for $? = B,\ell,\dR,\cris$. For every morphism $x:T\to \Ss_K[\Sigma_K^{-1}]$, we have a canonical $\Int$-submodule
\[
V(x) \subset \End(\mathcal{A}_x)
\]
whose cohomological realizations are sections of $\bm{V}_?$ for appropriate values of $?$. The space $V(x)$ has a canonical positive definite quadratic form 
\[
Q:V(x)\to \Int
\]
characterized by the identity $Q(f)\mathrm{id}_{\mathcal{A}_x} = f\circ f\in \End(\mathcal{A}_x)$. This is the module of \defnword{special endomorphisms} of $x$.  All of this is explained in quite a bit more detail in~\cite{Howard2020-fa}.

\subsubsection{}
Let $\Lambda$ be a positive definite lattice over $\Int$. Set
\[
\mathsf{L}(\Lambda) = \{ \Int\mbox{-lattices }\Lambda'\subset \Lambda_\Rat : \;\Lambda\subset \Lambda'\subset(\Lambda')^\vee\subset\Lambda^\vee\},
\]

We will consider the stack $\mathcal{Z}_K(\Lambda)\to \Ss_K[\Sigma_K^{-1}]$ associating with $x:T\to \Ss_K[\Sigma_K^{-1}]$ the set
\[
\mathcal{Z}_K(\Lambda)(x) = \{ \text{isometric embeddings }\iota:\Lambda\to V(x)\}.
\]

The next result can be found in~\cite[\S 3.2]{HMP:mod_codim}.
\begin{lemma}
\label{lem:ZLambda_basic}
The stack $\mathcal{Z}_K(\Lambda)$ is finite and unramified over $\Ss_K[\Sigma_K^{-1}]$. Moreover, if $\Lambda'\in \mathsf{L}(\Lambda)$, then there is a natural closed immersion $\mathcal{Z}_K(\Lambda')\hookrightarrow \mathcal{Z}_K(\Lambda)$ obtained by restricting an isometric embedding to $\Lambda\subset \Lambda'$. 
\end{lemma}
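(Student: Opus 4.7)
The plan is to handle each assertion in turn, with the unramifiedness of $\mathcal{Z}_K(\Lambda)$ reduced to the rigidity of the special endomorphism functor, the finiteness to a combination of quasi-finiteness (via positive definiteness) and properness (via the Néron property), and the closed-immersion claim to the observation that a finite monomorphism of stacks is a closed immersion.

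First I would set up the \emph{functor of special endomorphisms}: for $x:T\to \Ss_K[\Sigma_K^{-1}]$, let $\underline{V}(x)$ denote the sheaf on $T$ associating with $T'\to T$ the module of special endomorphisms $V(x_{T'})$. Using the fact that a special endomorphism $f$ is characterized among endomorphisms of $\mathcal{A}_x$ by the condition that its cohomological realizations land in the sub-sheaves $\bm{V}_?\subset \underline{\End}(\bm{H}_?)$, one checks that $\underline{V}$ is represented by an unramified scheme over $\Ss_K[\Sigma_K^{-1}]$: over a geometric point it gives a discrete (positive definite) lattice, and the infinitesimal rigidity is a consequence of the fact that an endomorphism that is special on the closed fiber of a square-zero thickening remains special on the total space provided it lifts at all, since the de Rham realization of any lift is forced to stay parallel in $\bm{V}_{\dR}$. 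The stack $\mathcal{Z}_K(\Lambda)$ is then a locally closed subfunctor of the $r$-fold fiber product $\underline{V}^{\times_{\Ss_K} r}$ (with $r=\mathrm{rank}(\Lambda)$) cut out by the inner product and norm conditions prescribed by $\Lambda$, so it inherits unramifiedness.

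For finiteness I would separately verify quasi-finiteness and properness. Quasi-finiteness is pointwise: over a geometric point $x$, $V(x)$ is a positive definite $\Int$-lattice of rank $\leq n+2$, and the number of isometric embeddings of the fixed positive definite $\Lambda$ into any positive definite lattice of bounded rank is finite (because each basis vector's image has norm exactly that of the corresponding vector in $\Lambda$, so lies in a finite set). Properness is checked via the valuative criterion on a DVR $\mathscr{O}$ with fraction field $F$: a section of $\mathcal{Z}_K(\Lambda)$ over $\Spec F$ supplies endomorphisms of $\mathcal{A}_{x_F}$ which, by the Néronian property, extend to endomorphisms of $\mathcal{A}_x$ over $\Spec \mathscr{O}$; the cohomological constraints defining $\bm{V}_?$ are closed conditions, so the extension remains special, and the $\Lambda$-structure is preserved by continuity. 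Combined with quasi-compactness of $\mathcal{Z}_K(\Lambda)\to \Ss_K[\Sigma_K^{-1}]$ (which follows once we know quasi-finiteness together with a uniform bound on the rank and discriminant of $V(x)$), this gives finiteness.

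For the second assertion, the restriction map $\iota\mapsto \iota\vert_{\Lambda}$ is manifestly functorial and sends an isometric embedding of $\Lambda'$ to an isometric embedding of $\Lambda$ since $\Lambda\hookrightarrow \Lambda'$ is an isometry. To see that $\mathcal{Z}_K(\Lambda')\to \mathcal{Z}_K(\Lambda)$ is a monomorphism, observe that for any $x:T\to \Ss_K[\Sigma_K^{-1}]$, $V(x)$ is $\Int$-torsion free, and $\Lambda\otimes \Rat=\Lambda'\otimes\Rat$, so any extension of a given $\iota:\Lambda\to V(x)$ to a homomorphism $\Lambda'\to V(x)\otimes \Rat$ is unique; hence if such an extension factors through $V(x)$ it does so uniquely. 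Since both stacks are finite over $\Ss_K[\Sigma_K^{-1}]$, the map between them is finite, and a finite monomorphism of (Deligne--Mumford) stacks is a closed immersion, completing the proof. The main technical point is the representability and unramifiedness of $\underline{V}$; everything else is formal once that is in hand, and indeed this is already implicit in the constructions of \cite{Howard2020-fa} and \cite{HMP:mod_codim}.
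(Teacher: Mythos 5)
Your argument is correct and follows essentially the same route the paper takes: the paper defers to \cite[\S 3.2]{HMP:mod_codim} and, in the remark immediately after the lemma, sketches exactly your reduction to the endomorphism stack $\underline{\End}(\mathcal{A})^m$ (unramified by rigidity, quasi-finite by positive definiteness, proper by the N\'eronian property), with the norm and inner-product conditions being locally constant, hence open and closed, on that unramified stack. The closed-immersion step via ``finite monomorphism $\Rightarrow$ closed immersion'' and torsion-freeness of $V(x)$ is likewise the standard argument.
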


\begin{remark}
If we fix a basis $\Lambda\simeq\Int^m$, we obtain a map of $\Ss_K[\Sigma_K^{-1}]$-stacks $\mathcal{Z}_K(\Lambda)\to \underline{\End}(\mathcal{A})^m$, where $\underline{\End}(\mathcal{A})$ is the endomorphism stack of $\mathcal{A}$ over $\Ss_K[\Sigma_K^{-1}]$. This latter stack is locally finite and unramified over $\Ss_K[\Sigma_K^{-1}]$ by standard facts about endomorphisms of abelian schemes, and $\mathcal{Z}_K(\Lambda)$ is realized as an open and closed stack of $\underline{\End}(\mathcal{A})^m$.
\end{remark}

\begin{remark}
\label{rem:ZLambda_generic_fiber_shimura_varieties}
Fix an embedding $\Lambda_{\Rat}\hookrightarrow V$, and let $V^\flat = \Lambda_{\Rat}^\perp\subset V$. Let $G^\flat\subset G$ be the subgroup acting trivially on $\Lambda_{\Rat}$: this is isomorphic to $\GSpin(V^\flat)$. Set 
\[
\Upsilon(\Lambda) = \{g\in G(\Adele_f):\;\Lambda\subset V\cap gV_{\widehat{\Int}}\}.
\]
For each $g\in \Upsilon(\Lambda)$, we obtain a quadratic lattice $V^\flat_{g,\Int} = V^\flat \cap gV_{\widehat{\Int}}\subset V^\flat$ of signature $(n- \mathrm{rank}(\Lambda),2)$. We can associate with this a GSpin Shimura variety $\Sh_{K^\flat_g}$ mapping to $\Sh_K$, where $K^\flat_g = gKg^{-1}\cap G^\flat(\Adele_f)$. Then there is a canonical isomorphism of $\Sh_K$-stacks
\begin{align}\label{eqn:ZLambda_generic_fiber}
\bigsqcup_{g\in G^\flat(\Rat)\backslash \Upsilon(\Lambda)/K} \Sh_{K^\flat_g} \xrightarrow{\simeq}Z_K(\Lambda).
\end{align}
See the proof of~\cite[Lemma 3.2.3]{HMP:mod_codim}. 
\end{remark}

\begin{remark}
\label{rem:leftcirc_ZLambda}
Consider the open substack
\[
\leftcirc\mathcal{Z}_K(\Lambda) = \mathcal{Z}_K(\Lambda)\smallsetminus \bigcup_{\Lambda'\in \mathsf{L}(\Lambda),\Lambda\subsetneq \Lambda'}\mathcal{Z}_K(\Lambda')\subset \mathcal{Z}_K(\Lambda)
\]

Set $Z_K(\Lambda) = \mathcal{Z}_K(\Lambda)_\Rat$ and $\leftcirc Z_K(\Lambda) = \leftcirc\mathcal{Z}_K(\Lambda)_\Rat$. Let $\leftcirc\Upsilon(\Lambda)\subset \Upsilon(\Lambda)$ be the subset of element $g$ such that $\Lambda$ is a direct summand of $V \cap gV_{\widehat{\Int}}$. Then the decomposition~\eqref{eqn:ZLambda_generic_fiber} restricts to an isomorphism
\begin{align}\label{eqn:leftcirc_ZLambda_generic_fiber}
\bigsqcup_{g\in G^\flat(\Rat)\backslash \leftcirc\Upsilon(\Lambda)/K} \Sh_{K^\flat_g} \xrightarrow{\simeq}\leftcirc Z_K(\Lambda).
\end{align}

Using this, we see that, for every $\Lambda' \in \mathsf{L}(\Lambda)$, the  morphism
\[
Z_K(\Lambda') \to Z_K(\Lambda)
\]
is an open and closed immersion, and there is an isomorphism  of $\Rat$-stacks
\[
\bigsqcup_{\Lambda'\in \mathsf{L}(\Lambda)}\leftcirc Z_K(\Lambda')\xrightarrow{\simeq}Z_K(\Lambda) 
.
\]

\end{remark}

\begin{remark}
\label{rem:open_closed_prime_to_p}
Suppose that $\Lambda'\in \mathsf{L}(\Lambda)$ is such that $p\nmid [\Lambda':\Lambda]$. Then the map $\mathcal{Z}_K(\Lambda')_{\Int_{(p)}}\to \mathcal{Z}_K(\Lambda)_{\Int_{(p)}}$ is an open and closed immersion. It is enough to know that it is \'etale after base-change to $\Field_p$, and this follows from the fact that the deformation theory of $\mathcal{Z}_K(\Lambda)_{\Field_p}$ as a stack over $\Ss_{K,\Field_p}$ depends only on $\Lambda_{\Int_p}$. This follows for instance from the Serre-Tate theorem telling us that the deformation theory of abelian schemes in characteristic $p$ is equivalent to that of the corresponding $p$-divisible groups.
\end{remark}

\begin{proposition}
\label{prop:ZLambda_props}
Suppose that $\mathrm{rank}(\Lambda)\leq (n-4)/2$. 
\begin{enumerate}
	\item $\leftcirc\mathcal{Z}_K(\Lambda)_{\Int_{(p)}}$ is normal, flat, and equidimensional of dimension $n- \mathrm{rank}(\Lambda)+1 = \dim \Ss_K - \mathrm{rank}(\Lambda)$.
	\item The special fiber $\leftcirc\mathcal{Z}_K(\Lambda)_{\Field_p}$ is geometrically normal and equidimensional of dimension $n- \mathrm{rank}(\Lambda)$.
	\item Let $\leftcirc\mathcal{Z}^{\mathrm{ord}}_K(\Lambda)_{\Field_p}\subset \leftcirc\mathcal{Z}_K(\Lambda)_{\Field_p}$ be the pre-image of $\Ss_{K,\Field_p}^{\mathrm{ord}}$. Then $\mathcal{Z}^{\mathrm{ord}}_K(\Lambda)_{\Field_p}$ is open and dense in $\mathcal{Z}_K(\Lambda)_{\Field_p}$
\end{enumerate}
\end{proposition}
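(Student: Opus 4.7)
Assertions (1) and (2) are already established in~\cites{Howard2020-fa,HMP:mod_codim}, and the plan is to simply quote them. The strategy there is a local model analysis: \'etale-locally around a geometric point of $\leftcirc\mathcal{Z}_K(\Lambda)_{\Int_{(p)}}$, the completed local ring is identified with that of $\mathrm{Par}^\circ_{\lambda_0}(\Lambda_{\Int_p},U)$ for a suitable hyperplane $U\subset \Lambda_{\Int_p}$ determined by the point. Lemma~\ref{lem:parT_smooth} ensures this local model is smooth over $\Int_p$ of relative dimension $n-\mathrm{rank}(\Lambda)$, from which the normality, flatness, equidimensionality, and dimension statements in (1) and (2) all follow.

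The substance of the proposition is (3). The plan is to reduce the density statement to the density of the ordinary locus in integral canonical models of lower-rank GSpin Shimura varieties. By Remark~\ref{rem:leftcirc_ZLambda}, we have the canonical generic-fiber decomposition $\bigsqcup_g \Sh_{K^\flat_g}\xrightarrow{\simeq}\leftcirc Z_K(\Lambda)$, indexed by $g\in G^\flat(\Rat)\backslash\leftcirc\Upsilon(\Lambda)/K$. Since $\Lambda_{\Int_p}$ is a direct summand of the self-dual lattice $V_{\Int_p}$, the orthogonal complement $V^\flat_{g,\Int_p}$ is itself self-dual, so each $\Ss_{K^\flat_g,\Int_{(p)}}$ is a smooth integral canonical model of its generic fiber. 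The natural finite map
\[
\bigsqcup_g \Ss_{K^\flat_g,\Int_{(p)}} \to \leftcirc\mathcal{Z}_K(\Lambda)_{\Int_{(p)}}
\]
is a generic isomorphism between normal flat $\Int_{(p)}$-schemes (using (1)), hence a global isomorphism. On each $\Ss_{K^\flat_g,\Field_p}$, the ordinary locus is open (Corollary~\ref{cor:ordinary_openness}) and non-empty---for instance via reductions of ordinary CM points as in Remark~\ref{rem:hypersymmetric_CM}---hence dense. Finally, Remark~\ref{rem:unramified_ordinary} identifies $\mu_v^\flat$-ordinariness for $G^\flat_{\Int_p}$ with classical ordinariness of the underlying abelian variety, which is the same as $\mu_v$-ordinariness for $G_{\Int_p}$; under the above isomorphism, the dense ordinary locus of the source therefore lands inside $\leftcirc\mathcal{Z}^{\mathrm{ord}}_K(\Lambda)_{\Field_p}$, yielding density.

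The hard part will be justifying the upgrade from a generic to a global isomorphism of integral models. Finiteness comes from Lemma~\ref{lem:ZLambda_basic}, and surjectivity requires the flatness in (1) to propagate each generic component to the special fiber; a finite surjective generic isomorphism between normal schemes is then automatically an isomorphism by Zariski's main theorem. A secondary subtlety is the need to exhibit enough ordinary CM points to meet every geometric irreducible component of each $\Ss_{K^\flat_g,\Field_p}$, so that the density of the ordinary locus of the source actually translates into density on the target; this follows from the standard existence of CM types for imaginary quadratic fields split at $p$ together with the description of connected components of $\Ss_{K^\flat_g,\overline{\Field}_p}$ via class-field-theoretic considerations.
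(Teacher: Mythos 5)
Your treatment of (1) and (2) matches the paper: both are quoted from~\cite{HMP:mod_codim} (Proposition 3.2.4), and your sketch of the local-model mechanism behind them is accurate.

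For (3) there is a genuine gap, and it sits exactly at the point your argument leans on hardest: the claim that $V^\flat_{g,\Int_p}$ is self-dual because $\Lambda_{\Int_p}$ is a direct summand of the self-dual lattice $V_{\Int_p}$. For a direct summand $W$ of a self-dual quadratic lattice $N$, the discriminant groups of $W$ and of $W^\perp$ are isomorphic, so $W^\perp$ is self-dual if and only if $W$ is. The proposition makes no assumption that $p\nmid\disc(\Lambda)$; when $p\mid\disc(\Lambda)$ the lattices $V^\flat_{g,\Int_p}$ are \emph{not} self-dual, $K^\flat_{g,p}$ is not hyperspecial, and $\Ss_{K^\flat_g,(p)}$ is only the normal (generally non-smooth) model of Construction~\ref{const:integral_models}. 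Consequently your "generic isomorphism between normal flat schemes is a global isomorphism" step also fails as stated: as recorded in Remark~\ref{rem:leftcirc_ZLambda_integral_models}, one only gets an open immersion $\leftcirc\Ss_{K^\flat_g,(p)}\hookrightarrow \Ss_{K^\flat_g,(p)}$, where $\leftcirc\Ss_{K^\flat_g,(p)}$ is the closure of $\Sh_{K^\flat_g}$ inside $\leftcirc\mathcal{Z}_K(\Lambda)_{\Int_{(p)}}$; the qualifier "open subspaces of integral models" can be dropped only when $\Lambda$ is maximal at $p$ (cf.\ Proposition~\ref{prop:p(m)_maximal}). Indeed, if your reduction worked for all $\Lambda$, the entire Hecke-correspondence machinery of the paper would be unnecessary. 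Density of the ordinary locus in these non-hyperspecial situations is genuinely nontrivial; the paper obtains (3) by citing~\cite[Proposition 7.1.2(1)]{Howard2020-fa}, which proves density of the ordinary locus inside a certain "primitive" open substack $\mathcal{Z}^{\mathrm{pr}}_K(\Lambda)_{\Field_p}\subset\leftcirc\mathcal{Z}_K(\Lambda)_{\Field_p}$, combined with the fact (from the proof of~\cite[Proposition 3.2.4]{HMP:mod_codim}) that this primitive substack is itself dense. Your secondary worry about non-empty open sets not being dense on reducible special fibers is real but subordinate; the self-duality failure is the step that cannot be repaired within your framework.
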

\begin{proof}
See~\cite[Proposition 3.2.4]{HMP:mod_codim} for the first two assertions. The last one follows from~\cite[Proposition 7.1.2{1}]{Howard2020-fa}: This shows that the ordinary locus of a certain open substack $\mathcal{Z}^{\mathrm{pr}}_K(\Lambda)_{\Field_p}\subset \leftcirc \mathcal{Z}_K(\Lambda)_{\Field_p}$ is dense within it; but, as argued in the proof of ~\cite[Proposition 3.2.4]{HMP:mod_codim}, $\mathcal{Z}^{\mathrm{pr}}_K(\Lambda)_{\Field_p}$ is in turn dense in $\leftcirc\mathcal{Z}_K(\Lambda)_{\Field_p}$.
\end{proof}

\begin{remark}
\label{rem:leftcirc_ZLambda_integral_models}
Suppose that we are in the situation of Proposition~\ref{prop:ZLambda_props}. For each $g\in \leftcirc\Upsilon(\Lambda)$, we obtain from~\eqref{eqn:leftcirc_ZLambda_generic_fiber}, an open and closed immersion $\Sh_{K^\flat_g}\hookrightarrow \leftcirc Z_K(\Lambda)$. Set $\leftcirc \Ss_{K^\flat_g,(p)}$ to be the Zariski closure of $\Sh_{K^\flat_g}$ in $\leftcirc \mathcal{Z}_K(\Lambda)_{\Int_{(p)}}$. By the normality of the latter stack, we see that we have a decomposition
\begin{align}
\label{eqn:leftcirc_ZLambda_decomp_integral_model}
\bigsqcup_{g\in G^\flat(\Rat)\backslash \leftcirc\Upsilon(\Lambda)/K} \leftcirc\Ss_{K^\flat_g,(p)} \xrightarrow{\simeq}\leftcirc \mathcal{Z}_K(\Lambda)_{\Int_{(p)}}
\end{align}
into open and closed (normal) substacks. 

Note that, if $\Ss_{K^\flat_g}$ is the integral model for $\Sh_{K^\flat_g}$ from Construction~\ref{const:integral_models}, then the map $\leftcirc\Ss_{K^\flat_g,(p)}\to \Ss_{K,(p)}$ lifts to an open immersion $\leftcirc\Ss_{K^\flat_g,(p)}\hookrightarrow \Ss_{K^\flat_g,(p)}$. Indeed, this follows from the construction of $\Ss_{K^\flat_g,(p)}$ as the normalization of $\Ss_{K,(p)}$ in $\Sh_{K^\flat_g}$, and Lemma~\ref{lem:zmt_open_immersion} below.
\end{remark} 

\begin{lemma}
\label{lem:zmt_open_immersion}
Let $f:X\to Y$ be a quasi-finite separated map of normal Deligne-Mumford stacks flat over $\Int_{(p)}$. If the generic fiber $f_{\Rat}$ is an open immersion, then so is $f$.
\end{lemma}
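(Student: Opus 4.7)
The plan is to apply Zariski's main theorem in the Deligne--Mumford setting, reduce to the integral case by decomposing into connected components, and finish with the classical fact that a finite birational morphism with normal target is an isomorphism.

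First I would use ZMT for DM stacks to factor $f$ as $X\xhookrightarrow{j}\bar X\xrightarrow{\bar f}Y$ with $j$ an open immersion and $\bar f$ finite. Since $\bar X$ is of finite type over the excellent ring $\Int_{(p)}$, normalization is a finite operation, so the normality of $X$ allows me to replace $\bar X$ by the scheme-theoretic closure of $X$ inside its normalization: without loss of generality $\bar X$ is then normal, reduced, and contains $X$ as a scheme-theoretically dense open. Every irreducible component of $\bar X$ is consequently the Zariski closure of an irreducible component of the $\Int_{(p)}$-flat stack $X$, so $\bar X$ itself is $\Int_{(p)}$-flat.

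Next I would reduce to the integral case. Because $X$ and $Y$ are normal and locally Noetherian, their connected components coincide with their irreducible components. Writing $X=\sqcup_\beta X_\beta$ and $Y=\sqcup_\alpha Y_\alpha$, each component is integral with non-empty irreducible generic fiber by flatness, and each connected $X_\beta$ maps into a unique $Y_{\alpha(\beta)}$. The key observation is that at most one $X_\beta$ can land in any given $Y_\alpha$: otherwise two components $X_\beta,X_{\beta'}$ would map under the open immersion $f_\Rat$ to disjoint non-empty opens of the irreducible $Y_{\alpha,\Rat}$, but two non-empty opens of an irreducible stack must share the generic point. So it suffices to treat each $X_\beta\to Y_{\alpha(\beta)}$ separately.

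In the integral case, $X_\Rat$ is a non-empty open in both the irreducible $\bar X_\Rat$ (via $j$) and the irreducible $Y_\Rat$ (via $f_\Rat$), so $f_\Rat(X_\Rat)$ is dense in $Y_\Rat$; since $\bar f$ is finite, it is therefore surjective. Its generic degree equals that of the open immersion $X_\Rat\hookrightarrow f_\Rat(X_\Rat)$, namely $1$, so $\bar f$ is finite and birational. A finite birational morphism with normal target is an isomorphism: $\bar f_*\mathcal O_{\bar X}$ is a coherent sub-$\mathcal O_Y$-algebra of the constant function-field sheaf $K(Y)$ (birationality), integral over $\mathcal O_Y$ (finiteness), hence equal to $\mathcal O_Y$ by normality. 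Thus $\bar f$ is an isomorphism and $f=\bar f\circ j$ is an open immersion. The only technical subtlety is the invocation of ZMT and of finite normalization in the DM setting, both of which are standard since $\Int_{(p)}$ is excellent.
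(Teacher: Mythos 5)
Your proof is correct and follows essentially the same route as the paper: factor $f$ via Zariski's main theorem for Deligne--Mumford stacks (Laumon--Moret-Bailly, Th\'eor\`eme 16.5) as an open immersion into a stack finite over $Y$, arrange flatness by passing to the closure of $X$, and use normality of $Y$ to conclude the finite piece is an isomorphism onto a union of connected components. You simply make explicit the component-by-component reduction and the finite-birational-onto-normal-target step that the paper leaves implicit.
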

\begin{proof}
By~\cite[Th\'eor\`eme 16.5]{MR1771927}, we can write $f$ as the composition of a finite map $\overline{f}:\overline{X}\to Y$ of Deligne-Mumford stacks with an open immersion $j:X\hookrightarrow \overline{X}$. By replacing $\overline{X}$ with the flat closure of its generic fiber, we can also assume that $\overline{X}$ is flat over $\Int_{(p)}$. By the normality of $Y$, the finite map $\overline{X}\to Y$ must be an isomorphism onto a union of connected components, and hence $f$ is indeed an open immersion.
\end{proof}

\subsection{The ordinary loci of special cycles}\label{subsec:the_ordinary_loci_of_special_cycles}

We will now look at the ordinary locus $\leftcirc \mathcal{Z}^{\ord}_K(\Lambda)_{\Field_p}$ in more detail. For the rest of this section, we will maintain the assumption $\mathrm{rank}(\Lambda)\leq \frac{n-4}{2}$ from Proposition~\ref{prop:ZLambda_props}. We will also assume $\Lambda\neq 0$.

\begin{remark}
	\label{rem:Vflat_isotropic}
	The representatives $\mu:\Gm\to G_{\Int_p}$ for the geometric conjugacy class associated with the Shimura cocharacters of the GSpin Shimura datum are exactly those that yield weight decompositions
\begin{align}\label{eqn:mu_gspin_zp_splitting}
	H_{\Int_p} = H^1_{\Int_p}\oplus H^0_{\Int_p}\;;\; V_{\Int_p} = V^{-1}_{\Int_p}\oplus V^0_{\Int_p}\oplus V^1_{\Int_p}
\end{align}
where $\mu(z)$ acts on $H^i_{\Int_p}$ via $z^{-i}$ and where $V^{\pm 1}_{\Int_p}\subset V_{\Int_p}$ are complementary isotropic lines on which $\mu(z)$ acts via $z^{\mp 1}$ and $V^0_{\Int_p}$ is their mutual orthogonal complement. 
\end{remark}

\begin{remark}
	\label{rem:p-adic_special_endomorphisms}
For every point $x_0\in \Ss_{K,\Field_p}(\kappa)$, we have a $p$-adic counterpart of the space of special endomorphisms: Namely the $F$-invariant vectors $\bm{V}_{\cris,x_0}^{F=1} \subset \bm{V}_{\cris,x_0}$. These are precisely the elements of $\bm{V}_{\cris,x_0}$ that are also crystalline realizations of endomorphisms of $\mathcal{G}_{x_0}$. When $x_0$ is ordinary, there exist isomorphisms $V^0_{\Int_p}\xrightarrow{\simeq}\bm{V}_{\cris,x_0}^{F=1}$ well-defined up to an element of $M_{\Int_p}(\Int_p)$. In particular, for any point $(x_0,\iota)\in \leftcirc \mathcal{Z}^{\ord}_K(\Lambda)_{\Field_p}(\kappa)$, $\iota$ the crystalline realization of $\iota$ gives an isometric embedding $\Lambda_{\Int_p}\hookrightarrow \bm{V}_{\cris,x_0}^{F=1}$ onto a direct summand.
\end{remark}

\begin{remark}
	For $g\in\leftcirc\Upsilon(\Lambda)$, we can apply the constructions of~\eqref{subsubsecqW} with $N = V_{\Int_p}$ and $W = W_g\defn g_p^{-1}\Lambda_{\Int_p}\subset N$. We set $G^\flat_{g,\Int_p} = Q_{W_g}$ in the notation there: This is a smooth group scheme over $\Int_p$, clearly a subgroup of $G_{\Int_p}$, and conjugation by $g_p$ identifies its generic fiber with the subgroup $G^\flat_{\Rat_p}\subset G_{\Rat_p}$. Moreover, via this identification, we have $G^\flat_{g,\Int_p}(\Int_p) = K^\flat_{g,p}\subset G^\flat(\Rat_p)$. 
\end{remark}

\begin{lemma}
	\label{lem:mu_choice_flat}
Fix $g\in \leftcirc \Upsilon(\Lambda)$. Then:
\begin{enumerate}
	\item We can choose the decomposition~\eqref{eqn:mu_gspin_zp_splitting} so that the corresponding cocharacter factors through $G^\flat_{g,\Int_p}$. 
	\item All such decompositions---and hence all such cocharacters---are conjugate under $G^\flat_{g,\Int_p}(\Int_p)$.
	\item The centralizer of any such cocharacter is a smooth subgroup scheme $M^\flat_{g,\Int_p}\subset G^\flat_{g,\Int_p}$ such that $\tilde{M}^\flat_{g,\Int_p}$\footnote{This is the Zariski closure of the derived subgroup of the generic fiber.} is also smooth.
\end{enumerate}
\end{lemma}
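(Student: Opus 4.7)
The plan rests on the observation that a cocharacter $\mu$ inducing a decomposition as in~\eqref{eqn:mu_gspin_zp_splitting} factors through $G^\flat_{g,\Int_p}=Q_{W_g}$ iff $\mu$ acts trivially on $W_g$, iff the associated hyperbolic plane $H=V^{-1}_{\Int_p}\oplus V^1_{\Int_p}$ lies in $W_g^\perp$. So such a $\mu$ is the same datum as an $\Int_p$-direct summand hyperbolic plane $H\subset V_{\Int_p}$ with $H\perp W_g$, together with an ordering of its two isotropic lines (this ordering then determines the cocharacter uniquely in the Shimura conjugacy class of $\GSpin(V_{\Int_p})$).

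For existence (1), self-duality of $V_{\Int_p}$ makes $W_g^\perp\subset V_{\Int_p}$ a direct summand of rank $\geq(n+8)/2\geq 6$. I would first exhibit a hyperbolic plane in $W_{g,\Field_p}^\perp$ by noting that its bilinear-form radical $W_{g,\Field_p}\cap W_{g,\Field_p}^\perp$ has dimension at most $\mathrm{rank}(\Lambda)\leq(n-4)/2$; the induced non-degenerate bilinear quotient of dimension $\geq 6$ then contains a hyperbolic plane by Chevalley--Warning and the standard hyperbolic-plane completion (with a slight case analysis at $p=2$ via Artin--Schreier). Lifting this mod-$p$ hyperbolic plane by Nakayama to a rank-$2$ $\Int_p$-direct summand $H\subset W_g^\perp\subset V_{\Int_p}$ gives a hyperbolic plane, since whether a self-dual rank-$2$ $\Int_p$-lattice is hyperbolic or anisotropic is detected on the special fiber. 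An ordered isotropic basis of $H$ then supplies the required cocharacter. This step--controlling the reduction mod $p$ in the degenerate case--is the only genuine obstacle.

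For uniqueness (2), given cocharacters $\mu_1,\mu_2$ with associated hyperbolic planes $H_1,H_2\subset W_g^\perp$ and chosen ordered isotropic bases, I would apply Lemma~\ref{lem:GSpin_torsor} with $N=V_{\Int_p}$ and $W=W_g\oplus H_1$ (of codimension $\geq n/2+2\geq 2$). The tautological isometric embedding $j:W\hookrightarrow V_{\Int_p}$ identifying $W_g$ pointwise and sending $H_1$ to $H_2$ compatibly with the orderings is realized by some $h\in\GSpin(V_{\Int_p})(\Int_p)$; since $h$ fixes $W_g$ pointwise, $h\in G^\flat_{g,\Int_p}(\Int_p)$, and it conjugates $\mu_1$ to $\mu_2$.

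For (3), the centralizer $M_{\Int_p}$ of $\mu$ in $G_{\Int_p}$ preserves the weight decomposition and is identified, modulo the central $\Gm$ acting on $V^{\pm 1}_{\Int_p}$, with $\GSpin(V^0_{\Int_p})$, where $V^0_{\Int_p}$ is self-dual of rank $n$ (as the orthogonal complement of the self-dual hyperbolic plane $H$). Accordingly $M^\flat_{g,\Int_p}=M_{\Int_p}\cap Q_{W_g}$ is the product of the central $\Gm$ with the pointwise stabilizer of $W_g\subset V^0_{\Int_p}$ in $\GSpin(V^0_{\Int_p})$. Since $\mathrm{rank}(W_g)\leq(n-4)/2\leq n-2$, Lemma~\ref{lem:qT_smooth} applies to the pair $(V^0_{\Int_p},W_g)$, yielding smoothness with connected special fiber of this stabilizer and of its preimage in $\mathrm{Spin}(V^0_{\Int_p})$; the product with $\Gm$ preserves these properties, giving the smoothness of $M^\flat_{g,\Int_p}$ and of $\tilde{M}^\flat_{g,\Int_p}$.
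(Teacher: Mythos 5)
Your proof is correct and follows the same overall strategy as the paper: reduce (1) to finding a hyperbolic plane in $W_g^\perp$, get (2) from Lemma~\ref{lem:GSpin_torsor} applied to the orthogonal sum of $W_g$ with the chosen hyperbolic plane, and get (3) from Lemma~\ref{lem:qT_smooth}. The only real divergence is in the existence step: the paper works with the quadric of isotropic lines in $W_g^\perp$ as a scheme over $\Int_p$ (a flat local complete intersection with rational special fiber and singular locus of codimension $\geq 5$, via~\cite[Lemma 2.11]{mp:reg}), picks a smooth $\Field_p$-point, lifts it by Hensel, and then repeats to find a complementary isotropic line; you instead produce the whole hyperbolic plane mod $p$ and lift it at once via Nakayama plus the observation that self-duality and isotropy of a rank-$2$ lattice are detected on the special fiber. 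Both work; the paper's route outsources the characteristic-$2$ bookkeeping to the cited geometric lemma, whereas yours surfaces it. One caution on your mod-$p$ step: when $p=2$ the quadratic form does not descend to the quotient of $W_{g,\Field_p}^\perp$ by its bilinear radical (only the quotient by the vectors of the radical on which $Q$ vanishes carries an induced form), so "the quotient contains a hyperbolic plane" is not quite the right statement. The clean fix is to show directly that $W_{g,\Field_p}^\perp$ contains an isotropic vector \emph{outside} the radical (a point count: isotropic vectors number roughly $p^{d-2}$ or more in dimension $d\geq 7$, while the radical has at most $p^{d-6}$ elements) and then complete it to a hyperbolic plane by the standard substitution $w\mapsto w - Q(w)v$. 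With that adjustment your argument is complete. (For (2), note the paper's printed choice "$U_{\Int_p}\oplus W_g^\perp$" is evidently a typo for $U_{\Int_p}\oplus W_g$, which is exactly the $W$ you use.)
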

\begin{proof}
This amounts to knowing that, if $U_{\Int_p}$ is the hyperbolic plane over $\Int_p$, then there is an isometric embedding $U_{\Int_p}\hookrightarrow N$ that is orthogonal to $W_g$.

The numerical condition on $\mathrm{rank}(\Lambda)$ ensures that we have $n\ge 6$, $\dim V^\flat\ge \frac{n}{2}+4$ and $\dim V^\flat - \mathrm{rank}(\Lambda)\ge 6$. In particular, for any $g\in \leftcirc\Upsilon(\Lambda)$, the quadric of isotropic lines in $W_g^\perp$ is a flat local complete intersection over $\Int_p$ of relative dimension $\ge 5$, whose special fiber is a rational variety. Moreover, the singular locus of the special fiber consists of isotropic lines that are contained in the radical of $W_{g,\Field_p}^{\perp}$ and has codimension $\ge 5$; see~\cite[Lemma 2.11]{mp:reg}. 

Using this geometric information, we see that we can first pick any isotropic line $V^{-1}_{\Int_p}\subset W_g^\perp$ in the smooth locus, and then choose a complementary isotropic line $V^1_{\Int_p}$ also in the smooth locus of the quadric associated with $W_g^{\perp}$. 

That all such decompositions are conjugate under $G^\flat_{g,\Int_p}(\Int_p)$ follows from Lemma~\ref{lem:GSpin_torsor}, applied with $W$ the orthogonal direct sum $U_{\Int_p}\oplus W_g^{\perp}$.

The group $M^\flat_{g,\Int_p}$ is an extension of $\Gmh{\Int_p}$ by the pointwise stabilizer of the direct summand $V^{-1}_{\Int_p}\oplus W_g \oplus V^1_{\Int_p}$. Therefore, assertion (3) follows from Lemma~\ref{lem:qT_smooth}. 
\end{proof}

\begin{remark}
	\label{rem:ordinary_SKg}
Choose any cocharacter $\mu_g:\Gmh{\Int_p}\to G^\flat_{g,\Int_p}$ as in Lemma~\ref{lem:mu_choice_flat}. Then Lemma~\ref{lem:mu_choice_flat} tells us that Assumption~\ref{assump:ordinary_cochar} is valid, and so we can now consider the ordinary locus $\Ss^{\ord}_{K_g^\flat,\Field_p}\subset \Ss_{K_g^\flat}$. By (2) of the lemma, this is independent of the choice of cocharacter.
\end{remark}

\begin{lemma}
	\label{lem:ordinary_SKg_agrees}
We have $\Ss^{\ord}_{K_g^\flat,\Field_p} = \leftcirc\Ss_{K_g^\flat,\Field_p}\times_{\leftcirc \mathcal{Z}_K(\Lambda)_{\Field_p}}\leftcirc\mathcal{Z}^{\ord}_K(\Lambda)_{\Field_p}$. Moreover, Assumption~\ref{assump:canonical_lift} holds: the canonical lift of any point of $\Ss^{\ord}_{K_g^\flat,\Field_p}$ lifts to $\Ss_{K_g^\flat}$.
\end{lemma}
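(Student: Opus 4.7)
The plan is to handle the two claims in turn. For the equality of loci, I must show that a point $(x_0,\iota)\in \leftcirc\Ss_{K^\flat_g,\Field_p}(\kappa)$ is $\mu_g$-ordinary (as a point of the GSpin Shimura variety attached to $V^\flat_{g,\Int}$) if and only if the underlying abelian variety $\mathcal{A}_{x_0}$ is ordinary in the classical sense, and also that every $\mu_g$-ordinary point of $\Ss_{K^\flat_g,\Field_p}$ lies in the non-degenerate open substack $\leftcirc\Ss_{K^\flat_g,\Field_p}$ (which is automatic once the defining trivialization carries the direct summand $W_g\subset V_{\Int_p}$ onto the crystalline realization of $\iota(\Lambda)$). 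One direction is essentially free: $\mu_g$ is a Shimura cocharacter for the ambient datum $(G,X)$ with the correct weight decomposition on $H_{\Int_p}$, so any $\mu_g$-ordinary trivialization is a fortiori a $\mu$-ordinary trivialization for $G$, and then Remark~\ref{rem:our_ordinary_is_ordinary} gives ordinariness of $\mathcal{A}_{x_0}$.

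For the reverse direction I would start with $\mathcal{A}_{x_0}$ ordinary and apply Remark~\ref{rem:unramified_ordinary} to the hyperspecial group $G_{\Int_p}=\GSpin(V_{\Int_p})$ to produce a trivialization $\alpha:W(\kappa)\otimes_{\Int_p} H_{\Int_p}\xrightarrow{\simeq}\bm{H}_{\cris,x_0}$ preserving the $G_{\Int_p}$-tensors and conjugating Frobenius to $\sigma\otimes\mu(p)^{-1}$ for some Shimura cocharacter $\mu$. Since $\iota(\lambda)$ is $F$-invariant, the induced $\alpha_V:W(\kappa)\otimes V_{\Int_p}\xrightarrow{\simeq}\bm{V}_{\cris,x_0}$ pulls back the crystalline realization of $\iota$ to an isometric embedding $\iota_0:\Lambda_{\Int_p}\hookrightarrow V^0_{\Int_p}\subset V_{\Int_p}$, whose image is a direct summand by the non-degeneracy of $(x_0,\iota)$. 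Lemma~\ref{lem:GSpin_torsor} then produces $g_1\in G_{\Int_p}(\Int_p)$ that sends $\iota_0(\Lambda_{\Int_p})$ onto $W_g=g_p^{-1}\Lambda_{\Int_p}$; after replacing $\alpha$ by $\alpha\circ(g_1^{-1}\cdot)$, the $W_g$-tensors are preserved and Frobenius now conjugates to $\sigma\otimes(g_1\mu g_1^{-1})(p)^{-1}$. Because $W_g$ must be $F$-fixed, the new cocharacter fixes $W_g$ pointwise and so factors through $G^\flat_{g,\Int_p}$; by Lemma~\ref{lem:mu_choice_flat}(2) it is conjugate to $\mu_g$ by an element $g_2\in G^\flat_{g,\Int_p}(\Int_p)$, and composing $\alpha$ once more with $g_2^{-1}$ yields a trivialization witnessing $\mu_g$-ordinariness.

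For the canonical lift claim, I would take $x_0\in \Ss^{\ord}_{K^\flat_g,\Field_p}(\kappa)$, and let $\tilde{x}\in \Ss_K(W(\kappa))$ be the canonical lift of its image in $\Ss_K$ (available by Remark~\ref{rem:hyperspecial_canonical_lift}). The decisive observation is that $\Hom(\mathcal{G}^{\et}_{x_0},\mathcal{G}^{\mult}_{x_0})=\Hom(\mathcal{G}^{\mult}_{x_0},\mathcal{G}^{\et}_{x_0})=0$ over $\kappa$, so every special endomorphism $\iota(\lambda)$ respects the étale--multiplicative splitting and lifts canonically to $\mathcal{G}^{\mathrm{can}}_{x_0}=\mathcal{G}^{\et}_{x_0}\times\mathcal{G}^{\mult}_{x_0}$; by Serre--Tate, it then lifts to an element of $\End(\mathcal{A}_{\tilde{x}})$, and the crystalline-to-de Rham-to-étale comparisons of Remark~\ref{rem:hodge_cocharacter} show the lift remains a special endomorphism, i.e.\ lies in $V(\tilde{x})$. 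Consequently $\iota$ extends to an isometric embedding $\Lambda\hookrightarrow V(\tilde{x})$, producing a $W(\kappa)$-point of $\mathcal{Z}_K(\Lambda)$ over $\tilde{x}$; the generic fiber sits in the component $\Sh_{K^\flat_g}\subset Z_K(\Lambda)$ whose specialization contains $(x_0,\iota)$, and openness of $\leftcirc\Ss_{K^\flat_g,(p)}\subset \Ss_{K^\flat_g,(p)}$ (Remark~\ref{rem:leftcirc_ZLambda_integral_models}) places the lift in the desired locus.

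The main technical hurdle I anticipate is in the reverse direction of the first claim: tracking the two successive modifications of the trivialization---first by $g_1\in G_{\Int_p}(\Int_p)$ to align $W_g$ with the image of $\iota$, and then by $g_2\in G^\flat_{g,\Int_p}(\Int_p)$ to normalize the transported cocharacter to precisely $\mu_g$---so that the final trivialization remains integral and produces the exact Frobenius identity $\sigma\otimes \mu_g(p)^{-1}$. Both adjustments are fed by the Witt-extension type arguments packaged in Section~\ref{sec:group_schemes}, and the verification that the intermediate cocharacter factors through $G^\flat_{g,\Int_p}$ is where the hypothesis $\mathrm{rank}(\Lambda)\leq (n-4)/2$ and the smoothness/connectedness conclusions of Lemma~\ref{lem:qT_smooth} really earn their keep.
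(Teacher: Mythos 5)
Your proposal is correct and follows essentially the same route as the paper: the paper's proof likewise starts from the $G_{\Int_p}$-structure-preserving ordinary trivialization supplied by Remark~\ref{rem:unramified_ordinary} and then invokes Lemma~\ref{lem:GSpin_torsor} to adjust it to a $G^\flat_{g,\Int_p}$-structure-preserving one (your two-step modification by $g_1$ and $g_2$, using Lemma~\ref{lem:mu_choice_flat}(2), is exactly the unpacking of that step), and it disposes of Assumption~\ref{assump:canonical_lift} by noting that all endomorphisms of $\mathcal{A}_{x_0}$ deform to its canonical lift, which is what your Serre--Tate argument makes explicit.
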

\begin{proof}
	We want to show that, for any algebraically closed point $x_0\in \leftcirc\Ss_{K_g^\flat}(\kappa)$ with $\mathcal{A}_{x_0}$ ordinary, we can find a $G^\flat_{g,\Int_p}$-structure preserving trivialization $W(\kappa)\otimes_{\Int_p}H_{\Int_p}\xrightarrow{\simeq}\bm{H}_{\cris,x_0}$ such that the Frobenius endomorphism on the right hand side conjugates to the endomorphism $1\otimes \mu_g(p)^{-1}$ of $W(\kappa)\otimes_{\Int_p}H_p$. That we can find a $G_{\Int_p}$-structure preserving isomorphism follows from Remark~\ref{rem:unramified_ordinary}. We can then use Lemma~\ref{lem:GSpin_torsor} to modify this to a $G^\flat_{g,\Int_p}$-structure preserving one.

	The validity of Assumption~\ref{assump:canonical_lift} is a consequence of the fact that all endomorphisms of $\mathcal{A}_{x_0}$ deform to endomorphisms of its canonical lift.
\end{proof}

\begin{remark}
	\label{rem:igusa_reduction_g_flat}
	The Igusa tower $\widehat{\mathrm{Ig}}^{\ord}_{K,p}$ over $\widehat{\Ss}^{\ord}_{K,p}$, when restricted to $\widehat{\Ss}^{\ord}_{K^\flat_g,p}$ acquires a reduction of structure group to the subgroup $M^\flat_{g,\Int_p}(\Int_p)\subset M_{\Int_p}(\Int_p)$, given by its own Igusa tower $\widehat{\mathrm{Ig}}^{\ord}_{K^\flat_g,p}$. This can be understood explicitly. First, Remark~\ref{rem:p-adic_special_endomorphisms} tells us that, for every algebraically closed point  $x_0\in \widehat{\Ss}^{\ord}_{K^\flat_g,p}(\kappa)$, the crystalline realization of $\iota:\Lambda\to V(x_0)$ yields an embedding as a direct summand
\[
	W_g \xrightarrow[\simeq]{g_p}\Lambda_{\Int_p}\hookrightarrow \bm{V}^{F=1}_{\cris,x_0}.
\]
	The reduction of structure is now given by the subsheaf parameterizing for $x:\Spf R\to \widehat{\Ss}^{\ord}_{K_g^\flat,p}$, trivializations 
  \[
	R\otimes_{\Int_p}\mathcal{G}_0\xrightarrow{\simeq}\mathcal{G}^{\et}_x\times \mathcal{G}^{\mult}_x
	\]
	such that, at every geometric point $s:\Spec \kappa\to \Spf R$, we obtain a commuting diagram
  \begin{align*}
  \begin{diagram}
  		 W_g&\rEquals& W_g\\
  	 \dInto&&\dInto\\
  	 V^0_{\Int_p}&\rTo_{\simeq}&\bm{V}^{F=1}_{\cris,x\circ s}.
  \end{diagram}
  \end{align*}
\end{remark}

\begin{lemma}
	\label{lem:ordinary_SKg_hypersymmetric}
$\Ss^{\ord}_{K_g^\flat,\Field_p}$ contains a hypersymmetric point (see Definition~\ref{ord:defn:hypersymmetric}).
\end{lemma}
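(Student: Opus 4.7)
The plan is to apply Remark~\ref{rem:hypersymmetric_CM} in the GSpin Shimura datum attached to $G^\flat_g$: produce an imaginary quadratic extension $L/\Rat$ split at $p$ together with a morphism $T_L\to G^\flat_g$ whose real fiber lies in $X^\flat_g$ and such that the image of $T_L(\Int_p)=(\Reg{L}\otimes_\Int\Int_p)^\times$ is contained in $G^\flat_{g,\Int_p}(\Int_p)=K^\flat_{g,p}$. Concretely, I would exhibit a two-dimensional $\Rat$-subspace $W\subset V^\flat_g\subset V$ with $W_\Real$ negative definite and with $W_{\Int_p}\defn W\cap V_{\Int_p}$ a self-dual hyperbolic direct summand of $V_{\Int_p}$. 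Setting $L\defn\Rat(\sqrt{-\disc(W)})$, the first condition makes $L$ imaginary quadratic and the second makes $L$ split at $p$, since hyperbolicity of $W_{\Int_p}$ forces $-\disc(W)$ to be a $p$-adic square. The canonical identification $T_L\xrightarrow{\simeq}\GSpin(W)$, combined with the orthogonal decomposition $V_{\Int_p}=W_{\Int_p}\oplus W_{\Int_p}^\perp$ (which exists because $W_{\Int_p}$ is a self-dual summand), yields a morphism $T_L\to\GSpin(V)$ whose image fixes $W_{\Int_p}^\perp\supset W_g$ pointwise and preserves $V_{\Int_p}$, and therefore factors integrally through $G^\flat_{g,\Int_p}=Q_{W_g}$. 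Any $\Real$-algebra structure on $W_\Real$ provides $h:\mathbb{S}\to T_{L,\Real}\to G^\flat_{g,\Real}$ corresponding to the negative-definite 2-plane $W_\Real\subset V^\flat_{g,\Real}$, hence an element of $X^\flat_g$.

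To construct $W$, I would first produce a self-dual hyperbolic direct summand $U_p\subset V_{\Int_p}$ orthogonal to $W_g$, which is exactly the output of the first part of the proof of Lemma~\ref{lem:mu_choice_flat}: under the hypothesis $\mathrm{rank}(\Lambda)\leq(n-4)/2$, the quadric of isotropic lines in $W_g^\perp\subset V_{\Int_p}$ is a flat $\Int_p$-scheme of relative dimension $\geq 5$ whose singular locus in the special fiber has codimension $\geq 5$, so one picks first an isotropic primitive vector in the smooth locus and then a complementary isotropic summand, whose span is the desired $U_p$. I would then invoke weak approximation on the Grassmannian $\mathrm{Gr}(2,V^\flat_g)$, which is a rational $\Rat$-variety, so $\Rat$-points are dense in $\mathrm{Gr}(2,V^\flat_g)(\Real)\times\mathrm{Gr}(2,V^\flat_g)(\Rat_p)$. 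Approximating the pair consisting of a negative-definite 2-plane at infinity (which exists because $V^\flat_g$ has signature with negative part of dimension $2$) and $U_p\otimes\Rat_p$ at $p$, I obtain a rational 2-plane $W$ satisfying the desired conditions; here one uses that both negative definiteness at $\infty$ and the property that $W\cap V_{\Int_p}$ equals a prescribed self-dual summand are open conditions in the respective topologies.

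Finally, the CM sub-datum $(T_L,\{h\})\hookrightarrow(G^\flat_g,X^\flat_g)$ produces a CM point $x\in\Sh_{K^\flat_g}(L^{\mathrm{ab}})$ that extends to an $\Reg{L_v}$-point of $\Ss_{K^\flat_g}$ for a place $v\mid p$ of its field of definition, thanks to the integrality of $T_L\to G^\flat_g$ and the N\'eronian property of $\mathcal{A}$. Because $L$ is split at $p$, the reflex cocharacter factors through the split torus $T_{L,\Rat_p}\simeq\Gm\times\Gm$, making $\mathcal{A}_x$ isogenous to a power of a CM elliptic curve with CM by $L$; since $p$ splits in $L$, such an elliptic curve is ordinary at $v$. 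Hence the reduction $x_0$ lies in $\Ss^{\ord}_{K^\flat_g,\Field_p}$, and combining Remarks~\ref{rem:hypersymmetric_one}, \ref{rem:hypersymmetric_two}, and~\ref{rem:hypersymmetric_CM} shows that $x_0$ is hypersymmetric.

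The main obstacle is the construction of $W$: the existence of $U_p$ as a hyperbolic summand orthogonal to $W_g$ is a numerical consequence of the rank hypothesis and mirrors the argument of Lemma~\ref{lem:mu_choice_flat}, but the realization over $\Rat$ requires the density argument on the Grassmannian and careful tracking of the integral and archimedean conditions simultaneously. The remaining verification of ordinariness and hypersymmetry is a standard consequence of CM theory.
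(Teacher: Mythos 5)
Your proposal is correct and follows exactly the paper's (very terse) argument: the paper's proof simply asserts the existence of a rank-$2$ negative definite lattice in $V^\flat_{g,\Int}$ whose $p$-completion is hyperbolic, identifies its GSpin group with $\Res_{F/\Rat}\Gm$ for $F$ imaginary quadratic split at $p$, and invokes Remark~\ref{rem:hypersymmetric_CM}. You supply the justification for the existence of that lattice (via the hyperbolic summand from Lemma~\ref{lem:mu_choice_flat} and weak approximation on the Grassmannian), which the paper leaves as an ``observation''; the only nitpick is that the $p$-adic condition you should impose is that $W\cap V_{\Int_p}$ be a self-dual summand \emph{isometric} to the hyperbolic plane (an open condition), not literally equal to the prescribed $U_p$.
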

\begin{proof}
This just amounts to the observation that there is a rank $2$ negative definite lattice $L_{\Int}\subset V^\flat_{g,\Int}$ such that $L_{\Int_p}$ is isomorphic to the hyperbolic plane. Indeed, $\GSpin(L)$ will then be of the form $\Res_{F/\Rat}\Gm$ for an imaginary quadratic extension $F/\Rat$ that is split at $p$, and we can then conclude using Remark~\ref{rem:hypersymmetric_CM}.	
\end{proof}

\begin{remark}
	\label{rem:ordinary_orthogonal_unipotent_unramified}
Let $U^-_{\mu_g}\subset G_{\Int_p}$ be the opposite unipotent associated with $\mu_g$, so that we have a canonical inclusion
\begin{align}\label{eqn:unip_embedding}
\Lie U^-_{\mu_g}\subset \Hom(H_{\Int_p}^1,H_{\Int_p}^0).
\end{align}
This can be made explicit. Namely, let $G_0 = \SO(V_{\Int_p})(\Int_p)$ be the special orthogonal quotient of $G_{\Int_p}$. Then we can also identify $U^-_{\mu_g}$ with the unipotent subgroup of $G_0$ associated with the isotropic line $V_{\Int_p}^{-1}$. That is, we have
\[
\Lie U^-_{\mu_g} = \{(\varphi,\psi)\in \Hom(V_{\Int_p}^0,V_{\Int_p}^{-1})\times \Hom(V_{\Int_p}^1,V_{\Int_p}^0):\; \varphi^\vee + \psi = 0\}\subset \End(V_{\Int_p}).
\]
Here, we have used the non-degenerate bilinear form on $V_{\Int_p}$ to identify $V_{\Int_p}^0$ with its own dual, and $V_{\Int_p}^1$ with the dual of $V_{\Int_p}^{-1}$, and hence the dual $\varphi^\vee$ of $\varphi$ with a map $\varphi^\vee:V_{\Int_p}^1\to V_{\Int_p}^0$. In what follows, we can and will identify $\Lie U^-_{\mu_g}$ with its image in $\Hom(V_{\Int_p}^0,V_{\Int_p}^{-1})$. Fix generators $v^{\pm}$ of $V_{\Int_p}^{\pm 1}$. Then, as explained in \cite[\S 1]{mp:reg}, under the left multiplication action of $V_{\Int_p}$ on $H_{\Int_p}$, we have
\[
H_{\Int_p}^1 = \ker(v^+) = \im(v^+)\;;\; H_{\Int_p}^0 = \ker(v^{-})= \im (v^{-}).
\]
The embedding~\eqref{eqn:unip_embedding} can now be described as follows: Suppose that we have a map $\varphi:V_{\Int_p}^0\to V_{\Int_p}^{-1}$ in $\Lie U^-_{\mu_g}$. There exists a unique $v_\varphi^0\in V_{\Int_p}^0$ such that, for all $v\in V_{\Int_p}^0$, we have $[v_\varphi^0,v]_Q\cdot v^{-} = \psi(v).$ Now, one can check that, up to sign, under~\eqref{eqn:unip_embedding}, $(\varphi,\psi)$ maps to left multiplication by the element $v^{-}v_\varphi^0$ in the Clifford algebra.
\end{remark}

\begin{remark}
	\label{rem:ordinary_SKg_def_rings}
Now, let $\Lie U^{\flat,-}_{\mu_g} = \Lie U^-_{\mu_g}\cap \Lie G_{g,\Int_p}^\flat$. One checks that this is identified with the subspace 
\[
	\Hom(V_{\Int_p}^0/W_g,V_{\Int_p}^1)\subset \Hom(V_{\Int_p}^0,V_{\Int_p}^1).
\]
Fix an algebraically closed point $x_0\in \Ss^{\ord}_{K^\flat_g,\Field_p}(\kappa)$, and let $\widehat{U}^\flat_{x_0}$ (resp. $\widehat{U}_{x_0}$) be the deformation spaces of $\widehat{\Ss}^{\ord}_{K^\flat_g,p}$ (resp. $\widehat{\Ss}^{\ord}_{K,p}$).
Using Lemma~\ref{lem:ordinary_SKg_agrees} and Proposition~\ref{prop:ordinary_deformations}, we see that we have a diagram 
\[
	\begin{diagram}
		\widehat{U}^\flat_{x_0}&\rTo&\widehat{U}_{x_0}\\
		\dTo^\simeq&&\dTo_\simeq\\
		\widehat{T}_{G^\flat_g}&\rTo&\widehat{T}_G
	\end{diagram}
\]
where $\widehat{T}_{G^\flat_g}$ (resp. $\widehat{T}_G$) is the formal torus over $W(\kappa)$ with cocharacter group $\Lie U^{\flat,-}_{\mu_g}$ (resp. $\Lie U^-_{\mu_g}$). This diagram is uniquely determined up to the action of $M^\flat_{\mu_g,\Int_p}(\Int_p)$. 
\end{remark}

\begin{remark}
	\label{rem:ordinary_SKg_character_groups}
If we choose a generator $v^1$ for $V^1_{\Int_p}$, then we can compatibly identify the character groups of $\widehat{T}_G$ and $\widehat{T}_{G^\flat_g}$ with $V^0_{\Int_p}$ and its quotient $V^0_{\Int_p}/W_g$, respectively.
\end{remark}

\subsection{Correspondences between special cycles in characteristic $0$}\label{subsec:corr_char_0}

We will now apply the theory of Hecke correspondences from \S~\ref{subsec:isogenies_and_p_hecke_correspondences_in_the_generic_fiber} in the particular context of GSpin Shimura varieties.

\begin{remark}
\label{rem:conjugation_isogeny_generic_fiber}
	Suppose that we have $s,t:\Spec R\to \Sh_K$ and $f\in \mathrm{QIsog}_G(s,t)$. Then conjugation by $f^{-1}$ induces an isomorphism 
\[
	c(f) \defn f^{-1}\circ (\cdot)\circ f:\End(\mathcal{A}_s)[p^{-1}]\xrightarrow{\simeq}\End(\mathcal{A}_t)[p^{-1}] 
\]
	carrying $V(s)[p^{-1}]$ onto $V(t)[p^{-1}]$. 
\end{remark}

\begin{definition}
A \defnword{$p$-minimal pair} or simply \defnword{minimal pair} $\Xi = (\tilde{\Lambda}\subset \Lambda)$ is a pair of positive definite lattices $(\Lambda,\tilde{\Lambda})$ equipped with an isometric embedding $\tilde{\Lambda}\subset \Lambda$ such that $[\Lambda:\tilde{\Lambda}] = p$.
\end{definition}

\begin{definition}
	Suppose that $\Xi$ is a minimal pair. Define a stack
\[
	\mathrm{QIsog}_{G,\mu_p,\Xi}\to \leftcirc Z_K(\Lambda)\times \leftcirc Z_K(\tilde{\Lambda})
\]
whose fiber over $((s,\iota),(t,\tilde{\iota}))\in \leftcirc Z_K(\Lambda)(R)\times \leftcirc Z_K(\tilde{\Lambda})(R)$ is given by
\[
	\mathrm{QIsog}_{G,\mu_p,\Xi}((s,\iota),(t,\tilde{\iota})) = \{f\in \mathrm{QIsog}_{G,\mu_p}(s,t):\;c(f)\circ \iota\vert_{\tilde{\Lambda}} = \tilde{\iota}\}
\]
In other words, we are looking for $p$-quasi-isogenies of type $\mu_p$ that `shrink' the isometric embedding of $\Lambda$ onto a direct summand of the space of special endomorphisms to that of $\tilde{\Lambda}$ onto a direct summand of the space of special endomorphisms of an isogenous point. Write $s_{\mu_p,\Xi,K}:\mathrm{QIsog}_{G,\mu_p,\Xi}\to \leftcirc Z_K(\Lambda)$ and $t_{\mu_p,\Xi,K}:\mathrm{QIsog}_{G,\mu_p,\Xi}\to \leftcirc Z_K(\tilde{\Lambda})$ for the natural maps.
\end{definition}

\begin{remark}
	\label{rem:reduction_of_structure_group}
Suppose that we have $g\in \leftcirc\Upsilon(\Lambda)$. Over $\Sh_{K^\flat_g}$, the $p$-adic realization of the tautological embedding of $\Lambda$ into the space of special endomorphisms gives an embedding
\[
	\bm{\iota}_p:\underline{\Lambda}_{\Int_p}\hookrightarrow \bm{V}_p\vert_{\Sh_{K^\flat_g}}
\]
as a local direct summand. Using Lemma~\ref{lem:GSpin_torsor}, we now see that the canonical $K^\flat_{g,p}$-torsor $\Sh_{K^{\flat,p}_g}\to \Sh_{K^\flat_g}$ parameterizes $G_{\Int_p}$-structure preserving trivializations $\underline{H}_{\Int_p}\xrightarrow{\simeq}\bm{H}_p$ that carry the embedding $W_g\hookrightarrow V_{\Int_p}$ onto $\bm{\iota}_p$.
\end{remark}

\begin{proposition}
	\label{prop:char0_corr_cycles}
\begin{enumerate}
\item The map $s_{\mu_p,\Xi,K}$ is finite \'etale and induces a bijection on geometric connected components.
	\item The map $t_{\mu_p,\Xi,K}$ is an isomorphism.
\end{enumerate}
\end{proposition}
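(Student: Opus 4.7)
The plan is to interpret both maps via Proposition \ref{prop:isogG_generic_desc} (which identifies $\mu_p$-quasi-isogenies with self-dual lattice moves) and Remark \ref{rem:reduction_of_structure_group} (identifying the $p$-adic realization of the tautological embeddings), and then invoke Lemma \ref{lem:nice_cochar}. For assertion (2), given $(t,\tilde\iota)\in\leftcirc Z_K(\tilde\Lambda)(R)$, the $p$-adic realization $\tilde\iota_p$ embeds $\tilde\Lambda_{\Int_p}$ as a direct summand $\tilde W = \tilde\iota_p(\tilde\Lambda_{\Int_p})$ of $\bm V_{p,t}$, while its rational extension yields a lattice $W = \tilde\iota_\Rat(\Lambda_{\Int_p})\subset \bm V_{p,t}[p^{-1}]$ containing $\tilde W$ with index $p$. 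Lemma \ref{lem:nice_cochar}(2), applied with $\tilde N = \bm V_{p,t}$, produces a unique self-dual $\Int_p$-lattice $N$ in $\bm V_{p,t}[p^{-1}]$ related to $\bm V_{p,t}$ by an element of $G(\Int_p)\lambda_0(p)^{-1}G(\Int_p)$ and satisfying $N\cap W[p^{-1}] = W$. By Proposition \ref{prop:isogG_generic_desc} this $N$ corresponds to a unique $\mu_p$-quasi-isogeny $f\colon\mathcal A_s\dashrightarrow\mathcal A_t$ (hence a unique $s$), and then $\iota = c(f)^{-1}\circ\tilde\iota_\Rat$ gives a direct-summand integral embedding into $V(s)$ (because $W$ is saturated in $N$), yielding the unique preimage.

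For assertion (1), the finite étale nature of $s_{\mu_p,\Xi,K}$ follows on realizing $\mathrm{QIsog}_{G,\mu_p,\Xi}$ as an open and closed substack of the pullback to $\leftcirc Z_K(\Lambda)\times\leftcirc Z_K(\tilde\Lambda)$ of $\mathrm{QIsog}_{G,\mu_p}\simeq\Sh_{K_{\mu_p(p)}}$ (the latter being finite étale over $\Sh_K$ via the source projection), the integrality condition on $c(f)\circ\iota|_{\tilde\Lambda}$ carving out the open-and-closed condition. For the bijection on geometric connected components, I would use (2) to rewrite $s_{\mu_p,\Xi,K}$ as the induced map $\pi\colon\leftcirc Z_K(\tilde\Lambda)\to\leftcirc Z_K(\Lambda)$ and decompose both sides via \eqref{eqn:leftcirc_ZLambda_generic_fiber}. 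Since $\Lambda_\Rat = \tilde\Lambda_\Rat$, both indexing sets involve the same group $G^\flat = \GSpin(\Lambda_\Rat^\perp)$, and $\pi$ is given at $p$ by right translation by a representative of $\mu_p(p)$, restricting on each piece to a finite étale map between GSpin Shimura varieties for $G^\flat$ at comparable level structures. The expected main obstacle is verifying this induced map is a bijection on $\pi_0$: using the self-duality of $V^\flat_{g,\Int_p}$ (so that $G^\flat_g$ is unramified at $p$), the weak approximation property from Proposition \ref{ord:proposition:weakapprox} together with Lemma \ref{lem:hecke_action_conncomp} reduces $\pi_0$ on each side to a spinor-norm double coset, and the surjectivity of the spinor norm on the relevant subgroups (Lemma \ref{lem:smooth_grp_schemes}(6)) combined with the $K^\flat_{g,p}$-transitivity on $\mathrm{Par}^\circ_{\lambda_0}(W_g,U_g)(\Field_p)$ (via Lemma \ref{lem:GSpin_torsor}) then forces the claimed identification.
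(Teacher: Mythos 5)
Your proposal is correct and follows essentially the same route as the paper: both identify the correspondence fiberwise over each $\Sh_{K^\flat_g}$ with a contraction product against $\mathrm{Par}^\circ_{\lambda_0}(W_g,U_g)(\Field_p)$ via Proposition~\ref{prop:isogG_generic_desc} and Lemma~\ref{lem:nice_cochar}, and then deduce the $\pi_0$-bijection from the transitivity of $G^\flat_{g,\Int_p}(\Field_p)$ on that set together with the surjectivity of the spinor norm on the stabilizer (Lemma~\ref{lem:smooth_grp_schemes}) and Deligne's spinor-norm description of connected components. The only cosmetic difference is that you establish (2) first and phrase (1) through the induced map $\pi$, while the paper argues (1) directly and treats (2) as the analogous case of Lemma~\ref{lem:nice_cochar}(2).
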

\begin{proof}
Let $\mathrm{Par}_{\mu_p}$ be the quadric Grassmannian over $\Int_p$ parameterizing isotropic lines in $V_{\Int_p}$. Fix $g\in \leftcirc\Upsilon(\Lambda)$, consider the map
\[
\mathrm{QIsog}_{G,\mu_p,\Xi}\times_{s_{\mu_p,\Xi,K},\leftcirc Z_K(\Lambda)}\Sh_{K^\flat_g}\to \Sh_{K^\flat_g}
\]
By Proposition~\ref{prop:isogG_generic_desc} and its proof, combined with Remark~\ref{rem:quadric_desc_par_lambda_0}, we obtain a commutative diagram
\[
\begin{diagram}
	\mathrm{QIsog}_{G,\mu_p,\Xi}\times_{s_{\mu_p,\Xi,K},\leftcirc Z_K(\Lambda)}\Sh_{K^\flat_g}&\rTo&\Sh_{K^{\flat,p}_g}\times^{G^\flat_{g,\Int_p}(\Int_p)}\mathrm{Par}_{\mu_p}(\Field_p)\\
	&\rdTo(1,2)\ldTo(1,2)\\
		&\Sh_{K^\flat_g}
\end{diagram}
\]
Unwinding the definitions and using (1) of Lemma~\ref{lem:nice_cochar}, one finds that the top arrow maps its source isomorphically onto the substack
\[
	\Sh_{K^{\flat,p}_g}\times^{G^\flat_{g,\Int_p}(\Int_p)}\mathrm{Par}_{\mu_p}(W_g,U_g)(\Field_p)\subset \Sh_{K^{\flat,p}_g}\times^{G^\flat_{g,\Int_p}(\Int_p)}\mathrm{Par}_{\mu_p}(\Field_p),
\]
where $U\in \mathbb{P}(W_g)(\Int_p)$ is any hyperplane such that $g_p^{-1}\tilde{\Lambda}_{\Int_p} = U + pW_g$. Moreover, by Lemma~\ref{lem:smooth_grp_schemes}, $G^\flat_{g,\Int_p}(\Field_p)$ acts transitively on $\mathrm{Par}_{\mu_p}(W_g,U_g)(\Field_p)$, and the stabilizer $Q(\Field_p)$ of any point in this set maps surjectively on $\Field_p^\times$ via the spinor norm.

To finish, we need to show that the right diagonal arrow induces a bijection on geometric connected components. Since $G^\flat$ has simply connected derived subgroup, we find from~\cite[Th\'eor\`eme 2.4]{deligne:travaux} that for any level $K'\subset G^\flat(\Adele_f)$ the spinor norm induces a bijection
\[
	\pi_0(\Sh_{K',\overline{\Rat}})\xrightarrow{\simeq}\Adele_f^\times/\Rat_{>0}\nu(K').
\]
Now, the discussion in the previous paragraph shows that we have
\[
	\Sh_{K^{\flat,p}_g}\times^{G^\flat_{g,\Int_p}(\Int_p)}\mathrm{Par}_{\mu_p}(\Field_p) = \Sh_{K'}
\]
where $K'\subset K^\flat_g$ is the pre-image of $Q(\Field_p)\subset G^\flat_{g,\Int_p}(\Field_p)$. The surjectivity of the spinor norm $Q(\Field_p)\to \Field_p^\times$ shows that we have $\nu(K') = \nu(K^\flat_g)$, and hence that the map on connected components is a bijection as desired.

The proof of assertion (2) is along analogous lines and uses (2) of Lemma~\ref{lem:nice_cochar}.
\end{proof}

\subsection{Ordinary correspondences of minuscule type}\label{sec:ordinary_correspondences_of_minuscule_type}

Fix a cocharacter $\mu_p:\Gmh{\Int_p}\to G_{\Int_p}$ as in Remark~\ref{rem:Vflat_isotropic} with associated splittings~\eqref{eqn:mu_gspin_zp_splitting}. Let $M_{\Int_p}\subset G_{\Int_p}$ be the centralizer of $\mu_p$: This is an extension of $\Gm$ by $\GSpin(V^0_{\Int_p})$. In this subsection, we will study in detail the structure of $\widehat{\mathrm{QIsog}}^{\ord}_{G,\lambda_p}$ for the choice of a non-trivial minuscule cocharacter $\lambda_p$ of $M_{\Int_p}$. Up to isomorphism as a formal stack over $\widehat{\Ss}^{\ord}_{K,p}\times \widehat{\Ss}^{\ord}_{K,p}$, this will not depend on the choice of the pair $(\mu_p,\lambda_p)$ in their $G_{\Int_p}(\Int_p)$-conjugacy class.

\begin{definition}
		Consider the conjugacy class of cocharacters $\lambda_p:\Gmh{\Int_p}\to M_{\Int_p}$ factoring through $\GSpin(V^0_{\Int_p})$, acting via weights $0,1$ on the Clifford algebra $C(V^0_{\Int_p})$ and giving a splitting
	\begin{align}\label{eqn:MZp_orthogonal_splitting}
		V^0_{\Int_p} = V^0_{\Int_p}(-1)\oplus V^0_{\Int_p}(0)\oplus V^0_{\Int_p}(1)
	\end{align}
	where $V^0_{\Int_p}(\pm 1)\subset V^0_{\Int_p}$ are complementary isotropic lines and $V^0_{\Int_p}(0)$ is their mutual orthogonal complement. 
\end{definition}

\begin{remark}
	\label{rem:isogG_lambdap_etale_struct}
For any cocharacter $\varpi:\Gmh{\Int_p}\to M_{\Int_p}$, set
	\[
		\widehat{\mathrm{Ig}}^{\ord}_{\varpi} \defn \widehat{\mathrm{Ig}}^{\ord}_{K,v}\times^{M_{\Int_p}(\Int_p)}\underline{M_{\Int_p}(\Int_p)\varpi(p)M_{\Int_p}(\Int_p)/M_{\Int_p}(\Int_p)}.
	\]
Then it follows from Proposition~\ref{prop:isogM_desc_without_isoms} and Corollaries~\ref{cor:qisogM_ordinary_desc} and~\ref{cor:isogG_ordinary_deformation_desc} that we have diagrams
\begin{align*}
		\begin{diagram}
		\widehat{\mathrm{QIsog}}^{\ord}_{G,\lambda_p}&\rTo&\widehat{\mathrm{Ig}}^{\ord}_{-\lambda_p}\\
		&\rdTo(1,2)_s\ldTo(1,2)\\
		&\widehat{\Ss}^{\ord}_{K,p}
	\end{diagram}\qquad;\qquad
	\begin{diagram}
		\widehat{\mathrm{QIsog}}^{\ord}_{G,\lambda_p}&\rTo&\widehat{\mathrm{Ig}}^{\ord}_{\lambda_p}\\
		&\rdTo(1,2)_t\ldTo(1,2)\\
		&\widehat{\Ss}^{\ord}_{K,p}
	\end{diagram}
\end{align*}
where both top horizontal arrows are finite flat homeomorphisms. Moreover, one finds from Remark~\ref{rem:quadric_desc_par_lambda_0} that we have
\[
		\widehat{\mathrm{Ig}}^{\ord}_{-\lambda_p}\simeq 	\widehat{\mathrm{Ig}}^{\ord}_{\lambda_p}\simeq \widehat{\mathrm{Ig}}^{\ord}_{K,p}\times^{M_{\Int_p}(\Int_p)}\underline{\mathrm{Par}_{\lambda_p}(\Field_p)},
\]
where $\mathrm{Par}_{\lambda_p}(\Field_p)$ is the set of isotropic lines in $V^0_{\Field_p}$.
\end{remark}

\begin{remark}
The cocharacter $\lambda_p$ breaks up $\Lie U^-_{\mu_p}$ and $\Hom(H_{\Int_p}^1,H_{\Int_p}^0)$ compatibly into eigenspaces
\begin{align*}
\Lie U^-_{\mu_p} = \bigoplus_{i=-1}^1\Lie U^-_{\mu_p}(i)\;;\;\Hom(H_{\Int_p}^1,H_{\Int_p}^0) = \bigoplus_{i=-1}^1\Hom(H_{\Int_p}^1,H_{\Int_p}^0)(i),
\end{align*}
where
\[
\Lie U^-_{\mu_p}(i) = \begin{cases}
\Hom(V_{\Int_p}^0(1),V_{\Int_p}^{-1})&\text{if $i=-1$};\\
\Hom(V_{\Int_p}^0(0),V_{\Int_p}^{-1})&\text{if $i=0$};\\
\Hom(V_{\Int_p}^0(-1),V_{\Int_p}^{-1})&\text{if $i=1$}.
\end{cases},
\]
and
\[
\Hom(H_{\Int_p}^1,H_{\Int_p}^0)(i) = \begin{cases}
\Hom(H_{\Int_p}^1(1),H_{\Int_p}^0(0))&\text{if $i=-1$};\\
\Hom(H_{\Int_p}^1(0),H_{\Int_p}^0(0)) \oplus \Hom(H_{\Int_p}^1(1),H_{\Int_p}^0(1))&\text{if $i=0$};\\
\Hom(H_{\Int_p}^1(0),H_{\Int_p}^0(1))&\text{if $i=1$}.
\end{cases}
\]
\end{remark}

\begin{remark}
	\label{rem:ord_def_ring_lambda_p}
Recall the notation and setup from Remark~\ref{rem:ordinary_orthogonal_unipotent_unramified}. Consider the homomorphism
\begin{align}\label{eqn:ord_def_lambda_p_cocharacter}
\Lie U^-_{\mu_p}\oplus \Lie U^-_{\mu_p}\xrightarrow{(\varphi_1,\varphi_2)\mapsto v^-{v_{\varphi_1}}^0\lambda_p(p)-\lambda_p(p)v^-v_{\varphi_2}^0}\Hom(H_{\Int_p}^1,H_{\Int_p}^0)
\end{align}
If we restrict to each of the non-zero weight spaces for the action of $\lambda_p$, then one sees that we obtain
\begin{align*}
	\Lie U^-_{\mu_p}(-1)\oplus \Lie U^-_{\mu_p}(-1)\subset \Hom(H_{\Int_p}^1(1),H_{\Int_p}^0(0))\times \Hom(H_{\Int_p}^1(1),H_{\Int_p}^0(0))\xrightarrow{(f_1,f_2)\mapsto pf_1-f_2}\Hom(H_{\Int_p}^1,H_{\Int_p}^0)\\
	\Lie U^-_{\mu_p}(-1)\oplus \Lie U^-_{\mu_p}(-1)\Hom(H_{\Int_p}^1(0),H_{\Int_p}^0(1))\oplus \Hom(H_{\Int_p}^1(0),H_{\Int_p}^0(1))\xrightarrow{((f_1,f_2)\mapsto f_1-pf_2}\Hom(H_{\Int_p}^1,H_{\Int_p}^0).
\end{align*}
For the zero weight space, the restriction of both maps $\varphi\mapsto v^-{v_\varphi}^0\lambda_p(p)$ and $\varphi\mapsto \lambda_p(p)v^-{v_\varphi}^0$ can be identified with the composition
\begin{align}\label{eqn:lambda_p_weight_0_map}
\Lie U^-_{\mu_p}(0)&\xrightarrow{\varphi\mapsto v^-v^0_\varphi}\Hom(H_{\Int_p}^1(0),H_{\Int_p}^0(0)) \oplus \Hom(H_{\Int_p}^1(1),H_{\Int_p}^0(1))\\
&\xrightarrow{(\mathrm{id},p\cdot \mathrm{id})}\Hom(H_{\Int_p}^1(0),H_{\Int_p}^0(0)) \oplus \Hom(H_{\Int_p}^1(1),H_{\Int_p}^0(1)),\nonumber
\end{align}
which maps onto a direct summand of its target.
\end{remark}

\begin{remark}\label{rem:ord_def_ring_lambda_p_2}
For any algebraically closed field $\kappa$,~\eqref{eqn:ord_def_lambda_p_cocharacter} gives  a map of formal tori over $W(\kappa)$, $\widehat{T}_G\times \widehat{T}_G \to \widehat{T}$, and Proposition~\ref{prop:isogG_ordinary_deformation_rings} now shows that the kernel $\widehat{T}_{G,\lambda_p}$ of this map  is the model for the completion of $\widehat{\mathrm{QIsog}}^{\ord}_{G,\lambda_p}$ at any $\kappa$-valued point. 

The weight decompositions of their cocharacter groups for the action of $\lambda_p$ yield compatible splittings
\[
	\widehat{T}_G = \prod_{i=-1}^1\widehat{T}_G(i)\subset \prod_{i=-1}^1\widehat{T}(i)\subset \widehat{T}
\]
The discussion in Remark~\ref{rem:ord_def_ring_lambda_p} shows that the map $\widehat{T}_G\times \widehat{T}_G \to \widehat{T}$ respects this decomposition, and its restrictions take the following shape:
\begin{align*}
	\widehat{T}_G(-1)\times \widehat{T}_G(-1)&\xrightarrow{(x,y)\mapsto x^py^{-1}}\widehat{T}_G(-1)\subset \widehat{T}(-1);\\
	\widehat{T}_G(0)\times\widehat{T}_G(0)&\xrightarrow{(x,y)\mapsto x-y}\widehat{T}_G(0)\xrightarrow{\varpi}\widehat{T}(0);\\
	\widehat{T}_G(1)\times \widehat{T}_G(1)&\xrightarrow{(x,y)\mapsto xy^{-p}}\widehat{T}_G(1)\subset \widehat{T}(1).
\end{align*}
Here $\varpi$ is the closed immersion of formal tori arising from the map~\eqref{eqn:lambda_p_weight_0_map} on cocharacter groups.

Therefore, we obtain a decomposition 
\[
\widehat{T}_{G,\lambda_p} = \widehat{T}_{G,\lambda_p}(-1)\times \widehat{T}_{G,\lambda_p}(0)\times \widehat{T}_{G,\lambda_p}(1),
\]
with 
\begin{align}\label{eqn:UG_lambda0_decomp}
\widehat{T}_{G,\lambda_p}(i) &= \begin{cases}
\{(x,x^p):\;x\in \widehat{T}_G(-1)\}&\text{if $i=-1$};\\
\{(x,x):\;x\in \widehat{T}_G(0)\}&\text{if $i=0$};\\
\{(x^p,x):\;x\in \widehat{T}_G(1)\}&\text{if $i=1$}.
\end{cases}
\end{align}
The map $s_{\lambda_p}$ (resp. $t_{\lambda_p}$) from $\widehat{T}_{G,\lambda_p}$ to $\widehat{T}_G$ is therefore given by the identity on $\widehat{T}_{G,\lambda_p}(-1)$ and $\widehat{T}_{G,\lambda_p}(0)$ (resp. $\widehat{T}_{G,\lambda_p}(1)$ and $\widehat{T}_{G,\lambda_p}(0)$ ) and by the $p$-power map on  $\widehat{T}_{G,\lambda_p}(1)$ (resp.  $\widehat{T}_{G,\lambda_p}(-1)$).
\end{remark}

\begin{remark}
	\label{rem:ord_def_ring_lambda_character_groups}
Just as in Remark~\ref{rem:ordinary_SKg_character_groups}, via the choice of basis element $v^1$ for $V^1_{\Int_p}$, we can identify the character group of $\widehat{T}_{G,\lambda_p}$ with $\bigoplus_{i=-1}^1V^0_{\Int_p}(i) = V^0_{\Int_p}$, and the source and target maps are given on the level of character groups, and for this splitting, by $(p,1,1)$ and $(1,1,p)$, respectively.
\end{remark}

\subsection{Ordinary correspondences between special cycles}\label{subsec:corr_ordinary}

Here we give the ordinary counterpart to the story from \S\ref{subsec:corr_char_0}.

% We can also express this in terms of character groups. Indeed, we can identify the character groups of both $\widehat{U}_G$ and $\widehat{U}_{G,\lambda_p}$ with
% \[
% \Hom(V_{\Int_p}^{-1},V_{\Int_p}^0) \xrightarrow[\simeq]{f\mapsto f(v^-)}V_{\Int_p}^0. 
% \]
% Via this identification, the inclusion
% \[
% \widehat{U}_{G,\lambda}\hookrightarrow \widehat{U}_G\times \widehat{U}_G
% \]
% corresponds to the map of character groups
% \[
% V_{\Int_p}^0 \oplus V_{\Int_p}^0 \xrightarrow{(v,w)\mapsto \pi_1(v)+\pi_2(w)}V_{\Int_p}^0,
% \]
% where, in terms of the decomposition 
% \[
% V_{\Int_p}^0 = V_{\Int_p}^0(-1)\oplus V_{\Int_p}^0(0)\oplus V_{\Int_p}^0(1),
% \]
% we have
% \[
% \pi_1 = (p,1,1)\;;\; \pi_2 = (1,1,p).
% \]
% \end{remark}

\begin{remark}
\label{rem:conjugation_isogeny_ordinary_locus}
	Suppose that we have $s,t:\Spf R\to \widehat{\Ss}^{\ord}_{K,\Field_p}$ and $f\in \widehat{\mathrm{QIsog}}^{\ord}_G(s,t)$. Then just as in the generic fiber in Remark~\ref{rem:conjugation_isogeny_generic_fiber} conjugation by $f^{-1}$ induces an isomorphism 
\[
	c(f) \defn f^{-1}\circ (\cdot)\circ f:\End(\mathcal{A}_s)[p^{-1}]\xrightarrow{\simeq}\End(\mathcal{A}_t)[p^{-1}] 
\]
	carrying $V(s)[p^{-1}]$ onto $V(t)[p^{-1}]$. 
\end{remark}

\begin{definition}
	Suppose that $\Xi = (\tilde{\Lambda}\subset\Lambda)$ is a minimal pair. Define a formal stack
\[
	\widehat{\mathrm{QIsog}}^{\ord}_{G,\lambda_p,\Xi}\to \leftcirc \widehat{\mathcal{Z}}^{\ord}_K(\Lambda)\times \leftcirc \widehat{\mathcal{Z}}^{\ord}_K(\tilde{\Lambda})
\]
whose fiber over $((s,\iota),(t,\tilde{\iota}))\in \leftcirc \widehat{\mathcal{Z}}^{\ord}_K(\Lambda)(R)\times \leftcirc \widehat{\mathcal{Z}}^{\ord}_K(\tilde{\Lambda})(R)$ for $R$ $p$-complete is given by
\[
	\widehat{\mathrm{QIsog}}^{\ord}_{G,\lambda_p,\Xi}((s,\iota),(t,\tilde{\iota})) = \{f\in \widehat{\mathrm{QIsog}}^{\ord}_{G,\lambda_p}(s,t):\;c(f)\circ \iota\vert_{\tilde{\Lambda}} = \tilde{\iota}\}.
\]
Write $s^{\ord}_{\lambda_p,\Xi,K}:\widehat{\mathrm{QIsog}}^{\ord}_{G,\lambda_p,\Xi}\to \leftcirc \widehat{\mathcal{Z}}^{\ord}_K(\Lambda)$ and $t^{\ord}_{\lambda_p,\Xi,K}:\widehat{\mathrm{QIsog}}^{\ord}_{G,\lambda_p,\Xi}\to \leftcirc \widehat{\mathcal{Z}}^{\ord}_K(\tilde{\Lambda})$ for the natural maps. For each $g\in \leftcirc\Upsilon(\Lambda)$ (resp. $\tilde{g}\in \leftcirc\Upsilon(\tilde{\Lambda})$), write $s^{\ord}_{\lambda_p,\Xi,K_g^\flat}$ (resp. $t^{\ord}_{\lambda_p,\Xi,K_{\tilde{g}}^\flat}$ for the restriction of $s^{\ord}_{\lambda_p,\Xi,K}$ (resp. $t^{\ord}_{\lambda_p,\Xi,K}$) over the open subspace $\widehat{\Ss}^{\ord}_{K^\flat_g,p}$ (resp. $\widehat{\Ss}^{\ord}_{K^\flat_{\tilde{g}},p}$ )
\end{definition}

\begin{notation}
	Suppose that we have $g\in \leftcirc\Upsilon(\Lambda)$. Set $\widetilde{W}_g = g_p^{-1}\tilde{\Lambda}_{\Int_p}\subset W_g$, and let $U_g\in \mathbb{P}(W_g)(\Int_p)$ be a hyperplane such that $pW_g + U_g = \tilde{W}_g$. Associated with this is the scheme $\mathrm{Par}^\circ_{\lambda_p}(W_g,U_g)$ from Definition~\ref{defn:W_generic_type_U} that parameterizes $W_g$-generic isotropic lines in $V^0_{\Int_p}$ such that $U$ is the kernel of the pairing of $W_g$ with $J$. 
\end{notation}

\begin{remark}
	\label{rem:isog_lambda_g_source}
Suppose that we have an algebraically closed point 
  \[
  	z = ((s_0,\iota),(t_0,\tilde{\iota}),f)\in \widehat{\mathrm{QIsog}}^{\ord}_{G,\lambda_p,\Xi}(\kappa)
  \]
  such that $(s_0,\iota)\in \widehat{\Ss}^{\ord}_{K^\flat_g,p}(\kappa)$ for $g\in \leftcirc\Upsilon(\Lambda)$. We can choose a cocharacter $\mu_g$ as in Remark~\ref{rem:ordinary_SKg}, so that it factors through $G^\flat_{g,\Int_p}$, and a trivialization $W(\kappa)\otimes_{\Int_p}\mathcal{G}_0\xrightarrow{\simeq}\mathcal{G}_{s_0}$ that is a section of $\mathrm{Ig}^{\ord}_{K^\flat_g,p}$ over $s_0$. In particular, this identifies $V^0_{\Int_p}$ with $\bm{V}_{\cris,s_0}^{F=1}$ in such a way that $W_g = g_p^{-1}\Lambda_{\Int_p}$ is identified with the $\Int_p$-submodule generated by the image of $\iota$. Now, conjugation by the crystalline realization of $f$ gives an isomorphism 
\[
  V^0_{\Rat_p}\simeq \bm{V}_{\cris,s_0}^{F=1}[p^{-1}]\xrightarrow{\simeq}\bm{V}_{\cris,t_0}^{F=1}[p^{-1}].
 \]
 By our hypotheses, the pre-image of the lattice $\bm{V}_{\cris,t_0}^{F=1}$ in $V^0_{\Rat_p}$ is identified with a lattice of the form $hV^0_{\Int_p}$ with the following properties:
 \begin{enumerate}
 	\item $h \in M_{\Int_p}(\Int_p)\lambda_p(p)M_{\Int_p}(\Int_p)$;
 	\item $hV^0_{\Int_p}\cap W_g[p^{-1}] = \tilde{W}_g$.
 \end{enumerate}
By (1) of Lemma~\ref{lem:nice_cochar}, such lattices are in canonical bijection with $\mathrm{Par}^\circ_{\lambda_p}(W_g,U_g)(\Field_p)$.
\end{remark}

\begin{remark}
	\label{rem:isog_lambda_tilde_g_target}
Suppose that we have a point $z\in \widehat{\mathrm{QIsog}}^{\ord}_{G,\lambda_p,\Xi}(\kappa)$ as above with $(t_0,\tilde{\iota})\in \widehat{\Ss}^{\ord}_{K^\flat_{\tilde{g}},p}(\kappa)$ for $\tilde{g}\in \leftcirc\Upsilon(\tilde{\Lambda})$. Choose the cocharacter $\mu_{\tilde{g}}$ as in Remark~\ref{rem:ordinary_SKg}, so that it factors through $G^\flat_{\tilde{g},\Int_p}$. Set
\[
  \tilde{W}_{\tilde{g}} = \tilde{g}_p^{-1}\tilde{\Lambda}_{\Int_p}\subset V^0_{\Int_p}.
\]
Choose also a trivialization $W(\kappa)\otimes_{\Int_p}\mathcal{G}_0\xrightarrow{\simeq}\mathcal{G}_{t_0}$ that is a section of $\mathrm{Ig}^{\ord}_{K^\flat_{\tilde{g}},p}$ over $t_0$. Conjugation by the crystalline realization of $f$ now gives us an isomorphism $\bm{V}_{\cris,s_0}^{F=1}[p^{-1}]\xrightarrow{\simeq}V^0_{\Rat_p}$ such that the image of $\bm{V}_{\cris,s_0}^{F=1}$ is of the form $\tilde{h}V^0_{\Int_p}$ with the following properties:
\begin{enumerate}
		\item $\tilde{h}\in M_{\Int_p}(\Int_p)\lambda_p(p)^{-1}M_{\Int_p}(\Int_p)$;
 	\item $\tilde{h}V^0_{\Int_p}\cap \tilde{W}_{\tilde{g}}[p^{-1}] = W_{\tilde{g}}\defn \tilde{g}_p^{-1}\Lambda_{\Int_p}$.
\end{enumerate}
By (2) of Lemma~\ref{lem:nice_cochar}, there is a unique such lattice.
\end{remark}

% \begin{remark}
% 	\label{rem:isog_lambda_restriction_g_source}
% Suppose that we have $g\in \leftcirc\Upsilon(\Lambda)$. First, we will pick $\mu_p = \mu_g$ as in Remark~\ref{rem:ordinary_SKg}, so that it factors through $G^\flat_{g,\Int_p}$. The choice of $\lambda_g$ in the $M_{\Int_p}(\Int_p)$-conjugacy class of $\lambda_p$ will be made as follows: Set $\widetilde{W}_g = g_p^{-1}\tilde{\Lambda}_{\Int_p}\subset W_g$, and let $U_g\in \mathbb{P}(W_g)(\Int_p)$ be a hyperplane such that $pW_g + U_g = \tilde{W}_g$. Associated with this is the scheme $\mathrm{Par}^\circ_{\lambda_g}(W_g,U_g)$ from Definition~\ref{defn:W_generic_type_U} that parameterizes $W_g$-generic isotropic lines in $V^0_{\Int_p}$ such that $U$ is the kernel of the pairing of $W_g$ with $J$. We will pick a line $V^0_{\Int_p}(-1)\subset V^0_{\Int_p}$ in $\mathrm{Par}^\circ_{\lambda_g}(W_g,U_g)(\Int_p)$ and choose the cocharacter $\lambda_g$ such that this line is its $(-1)$-weight space. In particular, this implies that, via the non-degenerate pairing between $V^0_{\Int_p}(-1)$ and $V^0_{\Int_p}(1)$, we obtain a surjective map $W_g\to V^0_{\Int_p}(1)$ with kernel $U_g$.
% \end{remark}

\begin{notation}
	Set $\mathsf{X}^\flat_g \defn \mathrm{Par}^\circ_{\lambda_g}(W_g,U_g)$  and
\[
	 \widehat{\mathrm{Ig}}^{\ord}_{\mathsf{X}^\flat_g} \defn \widehat{\mathrm{Ig}}^{\ord}_{K^\flat_g,p}\times^{M^\flat_{g,\Int_p}(\Int_p)}\mathsf{X}^\flat_g(\Field_p).
\]
\end{notation}

% \begin{remark}
% 	\label{rem:isog_lambda_restriction_g_target}
% Now, suppose that we have $\tilde{g}\in \leftcirc\Upsilon(\Lambda)$. Write
% \[
% 	t^{\ord}_{\lambda_p,\Xi,K_{\tilde{g}}^\flat}:	\widehat{\mathrm{QIsog}}^{\ord}_{G,\lambda_p,\Xi,\tilde{g}}\to \widehat{\Ss}^{\ord}_{K^\flat_{\tilde{g}},p}
% \]
% for the restriction of $t^{\ord}_{\lambda_p,\Xi,K}$. Here again, we can pick $\mu_p = \mu_{\tilde{g}}$ factoring through $G^\flat_{\tilde{g},\Int_p}$. We have the associated embedding as a direct summand $W_{\tilde{g}} = \tilde{g}_p^{-1}\tilde{\Lambda}_{\Int_p}\subset V_{\Int_p}$. Now, for $\lambda_{\tilde{g}}$, we will pick it in the following way using (2) of Lemma~\ref{lem:nice_cochar}: Let $V^0_{\Field_p}(1)\subset V^0_{\Field_p}$ be the unique isotropic line such that, for any lift $V^0_{\Int_p}(1)\subset V^0_{\Int_p}$ and any splitting with this as its $1$-weight space, we have
% \[
% W_{\tilde{g}}[p^{-1}]\cap (pV^0_{\Int_p}(-1)\oplus V^0_{\Int_p}(0)\oplus p^{-1}V^0_{\Int_p}(1)) = \tilde{g}_p^{-1}\Lambda_{\Int_p}.
% \]
% We choose $\lambda_{\tilde{g}}$ to be associated with such a splitting.
% \end{remark}

\begin{proposition}
\label{prop:ord_corr_cycles}
Fix $g\in \leftcirc\Upsilon(\Lambda)$ and $\tilde{g}\in \leftcirc\Upsilon(\tilde{\Lambda})$. Then:
\begin{enumerate}
	\item The map $s^{\ord}_{\lambda_p,\Xi,K^\flat_g}$ factors as
	\[
		\widehat{\mathrm{QIsog}}^{\ord}_{G,\lambda_p,\Xi,g}\to \widehat{\mathrm{Ig}}^{\ord}_{\mathsf{X}^\flat_g}\to \widehat{\Ss}^{\ord}_{K^\flat_g,p}
	\]
	where the first map is a finite flat homeomorphism.
\item The map $t^{\ord}_{\lambda_p,\Xi,K^\flat_{\tilde{g}}}$ is an isomorphism
\end{enumerate}
\end{proposition}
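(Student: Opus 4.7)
The plan is to imitate the argument for Proposition~\ref{prop:char0_corr_cycles}, but substituting the ordinary Hecke correspondence theory of \S\ref{sub:ordinary_isogenies_and_p_hecke_correspondences} for its classical counterpart. For (1), the key point is that a $\lambda_p$-type quasi-isogeny $f$ between ordinary points, subject to the shrinking condition $c(f)(\iota|_{\tilde\Lambda})=\tilde\iota$, is equivalent data to a section of the smaller Igusa tower $\widehat{\mathrm{Ig}}^{\ord}_{K^\flat_g,p}$ together with an element of the smaller quadric $\mathsf{X}^\flat_g(\Field_p)$. For (2), the crucial input is the uniqueness statement of Lemma~\ref{lem:nice_cochar}(2), which forces the source lattice (and hence the source point) to be uniquely determined by the target data.

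\textbf{Construction of the map in (1).} Working pointwise, start with $z=((s_0,\iota),(t_0,\tilde\iota),f)\in \widehat{\mathrm{QIsog}}^{\ord}_{G,\lambda_p,\Xi,g}(\kappa)$. Remark~\ref{rem:igusa_reduction_g_flat} shows that the embedding $\iota$ forces any $G_{\Int_p}$-structure preserving trivialization at $s_0$ to refine to a $G^\flat_{g,\Int_p}$-structure preserving one; this yields a canonical section of $\widehat{\mathrm{Ig}}^{\ord}_{K^\flat_g,p}$ at $s_0$. Fixing such a trivialization, Remark~\ref{rem:isog_lambda_g_source} translates the conjugation isomorphism $c(f)$, together with the shrinking condition, into the datum of a lattice $hV^0_{\Int_p}\subset V^0_{\Rat_p}$ with $h\in M(\Rat_p)\lambda_p(p)M(\Rat_p)$ and $hV^0_{\Int_p}\cap W_g[p^{-1}]=\tilde W_g$. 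By Lemma~\ref{lem:nice_cochar}(1), the set of such lattices is in canonical bijection with $\mathsf{X}^\flat_g(\Field_p)=\mathrm{Par}^{\circ}_{\lambda_p}(W_g,U_g)(\Field_p)$, giving the desired $\kappa$-point of $\widehat{\mathrm{Ig}}^{\ord}_{\mathsf{X}^\flat_g}=\widehat{\mathrm{Ig}}^{\ord}_{K^\flat_g,p}\times^{M^\flat_{g,\Int_p}(\Int_p)}\mathsf{X}^\flat_g(\Field_p)$. This construction globalizes by the functoriality of the crystalline comparison and the contraction product.

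\textbf{The finite flat homeomorphism property in (1).} Both source and target are finite over the normal base $\widehat{\Ss}^{\ord}_{K^\flat_g,p}$: the target via the $M^\flat_{g,\Int_p}(\Int_p)$-torsor structure (hence finite \'etale), and the source via the restriction of the finite flat structure of $\widehat{\mathrm{QIsog}}^{\ord}_{G,\lambda_p}\to\widehat{\Ss}^{\ord}_{K,p}$ from Corollary~\ref{cor:isogG_ordinary_deformation_desc}. The construction above shows that the map is bijective on $\kappa$-points, so it remains to analyze deformation rings. Using the identification of the deformation space of $\widehat{\Ss}^{\ord}_{K^\flat_g,p}$ with the formal torus of cocharacter group $\Lie U^{\flat,-}_{\mu_g}$ (Remark~\ref{rem:ordinary_SKg_def_rings}), together with the product decomposition of $\widehat{T}_{G,\lambda_p}$ in Remark~\ref{rem:ord_def_ring_lambda_p_2}, one sees that the $\Xi$-constraint cuts the deformation ring down to match that of $\widehat{\mathrm{Ig}}^{\ord}_{\mathsf{X}^\flat_g}$ up to the $p$-power map on the single weight-$(+1)$ factor. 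This is exactly the shape of a finite flat homeomorphism, as claimed. The main technical subtlety here (and the place I expect the hardest work) is carefully matching the weight decompositions for $\lambda_p$ relative to the $G^\flat_g$-reduction with the pieces of $\mathsf{X}^\flat_g$, but everything is forced by the identifications of character groups in Remark~\ref{rem:ord_def_ring_lambda_character_groups}.

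\textbf{Proof of (2).} The argument is analogous but simpler. For a $\kappa$-point $((s_0,\iota),(t_0,\tilde\iota),f)$ with $(t_0,\tilde\iota)\in \widehat{\Ss}^{\ord}_{K^\flat_{\tilde g},p}(\kappa)$, Remark~\ref{rem:isog_lambda_tilde_g_target} encodes the conjugation data by a lattice $\tilde hV^0_{\Int_p}\subset V^0_{\Rat_p}$ with $\tilde h\in M(\Rat_p)\lambda_p(p)^{-1}M(\Rat_p)$ and $\tilde hV^0_{\Int_p}\cap \tilde W_{\tilde g}[p^{-1}] = W_{\tilde g}$. Lemma~\ref{lem:nice_cochar}(2) produces a \emph{unique} such lattice; consequently $s_0$, $f$, and $\iota=c(f)\circ\tilde\iota|_\Lambda$ are all uniquely determined by $(t_0,\tilde\iota)$, so $t^{\ord}_{\lambda_p,\Xi,K^\flat_{\tilde g}}$ is bijective on $\kappa$-points. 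On deformation rings, Remark~\ref{rem:ord_def_ring_lambda_p_2} shows that the $t$-projection from $\widehat{T}_{G,\lambda_p}$ is the identity on the weight-$0$ and weight-$(+1)$ components, and the uniqueness from Lemma~\ref{lem:nice_cochar}(2) cuts off the weight-$(-1)$ component (on which $t$ is the $p$-power map). The resulting map on deformation rings is an isomorphism, and since both source and target are finite over the formally smooth $\widehat{\Ss}^{\ord}_{K^\flat_{\tilde g},p}$, $t^{\ord}_{\lambda_p,\Xi,K^\flat_{\tilde g}}$ is an isomorphism of formal stacks.
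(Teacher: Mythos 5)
Your overall strategy coincides with the paper's: establish a bijection on algebraically closed points via Remarks~\ref{rem:isog_lambda_g_source} and~\ref{rem:isog_lambda_tilde_g_target} together with Lemma~\ref{lem:nice_cochar}, and then promote this to the full statement by comparing complete local rings, which are formal diagonalizable groups by Proposition~\ref{prop:isogG_ordinary_deformation_rings} and Remarks~\ref{rem:ordinary_SKg_def_rings} and~\ref{rem:ord_def_ring_lambda_p_2}. The pointwise half of your argument, including the reduction of structure group to $\widehat{\mathrm{Ig}}^{\ord}_{K^\flat_g,p}$ and the use of the two parts of Lemma~\ref{lem:nice_cochar}, matches the paper and is complete.

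The gap is in the deformation-ring step, which is where the paper does its real work. You describe the constrained deformation space as obtained from the weight decomposition of $\widehat{T}_{G,\lambda_p}$ by "cutting down" to the $\Xi$-constraint, with the source map the identity on two weight factors and the $p$-power map on the weight-$(+1)$ factor, and with part (2) following because the constraint "cuts off" the weight-$(-1)$ factor. But $W_g$ and $\lambda_g(p)^{-1}\tilde{W}_g$ are \emph{not} homogeneous for the $\lambda_g$-grading of $V^0_{\Int_p}$ --- on the contrary, the genericity of the isotropic line forces $W_g$ to project \emph{onto} $V^0_{\Int_p}(1)$ --- so the splitting~\eqref{eqn:UG_lambda0_decomp} does not descend to the constrained problem, and there is no well-defined "weight-$(+1)$ factor" of $\widehat{U}_z$ on which $s$ is the $p$-power map. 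What the paper actually does is identify the character group of $\widehat{U}_z$ with the cokernel $M$ of the explicit map
\[
V^0_{\Int_p}\longrightarrow V^0_{\Int_p}/W_g\oplus V^0_{\Int_p}/\lambda_g(p)^{-1}\tilde{W}_g,\qquad (z_{-1},z_0,z_1)\mapsto \bigl((z_{-1},z_0,pz_1)+W_g,\ (pz_{-1},z_0,z_1)+\lambda_g(p)^{-1}\tilde{W}_g\bigr),
\]
and then verify by hand that $V^0_{\Int_p}/W_g\to M$ is injective with image of index $p$ (whence the finite flat homeomorphism in (1)) while $V^0_{\Int_p}/\lambda_g(p)^{-1}\tilde{W}_g\to M$ is an isomorphism (whence (2)); both verifications use precisely the surjectivity of the projection of $W_g$ onto $V^0_{\Int_p}(1)$. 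Your conclusions are the right ones, but this cokernel computation is the content of the proposition and cannot be read off from the weight decomposition alone.
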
	
\begin{proof}
	By Remark~\ref{rem:isogG_lambdap_etale_struct}, we have a commuting diagram
	\[
		\begin{diagram}
		\widehat{\mathrm{QIsog}}^{\ord}_{G,\lambda_p,\Xi}&\rTo&\widehat{\mathrm{Ig}}^{\ord}_{K^\flat_g,p}\times^{M_{g,\Int_p}^\flat(\Int_p)}\mathrm{Par}_{\lambda_g}(\Field_p)\\
		&\rdTo(1,2)_s\ldTo(1,2)\\
		&\widehat{\Ss}^{\ord}_{K^\flat_g,p}
	\end{diagram}
	\]
	Here $\mathrm{Par}_{\lambda_g}(\Field_p)$ is the set of isotropic lines in $V^0_{\Field_p}$. Remark~\ref{rem:isog_lambda_g_source} shows that, on $\kappa$-points for $\kappa$ algebraically closed, the top arrow maps isomorphically onto the $\kappa$-points of
	\[
		\widehat{\mathrm{Ig}}^{\ord}_{K^\flat_g,p}\times^{M_{g,\Int_p}^\flat(\Int_p)}\mathrm{Par}_{\lambda_g}^{\circ}(W_g,U_g)(\Field_p) =  \widehat{\mathrm{Ig}}^{\ord}_{\mathsf{X}^\flat_g}.
	\]
	Since $\widehat{\mathrm{Ig}}^{\ord}_{\mathsf{X}^\flat_g}$ is \'etale over the base, this shows that the top horizontal map actually factors through a map
  \[
	\widehat{\mathrm{QIsog}}^{\ord}_{G,\lambda_p,\Xi,g}\to \widehat{\mathrm{Ig}}^{\ord}_{\mathsf{X}^\flat_g}.
	\]
	that is an isomorphism on algebraically closed points. Similarly, Remark~\ref{rem:isog_lambda_tilde_g_target} shows that the map $t^{\ord}_{\lambda_p,\Xi,K^\flat_{\tilde{g}}}$ is a bijection on $\kappa$-points for $\kappa$ algebraically closed. 

  To finish the proof, we can now work on the level of deformation rings. More precisely, suppose that we are in the situation of Remark~\ref{rem:isog_lambda_g_source}.  If $\widehat{U}_z$ (resp. $\widehat{U}^\flat_{s_0}$, $\widehat{U}^{\flat}_{t_0}$) is the completion of $\widehat{\mathrm{QIsog}}^{\ord}_{G,\lambda_p,\Xi}$  at $z$ (resp. $\widehat{\Ss}^{\ord}_{K^\flat_g,p}$ at $(s_0,\iota)$, resp. $\widehat{\mathcal{Z}}^{\ord}_{K}(\tilde{\Lambda})$ at $(t_0,\tilde\iota)$), we want to show that the source map $\widehat{U}_z\to \widehat{U}^{\flat}_{s_0}$ is a finite flat homeomorphism and that the target map $\widehat{U}_z\to \widehat{U}^\flat_{t_0}$ is an isomorphism. 

  By Remark~\ref{rem:isog_lambda_g_source} again, we can work with the cocharacter $\mu_g$ factoring through $G_{g,\Int_p}^\flat$ and choose the trivializations 
\[
  \mathcal{G}_{t_0}\xleftarrow{\simeq}W(\kappa)\otimes_{\Int_p}\mathcal{G}_0\xrightarrow{\simeq}\mathcal{G}_{s_0}
 \]
 such that the crystalline realization of $f$ from $\bm{V}_{\cris,s_0}[p^{-1}]$ to $\bm{V}_{\cris,t_0}[p^{-1}]$ is carried via these isomorphisms to the map
 \[
 	W(\kappa)\otimes_{\Int_p}V_{\Int_p}[p^{-1}]\xrightarrow{1\otimes \lambda_g(p)}W(\kappa)\otimes_{\Int_p}V_{\Int_p}[p^{-1}],
 \]
 where $\lambda_g$ is a cocharacter of $M_{\Int_p}$ that is conjugate to $\lambda_p$ and whose $-1$-eigenspace maps to an isotropic line in $\mathrm{Par}^\circ_{\lambda_p}(W_g,U_g)(\Int_p)$. That is, if $V_{\Int_p}^0 = \oplus_{i=-1}^1V^0_{\Int_p}(i)$ is the weight space decomposition for $\lambda_g$, then $W_g$ pairs non-degenerately with $V^0_{\Int_p}(-1)$ (equivalently, its projection on $V^0_{\Int_p}(1)$ is surjective), and $V^0_{\Int_p}(-1)$ maps onto a direct summand of $V^0_{\Int_p}/W_g$. Furthermore, $\tilde{W}_g = \lambda_g(p)V^0_{\Int_p}\cap W_g[p^{-1}]$.

 Combining this with Remarks~\ref{rem:ordinary_SKg_character_groups} and~\ref{rem:ord_def_ring_lambda_character_groups}, one sees that the map
 \[
 	\widehat{U}_z\to \widehat{U}^\flat_{s_0}\times \widehat{U}^\flat_{t_0}
 \]
 is isomorphic to a map of formal diagonalizable groups 
 \[
 	\widehat{T}_{\lambda_g}\to \widehat{T}^\flat_1\times \widehat{T}^\flat_2
 \]
 over $W(\kappa)$, where $\widehat{T}^\flat_1$ (resp. $\widehat{T}^\flat_2$) has character group $V^0_{\Int_p}/W_g$ (resp. $V^0_{\Int_p}/\lambda_g(p)^{-1}\tilde{W}_g$) and the character group of	$\widehat{T}_{\lambda_g}$ is the cokernel $M$ of the map
 \begin{align*}
 	V^0_{\Int_p} = \bigoplus_{i=-1}^1V^0_{\Int_p}(i)&\to V^0_{\Int_p}/W_g \oplus V^0_{\Int_p}/\lambda_g(p)^{-1}\tilde{W}_g\\
 	(z_{-1},z_0,z_1)&\mapsto \bigl((z_{-1},z_0,pz_1)+W_g,(pz_{-1},z_0,z_1)+\lambda_g(p)^{-1}\tilde{W}_g\bigr).
 \end{align*}
 I now claim which shows that $V^0_{\Int_p}/W_g$ maps injectively onto an index $p$ subgroup of $M$ while $V^0_{\Int_p}/\lambda_g(p)^{-1}\tilde{W}_g$ maps isomorphically onto $M$. This will imply that the map $\widehat{T}_{\lambda_g}\to \widehat{T}^\flat_1$ is a finite flat homeomorphism and that $\widehat{T}_{\lambda_g}\to \widehat{T}^\flat_2$ is an isomorphism, and so will complete the proof of the proposition.
 
 To prove the claim, note that the the elements of the kernel of $V^0_{\Int_p}/W_g\to M$ have representatives of the form $(z_{-1},z_0,pz_1)$ where $(pz_{-1},z_0,z_1)$ belongs to $\lambda_g(p)^{-1}\tilde{W}_g$. But then
 \[
 	(z_{-1},z_0,pz_1)=\lambda_g(p)\cdot (pz_{-1},z_0,z_1)\in \tilde{W}_g\subset W_g.
 \]
 This shows the injectivity of $V^0_{\Int_p}/W_g\to M$. Similarly, the kernel of $V^0_{\Int_p}/\lambda_g(p)^{-1}\tilde{W}_g\to M$ consists of elements represented by $(pz_{-1},z_0,z_1)$ with $(z_{-1},z_0,pz_1)\in W_g$. But then 
 \[
 	(pz_{-1},z_0,z_1)\in  V^0_{\Int_p}\cap \lambda_g(p)^{-1}W_g = \lambda_p(g)^{-1}\tilde{W}_g.
 \]
 Thus this map is also injective.

 Now, since $W_g$ projects surjectively onto $V^0_{\Int_p}(1)$, every element of $V^0_{\Int_p}/W_g$ can be represented by a tuple of the form $(z_{-1},z_0,0)$ and so has the same image in $M$ as the element $-(pz_{-1},z_0,0)+\lambda_g(p)^{-1}\tilde{W}_g\in V^0_{\Int_p}/\lambda_g(p)^{-1}\tilde{W}_g$. This shows that $V^0_{\Int_p}/\lambda_g(p)^{-1}\tilde{W}_g$ surjects onto $M$, and also shows that the image of $V^0_{\Int_p}/W_g$ has index $p$ in $M$.
\end{proof}

\subsection{Unique specialization of connected components}\label{subsec:spec_conn}

Here we will prove the main technical result of this paper, Proposition~\ref{prop:irred}, and so complete the proof of Theorem~\ref{introthm:main}.

\begin{remark}
	Let $\Xi = (\Lambda\subset \tilde{\Lambda})$ be a minimal pair, and write
\[
\pi^{\ord}_K:s^{\ord}_{\lambda_0,\Xi,K}\circ (t^{\ord}_{\lambda_0,\Xi,K})^{-1}:\leftcirc\widehat{\mathcal{Z}}^{\ord}_K(\tilde{\Lambda})_{\Field_p}\to \leftcirc\widehat{\mathcal{Z}}^{\ord}_K(\Lambda)\;;\; \pi_K:s_{\mu,\Xi,K}\circ (t_{\mu,\Xi,K})^{-1}:\leftcirc Z_K(\tilde{\Lambda})\to \leftcirc Z_K(\Lambda).
\]

We obtain a diagram:
\begin{align}\label{main:eqn:pi0diagram}
\begin{diagram}
    \pi_{0}\bigl(\leftcirc \mathcal{Z}_K(\tilde{\Lambda})_{\overline{\Field}_p}\bigr)&\rTo^{\simeq}&\pi_0\bigl(\leftcirc\mathcal{Z}^{\ord}_K(\tilde{\Lambda})_{\overline{\Field}_p}\bigr)&\rTo&\pi_0\bigl(\leftcirc Z_K(\tilde{\Lambda})_{\overline{\Rat}}\bigr);\\
    &&\dTo_{\pi^{\ord}_{K,*}}&&\dTo^{\simeq}_{\pi_{K,*}}\\
    \pi_{0}\bigl(\leftcirc \mathcal{Z}_K(\Lambda)_{\overline{\Field}_p}\bigr)&\rTo_{\simeq}&\pi_0\bigl(\leftcirc\mathcal{Z}^{\ord}_K(\Lambda)_{\overline{\Field}_p}\bigr)&\rTo&\pi_0\bigl(\leftcirc Z_K(\Lambda)_{\overline{\Rat}}\bigr).
  \end{diagram}
  \end{align}
The horizontal maps are the natural ones, and the ones on the left are bijections because the ordinary locus is dense in $\leftcirc \mathcal{Z}_K(\Lambda)_{\Field_p}$; see (3) of Proposition~\ref{prop:ZLambda_props}. The vertical map on the right hand side is a bijection by Proposition~\ref{prop:char0_corr_cycles}. One can verify that the square on the right commutes using Remarks~\ref{rem:qisog_comparison_global} and~\ref{rem:qisog_points_comparison_types}.
\end{remark}

\begin{notation}
Let $P(\Lambda)$ be the assertion that the lower right horizontal arrow is bijective in the diagram above for all prime-to-$p$ levels $K^p$, and let $Q(\Xi)$ be the assertion that the vertical arrow in the middle is bijective for all prime-to-$p$ levels $K^p$.
\end{notation}

\begin{proposition}\label{prop:p(m)_maximal}
$P(\Lambda)$ is true if $\Lambda$ is \emph{maximal} at $p$. 
\end{proposition}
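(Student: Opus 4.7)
The plan is to reduce the statement to a standard bijectivity of geometric connected components for smooth GSpin Shimura varieties at hyperspecial level, and then invoke the toroidal compactifications of \cite{mp:toroidal}.

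By Remark~\ref{rem:leftcirc_ZLambda_integral_models}, there is a decomposition
\[
\leftcirc \mathcal{Z}_K(\Lambda)_{\Int_{(p)}} \simeq \bigsqcup_{g \in G^\flat(\Rat)\backslash \leftcirc\Upsilon(\Lambda)/K} \leftcirc \Ss_{K^\flat_g,(p)},
\]
where each $\leftcirc \Ss_{K^\flat_g,(p)}$ is an open subspace of the integral model $\Ss_{K^\flat_g,(p)}$. The first step of the plan is to show that this open immersion is in fact an equality when $\Lambda$ is maximal at $p$. Maximality implies that every $\Lambda' \in \mathsf{L}(\Lambda)$ strictly containing $\Lambda$ has index prime to $p$, so Remark~\ref{rem:open_closed_prime_to_p} shows that the closed substacks $\mathcal{Z}_K(\Lambda')_{\Int_{(p)}}$ are in fact open and closed in $\mathcal{Z}_K(\Lambda)_{\Int_{(p)}}$, and consequently $\leftcirc \mathcal{Z}_K(\Lambda)_{\Int_{(p)}} \subset \mathcal{Z}_K(\Lambda)_{\Int_{(p)}}$ is open and closed. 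The natural finite map $\Ss_{K^\flat_g,(p)} \to \mathcal{Z}_K(\Lambda)_{\Int_{(p)}}$ is flat, and each of its connected components has dense generic fiber contained in $\Sh_{K^\flat_g} \subset \leftcirc Z_K(\Lambda)$, so this map factors through $\leftcirc \Ss_{K^\flat_g,(p)}$. Composing this factorization with the open immersion $\leftcirc \Ss_{K^\flat_g,(p)} \hookrightarrow \Ss_{K^\flat_g,(p)}$ yields an endomorphism that is the identity on the dense generic fiber, hence the identity by normality; so the open immersion admits a retraction and is therefore an isomorphism.

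It then suffices to show that for each $g$, the specialization map $\pi_0(\Ss_{K^\flat_g, \overline{\Field}_p}) \to \pi_0(\Sh_{K^\flat_g, \overline{\Rat}})$ is a bijection. The maximality of $\Lambda$ at $p$ together with the self-duality of $V_{\Int_p}$ ensures that $W_g = g_p^{-1}\Lambda_{\Int_p}$ is a self-dual direct summand of $V_{\Int_p}$, so $V^\flat_{g,\Int_p} = W_g^\perp$ is likewise self-dual, $K^\flat_{g,p}$ is hyperspecial, and $\Ss_{K^\flat_g,(p)}$ is the smooth integral canonical model of \cite{kis:abelian, Kim2016-fb}. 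I then invoke the smooth proper toroidal compactifications $\overline{\Ss}_{K^\flat_g,(p)}$ from \cite{mp:toroidal}: over $\Spec\Int_{(p)}$ the geometric fibers of this proper smooth model have matching sets of connected components (by Stein factorization), and since the open immersion $\Ss_{K^\flat_g,(p)} \hookrightarrow \overline{\Ss}_{K^\flat_g,(p)}$ is schematically dense in each fiber with boundary of positive codimension, components of the interior and of its compactification are in bijection. The desired bijection on $\pi_0$ follows, and hence so does $P(\Lambda)$.

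The main obstacle is verifying the claim that maximality of $\Lambda$ at $p$ forces $W_g$ (and hence $V^\flat_{g,\Int_p}$) to be self-dual inside $V_{\Int_p}$. This is standard at odd primes via the structure theory of quadratic lattices over $\Int_p$; at $p=2$ one must use more carefully the self-duality of $V_{\Int_p}$ assumed in Theorem~\ref{introthm:main} together with the fact that $W_g$ is already known to be a direct summand of an even self-dual ambient lattice.
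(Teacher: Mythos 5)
Your first step---using maximality and Remark~\ref{rem:open_closed_prime_to_p} to see that $\leftcirc\mathcal{Z}_K(\Lambda)_{\Int_{(p)}}$ is open and closed in $\mathcal{Z}_K(\Lambda)_{\Int_{(p)}}$, hence finite over $\Ss_{K,(p)}$, and then to upgrade the open immersion $\leftcirc\Ss_{K^\flat_g,(p)}\hookrightarrow\Ss_{K^\flat_g,(p)}$ to an equality---is exactly what the paper does (your retraction argument is a roundabout way of saying that $\leftcirc\Ss_{K^\flat_g,(p)}$ is normal, finite over $\Ss_{K,(p)}$, and has generic fiber $\Sh_{K^\flat_g}$, so it must coincide with the normalization).

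The second half, however, rests on a false claim. Maximality of $\Lambda$ at $p$ does \emph{not} imply that $W_g=g_p^{-1}\Lambda_{\Int_p}$ is self-dual: a maximal integral $\Int_p$-lattice can perfectly well have discriminant divisible by $p$. For instance, if $\Lambda=\Int v$ with $Q(v)=p$, then every $\Lambda'\in\mathsf{L}(\Lambda)$ has index prime to $p$ (any overlattice $\tfrac1a\Int v$ with integral form forces $a^2\mid p$), so $\Lambda$ is maximal at $p$, yet $\Lambda_{\Int_p}$ is not unimodular; and such a $W_g$ can be a direct summand of the self-dual $V_{\Int_p}$ (take $v=e+pf$ in a hyperbolic plane). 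This case genuinely occurs in the induction of Proposition~\ref{prop:irred} and in the K3 application (where $p\mid d$ but $p^2\nmid d$). Consequently $V^\flat_{g,\Int_p}=W_g^\perp$ need not be self-dual, $K^\flat_{g,p}$ need not be hyperspecial, and $\Ss_{K^\flat_g,(p)}$ is in general only normal and finite over $\Ss_{K,(p)}$, not smooth---so your reduction to smooth proper toroidal compactifications plus Stein factorization does not apply. This is precisely why the paper instead invokes \cite[Corollary 4.1.11]{mp:toroidal}, which is tailored to these normal, possibly non-smooth integral models and directly yields that each connected component of the special fiber is the specialization of a unique component of the generic fiber. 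Your closing paragraph correctly identifies the self-duality of $W_g$ as the weak point, but the issue is not one of more careful lattice theory at $p=2$: the claim is simply false already for $p$ odd.
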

\begin{proof}
The maximality at $p$ here means (by definition) that, for all $\Lambda'\in \mathsf{L}(\Lambda)$, $[\Lambda':\Lambda]$ is prime to $p$. Therefore, by Remark~\ref{rem:open_closed_prime_to_p}, we see that $\leftcirc \mathcal{Z}_K(\Lambda)_{\Field_p}$ is an open and closed substack of $\mathcal{Z}_K(\Lambda)_{\Field_p}$, and in particular is finite over $\Ss_K$. Moreover, Proposition~\ref{prop:ZLambda_props} tells us that $\leftcirc\mathcal{Z}_K(\Lambda)_{\Int_{(p)}}$ and $\leftcirc\mathcal{Z}_K(\Lambda)_{\overline\Field_p}$ are normal. In particular, the irreducible and connected components of the latter stack agree.

By Remark~\ref{rem:leftcirc_ZLambda_integral_models} and the finiteness of $\leftcirc\mathcal{Z}_K(\Lambda)_{\Int_{(p)}}\to \Ss_{K,(p)}$ , we have a decomposition into normal stacks
\[
\leftcirc\mathcal{Z}_K(\Lambda)_{\Int_{(p)}}\simeq \bigsqcup_{g\in G^\flat(\Rat)\backslash \leftcirc\Xi_K(\Lambda)/K} \Ss_{K^\flat_g,(p)}
\]
finite over $\Ss_{K,(p)}$.

Therefore~\cite[Corollary 4.1.11]{mp:toroidal} shows that every connected component of its special fiber is the specialization of a unique connected component of its generic fiber.
\end{proof}

\begin{proposition}\label{prop:p(m)_to_q(m)}
$P(\Lambda)$ implies $Q(\Xi)$ for all minimal pairs of the form $\Xi = (\tilde{\Lambda}\subset \Lambda)$.
\end{proposition}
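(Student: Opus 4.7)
The plan is to exploit the commutative diagram~\eqref{main:eqn:pi0diagram} together with the fibration structure of $\pi^{\ord}_K$ supplied by Proposition~\ref{prop:ord_corr_cycles} and the Chai-Hida monodromy statement Proposition~\ref{proposition:chai_hida_application}. Fix a prime-to-$p$ level $K^p$; the whole argument is uniform in it.

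Surjectivity of $\pi^{\ord}_{K,*}$ follows essentially from the diagram. The upper specialization map is surjective because $\leftcirc\mathcal{Z}_K(\tilde{\Lambda})_{\Int_{(p)}}$ is normal and flat with its ordinary locus dense in the special fiber (Proposition~\ref{prop:ZLambda_props}); the right vertical is the bijection of Proposition~\ref{prop:char0_corr_cycles}; and the lower specialization map is the bijection supplied by $P(\Lambda)$. Chasing the square yields that $\pi^{\ord}_{K,*}$ is surjective.

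For injectivity, I use the decomposition $\leftcirc\mathcal{Z}^{\ord}_K(\Lambda)_{\overline{\Field}_p}=\bigsqcup_{g\in G^\flat(\Rat)\backslash \leftcirc\Upsilon(\Lambda)/K}\leftcirc\Ss^{\ord}_{K^\flat_g,\overline{\Field}_p}$ coming from Remark~\ref{rem:leftcirc_ZLambda_integral_models} and analyze $\pi^{\ord}_K$ piece by piece. Fix $g$. By Proposition~\ref{prop:ord_corr_cycles}, $t^{\ord}_{\lambda_p,\Xi}$ is globally an isomorphism, while $s^{\ord}_{\lambda_p,\Xi,K^\flat_g}$ factors as a finite flat homeomorphism onto the Igusa-type torsor $\widehat{\mathrm{Ig}}^{\ord}_{\mathsf{X}^\flat_g}$ with $\mathsf{X}^\flat_g=\mathrm{Par}^\circ_{\lambda_p}(W_g,U_g)$. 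Hence on $\pi_0$ the fiber of $\pi^{\ord}_{K,*}$ above any connected component of $\leftcirc\Ss^{\ord}_{K^\flat_g,\overline{\Field}_p}$ is canonically identified with the components of $\mathrm{Ig}^{\ord}_{\mathsf{X}^\flat_g,\overline{\Field}_p}$ lying above it. I would then apply Proposition~\ref{proposition:chai_hida_application} to the Shimura variety $\Ss_{K^\flat_g}$ for the group $G^\flat_g$, with cocharacter $\mu_g$ from Lemma~\ref{lem:mu_choice_flat} and $\mathsf{X}=\mathsf{X}^\flat_g$, to conclude that $\pi_0\bigl(\mathrm{Ig}^{\ord}_{\mathsf{X}^\flat_g,\overline{\Field}_p}\bigr)\xrightarrow{\simeq}\pi_0\bigl(\leftcirc\Ss^{\ord}_{K^\flat_g,\overline{\Field}_p}\bigr)$, so that each fiber is a singleton. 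The smoothness hypotheses~(\ref{hypo:smooth}), (\ref{hypo:Qsmooth}) and the weak approximation hypothesis~(\ref{hypo:weakapprox}) are supplied by Lemmas~\ref{lem:mu_choice_flat} and~\ref{lem:smooth_grp_schemes}; the isotropy hypothesis~(\ref{hypo:isotropic}) holds because $V^\flat_g$ has rank $\geq 5$ at every prime; and the hypersymmetric-point hypothesis~(\ref{hypo:hypersymm}) is Lemma~\ref{lem:ordinary_SKg_hypersymmetric}. Combined with surjectivity, this yields bijectivity of $\pi^{\ord}_{K,*}$, i.e.\ $Q(\Xi)$.

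The main obstacle is the verification of hypothesis~(\ref{hypo:conncomp}) of Proposition~\ref{proposition:chai_hida_application} for each $G^\flat_g$, namely the bijection $\pi_0\bigl(\Ss^{\ord}_{K^\flat_{g,p},\overline{\Field}_p}\bigr)\xrightarrow{\simeq}\pi_0\bigl(\Sh_{K^\flat_{g,p},\overline{\Rat}_p}\bigr)$. A priori, $P(\Lambda)$ delivers only the analogous bijection after summing over $g$, so one must check that it decomposes compatibly summand by summand; concretely one needs $\pi_0\bigl(\leftcirc\Ss^{\ord}_{K^\flat_g,\overline{\Field}_p}\bigr)\to \pi_0\bigl(\Ss^{\ord}_{K^\flat_g,\overline{\Field}_p}\bigr)$ to be bijective. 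This should follow from the flatness of $\Ss_{K^\flat_g,(p)}\to \Spec\Int_{(p)}$ and the density of the open immersion $\leftcirc\Ss_{K^\flat_g,(p)}\hookrightarrow \Ss_{K^\flat_g,(p)}$ noted in Remark~\ref{rem:leftcirc_ZLambda_integral_models}, combined with the normality of $\Ss_{K^\flat_g,(p)}$, which together force $\pi_0$ of both the open subspace and the ambient integral model to agree with $\pi_0$ of the common generic fiber $\Sh_{K^\flat_g}$.
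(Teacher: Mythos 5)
Your proposal is correct and follows essentially the same route as the paper: reduce via Proposition~\ref{prop:ord_corr_cycles} to the bijectivity of $\pi_0\bigl(\mathrm{Ig}^{\ord}_{\mathsf{X}^\flat_g,\overline{\Field}_p}\bigr)\to\pi_0\bigl(\Ss^{\ord}_{K^\flat_g,\overline{\Field}_p}\bigr)$ for each $g$, then invoke Proposition~\ref{proposition:chai_hida_application} with exactly the hypothesis checks you list (the separate diagram-chase for surjectivity is redundant, since the Chai--Hida statement already gives bijectivity of each fiber). One caveat on your final paragraph: Remark~\ref{rem:leftcirc_ZLambda_integral_models} asserts only an open immersion, not density in the special fiber, and no such density is needed --- since the decomposition~\eqref{eqn:leftcirc_ZLambda_decomp_integral_model} is into open \emph{and closed} substacks, $P(\Lambda)$ restricts summand by summand, and hypothesis~(\ref{hypo:conncomp}) is applied with $S$ the ordinary locus of $\leftcirc\Ss_{K^\flat_g}$ itself (cf.\ Lemma~\ref{lem:ordinary_SKg_agrees}), which is exactly what $P(\Lambda)$ together with (3) of Proposition~\ref{prop:ZLambda_props} controls.
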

\begin{proof}
By Proposition~\ref{prop:ord_corr_cycles}, this reduces to knowing that, for all $g\in \leftcirc\Xi_K(\Lambda)$, the map
\[
\pi_0\left(\widehat{\mathrm{Ig}}^{\ord}_{\mathsf{X}^\flat_g,\overline{\Field}_p}\right)\to \pi_0(\mathcal{S}^{\ord}_{K^\flat_g,\overline{\Field}_p})
\]
is a bijection. We will check this using Proposition~\ref{proposition:chai_hida_application}. Assertions (1)-(3) are taken care of by Lemma~\ref{lem:smooth_grp_schemes}. Assertion (4) holds because we are assuming $P(\Lambda)$ (and by (3) of Proposition~\ref{prop:ZLambda_props}). Our group $G$ is already isotropic over $\Rat$, so Assertion (5) is okay. The last Assertion (6) is also valid by Lemma~\ref{lem:ordinary_SKg_hypersymmetric}.
\end{proof}

\begin{proposition}\label{prop:irred}
For any $\Lambda$, we have a bijection
\[
 \pi_{0}\bigl(\leftcirc \mathcal{Z}_K(\Lambda)_{\overline{\Field}_p}\bigr)\xrightarrow{\simeq}\pi_{0}\bigl(\leftcirc Z_K(\Lambda)_{\overline{\Rat}}\bigr).
\]
\end{proposition}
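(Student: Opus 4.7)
The plan is to induct on $v_p(\disc(\Lambda))$. The base case collects all $\Lambda$ that are maximal at $p$ (including in particular every $\Lambda$ with $v_p(\disc(\Lambda))=0$), and is exactly Proposition~\ref{prop:p(m)_maximal}. The inductive step will reduce $P(\Lambda)$ to $P(\tilde\Lambda)$ for a suitable $\tilde\Lambda$ with strictly smaller $p$-adic discriminant, using Proposition~\ref{prop:p(m)_to_q(m)} and a diagram chase on~\eqref{main:eqn:pi0diagram}.

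Suppose then that $\Lambda$ is not maximal at $p$. I will choose some $\Lambda'\in \mathsf{L}(\Lambda)$ with $p\mid[\Lambda':\Lambda]$, and then pick an intermediate lattice $\tilde\Lambda$ with $\Lambda\subsetneq\tilde\Lambda\subset\Lambda'$ and $[\tilde\Lambda:\Lambda]=p$. The chain $\tilde\Lambda\subset \Lambda'\subset (\Lambda')^\vee\subset \tilde\Lambda^\vee$ shows that $\tilde\Lambda\in\mathsf{L}(\Lambda)$, so $\Xi=(\Lambda\subset\tilde\Lambda)$ is a minimal pair. Since $[\tilde\Lambda:\Lambda]=[\Lambda^\vee:\tilde\Lambda^\vee]=p$, a quick index count gives $\disc(\tilde\Lambda)=\disc(\Lambda)/p^2$, whence $v_p(\disc(\tilde\Lambda))<v_p(\disc(\Lambda))$. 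The rank of $\tilde\Lambda$ equals that of $\Lambda$, so the standing hypothesis $\mathrm{rank}(\Lambda)\leq (n-4)/2$ is preserved, and the inductive hypothesis yields $P(\tilde\Lambda)$. Proposition~\ref{prop:p(m)_to_q(m)} then delivers $Q(\Xi)$.

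To finish, I will chase the diagram~\eqref{main:eqn:pi0diagram} for this particular $\Xi$. In the bottom row, the left horizontal arrow is bijective by density of the ordinary locus (assertion~(3) of Proposition~\ref{prop:ZLambda_props}), so it suffices to prove that the bottom-right horizontal arrow is bijective. By commutativity of the right square, this follows once I know that the middle vertical $\pi^{\ord}_{K,*}$, the right vertical $\pi_{K,*}$, and the top-right composition are all bijective; these are respectively $Q(\Xi)$, Proposition~\ref{prop:char0_corr_cycles}, and $P(\tilde\Lambda)$. This completes the induction and proves $P(\Lambda)$ in general.

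The substance of the argument lies entirely upstream: the delicate steps are the Chai--Hida-type analysis packaged in Proposition~\ref{proposition:chai_hida_application} and the explicit description of the ordinary $\lambda_p$-Hecke correspondence between special cycles in Proposition~\ref{prop:ord_corr_cycles}, both of which feed into Proposition~\ref{prop:p(m)_to_q(m)}. Once those inputs are in hand, no further obstacle appears: Proposition~\ref{prop:irred} is purely a matter of the induction on $v_p(\disc(\Lambda))$ and the commutative-diagram argument sketched above.
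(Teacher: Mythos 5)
Your proof is correct and is essentially the paper's own argument, which is stated in one line as exactly this induction on $v_p(\disc(\Lambda))$ combining Propositions~\ref{prop:p(m)_maximal} and~\ref{prop:p(m)_to_q(m)} with a chase of the diagram~\eqref{main:eqn:pi0diagram}; your write-up just supplies the details (existence of the minimal pair, the discriminant drop, preservation of the rank bound). The only discrepancy is notational: your convention for the minimal pair is reversed relative to the paper's (there $\Xi=(\tilde\Lambda\subset\Lambda)$ has $\tilde\Lambda$ as the index-$p$ \emph{sub}lattice), so in the printed diagram~\eqref{main:eqn:pi0diagram} the arrow to be deduced is the top-right one and the inductively known one is the bottom-right — but since the chase only needs the other three sides of the right-hand square to be bijections, this swap is harmless.
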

\begin{proof}
  Using induction on the $p$-adic valuation of $[\Lambda^\vee:\Lambda]$, this follows from the diagram~\eqref{main:eqn:pi0diagram} and Propositions~\ref{prop:p(m)_maximal} and~\ref{prop:p(m)_to_q(m)}.
\end{proof}

\subsection{Application to the moduli of K3 surfaces}
When $\Lambda$ is a rank $1$ lattice spanned by a vector $v$ with $Q(v) = m$, write $\mathcal{Z}_K(m)$ for $\mathcal{Z}_K(\Lambda)$, and similarly for all the sub-loci of the special cycle. For an integer $d\ge 1$, let $\mathsf{M}^{\circ}_{2d,(p)}$ be the moduli stack of primitively polarized K3 surfaces over $\Int_{(p)}$ of degree $2d$ (see~\cite[\S 3]{mp:tatek3}).

\begin{proof}
[Proof of Theorem~\ref{introthm:k3s}]
Let $N$ be the self-dual quadratic $\Int$-lattice $U^{\oplus 3} \oplus E_8^{\oplus 2}$, where $U$ is the hyperbolic plane. Choose a hyperbolic basis $e,f$ for the first copy of $U$. Set
\[
L_d = \langle e-df\rangle^{\perp}\subset N.
\]
This is a quadratic space of signature $(19,2)$. We can choose our quadratic space $V$, and the $\Int$-lattice $V_{\Int}$ such that $V$ has signature $(20,2)$, $V_{\Int_{(p)}}$ is self-dual, and such that there exists an isometric embedding as a direct summand
\[
L_{d}\hookrightarrow V_{\Int}.
\]

Associated with the lattice $V_{\Int}$ and a suitable neat level subgroup $K^p\subset \GSpin(V)(\Adele^p_f)$, we have the integral model $\Ss_{K_pK^p,(p)}$ over $\Int_{(p)}$.

Let $\mathsf{M}^{\mathrm{sm}}_{2d,(p)}$ be the open smooth locus of $\mathsf{M}_{2d,(p)}$: This is a fiber-by-fiber dense subspace. In particular, it suffices to show that $\mathsf{M}^{\mathrm{sm}}_{2d,\overline{\Field}_p}$ is irreducible.

By the theory of~\cite[\S 5]{mp:tatek3}, extended to the case $p=2$ in~\cite[Prop. A. 12]{Kim2016-fb} (see also the erratum at~\cite{mp:erratum}), there is a finite \'etale cover $\tilde{\mathsf{M}}^{\mathrm{sm}}_{2d,K}$ of $\mathsf{M}^{\mathrm{sm}}_{2d,\Int_{(p)}}$, and an \'etale period map
\[
\tilde{\mathsf{M}}^{\mathrm{sm}}_{2d,K}\to \mathcal{Z}_K(2d)_{\Int_{(p)}}
\]
that is in fact an open immersion, since it is one in the generic fiber; see~\cite[Cor. 5.15]{mp:tatek3}. This map lands inside $\leftcirc\mathcal{Z}_K(2d)_{\Int_{(p)}}$, and in fact inside the \emph{primitive} locus $\mathcal{Z}^{\mathrm{pr}}_K(2d)_{\Int_{(p)}}$ where the de Rham realization of the special endomorphism of degree $2d$ generates a direct summand of $\bm{V}_{\dR}$.\footnote{It is the verification of this primitivity when $p=2$ that is the main focus of the erratum~\cite{mp:erratum}.}

Combined with Proposition~\ref{prop:irred}, this shows that every irreducible component of $\tilde{\mathsf{M}}^{\mathrm{sm}}_{2d,K,\overline{\Field}_p}$ is the specialization of a unique irreducible component of $\tilde{\mathsf{M}}^{\mathrm{sm}}_{2d,K,\overline{\Rat}}$. From this, we deduce the same assertion for the fibers of $\mathsf{M}^{\mathrm{sm}}_{2d,(p)}$. However, it is well-known that this moduli stack is irreducible over $\Comp$. For instance, this follows from the Torelli theorem; see the proof of~\cite[Prop. 5.3]{mp:tatek3}.
\end{proof}

\printbibliography

\end{document}